\documentclass[reqno,11pt]{article}

\usepackage{a4wide}
\usepackage{geometry}
\geometry{a4paper, top=4cm,bottom=3cm,left=2.3cm,right=2.3cm,       heightrounded,bindingoffset=5mm}


\usepackage{hyperref}
\usepackage{graphicx}
\usepackage{subfigure}
\usepackage[toc,page]{appendix}
\usepackage{xcolor}
\usepackage{pst-all}
\usepackage[english]{babel}
\usepackage{pdfsync}
\usepackage{amsmath,amssymb,amsthm}
\usepackage{mdwlist}
\usepackage{mathrsfs}
\usepackage{mathtools}
\usepackage{bm}
\usepackage{dsfont}
\usepackage{bbm}

\usepackage{tikz}
\usepackage[framemethod=TikZ]{mdframed}
\newcommand{\RCD}{\mathsf{RCD}}
\newcommand{\CD}{\mathsf{CD}}
\newcommand{\m}{\mathfrak{m}}
\newcommand{\n}{\mathfrak{n}}
\newcommand{\dis}{\mathsf{d}}
\newcommand{\di}{\mathsf{d}}
\newcommand{\N}{\mathbb N}

\newcommand{\PX}{\mathscr{P}}
\newcommand{\de}{\mathrm{d}}
\newcommand{\supp}{\text{supp}}

\newcommand{\X}{\mathsf{X}}

\newcommand{\M}{\mathscr{M}}
\newcommand{\Leb}{\mathcal{L}}
\newcommand{\p}{\mathsf p}
\newcommand{\g}{\mathit g}

\newcommand{\dint}{\mathsf{d}_{\mathsf{iKRW}}}
\newcommand{\Sin}{\mathcal{S}}
\newcommand{\weakto}{\overset{\ast}{\rightharpoonup}}

\DeclarePairedDelimiter{\abs}{\lvert}{\rvert}
\DeclarePairedDelimiter{\norm}{\lVert}{\rVert}

\DeclareFontFamily{U}{matha}{\hyphenchar\font45}
\DeclareFontShape{U}{matha}{m}{n}{
  <-6> matha5 <6-7> matha6 <7-8> matha7
  <8-9> matha8 <9-10> matha9
  <10-12> matha10 <12-> matha12
  }{}
\DeclareSymbolFont{matha}{U}{matha}{m}{n}
\DeclareMathSymbol{\Lt}{3}{matha}{"CE}

\def\Xint#1{\mathchoice 
  {\XXint\displaystyle\textstyle{#1}}%
  {\XXint\textstyle\scriptstyle{#1}}%
  {\XXint\scriptstyle\scriptscriptstyle{#1}}%
  {\XXint\scriptscriptstyle\scriptscriptstyle{#1}}%
  \!\int} 
\def\XXint#1#2#3{{\setbox0=\hbox{$#1{#2#3}{\int}$} 
  \vcenter{\hbox{$#2#3$}}\kern-.5\wd0}} 
\def\-int{\Xint -}

\numberwithin{equation}{section}

\newcommand{\diam}{\rm diam}

%
\newcommand{\R}{\mathbb{R}}



\newcommand{\mm}{{\mbox{\boldmath$m$}}}










\newcommand{\Kliminf}{K\kern-3pt-\kern-2pt\mathop{\rm lim\,inf}\limits}  
\renewcommand{\d}{{\mathrm d}}

\newcommand{\restr}[1]{\lower3pt\hbox{$|_{#1}$}} 


\newcommand{\nchi}{{\raise.3ex\hbox{$\chi$}}}

\setlength{\marginparwidth}{3cm}





\renewcommand{\mm}{\mathfrak m}                                



\renewenvironment{proof}{\removelastskip\par\medskip   
\noindent{\em Proof.} \rm}{\penalty-20\null\hfill$\square$\par\medbreak}

\numberwithin{equation}{section}

\newtheorem{thm}{Theorem}[section]
\newtheorem{prop}[thm]{Proposition}
\newtheorem{lemma}[thm]{Lemma}
\newtheorem{cor}[thm]{Corollary}
\newtheorem{dfn}[thm]{Definition}

\newtheorem{example}[thm]{Example}

\newtheorem{clm*}{Claim}
\newtheorem{result*}{Useful Result}

\theoremstyle{remark}
\newtheorem{rmk}[thm]{Remark}

\title{Convergence of metric measure spaces satisfying the CD condition for negative values of the dimension parameter}
\author{Mattia Magnabosco\thanks{Institut f\"ur Angewandte Mathematik, Universit\"at Bonn. Email: magnabosco@iam.uni-bonn.de}, Chiara Rigoni\thanks{Institut f\"ur Angewandte Mathematik, Universit\"at Bonn. Email: rigoni@iam.uni-bonn.de}, Gerardo Sosa}

\makeindex

\begin{document}

\UseRawInputEncoding

\maketitle

\begin{abstract}
We study the problem of whether the curvature-dimension condition with negative values of the generalized dimension parameter is stable under a suitable notion of convergence. To this purpose, first of all we introduce an appropriate setting to introduce the $\CD(K, N)$-condition for $N < 0$, allowing metric measure structures in which the reference measure is quasi-Radon. Then in this class of spaces we introduce the distance $\dis_{\mathsf{iKRW}}$, which extends the already existing  notions of distance between metric measure spaces. Finally, we prove that if a sequence of metric measure spaces satisfying the $\CD(K, N)$-condition with $N < 0$ is converging with respect to the distance $\dis_{\mathsf{iKRW}}$ to some metric measure space, then this limit structure is still a $\CD(K, N)$ space.
\end{abstract}

\tableofcontents

\section{Introduction}
In the last years, the class of metric measure spaces satisfying the synthetic curvature-dimension condition has been a central object of investigation.  These spaces, in which a lower bound on the curvature formulated in terms of optimal transport holds, have been introduced by Sturm in \cite{Sturm06I, Sturm06II} and independently by Lott and Villani in \cite{Lott-Villani09}. For a metric measure space $(\X, \dis, \mm)$, the curvature-dimension condition $\CD(K, N)$ depends on two parameters $K \in \R$  and $N \in [1, \infty]$ and it relies on a suitable convexity property of the entropy functional defined on the space of probability measures on $\X$: the $\CD(K, N)$-condition for finite $N$ is an appropriate reformulation of the $\CD(K, \infty)$ one introduced as the $K$-convexity of the relative entropy with respect to $\m$. Spaces satisfying the curvature-dimension condition are Riemannian manifolds \cite{Sturm06I, Sturm06II}, Finsler spaces \cite{Ohta09} and Alexandrov spaces \cite{Petrunin11, ZhangZhu10}. In particular, in the case of a weighted Riemannian manifold, namely a Riemannian manifold $(\mathit M, \g)$ equipped with a weighted measure $\mm = e^{-\psi} \text{vol}_\g$ which leads to a weighted Ricci curvature tensor $\mathrm{Ric}_N$,  being a $\CD(K, N)$ space is equivalent to the condition $\mathrm{Ric}_N \ge K$ that can be regarded as the combination of a lower bound by $K$ on the curvature and an upper bound by $N$ on the dimension. Moreover, in the setting of Riemannian manifolds, it turns out that for $N > 0$ it is possible to characterize the $\CD(K, N)$-condition in terms of a property of the relative entropy, as in the case of metric measure spaces satisfying the  $\CD(K, \infty)$-condition, for brevity, the class of $\CD(K, \infty)$ spaces: the required property is the {$(K, N)$-convexity} introduced in \cite{Erbar-Kuwada-Sturm13}. This notion reinforces the one of $K$-convexity, and can be generalized to the case of metric measure spaces.\\

In the Euclidean setting a direct application of the results in \cite{BraLieb} ensures that given any convex measure $\mu$, with full dimensional convex support and $C^2$ density $\Psi$, the space $(\R^n, \dis_{\text{Eucl.}}, \mu)$ satisfies the $\CD(0, N)$-condition for $1/N \in [-\infty, 1/n]$ (i.e. $N \in (-\infty, 0) \cup [n, \infty]$), in the sense that $\mathrm{Ric}_N \ge 0$. This class of measures, introduced by Borell in \cite{Borell}, extends the set of the so-called log-concave ones and it has been largely studied for example in \cite{Bobkov07, BobLed, Kol13}. In particular, following the terminology adopted by Bobkov, the case $N \in (-\infty, 0)$ corresponds to the ``heavy-tailed measures'' (see also \cite{BCR05}), identified by the condition that $1/ \Psi^{1/(n-N)}$ is convex. An explicit example of these measures is given by the family of Cauchy probability measures on $(\R^n, \dis_{\text{Eucl.}})$
\begin{equation}\label{eq:intro1}
\mu^{n, \alpha} = \dfrac{c_{n, \alpha}}{\big(1+\abs{x}^2\big)^{\frac{n + \alpha}{2}}} \, \d x, \quad \alpha > 0,
\end{equation}
where $c_{n, \alpha} > 0$ is a normalization constant. Hence, $(\R^n, \dis_{\text{Eucl.}}, \mu^{n, \alpha})$ is a $\CD(0, -\alpha)$ space.\\ 

Admitting $N < 0$ may sound strange if one thinks to $N$ as an {upper bound on the dimension}; however, as explained in \cite{Ohta-Takatsu1} and \cite{Ohta-Takatsu2}, in the case of weighted Riemannian manifolds, it is useful to consider a generalization of the entropy, called {$m$-relative entropy} $H_m(\cdot | \nu)$, stemming from the {Bregman divergence} in {information geometry}, which is closely related to the {R\'enyi entropies} in {statistical mechanics}. More precisely, in these papers Ohta and Takatsu prove that if $(\mathsf M, \omega)$ is a weighted Riemannian manifold and $\nu = \exp_m (\Psi) \omega$ is a conformal deformation of $\omega$ in terms of the $m$-exponential function, then the fact that $H_m(\cdot | \nu) \ge K$ in the Wasserstein space $(\PX^2(\M), W_2)$ is equivalent to the fact that $\text{Hess} \Psi \ge K$ and $\mathrm{Ric}_N \ge 0$ with $N = 1/(1-m)$, where $\mathrm{Ric}_N$ is the weighted Ricci curvature tensor associated with $(\mathsf M, \omega)$. In this setting, depending on the choice of the particular entropy, i.e., the value of $m$, the value of the dimension $N$ can be negative. 
Hence they show that the bounds $\text{Hess} \Psi \ge K$ and $\mathrm{Ric}_N \ge 0$ imply appropriate variants of the Talagrand, HWI, logarithmic Sobolev and the global Poincar\`e inequalities as well as the concentration of measures. Moreover, using similar techniques as in \cite{JKO98, Vil03, OS09}, they prove that the gradient flow of $H_m(\cdot | \nu)$ produces a weak solution to the porous medium equation (for $m > 1$) or the fast diffusion equation (for $m < 1$) of the form
\begin{equation}\label{eq:intro2}
\dfrac{\partial \rho}{\partial t} = \dfrac{1}{m} \Delta^\omega (\rho^m) + \text{div}_\omega(\rho \nabla \Psi),
\end{equation}
$\Delta^\omega$ and $\text{div}_\omega$ being the Laplacian and the divergence associated with the measure $\omega$. This is result was demonstrated also by Otto \cite{Otto01} in the case in which the reference measure $\nu$ in the {$m$-relative entropy} $H_m(\cdot | \nu)$ is given by the family of $m$-Gaussian measures, which is in turn closely related to the Barenblatt solution to \eqref{eq:intro2} without drift (see \cite{OW, Ta2}).\\

In \cite{Ohta16}, the author extends the range of admissible ``dimension parameters'' to negative values of $N$ in the theories of $(K, N)$-convex functions, of tensors $\mathrm{Ric}_N$ and of the $\CD(K, N)$-condition in the more general setting of metric measure spaces.  In particular, it is proved that the $(K, N)$-convexity for $N < 0$ is weaker than the $K$-convexity, thus it covers a wider class of functions. This means that the class of metric measure spaces satisfying the $\CD(K, N)$-condition for negative values of $N$ includes all $\CD(K, \infty)$ ones; in particular, since a metric measure space which satisfies the $\CD(K, N)$-condition for some $N > 0$ is also a $\CD(K, \infty)$ space, it follows that:

\vspace{0.37cm}

\noindent \hspace{-0.15cm} \framebox{\parbox[c]{4.45cm}
{ \vspace{0.1cm}\small{ $(\X, \dis, \mm)$ is a $\CD(K, N)$ space,}\\ \vspace{-0.5cm} \begin{center} \small{for some $N > 0$.} \vspace{0.1cm} \end{center}\vspace{-0.2cm}}} $\Rightarrow$ {\fboxsep=0.7em\fbox{\parbox[c]{4.385cm} {\small{$(\X, \dis, \mm)$ is a $\CD(K, \infty)$ space}}}} $\Rightarrow$ \framebox{\parbox[c]{4.45cm}{\vspace{0.07cm} \small{$(\X, \dis, \mm)$ is a $\CD(K, N)$ space,}\\ \vspace{-0.5cm}\begin{center} \small{for any $N < 0$.} \end{center}\vspace{-0.2cm}}} \vspace{0.37cm}

The curvature-dimension condition for negative values of the dimension has not been largely studied up to now. In the setting of metric measure spaces, the only paper devoted to the study of this notion is the aforementioned work by Ohta \cite{Ohta16}. Therein, many direct consequences are extracted from the definitions, as in the case of the standard curvature-dimension bounds theory and a number of results valid in the case of $N > 0$ are generalized to these spaces, including the Brunn-Minkowski inequality and some other functional ones. 

Nevertheless, most of the results on this topic are obtained in the case of weighted Riemannian manifolds. A first example of a model space is provided in \cite{Milman-NegSph}: it is therein proved that the {$n$-dimensional unit sphere} equipped with the {harmonic measure}, namely the {hitting distribution}  by the {Brownian motion} started at $x \in \mathbb{S}^n$, $|x| < 1$ (which can be equivalently described as  the probability measure whose density is proportional to $\mathbb{S}^n \ni y \to 1 / | y - x |^{n+1}$) is a $\CD\big(n-1-(n+1)/4, -1 \big)$ space. More generally, Milman provides an equivalent to the family \eqref{eq:intro1} of Cauchy measures in $\R^n$, showing that the family of probability measures on the $n$-dimensional unit sphere having density proportional to
\[
\mathbb{S}^n \ni y \mapsto \dfrac{1}{\abs{y-x}^{n + \alpha}}
\]
satisfies the curvature-dimension condition $\CD(n-1-\frac{n+\alpha}{4}, -\alpha)$ for all $\abs{x} < 1$, $\alpha \ge -n$ and $n \ge 2$. In \cite{Milman-Neg}, the author studies the isoperimetric, functional and concentration properties of $n$-dimensional weighted Riemannian manifolds satisfying a uniform bound from below on the tensor $\text{Ric}_N$, when $N \in (-\infty, 1)$,  providing a new one-dimensional model-space under an additional diameter upper bound (namely, a positively curved sphere of possibly negative dimension). In this setting, many other rigidity results have been obtained, such as the splitting theorem for weighted Lorentzian and Riemannian manifolds satisfying the $\CD(K, N)$-condition for any $N < 1$ (see \cite{Wylie17, Sakurai19, Sakurai20}). Other interesting geometric results have been proved when the tensor $\text{Ric}_N$ for $N \in (-\infty, 0]$ is uniformly bounded from below: for example, in the paper \cite{KolMilm}, Kolesnikov and Milman prove various Poincar\'e-type inequalities on the manifolds and their boundaries (making use of the Bochner's inequality and of the Reilly formula, when the boundary is nonempty).

Finally, let us underline that Bochner's inequality, generalized to the setting of weighted Riemannian manifolds satisfying the $\CD(K, N)$-condition for $N < 0$ in \cite{Ohta16} and in \cite{KolMilm}, independently, does not yet have a corresponding in the nonsmooth setting of metric measure spaces. We recall that for $N > 0$, this important inequality has been extended to the setting of singular spaces in a series of works, precisely in \cite{Ohta-SturmBoch} for Finsler manifolds, in \cite{Gigli-Kuwada-Ohta10} and \cite{ZhangZhu10} for Alexandrov spaces and in \cite{AmbrosioGigliSavare12} for $\RCD(K, \infty)$ spaces.\\

Despite the progress made in \cite{Ohta16}, some fundamental questions remain open. The objective of this paper is to address the question of whether the curvature-dimension condition with negative value of generalized dimension is stable under convergence in a suitable topology. Special attention has to be payed to establishing an appropriate setting. In fact, inspired by some of the results found in \cite{Ohta16}, we prove that for any $N < -1$ the interval $I := [-\pi/2, \pi/2]$ equipped with the Euclidean distance and the weighted measure $\d \mm(x) := \cos^N(x) \d\mathcal{L}^1(x)$, $\mathcal{L}^1$ being the 1-dimensional Lebesgue measure on $I$, is a $\CD(N, N)$ space. This fundamental example shows that the natural setting to introduce this curvature-dimension condition cannot be the one of complete and separable metric (Polish, in short) spaces equipped with Radon measures as in the case of $\CD(K, N)$ spaces with $N > 0$, but rather the one of Polish spaces endowed with  {quasi-Radon} measures, i.e., measures which are Radon outside a negligible set. In fact, roughly speaking, the information that the weighted measure $ \cos^N(x) \d\mathcal{L}^1(x)$ is the right one to consider in order to have a space with negative dimension comes from the theory of $(K, N)$-convex functions (see \cite[Section 2]{Ohta16}). However, despite the fact that the ``natural'' domain for the function $\cos^N(x)$ with $N < 0$ would be the open interval  $(-\pi/2, \pi/2)$, the theory of optimal transport forces us to consider the underling metric space to be complete and separable, in order to ensure that also the Wasserstein space $(\PX^2(\X), W_2)$ enjoys the same properties.  Furthermore, we prove that also the space obtained by gluing together $n$-copies of the interval $(I, \dis_{\text{Eucl.}}, \mm)$  introduced above still satisfies the $\CD(N, N)$-condition: this in particular shows that the negligible set of points in which the reference measure explodes is not just appearing in the ``boundary'' of our space, but also in the interior of it.

In this new and more general setting, a sequence of spaces satisfying   the curvature-dimension condition for negative dimension parameter may fail to be stable under the standard measured Gromov-Hausdorff convergence of metric measure spaces. For example, it can be the case that a well-defined limit of a sequence of $\CD(K,N)$ spaces does not exist due to failure of convergence of the metric or the measure. We present some examples of this kind of behavior for metric measure spaces whose reference measures are quasi-Radon:
\begin{itemize}
\item[1)] (\emph{$\sigma$-finiteness lost in limit}) Consider the sequence of compact metric measure spaces given by $\{([0,1],\dis_{\text{Eucl.}},\m_n :=x^{-n}dx)\}_{n\in\mathbb{N}}$. Since these measures are unbounded, it is not clear a priori in which way we want the measures to converge. One possibility, however, is the following: note that for every neighborhood $U$ of $0$ the measure $\m_n|_{[0,1]\setminus U}$ is finite, therefore, up to being cautious with boundaries, one could ask for the weak$^*$-convergence of the restricted finite measures $\m_n|_{X\setminus U}$ to some measure $\m_\infty^U$, for every neighborhood $U$ of the origin. Then using an extension theorem we would obtain a unique measure $\m_\infty$ defined on the whole interval $[0, 1]$. However this construction leads to a measure $\m_\infty$ which is infinite for every measurable subset $A$ of $[0,1]$ with $\Leb^1(A) > 0$, losing thus any regularity. 
\item[2)] (\emph{Bounded measures to unbounded measures}) Consider now the sequence of metric measure spaces given by $\big\{([2^{-n}, +\infty), |\cdot|, x^N \Leb^1)\big\}_{n\in \N}$ for some fixed $N < -1$. For each $n \in \N$, the reference measure of the space is  Radon, but the limit measure is such that every neighborhood of $0$ has infinite mass.
\end{itemize}

We recall that in the setting of metric measure spaces, a suitable notion of convergence, called measured Gromov-Hausdorff convergence,  was  introduced by Fukaya in \cite{Fukaya} as a natural variant of the purely metric Gromov-Hausdorff one. Then the stability of the $\CD(K, N)$-condition for $N \in [1, \infty)$  was proved following these two approaches:
\begin{itemize}
\item[$\bullet$] Lott and Villani proved that the $\CD(K, N)$-condition is stable under pointed measured Gromov-Hausdorff convergence in the class of proper pointed metric measure spaces. Roughly speaking, this means that for any $R > 0$ there is a measured Gromov-Hausdorff convergence of balls of radius $R$ around the given points of the spaces;
\item[$\bullet$] Sturm worked in the setting of Polish spaces equipped with a probability with finite second moment as reference measure. In this class of spaces he defined a distance $\mathbb D$ by putting
\[
\mathbb D \Big( (\X_1, \dis_1, \mm_1), (\X_2, \dis_2, \mm_2) \Big) := \inf W_2\big((\iota_1)_\sharp \mm_1, (\iota_2)_\sharp \mm_2 \big),
\]
the infimum being taken among all complete and separable metric spaces $(\X, \dis)$ and all the isometric embeddings $\iota_i \colon (\text{supp}(\mm_i), \dis_i) \to (\X, \dis)$, $i =1, 2$. He then showed that the curvature-dimension condition is stable with respect to this $\mathbb D$-convergence.
\end{itemize}

In particular, these two techniques produce the same convergence in the case of compact and doubling metric measure spaces. Hence, in \cite{GigMonSav} Gigli, Mondino and Savar\'e introduce a notion of convergence of metric measure spaces, called pointed measured Gromov convergence, which works without any compactness assumptions on the metric structure and for more general Radon measures which are finite on bounded sets. Moreover, they prove that lower Ricci bounds are stable with respect to this convergence.\\

The first achievement of this paper we propose a suitable setting to introduce the curvature-dimension condition for negative values of the dimension parameter, extending and complementing the work by Ohta \cite{Ohta16}. We then propose an appropriate notion of distance, that we call intrinsic pointed Kantorovich-Rubinstein-Wasserstein distance $\dis_{\mathsf{iKRW}}$, and we prove that the curvature-dimension bounds with negative values of the dimension are stable with respect to the $\dis_{\mathsf{iKRW}}$-convergence. In particular, this distance extends the one introduced in \cite{GigMonSav} to the set of equivalence classes of metric measure spaces with more general $\sigma$-finite measures, allowing us to analyze sequences of metric measure spaces in which the reference measures may ``explode'' in some points and which are not necessarily finite on bounded sets (we underline that also in this setting we do not require the locally compactness assumption on the metric structure). 

More specifically, the structures we work with are isomorphism classes of \emph{pointed generalized metric measure spaces}  $(\X,\dis,\m, \mathcal C, p)$ where:
\begin{itemize}
\item $(\X,\dis)$ is a complete separable  metric space,
\item $\m \in \M^{qR}(\X)$ is a quasi-Radon measure, $\m \not= 0$,
\item $\mathcal C \subset \X$ is a closed set with empty interior and $\mm(\mathcal C) = 0$,
\item $p\in\supp(\m) \subset \X$ is a distinguished point,
\end{itemize}
and $(\X_1,\dis_1,\m_1, \mathcal C_1, p_1)$ is said to be isomorphic to $(\X_2,\dis_2,\m_2, \mathcal C_2, p_2)$ if there exists
\[
\text{an isometric embedding } i \colon \supp(\mm_1) \to \X_2 \text{ such that } i(\mathcal C_1) = \mathcal C_2, i_\sharp \mm_1 = \mm_2 \text{ and } i(p_1) = p_2.
\]
Intuitively, here for ``quasi-Radon measure'' $\mm$ on $(\X, \dis)$ (following the terminology introduced in \cite[Volume 4]{Fremlin}) we mean a complete $\sigma$-finite measure with the following properties:
\begin{itemize}
\item there exists a closed negligible set with empty interior $\mathcal{S}_{\m} \subset \X$ such that $\m(U) = \infty$ for every open neighborhood $U$  of $ x \in \mathcal{S}_{\m}$
\item the restricted measure $\mm|_{\X \setminus \mathcal{S}_{\m}}$ is Radon on the open set $\X \setminus \mathcal{S}_{\m}$.
\end{itemize}
In this class of spaces we then introduce the intrinsic distance $\dis_{\mathsf{iKRW}}$. This is constructed by taking partitions of the space: each element of the partition has finite measure and can be then renormalized; hence, we measure the intrinsic Kantorovich-Rubinstein-Wasserstein distance between these renormalized elements. In doing so, we take inspiration from the ideas behind the construction of the distance $\mathrm{p}\mathbb{G}_W$  in \cite{GigMonSav}. However, in contrast to their setting, the lack of regularity of the measure becomes an obstacle to find a canonical and appropriate manner to partition the metric measure space. In particular, it turns out that a control on the Hausdorff distance of the singular sets in the definition of the $\dis_{\mathsf{iKRW}}$-distance is actually necessary in order to provide an extrinsic realization of the distance given an intrinsic one.

Then in this setting the $\CD(K, N)$-condition for negative values of $N$ is introduced requiring a suitable convexity property of the extended R\'enyi entropy functional defined on the space of probability measures on $\X$, as in the case $N > 0$. 

We prove that this notion is stable with respect to the $\dis_{\mathsf{iKRW}}$-distance: our main result (Theorem \ref{th:stab}) shows that if a sequence of pointed generalized metric measure spaces $\{(\X_n,\dis_n,\m_n, \mathcal C_n, p_n)\}_{n \in \N} $ satisfying the $\CD(K, N)$-condition for some $N < 0$ (and some other technical assumptions) is converging  in the $\dis_{\mathsf{iKRW}}$-distance to some generalized metric measure space $(\X_\infty,\dis_\infty, \m_\infty, \mathcal C_\infty, p_\infty)$, then this limit structure is still a $\CD(K, N)$ space.

This result in the case $N > 0$ strongly relies on the fact that the (standard) R\'enyi entropy functional is lower semicontinuous with respect to the weak topology in $\PX_2(\X)$. Unfortunately, the same property does not hold for the extended R\'enyi entropy functional $S_{N,\m}$ when the reference measure $\mm$ is quasi-Radon. Therefore we provide a new argument to prove this stability, which extends the proofs of Lott-Villani and Sturm when $N > 0$ and the one of Gigli-Mondino-Savar\`e when $N = \infty$ (in all these classes of spaces the reference measure $\mm$ is Radon, namely $\mathcal{S}_{\m} =\emptyset$). We show that $S_{N,\m}$ is weakly lower semicontinuous on the space
\begin{equation*}
    \PX^\mathcal {\mathcal S_\m}(\X):= \{\mu\in \PX_2(\X) \,: \, \mu(\mathcal S_\m)=0\}
\end{equation*}
and that this will be enough to prove the desired stability result, provided that each one of the spaces in the converging sequence $\{(\X_n,\dis_n,\m_n, \mathcal C_n, p_n)\}_{n \in \N} $ is not accumulating ``too much mass'' around any of the points in $\mathcal S_{\m_n}$ in a uniform way.

Finally, in Theorem \ref{th:ext} we manage to adapt this \emph{Stability Result} also in the case in which we only have at our disposal a distance which is not explicitly dependent on the behavior of the $\mm$-singular sets. Intuitively, one of the examples we would like to include in our theory consists in approximating the $\CD(0, N+1)$ space $([0, \infty), |\cdot|, x^N \Leb^1)$, where $N < -1$, making use of the sequence of metric measure spaces $([2^{-n}, +\infty), |\cdot|, x^N \Leb^1)$: clearly each space in this sequence is still a $\CD(0, N+1)$ space for $N < -1$ but now the singularity of the measure is ruled out from the domain, meaning that each metric space $([2^{-n}, +\infty), |\cdot|)$ is actually equipped with a Radon measure. Hence, we rely on an extrinsic approach to convergence which does not require any control on the Hausdorff distance between $\mm$-singular sets in the definition of the $\dis_{\mathsf{iKRW}}$-distance.\\

\noindent{\bf Acknowledgements:} The authors thank Professor Karl-Theodor Sturm for suggesting them the problem and for many valuable discussions. Many thanks are due to Lorenzo Dello Schiavo, for many enlightening conversations on measure theory. The second author gratefully acknowledges support by the European Union through the ERC-AdG 694405 RicciBounds. A large part of this work was written while the third author was employed in the group of Professor Karl-Theodor Sturm and furthermore he very gratefully acknowledges support by the European Union through the ERC-AdG 694405 RicciBounds.

\section{Metric spaces equipped with quasi-Radon measures}

\subsection{Measure theory background}\label{sec:preliminaries}
\subsubsection{Quasi-Radon measures}
We begin by introducing some notation and concepts of measure theory.
Let $\X$ be a set and $\mathcal{T},\,\Sigma,\,\m$ be, respectively, a topology, a $\sigma$-algebra, and a positive measure defined on $\Sigma$. The quadruple $(\X, \mathcal{T}, \Sigma, \m)$ is referred to as a \emph{topological measure space} if the condition $\mathcal{T} \subseteq \Sigma$ is satisfied. From now on we assume that $\m(\X)\not = 0$.
\begin{dfn}
Let $(\X,\mathcal{T},\Sigma, \m)$ be a topological measure space. We say that the measure $\m$ is:
\begin{enumerate}
\item[i)] \emph{locally finite} if for every $x \in \X$ there exists a neighborhood $U \in \mathcal{T}$ with $\m(U) < \infty$;
\item[ii)]  \emph{effectively locally finite} if for every $A\in \Sigma$ with $\m(A)>0$, there exists an open set $U\in \mathcal{T}$ with finite measure such that  $\m(A\cap U)>0$;
\item[iii)]  \emph{$\sigma$-finite} if there exists $\{A_i\}_{i\in\N} \subset \Sigma$ with $\m(A_i) < \infty$ for any $i \in \N$ such that $\X = \cup_{i\in\N}A_i$;
\item[iv)] \emph{inner regular with respect to a family of sets $\mathcal{F}\subset \Sigma$} if for any $E \in \Sigma$ we have that \[\m(E) = \sup\{\m(A) : A\in \mathcal{F} \text{ and }A\subset E \}.\]
In the particular case in which $\mathcal{F}$ is the family of compact sets of $\X$, this property is called \emph{tightness}.
\end{enumerate}
\end{dfn}

\noindent
We denote by $\mathcal{B}(\X)$ the smallest $\sigma$-algebra containing all open sets of a topological space $(\X,\mathcal{T})$. Recall that $\mathcal{B}(\X)$ is called  the Borel sigma algebra of $(\X,\mathcal{T})$ and that a measure defined on $\mathcal{B}(\X)$ is referred to as a Borel measure. In the following, with the expression \emph{``$\m$ is a measure on $\X$''} we refer to the fact that $\m$ is a measure defined on $\mathcal{B}(\X)$.  If $(\X,\mathcal{T})$ is a Hausdorff space, then compact subsets are in $\mathcal{B}(\X)$, since compact sets are closed in such spaces. An effectively locally finite Borel measure on a metric space  is inner regular with respect to closed sets: a proof of this fact can be found in \cite[Volume 4, Theorem 412E]{Fremlin}. 
Furthermore, a finite Borel measure on a complete and separable  metric (in short, a Polish)  space is tight (see \cite[Volume 2, Theorem 7.1.7]{Bogachev07}).

At this point we recall the following useful characterization of tight measures (see  \cite[Volume 4, Corollary 412B]{Fremlin} for a proof of this result):
\begin{prop}\label{prop:charatight}
Let $(\X,\dis)$ be a Polish space and $\m$ a Borel effectively locally finite measure on $\X$. Then the following are equivalent:
\begin{itemize}
\item[i)] the measure $\m$ is tight,
\item[ii)] for every measurable $A \in  \mathcal{B}(\X)$ with $\mm(A) > 0$ there exist a measurable compact set $K\subset A$ such that $\m(K) > 0$ .
\end{itemize}
\end{prop}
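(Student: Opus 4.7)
The plan is to prove the two implications separately. For $(i)\Rightarrow(ii)$, tightness directly gives $\m(A) = \sup\{\m(K) : K\subset A \text{ compact}\}$, so if $\m(A) > 0$ then the supremum is positive and hence some compact $K\subset A$ satisfies $\m(K) > 0$.

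For $(ii)\Rightarrow(i)$, I would first handle Borel sets $E$ of finite measure by an exhaustion argument. Define $\alpha := \sup\{\m(K) : K\subset E \text{ compact}\} \le \m(E)$. Picking compact sets $\widetilde K_n\subset E$ with $\m(\widetilde K_n)\to\alpha$ and setting $K_n := \widetilde K_1\cup\cdots\cup \widetilde K_n$ produces an increasing sequence of compacts in $E$ with $\m(K_n)\uparrow\alpha$. Let $K_\infty := \bigcup_n K_n \in \mathcal B(\X)$; continuity from below yields $\m(K_\infty) = \alpha$. Suppose for contradiction that $\alpha < \m(E)$. Then $\m(E\setminus K_\infty) = \m(E) - \alpha > 0$, so $(ii)$ supplies a compact $K'\subset E\setminus K_\infty$ with $\m(K') > 0$. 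By disjointness, $K_n\cup K'$ is a compact subset of $E$ with $\m(K_n\cup K') = \m(K_n)+\m(K')\to \alpha+\m(K') > \alpha$, contradicting the definition of $\alpha$. Hence $\alpha = \m(E)$.

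To extend to the case $\m(E) = \infty$, I would upgrade effective local finiteness to $\sigma$-finiteness modulo a null set. Since $(\X,\dis)$ is Polish and hence second-countable, the union $V$ of all open sets of finite $\m$-measure can be written as $V = \bigcup_n U_n$ with $\m(U_n) < \infty$. Effective local finiteness forces $\m(\X\setminus V) = 0$: otherwise some open $U$ of finite measure would meet $\X\setminus V$ in positive measure while being itself contained in $V$. Setting $V_N := \bigcup_{n\le N}U_n$, continuity from below gives $\m(E\cap V_N)\uparrow\m(E) = \infty$, so any $M > 0$ is exceeded by some $\m(E\cap V_N) < \infty$. Applying the finite-measure case to $E\cap V_N$ provides a compact $K\subset E\cap V_N\subset E$ with $\m(K) > M$, which establishes tightness.

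The main obstacle is the upgrade from effective local finiteness to a usable form of $\sigma$-finiteness, which relies crucially on second-countability of $\X$ to extract a countable subcollection of finite-measure opens whose union has full measure; without it one cannot reduce to the finite-measure case in a controlled way. Once this reduction is in place the exhaustion plus disjoint-union argument in the finite case is standard, with hypothesis $(ii)$ supplying precisely the inductive step needed to push the approximating compacts up to the supremum.
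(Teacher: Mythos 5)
Your proof is correct. Note that the paper does not actually give a proof of this proposition; it delegates to Fremlin (Volume 4, Corollary 412B), so there is no in-paper argument to compare against. Your argument is the standard one behind that cited result: the forward direction is immediate from the definition of the supremum, and the converse is the classical exhaustion principle (an increasing sequence of compacts approaching the supremum, followed by an application of hypothesis (ii) to the remainder set to force a contradiction). The reduction from arbitrary Borel sets to finite-measure ones via the countable union $V$ of finite-measure opens (using second countability and effective local finiteness to see that $\X \setminus V$ is null) is exactly the content of point (iii) of the paper's Proposition 2.4 and is deployed correctly here. All the steps — disjointness of $K'$ and $K_n$, finiteness of $\alpha$ when $\m(E) < \infty$, continuity from below — are in order. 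This is a sound self-contained proof of a result the paper only cites.
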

This in particular allows us to prove an important property of effectively locally finite measures defined on a Polish space:
\begin{lemma}\label{le:inn_reg}
Let $(\X,\dis)$ be a Polish space and $\m$ a Borel effectively locally finite measure on $\X$. Then the measure $\m$ is tight. 
\end{lemma}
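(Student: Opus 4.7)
The plan is to apply Proposition \ref{prop:charatight}: it suffices to show that for every $A\in\mathcal{B}(\X)$ with $\mm(A)>0$ there is a compact set $K\subset A$ with $\mm(K)>0$. The strategy is to reduce to the finite-measure case via effective local finiteness, and then invoke the classical tightness of finite Borel measures on Polish spaces cited above from \cite{Bogachev07}.

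First, given $A\in\mathcal{B}(\X)$ with $\mm(A)>0$, I would use the effective local finiteness of $\mm$ to produce an open set $U\in\mathcal{T}$ with $\mm(U)<\infty$ and $\mm(A\cap U)>0$. Next, I would consider the auxiliary measure $\nu:=\mm\mres U$, namely $\nu(E):=\mm(E\cap U)$ for $E\in\mathcal{B}(\X)$. Then $\nu$ is a finite Borel measure on the Polish space $(\X,\dis)$, so by \cite[Volume 2, Theorem 7.1.7]{Bogachev07} it is tight. Since $\nu(A)=\mm(A\cap U)>0$, by the characterization in Proposition \ref{prop:charatight} applied to $\nu$ there exists a compact set $K\subset A$ with $\nu(K)>0$. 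By construction $K\cap U$ is contained in $U$, and $\nu(K)=\mm(K\cap U)\le \mm(K)$, hence $\mm(K)>0$.

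Therefore, for every $A\in\mathcal{B}(\X)$ with $\mm(A)>0$ there is a compact $K\subset A$ with $\mm(K)>0$, and by Proposition \ref{prop:charatight} the measure $\mm$ is tight.

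I do not foresee a real obstacle: the only subtlety is that $\mm$ itself need not be finite and one cannot directly invoke the theorem for finite Borel measures on Polish spaces; the role of effective local finiteness is precisely to localize the problem to an open set of finite measure, after which everything reduces to the classical statement.
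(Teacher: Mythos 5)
Your proof is correct, and it takes a genuinely different, slightly leaner route than the paper's. Both arguments reduce to the classical fact that a finite Borel measure on a Polish space is tight, but they reduce differently. The paper first uses effective local finiteness to find $B\subset A$ of finite positive measure, then invokes the inner regularity of (effectively locally finite) Borel measures on metric spaces with respect to \emph{closed} sets to extract a closed $C\subset B$ of positive finite measure, notes that $C$ is itself Polish, and applies tightness of the finite measure $\m\restr{C}$ on the closed subspace $C$. You instead truncate the \emph{measure} rather than the \emph{space}: you set $\nu:=\m\mres U$ on the open set $U$ of finite measure supplied by effective local finiteness, observe that $\nu$ is already a finite Borel measure on the whole Polish space $\X$, apply tightness directly, and then transfer the compact $K$ back to $\m$ via $\m(K)\ge\nu(K)>0$. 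This bypasses the inner-regularity-with-respect-to-closed-sets step (the Fremlin 412E ingredient) entirely, which is a small economy. One micro-remark: for $\nu$ you don't actually need to route back through Proposition~\ref{prop:charatight} --- since $\nu$ is tight, inner regularity with respect to compacts directly yields a compact $K\subset A$ with $\nu(K)>0$ --- but invoking the proposition is harmless, as the finite measure $\nu$ is trivially effectively locally finite.
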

\begin{proof}
In order to prove the tightness of $\mm$, we show the validity of point $ii)$ in Proposition \ref{prop:charatight} by exhibiting the existence of a compact set $K\subset \X$ with $\m(K)>0$ in the case in which $\X$ has finite measure. 
In fact, the assumption on $\m$ to be effectively locally finite  guarantees the existence of a set $B \subset A$ with finite positive measure for any $A\subset \Sigma$ with $\mm(A) > 0$. Now, since a Borel measure on a metric spaces is inner regularity with respect to closed sets, there exists also a closed subset $C\subset B$ of positive finite measure.
We can then conclude noticing that closed subsets of Polish spaces are Polish as well and recalling that a finite Borel measure on a Polish space is tight.
\end{proof}

The following classes of Borel measures are of central interest to us.
\begin{dfn}[Radon and quasi-Radon measures]\label{def:radon}
Let $(\X,\dis)$ be a separable metric space. We say that a  Borel measure $\m$ is
\begin{enumerate}
\item[i)] \emph{Radon} if it is complete, locally finite, and inner regular with respect to compact sets. 
\item[ii)]  \emph{quasi-Radon} if it is complete, effectively locally finite, and inner regular with respect to closed sets.
\end{enumerate}
\end{dfn} 
\begin{rmk}[Assumptions on inner regularity]\label{rmk:inreg}
We underline that in Definition \ref{def:radon} we can drop the assumptions on the inner regularity of the measure whenever $(\X,\dis)$ is complete. Indeed in this setting both locally finite and effectively locally finite measures are tight, in view of Lemma \ref{le:inn_reg}, and for Hausdorff spaces tight measures are inner regular with respect to closed sets.
\end{rmk}
\begin{rmk}[Compatibility of definitions]
For general topological measure spaces it is required to  enrich the defining properties of Radon and quasi-Radon measures; this is understandable, since generally this class of spaces have much less structure. A broad definition can be found in \cite[Volume 4, Definitions 411(a) - 411(b)]{Fremlin}. Restricted to the framework in which we work, it turns out that the just mentioned definitions are equivalent to ours. 
\end{rmk}

Let us now list some properties that these classes of measures enjoy.
\begin{prop}\label{pr:radon}
For any $(\X,\dis)$ separable metric space equipped with a measure $\m$, the following implications are valid:
\begin{itemize}
\item[i)] if $\m$ is a  Radon measure, then $\m$ is a quasi-Radon measure;
\item[ii)] assume that $\m$ is locally finite, quasi-Radon, and such that, for every Borel set $E$ with positive measure, there exists a measurable compact set $K\subset E$ with the property that $\m (E\setminus K) > 0$. Then $\m$ is a Radon measure. If $(\X, \dis)$ is a complete metric space, then the same conclusion holds just assuming that $\m$ is a locally finite quasi-Radon measure.
\end{itemize}
Moreover, if $\m$ is a quasi-Radon measure, then the following properties hold:
\begin{itemize}
\item[iii)] $\X$ can be covered up to a set of measure zero by countable many open sets of finite measure. Specifically, there exists a countable collection of open sets  $\{U_i\}_{i\in \N}$ with the property that $\m(U_i)<\infty$ for every $i\in N$ and $\m(\X\setminus \bigcup_{i\in \N} U_i)=0$.
\item[iv)] $\m$ is $\sigma$-finite. 
\item[v)] if $\m$ is inner regular with respect to compact sets, then there exists a closed set $E\subset \X$ with $\mm(E) = 0$ such that $\m|_{\X\setminus E}$ is a Radon measure on the open set $\X\setminus E$.
\end{itemize}
In particular, for any quasi-Radon measure $\mm$ defined on a Polish space $(\X, \dis)$ there exists a closed set $\mathcal S_\mm$ with $\mm(\mathcal S_\mm) = 0$ such that $\m|_{\X\setminus \mathcal S_\mm}$ is a Radon measure on the open set $\X\setminus \mathcal S_\mm$.
\end{prop}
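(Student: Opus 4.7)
The plan is to work through the five parts in order, using separability and effective local finiteness as the key structural ingredients.

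For (i), I would simply unfold definitions: in a Hausdorff space compact sets are closed, so inner regularity with respect to compact sets passes immediately to inner regularity with respect to closed sets; and local finiteness upgrades to effective local finiteness via a Lindel\"of argument, since, given $A$ with $\mm(A)>0$, a cover of $\X$ by open sets of finite measure (granted by local finiteness) admits a countable subcover by separability, and at least one of its members must meet $A$ in positive measure.

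The heart of the proposition is (iii). The idea is to set $V:=\bigcup\{U\subset\X\text{ open}:\mm(U)<\infty\}$ and to show that $\mm(\X\setminus V)=0$. Indeed, effective local finiteness applied to $\X\setminus V$ (were its measure positive) would produce an open $U$ of finite measure with $\mm((\X\setminus V)\cap U)>0$, contradicting $U\subset V$. Since $V$ is an open subset of a separable metric space it is itself Lindel\"of, so its cover by finite-measure open sets admits a countable subcover $\{U_i\}_{i\in\N}$. Part (iv) then follows immediately from the decomposition $\X=(\X\setminus V)\cup\bigcup_i U_i$, the first summand being null and each $U_i$ having finite measure.

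For (v), I would take $E:=\X\setminus\bigcup_i U_i$, which is closed with $\mm(E)=0$. On the open set $\X\setminus E$ the restriction $\mm|_{\X\setminus E}$ is complete (inherited), locally finite (every point lies in some $U_i$), and inner regular with respect to compact sets (a compact $K\subset\X\setminus E$ is still compact in $\X$, and the inner regularity on $\X$ descends), hence Radon on $\X\setminus E$. Statement (ii) then follows by combining the construction above with the additional compact-approximation hypothesis, which, read as $\mm(K)>0$ for the witnessing compact, is precisely criterion (ii) of Proposition \ref{prop:charatight}; this upgrades the measure to a tight one, and tightness plus local finiteness plus completeness gives the Radon property. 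In the complete case, Lemma \ref{le:inn_reg} renders the extra hypothesis redundant. The final Polish assertion is then an immediate corollary of (v), since quasi-Radon on a Polish space is tight by Lemma \ref{le:inn_reg}, and tightness is exactly inner regularity with respect to compacts.

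The main obstacle is conceptual rather than computational: one must choose the null closed set $E$ so that the restriction is \emph{genuinely} Radon, that is, so that local finiteness, and not merely effective local finiteness, is recovered outside $E$. Taking the complement of the explicit countable cover by finite-measure open sets from (iii) is the right construction precisely because that cover witnesses local finiteness on $\X\setminus E$; the remaining descent of inner regularity and completeness to the open subspace is then routine.
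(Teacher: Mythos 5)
Your proof follows the paper's argument essentially step by step: the Lindel\"of covering of the union of finite-measure open sets for (iii), the resulting decomposition for (iv), the restriction of $\m$ to the complement of the closed null set for (v), the tightness characterization of Proposition \ref{prop:charatight} for (ii), and Lemma \ref{le:inn_reg} for both the complete case of (ii) and the final Polish assertion. The only real deviation is in (i), where the paper cites a result of Fremlin (a locally finite tight measure on a separable metric space is effectively locally finite) while you give a direct countable-subcover argument, and you implicitly correct the condition in (ii) to $\m(K)>0$ rather than the stated $\m(E\setminus K)>0$, which is indeed what is needed to invoke Proposition \ref{prop:charatight}.
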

\begin{proof}
The first point $i)$ follows from the fact that on a separable metric space $(\X, \dis)$ a locally finite tight measure is essentially locally finite (see  \cite[Volume 4, 416A]{Fremlin} for a proof of this result). \\
As for $ii)$,  the first part follows directly from the characterization of tightness in Proposition \ref{prop:charatight}. Hence Lemma \ref{le:inn_reg} implies the second part (see for example Remark \ref{rmk:inreg}).\\
Let us then prove $iii)$. We start recalling that a topological space $Y$ is called Lindelh\"of if for every open cover of $Y$ there exist a countable sub-cover. Moreover, $Y$ is called hereditary Lindelh\"of if the same property holds for every subset $\mathcal{V}\subset Y$. Now we note that $(\X, \dis)$ is second countable,  since it is separable, and that second countable topological spaces are Lindel\"of. Moreover since second countability is an hereditary property we have that a separable metric space is hereditary Lindel\"of. 
\\ We then define $\mathcal{V}$ to be the union of all open sets $V\subset \X$ which have finite measure and let $\{U_i\}_{i\in\N}$ be a countable sub-cover given by the hereditary Lindel\"of property of $\X$. We set $\mathcal{U}:=\cup_{i\in\N}U_i$.
Observe that $\m(E)=0$, for the complement set $E=X\setminus \mathcal{U}$. Indeed, the claim follows from the effectively locally finite property of the measure $\m$, since the intersection of $E$ with all open sets of finite measure is empty, i.e., $E\cap \mathcal{U}=\emptyset$.\\
At this point, the proof of $iv)$ and $v)$ are straightforward. In fact, the  sets $\{\{U_i\}_{i\in \N}, E\} $ defined in $iii)$ have finite measure and cover $\X$ while the set null set $E\subset X$ is closed, being the complement of a countable union of open sets. Lastly, note that $\m|_{X\setminus E}$ is locally finite and, by assumption, inner regular with respect to compact sets.\\
In the case of a Polish space $(\X, \dis)$, this last property $v)$ together with Lemma \ref{le:inn_reg} guarantees the existence of the closed set $\mathcal S_\mm$ with the property that $\mm(\mathcal S_\mm) = 0$ and $\m|_{\X\setminus \mathcal S_\mm}$ is a Radon measure.
\end{proof}

Our goal now is to state a general version of the Radon-Nikodym Theorem valid for quasi-Radon measures. With this aim, we first introduce another concept which is a strengthening of absolute continuity between measures.

\begin{dfn}[Absolutely continuous and truly continuous measures]
Let $(\X, \Sigma, \m)$ be a measurable space. We say that a measure $\mu$ on $\Sigma$ is
\begin{itemize}
\item[i)] \emph{absolutely continuous with respect to a measure $\m$} if for any $\varepsilon > 0$ there is $\delta > 0$ such that $\mu(E) \le \varepsilon$ for any measurable set $E \subseteq X$ with $\m(E) \le \delta$;
\item[ii)] \emph{truly continuous with respect to a measure $\m$} if for every $\epsilon > 0$ there exists $\delta > 0$ and a measurable set $E \subseteq X$ such that $\m(E) < \infty$ and $\mu(F) \le \epsilon$ for any measurable $F \subseteq X$ with $\mu(E \cap F) \le \delta$.
\end{itemize}
\end{dfn}

\begin{prop}\label{trac}
Let $(X, \Sigma, \m)$ be a measurable space and $\mu$ be a measure on $\Sigma$. Then $\mu$ is truly continuous with respect to $\m$ if and only if the following two conditions are satisfied:
\begin{itemize}
\item[i)] $\mu$ is absolutely continuous with respect to $\m$
\item[ii)] for any $E \in \Sigma$ with $\mu(E) > 0$ there is $F \in \Sigma$ such that $\m(F) < \infty$ and $\mu (E \cap F) > 0$.
\end{itemize}

Moreover, if the measure $\m$ is $\sigma$-finite, then $\mu$ is truly continuous with respect to $\m$ if and only if it is absolutely continuous.
\end{prop}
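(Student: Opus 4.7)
The plan is to establish each implication of the biconditional separately, then handle the moreover statement. For the forward direction, I assume $\mu$ is truly continuous and fix $\varepsilon > 0$, extracting $\delta > 0$ and a set $E$ with $\m(E) < \infty$ from the defining property. Condition (i) follows at once from the monotonicity $\m(F \cap E) \le \m(F)$: any $F$ with $\m(F) \le \delta$ meets the hypothesis of true continuity and thus satisfies $\mu(F) \le \varepsilon$. For (ii), given any $E_0$ with $\mu(E_0) > 0$, I apply true continuity with $\varepsilon := \mu(E_0)/2$ to the test set $F := E_0 \setminus E$; since $\m(F \cap E) = 0$, this yields $\mu(E_0 \setminus E) \le \mu(E_0)/2$, hence $\mu(E_0 \cap E) > 0$, so $E$ itself serves as the witness required in (ii).

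The reverse direction is the substantial one. Given $\varepsilon > 0$, I use (i) to obtain $\delta > 0$ such that $\m(F) \le \delta$ implies $\mu(F) \le \varepsilon/2$. The crux is to produce a set $E$ with $\m(E) < \infty$ and $\mu(X \setminus E) \le \varepsilon/2$; granted this, any $F$ with $\m(F \cap E) \le \delta$ satisfies
\[
\mu(F) \le \mu(F \cap E) + \mu(X \setminus E) \le \varepsilon/2 + \varepsilon/2 = \varepsilon,
\]
where (i) controls the first term. To construct $E$, I proceed by exhaustion: set $\gamma := \sup\{\mu(F') : F' \in \Sigma,\, \m(F') < \infty\}$ and choose an increasing sequence $F_n$ of finite-$\m$-measure sets with $\mu(F_n) \uparrow \gamma$ (obtained, if necessary, by replacing a maximizing sequence with its finite unions). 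Letting $G := \bigcup_n F_n$, monotone convergence gives $\mu(G) = \gamma$, and applying (ii) to $X \setminus G$ forces $\mu(X \setminus G) = 0$: otherwise (ii) would furnish $F'$ with $\m(F') < \infty$ and $\mu((X \setminus G) \cap F') > 0$, making $\mu(F_n \cup F')$ exceed $\gamma$ for $n$ large, contradicting the definition of $\gamma$ since $\m(F_n \cup F') < \infty$. Choosing $E := F_n$ for $n$ large enough that $\mu(F_n) > \gamma - \varepsilon/2$ completes the construction.

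The moreover statement follows by observing that $\sigma$-finiteness of $\m$ renders (ii) automatic: writing $X = \bigsqcup_n X_n$ with $\m(X_n) < \infty$ and using countable additivity $\mu(E) = \sum_n \mu(E \cap X_n)$, any $E$ with $\mu(E) > 0$ must have $\mu(E \cap X_n) > 0$ for some $n$, so $F := X_n$ works. Hence for $\sigma$-finite $\m$, true continuity reduces to absolute continuity. I expect the main obstacle to be the exhaustion argument in the reverse direction: the monotone-convergence step requires arranging the sequence to be increasing, and the whole construction implicitly relies on $\gamma < \infty$, which corresponds to a finiteness constraint on $\mu$ encoded in the defining inequality of true continuity (the test case $F = X$ forces $\mu$ to be adequately concentrated on $E$).
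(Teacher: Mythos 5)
Your proof is correct and follows essentially the same route as the paper's: monotonicity for (i), testing true continuity against $E_0 \setminus E$ for (ii), and the same exhaustion via $\gamma := \sup\{\mu(F):\m(F)<\infty\}$ (the paper's $S$) in the reverse direction, with your two-term estimate $\mu(F)\le\mu(F\cap E)+\mu(X\setminus E)$ a slightly tidier rendering of the paper's three-term decomposition $\mu(F)=\mu(F\cap F^*_n)+\mu(F\cap(F^*\setminus F^*_n))+\mu(F\setminus F^*)$. Your closing remark correctly flags the implicit reliance on $\gamma<\infty$, but attributing it to the defining inequality of true continuity is circular for the reverse implication (where true continuity is the conclusion, not a hypothesis); the paper's own argument assumes $S<\infty$ without comment at the same step, so this gap is inherited rather than introduced.
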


\begin{proof}
Let $\mu$ be a truly continuous measure. Directly from the definitions, it follows that $\mu$ is also absolutely continuous. Then we take $E \in \Sigma$ with $\mu(E) \neq 0$ and we observe that there exists $F \in \Sigma$, with $\m(F) < \infty$, such that $\mu(G) < \mu(E)$ whenever $G \in \Sigma$ and $G \cap F = \emptyset$. This in particular means that $\mu (E \setminus F) < \mu(E)$ and $\mu(E \cap F) > 0$.

Conversely, let $\mu$ be an absolutely continuous measure such that condition $ii)$ is satisfied. We want to prove that $\mu$ is truly continuous.
First of all we define $S := \sup\{ \mu(F) : F \in \Sigma, \m(F) < \infty  \}$ and we take a sequence $\{ F_n \} \subset \Sigma$ of sets with finite $\m$-measure such that $\lim_{n \to \infty} \mu(F_n)$. Then we define the set $F^* := \cup_{n \in \N} F_n$ and we observe that $\mu(F^*) > 0$ and that
\begin{equation}\label{Gzero} \text{for any } \,\, G \in \Sigma \,\, \text{ with } G \cap F^* = \empty \text{ it holds } \mu(G) = 0. \end{equation} 
Indeed, by contradiction, if $\mu(G) > 0$, then there would exist $H \in \Sigma$ with $\m(H) < \infty$ such that $\m(H \cap G) > 0$ and this would mean that $G \cap F^* \neq \empty$.

At this point for any $n \in \N$ we define the set $F_n^* := \cup_{k \le n} F_k$. We have that $\lim_{n \to \infty} \mu(F^* \setminus F_n^*) = 0$ and that $\m(F_n^*) < \infty$, since the sequence $\{ F_n \}$ is such that $\m(F_n) < \infty$ for any $n \in \N$.

Now we take any $\varepsilon > 0$. The fact that $\mu$ is absolutely continuous with respect to $\m$ ensures that there exists $\delta > 0$ such that whenever $E \in \Sigma$ with $\m(E) \le \delta$ it holds $\mu(E) \le \frac{1}{2} \varepsilon$. Hence let $n \in \N$ such that $\mu(F^* \setminus F^*_n) \le \frac{1}{2} \varepsilon$.

Therefore, if $F \in \Sigma$ with $\m(F \cap F^*) \le \delta$,
\[  \mu (F) = \mu (F \cap F^*_n) + \mu(F \cap (F^* \setminus F^*_n)) + \mu (F \setminus F^*) \le \epsilon,  \]
since $\mu(F \cap F^*_n) \le \frac{1}{2} \varepsilon$ by the absolutely continuity of $\mu$, $\mu(F \cap (F^* \setminus F^*_n)) \le \mu (F^* \setminus F^*_n) \le \frac{1}{2} \varepsilon$ and $\mu(F \setminus F^*) = 0$ by \eqref{Gzero}. As the choice of $\varepsilon$ is arbitrary, this proves that $\mu$ is truly continuous.

Finally, let $\m$ be a $\sigma$-finite measure, $\{X_n\}_{n \in \N}$ a non-decreasing sequence of sets of finite measure covering $\X$, and $\mu$ absolutely continuous with respect to $\m$. For any $E \in \Sigma$ such that $\mu(E) > 0$, we have that $\lim_{n \to \infty} \mu (E \cap X_n) > 0$, which means that there exists a $n \in \N$ with $\mu(E \cap X_n) > 0$. This means that $\mu$ satisfies condition $ii)$, and so it is truly continuous.
\end{proof}

\begin{thm}[Radon-Nikodym Theorem]\label{thm:RadonNikodym}
Let $(\X, \dis, \m)$ be a Polish space equipped with a quasi-Radon measure, and $\mu$ be a measure on $\X$ which is absolutely continuous with respect to $\m$. There exists a measurable function $f$ on $\X$ such that for any $B \in \mathcal{B}(\X)$ it holds 
\[\mu(B) = \int_B f \, \de \m.\]
\end{thm}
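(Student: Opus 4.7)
The plan is to reduce the statement to the classical Radon--Nikodym theorem by combining two results already established in this section, namely the $\sigma$-finiteness of quasi-Radon measures and the equivalence between absolute and true continuity in the $\sigma$-finite setting.

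First, I would observe that since $\m$ is a quasi-Radon measure on the Polish space $(\X, \dis)$, Proposition \ref{pr:radon}(iv) guarantees that $\m$ is $\sigma$-finite. Concretely, the countable collection $\{U_i\}_{i \in \N}$ of open sets of finite $\m$-measure supplied by Proposition \ref{pr:radon}(iii), together with the closed $\m$-negligible complement $\X \setminus \bigcup_i U_i$, yields an exhaustion of $\X$ by sets of finite $\m$-measure.

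Second, exploiting this $\sigma$-finiteness, the final assertion of Proposition \ref{trac} upgrades the absolute continuity of $\mu$ with respect to $\m$ to true continuity: for every $\varepsilon > 0$ there exist $\delta > 0$ and $E \in \mathcal{B}(\X)$ with $\m(E) < \infty$ such that $\mu(F) \le \varepsilon$ whenever $F \in \mathcal{B}(\X)$ satisfies $\m(E \cap F) \le \delta$. The point of this upgrade is that true continuity is the precise form of compatibility between $\mu$ and $\m$ that permits the existence of a density without imposing any $\sigma$-finiteness hypothesis on $\mu$ itself (the Borel measure $\mu$ being, a priori, allowed to take infinite values on Borel sets of finite $\m$-measure).

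Finally, I would invoke the general form of the Radon--Nikodym theorem for truly continuous measures (as formulated, for instance, in Fremlin, Volume 2, Theorem 232E), which produces a Borel measurable function $f \colon \X \to [0, \infty]$ such that $\mu(B) = \int_B f \, \de \m$ for every $B \in \mathcal{B}(\X)$. The main subtlety, and essentially the only one, is selecting the version of the classical theorem that accommodates a possibly non-$\sigma$-finite $\mu$; this is precisely why true continuity rather than mere absolute continuity was extracted in the previous step. No genuinely new argument is required beyond the assembly of these existing pieces.
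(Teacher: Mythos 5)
Your proposal matches the paper's own proof almost verbatim: both establish $\sigma$-finiteness of $\m$ via Proposition \ref{pr:radon}, upgrade absolute continuity to true continuity via Proposition \ref{trac}, and then invoke Fremlin's Radon--Nikodym theorem for truly continuous measures (Volume 2, Section 232). The remark about why true continuity, rather than $\sigma$-finiteness of $\mu$, is the right hypothesis is a helpful clarification but does not change the argument.
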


\begin{proof}
We refer to \cite[Volume 2, Section 232]{Fremlin} for a proof of the Radon-Nikodym theorem for measures $\mu$ which are truly continuous with respect to $\m$. Now, since in this setting Proposition \ref{pr:radon} ensures that the measure $\m$ is $\sigma$-finite, we can conclude just applying Proposition \ref{trac}.
\end{proof}


\subsubsection{ Convergence of quasi-Radon measures}\label{sec:Con_meas}

For a Polish space $(X,\dis)$ and a set $S\subset X$ we say that $U \in \mathcal{B}(X)$ is a neighborhood of $S$, if there exists an open $V\in \mathcal{B}(X)$, such that $S\subset V \subset U$. We write $\mathcal{N}_S$ for the set of all the neighborhoods of $S$ in $\X$. 
Let us fix a closed set with empty interior $S\subset \X$ to introduce the following classes of measures:
\[
\begin{split}
\mathscr{P}(\X)  &:= \big\{ \m : \m \text{ is a probability measure on } \X \big\}.\\
\mathscr{M}(\X) &:= \big\{ \m : \m \text{ is a finite measure on } \X \big\}; \\
\mathscr{M}_{loc}^R(\X) &:= \big\{ \m : \m \text{ is a Radon measure on } \X \text{ s.t. } \m (B) < \infty, \forall \, B \subset \X \text{ bounded}\big\};  \\
\M_S(\X) &:= \big\{ \m :  \m \text{ is a quasi-Radon  measure on } \X,  \, S \text{ is a $\m$-null set and } 
 \\& \;\;\;\;\;\;\;\;\;\;\;\;\;\ \m|_{X\setminus U} \in \mathscr{M}_{loc}^R(\X), \text{ for every }  U\in \mathcal{N}_S \big\}.
\end{split} 
 \]


The next class of measures is of central importance in our work,
\[
\begin{split}
\mathscr{M}^{qR}(\X) &:= \big\{ \m :  \m \text{ is a quasi-Radon  measure on } \X  \text{ for which there exists } S \subset \X\\ & \qquad \quad\,\,\, \text{ closed  with the property that } \, \m(S) = 0 \text{ and } \m \in \M_{S}(\X) \big\}.
\end{split}
\]

Notice that we have the following chain of inclusions: $\mathscr{P}(\X)\subset \M(\X)\subset\M_{loc}^{R} (\X) \subset\M^{qR}(\X)$.\\

The effective study of quasi-Radon measures will require us to monitor their singularities.  Intuitively said, given a closed set $S \subset \X$ with empty interior, in the definition above we isolate the set of singular points of a quasi-Radon measure inside $S$. Thus one should regard $\M_{S}(X)$ as the set of quasi-Radon measures which are locally finite and concentrated in $X \setminus S$. Observe that the effectively locally finiteness implies that all singular sets $S$ of quasi-Radon measures have empty interior, that is, $\M_{S}(\X) = \emptyset$ if  $\text{int}(S)\not = \emptyset$. The locally finiteness guarantees as well that $S$ is nowhere dense.
Moreover, Proposition \ref{pr:radon} proves that for every $\m\in\M^{qR}(\X)$ there exists a singular set $S_\m \subset \X$, closed with empty interior, providing that $\m\in\M_{S_\m}(\X)$. Finally, note that, in particular, $\mathscr{M}_{loc}^R(X) \subset \M_\emptyset^{qR}(X)$.\\

Let us now introduce the following sets of functions
\[
\begin{split}
C_{bs}(\X) &:=  \big\{ \text{bounded continuous functions with bounded support on } \X\big\}, \\
C_b(\X) &:=\big\{ \text{bounded continuous functions on } \X\big\},  \\
C_S(\X) &:= \big\{ \text{continuous functions on $\X$ which vanish on some neighborhood of } S\big\},
\end{split}
\]
where $S$ is a closed set with empty interior, and proceed to define  a convergence on $\M_S(\X)$ in duality with functions in $C_{bs}(\X) \cap C_S(\X)$. In detail, we say that
\begin{dfn}[Weak convergence for quasi-Radon measures]
The sequence of measures $\{\m_n\}_{n\in\N} \subset \M_S(\X)$  converges weakly to $\m_\infty\in \M_S(\X)$,  written $\m_n \weakto \m_\infty$, if
\begin{eqnarray}\label{def:wconv}
\lim_{n\to\infty} \int f \m_n = \int f \m_\infty \qquad \text{ for every } f \in C_{bs}(\X) \cap C_S(\X). 
\end{eqnarray}
\end{dfn}

We wish to emphasize that many useful properties, which enjoy Radon measures, are not necessarily valid, certainly not a priori, in the setting of quasi-Radon measures. An example is presented in the Lemma below, the proof of which can be found in  \cite[ Theorem 3.14]{Rudin}.
\begin{lemma}\label{lemma:dens}
Let $(\X, \dis)$ be a complete and separable metric space equipped with a Radon measure $\mm$ and let $\mathit{C}_b(\X)$ be the set of continuous and bounded functions on $\X$. Then $\mathit{C}_b(\X) \cap L^1(\mm)$ is dense in $L^1(\mm)$.
\end{lemma}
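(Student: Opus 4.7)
The plan is a standard three-step density argument built on the regularity properties of $\mm$. First, since Proposition~\ref{pr:radon} guarantees that a Radon measure on a Polish space is $\sigma$-finite, the indicators $\mathbf{1}_E$ with $\mm(E) < \infty$ span a dense subspace of $L^1(\mm)$; by linearity it therefore suffices, for every Borel set $E$ with $\mm(E) < \infty$ and every $\varepsilon > 0$, to exhibit an $f \in C_b(\X) \cap L^1(\mm)$ with $\|f - \mathbf{1}_E\|_{L^1(\mm)} < \varepsilon$.

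To produce such an $f$, I would combine inner regularity with a shrinking-tube construction. By tightness of the Radon measure pick a compact $K \subset E$ with $\mm(E \setminus K) < \varepsilon/2$. Local finiteness of $\mm$ and compactness of $K$ allow $K$ to be covered by finitely many open sets of finite measure, yielding an open set $V \supset K$ with $\mm(V) < \infty$; since $K$ is compact and $\X \setminus V$ closed and disjoint from $K$, the quantity $\delta_0 := \dis(K, \X \setminus V)$ is strictly positive. For every $\delta \in (0, \delta_0]$ the closed tube $K_\delta := \{x \in \X : \dis(x, K) \le \delta\}$ is contained in $V$, hence has finite measure, and the family $\{K_\delta\}_{\delta > 0}$ decreases to $K$ as $\delta \downarrow 0$. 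Monotone convergence for measures then gives $\mm(K_\delta \setminus K) \to 0$, so I may fix $\delta \in (0, \delta_0]$ small enough that $\mm(K_\delta \setminus K) < \varepsilon/2$, and define the continuous cutoff
\[
f(x) := \max\!\Bigl(0,\ 1 - \dis(x, K)/\delta\Bigr).
\]
By construction $f \in C_b(\X)$, the set $\{f > 0\}$ is contained in $K_\delta$ (so $f \in L^1(\mm)$), $f \equiv 1$ on $K$, and a pointwise case analysis delivers $|f - \mathbf{1}_E| \le \mathbf{1}_{E \setminus K} + \mathbf{1}_{K_\delta \setminus K}$, whence $\|f - \mathbf{1}_E\|_{L^1(\mm)} < \varepsilon$. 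Linear combination and passage back through finite-measure simple functions then closes the argument.

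The only point of real substance, and the step where the full Radon hypothesis is genuinely used (rather than merely tightness or $\sigma$-finiteness), is the production of the finite-measure open neighborhood $V$ of the compact set $K$: this is a direct consequence of local finiteness together with compactness, neither of which is available in the quasi-Radon framework near the singular set $\mathcal S_\mm$. This is precisely why, for quasi-Radon measures, a density statement of this form cannot be expected to hold without either restricting the test functions to vanish near $\mathcal S_\mm$ (as in the class $C_{\mathcal S_\mm}(\X)$ introduced above) or restricting attention to functions in $L^1(\mm)$ supported away from the singularities. Beyond this single ingredient the argument is pure bookkeeping with the triangle inequality, and I do not anticipate any further obstacle.
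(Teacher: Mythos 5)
Your proof is correct and self-contained, whereas the paper simply outsources this step to Rudin's Theorem 3.14. That cited result is stated for locally compact Hausdorff spaces equipped with a Riesz-representation measure, but the present setting is a general Polish space, which need not be locally compact; your argument uses only compactness of the inner-approximating set $K$ together with local finiteness of $\mm$, and so applies directly without local compactness of $\X$. In that sense your route genuinely fills a gap left by the citation. The steps all check out: the reduction to indicators of finite-measure Borel sets, the inner approximation by a compact $K$, the finite-measure open neighborhood $V \supset K$ obtained from local finiteness plus compactness, the continuity-from-above argument for $\mm(K_\delta\setminus K)\to 0$, the Lipschitz cutoff $f$, and the pointwise bound $|f-\mathbf{1}_E|\le \mathbf{1}_{E\setminus K}+\mathbf{1}_{K_\delta\setminus K}$. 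One small slip: at $\delta=\delta_0:=\dis(K,\X\setminus V)$ the \emph{closed} tube $K_{\delta_0}$ need not lie inside $V$ (take $\X=\R$, $K=\{0\}$, $V=(-1,1)$), so you should take $\delta<\delta_0$ strictly, or equivalently run the continuity-from-above argument over $\delta\le\delta_0/2$; this is purely cosmetic and does not affect the conclusion. Your closing observation is also on point and consistent with the paper's design decisions: the construction hinges on finding a finite-measure open neighborhood of a compact set, which is exactly what fails near the $\mm$-singular set in the quasi-Radon case, explaining why the paper must instead work with test functions in $C_{\mathcal S_\mm}(\X)$ that vanish near $\mathcal S_\mm$.
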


The following proposition substantiates our choice of convergence.
\begin{prop}
Let $(\X, \dis)$ be a Polish space, $S \subset \X$ be a closed set with empty interior, and $\mu, \nu \in \M_S(\X)$ two quasi-Radon measures on $\X$ such that $\int f \, \de \mu = \int f \, \de \nu$, for every function $f \in \mathit{C}_{bs} (\X)\cap C_S(\X)$. Then $\mu = \nu$.
\end{prop}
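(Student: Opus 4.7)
The plan is to show that $\mu$ and $\nu$ agree on every Borel subset of $\X \setminus S$, after which the identity $\mu = \nu$ follows from the fact that $\mu(S) = \nu(S) = 0$ by the definition of $\M_S(\X)$. The proof will proceed in two steps: first establish equality on compact subsets of $\X \setminus S$ by approximating their indicators via functions in the test class $C_{bs}(\X) \cap C_S(\X)$, and then extend via inner regularity.

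For the key step, I would fix a compact set $K \subset \X \setminus S$. Since $S$ is closed and $K$ is compact and disjoint from $S$, one has $\delta := \dis(K, S) > 0$ (with the convention $\delta = +\infty$ if $S = \emptyset$, in which case the argument simplifies). For each integer $n \geq 2/\delta$ set
\[
f_n(x) := \max\{0,\, 1 - n\,\dis(x, K)\}.
\]
Each $f_n$ is bounded and Lipschitz, with support contained in the bounded set $\{\dis(\cdot, K) \leq 1/n\}$ which stays at distance at least $\delta/2$ from $S$; hence $f_n \in C_{bs}(\X) \cap C_S(\X)$, and $f_n \downarrow \mathbf{1}_K$ pointwise on $\X$. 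To apply dominated convergence I need an integrable majorant: letting $U := \{\dis(\cdot, S) < \delta/2\}$, the assumption $\mu, \nu \in \M_S(\X)$ gives $\mu|_{\X \setminus U}, \nu|_{\X \setminus U} \in \M_{loc}^R(\X)$, so both $\mu$ and $\nu$ are finite on $\supp f_{\lceil 2/\delta \rceil}$. Dominated convergence together with the hypothesis $\int f_n \, \de\mu = \int f_n \, \de\nu$ then yields $\mu(K) = \nu(K)$.

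To extend to arbitrary Borel sets, I would introduce the decreasing sequence $U_m := \{\dis(\cdot, S) < 1/m\}$ of open neighbourhoods of $S$, so that $\bigcap_m U_m = S$. For any Borel set $B \subset \X$, continuity of measure gives $\mu(B) = \mu(B \setminus S) = \lim_m \mu(B \setminus U_m)$, and analogously for $\nu$, so it suffices to verify $\mu(B \setminus U_m) = \nu(B \setminus U_m)$ for each $m$. Now $\mu|_{\X \setminus U_m}$ and $\nu|_{\X \setminus U_m}$ are Radon measures on the Polish space $\X$, hence tight by Lemma \ref{le:inn_reg}; consequently both measures of the set $B \setminus U_m \subset \X \setminus S$ equal the supremum of their values on compact subsets, and by the previous step these suprema coincide.

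The delicate point will be verifying that the approximants $f_n$ genuinely lie in $C_{bs}(\X) \cap C_S(\X)$: this uses crucially the compactness of $K$ (to bound the support) and the closedness of $S$ (to guarantee the positive separation $\dis(K,S) > 0$). Since $\X$ is not assumed locally compact, one cannot appeal to a Riesz-type representation theorem directly; the $\M_S$-structure is invoked precisely to furnish a finite integrable majorant despite the potential blow-up of $\mu$ and $\nu$ near $S$.
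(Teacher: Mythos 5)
Your argument is correct, but it takes a genuinely different route from the paper's. The paper outsources the uniqueness statement to Fremlin's Proposition 415I, which asserts that two quasi-Radon measures $\m, \n$ coincide if their integrals agree on all $f \in C_b(\X) \cap L^1(\m) \cap L^1(\n)$; it then bridges the gap between that test class and the given one $C_{bs}(\X) \cap C_S(\X)$ by multiplying such $f$ with Lipschitz cutoff functions $g_n$ (supported far from $S$ and inside large balls) and passing to the limit via dominated convergence. You argue in the opposite direction: you descend from $C_{bs}(\X) \cap C_S(\X)$ to the indicator of a compact set $K \subset \X \setminus S$ by Lipschitz bump functions $f_n(x) = \max\{0, 1 - n\dis(x,K)\}$, using dominated convergence with the majorant $f_{\lceil 2/\delta\rceil}$ whose integrability comes precisely from the $\M_S(\X)$-structure (bounded support disjoint from a neighbourhood of $S$, hence finite measure). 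You then go from compact sets to arbitrary Borel sets via inner regularity of the restrictions $\mu|_{\X\setminus U_m}$ and continuity from below along the shrinking neighbourhoods $U_m \downarrow S$, exploiting $\mu(S) = \nu(S) = 0$. Your approach is more elementary and self-contained — it does not rely on Fremlin's uniqueness theorem as a black box — at the price of essentially rebuilding by hand the measure-theoretic skeleton that Fremlin's result packages up. Both proofs hinge on the same structural facts: that compact sets can be separated from $S$ by a positive distance (since $S$ is closed), and that the resulting bump functions are integrable because the $\M_S(\X)$-condition forces finiteness of $\mu,\nu$ on bounded sets away from $S$.
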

\begin{proof}
According to \cite[Volume 4, Proposition 415I]{Fremlin}, if $\m,\mathfrak{n}\in \M^{qR}(\X)$ are such that $\int f \, \de \m = \int f \, \de \mathfrak{n}$, for every function $f \in  C_b(\X) \cap L^1(\m) \cap L^1(\mathfrak{n})$, then $\m = \mathfrak{n}$. In particular this is valid for measures $\m,\mathfrak{n}\in\M_S(\X) \subset \M^{qR}(\X)$. The conclusion is attained using an approximating argument.

Let $x_0\in \X\setminus S$ and, for any $n \in \N$, consider a sequence of Lipschitz functions $g_n \colon \X\to[0,1]$ with the property that
\begin{equation}
g_n = \begin{cases} 
1 \quad \text{on } B_{2^n}(x_0) \cap \{x \in \X\, : \, \dis(x,S)\geq 2^{-n}\},\\
0 \quad \text{on } \X \setminus B_{2^{n+1}}(x_0) \cap \{x \in \X\, : \, \dis(x,S)\leq 2^{-(n+1)}\}.\end{cases}
\end{equation} 
Now, for every $f \in  C_b(\X) \cap L^1(\m) \cap L^1(\mathfrak n)$, the sequence $f_n := g_n f$ is such that
\begin{eqnarray*}
\{f_n\}_{n\in\N}\subset {C}_{bs} (\X)\cap C_S(\X), \quad \lim_{n\to\infty}f_n{=}f \quad \m,\mathfrak n \text{-a.e.},\text{ and } \abs{f_n}\leq \abs{f}, 
\end{eqnarray*}
since $\m(S)=\mathfrak n(S)=0$. We can then conclude applying the dominated convergence theorem. 
%
%
\end{proof}
 
Our definition of weak convergence for quasi-Radon measures turns out to be well-fitted for our purposes. Indeed, we have tailored it precisely with this goal. So let us then conclude this Subsection giving some observations regarding the corresponding topology.
 
\begin{rmk}\label{re:weak_narrow}
\begin{itemize} 
\item[i)] For our purposes, we would like to have a disposal a notion of convergence for quasi-Radon measures without making any a priori assumption on the uniformity of singular sets. However, this seems out of reach: indeed, without having any control on the singular sets of a given sequences, we would be able to generate unfavorable limiting singular sets and thus, for instance, obtain that  $C_{S_\infty}(\X) = \{f \equiv 0\}$. In this case, the weak convergence is trivial. As an example, consider a dense and countable collection of points $P = \{p_m\}_{m\in \N}\subset \X $ in a complete and separable space, and non-atomic measures $\nu_n\subset \M^{qR}(\X)$ such that for any neighborhood $U^n \subset \X$ of the set of the first $n$-points $P_n:=\{p_1, \dots , p_n\}$, $\nu_n(U^n) = \infty$, while $\nu_n(\X\setminus U^n) < \infty$. Letting $n\to\infty$, we would expect a limit measure having $P$ as a singular set but, for the reason given above, convergence defined against any meaningful subclass of continuous functions turns out to be trivial. Furthermore, note that such a limit measure would fall outside the realm of quasi-Radon measures.


\item[ii)] \textit{Consistency.} Let us underline  that by considering $S = \emptyset$ and by restricting the topology to $\M_{loc}^R(\X)$ the above definition coincides with the weak$^\ast$ topology (induced in duality with $C_{bs}(\X)$); by further restricting the topology to $\M(X)$, the weak topology agrees with the narrow topology (defined in duality with $C_b(\X)$). 
\end{itemize}
\end{rmk}


\subsection{Pointed generalized metric measure spaces and their convergence}
\subsubsection{Metric spaces equipped with quasi-Radon measures}
In the following we say that $(\X, \dis, \mm)$ is a \emph{metric measure space} if $(\X, \dis)$ is a Polish space equipped with a quasi-Radon measure $\mm$. We will refer to a \emph{generalized metric measure space} meaning a structure $(\X,\dis,\m, \mathcal C)$ where:
\begin{itemize}
\item $(\X,\dis)$ is a complete separable length metric space,
\item $\m \in \M^{qR}(\X)$ is a quasi-Radon measure, $\m \not= 0$,
\item $\mathcal C \subset \X$ is a closed set with empty interior and $\mm(\mathcal C) = 0$.
\end{itemize}
A \emph{pointed generalized metric measure space} is then the structure $(\X,\dis,\m,  \mathcal C, p)$ consisting of a generalized metric measure space with a distinguished point $p\in\supp(\m) \subset \X$. \\
Two generalized metric measure spaces $(\X_i,\dis_i,\m_i, \mathcal C_i)$, $i = 1, 2$ are called \emph{isomorphic} if there exists
\[
\text{an isometric embedding } i \colon \supp(\mm_1) \to \X_2 \text{ such that } i(\mathcal C_1) = \mathcal C_2 \text{ and } i_\sharp \mm_1 = \mm_2
\]
and, in the case of pointed metric measure spaces $(\X_i,\dis_i,\m_i, \mathcal C_i, p_i)$, $i = 1, 2$, we further require that $i(p_1) = p_2$. Any such $i$ is called \emph{isomorphism} from $\X_1$ and $\X_2$.\\
We denote by $\mathbb X := [\X, \d, \mm, \mathcal C, p]$ the equivalence class of the given pointed generalized metric measure space $(\X, \d, \mm, \mathcal C, p)$ and by $\mathcal M^{qR}$ the collection of all equivalence classes of pointed generalized metric measure spaces.\\
In particular, the portion of the space outside the support of the measure can be neglected since $(\X, \d, \mm, \mathcal C)$ (resp. $(\X, \d, \mm, \mathcal C, p)$) is isomorphic to $(\supp(\mm), \d, \mm, \mathcal C)$ (resp. $(\supp(\mm), \d, \mm, \mathcal C, p)$). Hence, we will assume that $\supp(\mm)=\X$, except when considering the associated \emph{$k$-th cuts, $\mathbb{X}^k$}, of a metric measure space, which we now turn to define.

For a quasi-Radon measure $\mm\in \M^{qR}(X)$, let $\mathcal{S}_{\m}\subset \X$ be  the \emph{$\m$-singular set}, or singular set in short, namely the set of all points in $\X$ for which every open neighborhood has infinite measure
\begin{equation}\label{def:Sm}
\mathcal{S}_{\m}:=\big\{x \in \X\, : \, \m(U) = \infty \text{ for every open neighborhood } U \text{ of } x\big\}.
\end{equation}
Recall that from Proposition \ref{pr:radon} we have that $\mathcal{S}_{\m}$ is a closed set with $\mm(\mathcal{S}_{\m}) = 0$. Moreover $\mathcal{S}_{\m} = \emptyset$ if and only if the measure $\mm$ is Radon. In particular, to any metric measure space $(\X, \dis,\mm)$ we can associate a generalized metric measure space in a canonical way by considering $(\X, \dis, \mm, \mathcal S_\mm)$. Now we fix once and for all a cut-off Lipschitz function $f_{\mathsf{cut}} \colon [0,\infty]\to [0,1]$ such that
\[\begin{cases}
\,f_{\mathsf{cut}}(x) = 1 \quad &\text{ for  } 0\leq x \leq 1, \\ 
\,f_{\mathsf{cut}}(x)=0\quad &\text{ for }   2\leq x
\end{cases}\]
and for $k\in\N$ we define the $k$-th cut of $\mathbb X$ as the generalized metric measure space $\mathbb X^k := (\X,\dis,\m^k, \mathcal C, p)$ where the measure is given by
\begin{eqnarray}\label{eq:k-cut}
\m^k:=f^k \, \m, \text{ where } \,
f^k(x):=
\begin{cases} 
 f_{\mathsf{cut}}( \dis(x,p) 2^{-k}  )\big(1-f_{\mathsf{cut}}(\dis(x,\mathcal{S}_\m)2^k )\big) \, &\text{ if } \Sin_\m\not= \emptyset, \label{eq:cut}\\
f_{\mathsf{cut}}( \dis(x,p) 2^{-k}  )  &\text{ if }  \Sin_\m= \emptyset. \label{eq:fk}
\end{cases}
\end{eqnarray}
Intuitively,  the $k$-th cut $(\X,\dis,\m^k, \mathcal C, p)$ \emph{resembles} more $\mathbb{X}$ as $k$ grows (see Remark \ref{rk:krw} below). 

\begin{rmk}[Regularity of the measure $\mm$]\label{ass:reg}
We point out that since we are considering metric measure spaces $(\X, \dis, \mm)$ endowed with measures $\m \in \M^{qR}(\X)$, it holds that
\begin{itemize} 
\item $\mm^k(X) < \infty $ for any $k \in \N$, and that
\item there exists a $\tilde k \in \N$ such that  for any $k \ge \tilde k$ it holds $\mm^k(\X) > 0$.
\end{itemize}
We say that $(\X, \dis, \mm)$ is a metric measure space with $\mm$-regularity parameter $\tilde k$ if the aforementioned condition is satisfied for $\tilde k \in \N$.
\end{rmk}

Finally, for a metric measure space $(\X, \dis, \mm)$, we define its $k$-th $\m$-regular set, or \emph{$k$-regular set} in short, as 
\begin{equation}\label{def:Rk}
\mathcal{R}^k := B_{2^{k+1}}(p) \setminus \mathcal{N}_{2^{-(k+1)}}(\mathcal{S}_{\m}), \quad \text{ for any } k \in \N,
\end{equation}
where $ \mathcal{N}_{2^{-(k+1)}}(\mathcal{S}_{\m}) := \cup_{x \in \mathcal{S}_{\m}} B_{2^{-(k+1)}} (x)$. Observe that $\mm|_{\mathcal{R}^k}$ is a finite measure and that actually $\text{supp}(\m^k) = \mathcal{R}^k$.

\subsubsection{Convergence of pointed metric measure spaces}
First of all, we recall what is the intrinsic Kantorovich-Rubinstein-Wasserstein ($\mathsf{iKRW}$, in short) distance between two metric measure spaces of finite mass. For this aim, we start fixing a cost function ${\mathsf c}$, that is, 
\begin{equation}\label{def:cost}
{\mathsf c} \in C( [0,\infty]) \text{ is non-constant and concave with } {\mathsf c}(0)=0 \text{ and }\lim_{d \to \infty} {\mathsf c}(d) < \infty
\end{equation}
 (e.g., ${\mathsf c}(d) = \tanh(d)$ or ${\mathsf c}(d) = d \wedge 1$). Then the $\mathsf{iKRW}$-distance between two probability measures $\m,\n \in\mathscr{P}(\X)$ on a complete and separable metric space $(\X, \dis)$ is given by
\begin{equation}
W_{\mathsf c}(\m,\mathfrak{n}):=\inf_{\gamma \in \mathsf{Adm}(\m,\mathfrak{n})} \int_{\X\times \X} {\mathsf c}(\dis(x,y)) \, \d\gamma(x,y).
\end{equation}
Observe that the distance $W_{\mathsf c}$ allows us to deal with all measures in $\mathscr{P}(\X)$, rather than with the ones in the restricted set $\mathscr{P}_2(\X)$. Moreover, regardless of the choice of $\mathsf c$ as in \eqref{def:cost}, $(\mathscr{P}(\X),W_{\mathsf c})$ is a complete and separable metric space and  the convergence with respect to the weak topology of probability measures is equivalent to the convergence provided by the $W_{\mathsf c}$-distance (see \cite[Chapter 6]{Villani09});  the last claim is a consequence of the fact that $c \circ \dis$ defines a bounded complete distance on $\X$, whose induced topology coincides with the one induced by $\dis$.\\

In the same spirit as Sturm's $\mathbb {D}$ distance, the $\mathsf{iKRW}$-distance is used to define  an  intrinsic  complete separable distance $\mathsf{d}_{\mathsf{iKRW}}^{fm}$ between pointed metric measure spaces with finite mass \cite{GigMonSav}.  Let $\mathbb{X}_1:=(\X_1,\dis_1,\m_1, \mathcal C_1, p_1),$  $\mathbb{X}_2 := (\X_2,\dis_2,\m_2, \mathcal C_2, p_2)\in \mathcal{M}^{qR} $ be generalized metric measure spaces with finite mass, then we set
\begin{equation}\label{def:iKRW}
\begin{split}
\mathsf{d}_{\mathsf{iKRW}}^{fm}&(\mathbb{X}_1,\mathbb{X}_2) := \\  & \left | \log\bigg(\frac{\m_1(\X_1)}{\m_2(\X_2)}\bigg) \right | + \inf \Big  \{ \dis\big (i_1(p_1),i_2(p_2) \big ) + { \dis_H \big (i_1(\mathcal{C}_{1}),i_2(\mathcal{C}_{2}) \big )}+ W_{\mathsf c}\big ((i_1)_\sharp \bar{\m}_1, (i_2)_\sharp  \bar{\m}_2 \big )  \Big\},
\end{split}
\end{equation} 
where the infimum is taken over all isometric embeddings $i_j \colon (\X_j,\dis_j) \to (\X,\dis)$ onto a  complete separable metric space,  $\bar{\m}_j:= \frac{\m_j}{\m_j(\X_j)}$ is a normalization of the measure $\m_j$, for $j\in\{1,2\}$ and $\dis_H$ is the Hausdorff distance between the two closed sets $i_1(\mathcal{C}_{1})$ and $i_2(\mathcal{C}_{2})$. In the following we set $\d_H(\emptyset, A) := +\infty$ if $A \neq \emptyset$ while $\d_H(\emptyset, \emptyset) := 0$.\\

Notice that the distance $\mathsf{d}_{\mathsf{iKRW}}^{fm}$ is defined only in the case in which the total mass of the two measures $\m_1$ and $\m_2$ is finite (and strictly positive). Therefore, in order to define a distance between two generalized metric measure spaces in $\mathcal{M}^{qR}$, we recover the space making use of the $k$-cuts and we sum up the contributions given by the $\mathsf{d}_{\mathsf{iKRW}}^{fm}$-distance between them.

In particular, we need the mass of the $k$-cuts to be strictly positive: for that purpose, given any  $\bar k \in \N$, we introduce the following class of spaces
\[
\mathcal{M}^{qR}_{\bar k} := \Big\{ (\X, \dis, \m, \mathcal C, p) \in \mathcal{M}^{qR} : \m^{\bar k}(\X) > 0 \Big\}
\]

Let us observe that for any finite family of generalized metric measure spaces in $\mathcal{M}^{qR}$, there exists a $\bar k \in \N$ such that the whole family is contained in $\mathcal{M}^{qR}_{\bar k}$ (in particular, it is sufficient to take $\bar k := \max \, \tilde k_i$, where  $\tilde k_i$ is the regularity parameter of the $i$-th space). Nevertheless, for a sequence in $\mathcal{M}^{qR}$ it is necessary to assume the existence of a common regularity parameter  in order to introduce a meaningful distance. Hence, in the following, we will restrict ourself to the class $\mathcal{M}^{qR}_{\bar k}$ for some $\bar k \in \N$.


\begin{dfn}[Intrinsic pointed Kantorovich-Rubinstein-Wasserstein distance]\label{def:dis}
For any couple of metric measure spaces $\mathbb{X}_i:=(\X_i,\dis_i,\m_i, \mathcal C_i, p_i)\in \mathcal{M}_{\bar k}^{qR}$, $i\in\{1,2\}$, $\bar k \in \N$, we define the  pointed $\mathsf{iKRW}$-distance as
\begin{eqnarray*}
\dis_{\mathsf{iKRW}}(\mathbb{X}_1,\mathbb{X}_2):= \displaystyle \sum_{k \ge \bar k} \dfrac{1}{2^k} \min \Big\{ 1, \dis_{\mathsf{iKRW}}^{fm}(\mathbb{X}_1^k,\mathbb{X}_2^k) \Big\},
\end{eqnarray*} 
where $\mathbb{X}_i^k = (\X_i, \dis_i, \m_i^k, \mathcal C_i, p_i)$ is the $k$-th cut of $\mathbb{X}_i$,  for $i\in\{1,2\}$.
\end{dfn}

Notice that the distance $\dis_{\mathsf{iKRW}}$ depends on the common regularity parameter $\bar k$, but we drop this dependence, since it will be clear from the context.

\begin{dfn}[Converging sequence of pointed generalized metric measure spaces]\label{de:intr}
We say that the sequence of pointed generalized metric measure spaces $\{\mathbb{X}_n\}_{n\in \N } \subset \mathcal{M}^{qR}_{\bar k}$, for some $\bar k \in \N$, is $\mathsf{iKRW}$-converging to $\mathbb{X}_\infty \in \mathcal{M}^{qR}_{\bar k}$ if 
\[\lim_{n\to \infty}\dis_{\mathsf{iKRW}}(\mathbb{X}_n,\mathbb{X}_\infty)=0.\]
\end{dfn}

Observe that the fact that $\dis_{\mathsf{iKRW}}^{fm}$ is a distance function guarantees that also $\dis_{\mathsf{iKRW}} \colon  \mathcal{M}_{\bar k}^{qR}  \to \R^+ \cup \{0\}$  defines a finite distance function.
\begin{rmk}\label{rk:krw} Directly from the definitions of $\dis_{\mathsf{iKRW}}$ and $\dis_{\mathsf{iKRW}}^{fm}$, it follows that
\begin{equation}\label{eq:convfm}
 \lim_{n\to \infty}\dis_{\mathsf{iKRW}}(\mathbb{X}_n,\mathbb{X}_\infty)=0 \quad \text{if and only if} \quad \lim_{n\to \infty} \dis_{\mathsf{iKRW}}^{fm}(\mathbb{X}_n^k,\mathbb{X}_\infty^k)=0 \text{ for every } k \ge \bar k,
 \end{equation}
where $\bar k$ is the common regularity parameter associated to the converging sequence.
\end{rmk}

In the next result we prove an \emph{extrinsic approach to convergence}. From now on we assume that the generalized metric measure space $(\X,\dis,\m, \mathcal C)$ is the canonical one associated to $(\X,\dis,\m)$, namely $\mathcal C = \mathcal S_\mm$ is the $\mm$-singular set.
\begin{prop}\label{pr:krw}
Let $\{\mathbb{X}_n\}_{n\in \N \cup \{\infty\}} \subset \mathcal{M}^{qR}_{\bar k}$, $\mathbb{X}_n = (\X_n,\dis_n,\m_n, \mathcal S_{\mm_n}, p_n)$ be a sequence of pointed generalized metric measure spaces, $\bar k \in \N$. Then the following statements are equivalent:
\begin{itemize}
\item[{i)}] $\lim_{n\to\infty} \dis_{\mathsf{iKRW}}(\mathbb{X}_n,\mathbb{X}_\infty)=0$,
\item[{ii)}] there exists a complete and separable metric space $(Z,\dis_Z)$, and isometric embeddings $i_n\colon \X_n\to Z$, $n \in \N$, for which 
\begin{equation}\label{eq:realiz}
\begin{split}
\left | \log\bigg(\frac{\m_n^k(\X_n)}{\m_\infty^k(\X_\infty)}\bigg) \right | + \dis_Z&\big (i_n(p_n), i_\infty(p_\infty) \big ) + \big(\dis_Z\big)_H \big (i_n(\mathcal{S}_{\mm_n}),i_\infty(\mathcal{S}_{\mm_\infty})\big)\\ &+ W_c\big((i_n)_\sharp \bar{\m}_n^k, (i_\infty)_\sharp  \bar{\m}_\infty^k \big )   \overset{n \to \infty}{\to} 0,
\end{split}
\end{equation}
for any $k \ge \bar k$.
\end{itemize}

Such a space and embeddings  $\big( (Z,\dis_Z),\{i_n\}_{n\in\N}\big )$ are referred to as an \emph{effective realization for the convergence} of  $\{\mathbb{X}_n\}_{n \in \N}$ to $\mathbb{X}_\infty$.
\end{prop}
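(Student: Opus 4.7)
The plan is to handle the two implications separately.

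For (ii)$\Rightarrow$(i), given an effective realization $\bigl((Z,\dis_Z),\{i_n\}_{n\in\N}\bigr)$, for each $k\ge \bar{k}$ the restrictions of $i_n, i_\infty$ to $\X_n, \X_\infty$ respectively serve as competitors in the infimum defining $\dis_{\mathsf{iKRW}}^{fm}(\mathbb{X}_n^k, \mathbb{X}_\infty^k)$, so this latter quantity is bounded above by the expression in \eqref{eq:realiz}, which tends to zero for every fixed $k$. Since $\min\{1, \dis_{\mathsf{iKRW}}^{fm}(\mathbb{X}_n^k, \mathbb{X}_\infty^k)\}\le 1$ uniformly, the dominated convergence theorem applied to the series $\sum_{k\ge \bar{k}} 2^{-k}(\cdot)$ yields $\dis_{\mathsf{iKRW}}(\mathbb{X}_n, \mathbb{X}_\infty)\to 0$.

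For (i)$\Rightarrow$(ii), by Remark~\ref{rk:krw} assumption (i) gives $\dis_{\mathsf{iKRW}}^{fm}(\mathbb{X}_n^k, \mathbb{X}_\infty^k)\to 0$ as $n\to\infty$ for each fixed $k\ge \bar{k}$. The plan is: (a) for each pair $(n,k)$ select near-optimal ambient spaces $(Y_{n,k}, \dis_{n,k})$ with isometric embeddings $\iota_{n,k}\colon \X_n\to Y_{n,k}$ and $\tilde{\iota}_{n,k}\colon \X_\infty\to Y_{n,k}$ attaining the infimum in the definition of $\dis_{\mathsf{iKRW}}^{fm}$ up to an error $2^{-k}/n$; (b) for each fixed $n$, glue the countable family $\{Y_{n,k}\}_{k\ge \bar{k}}$ into a single complete separable metric space $(Z_n, \dis_{Z_n})$ by identifying the embedded copies of $\X_n$ (and of $\X_\infty$) across the different $k$'s, endowed with the natural amalgamation length metric, thereby producing a single pair of isometric embeddings $\iota_n\colon\X_n\to Z_n$ and $\tilde{\iota}_n\colon\X_\infty\to Z_n$ satisfying
\[
\dis_{Z_n}(\iota_n(u), \tilde{\iota}_n(v)) \;\le\; \dis_{n,k}(\iota_{n,k}(u), \tilde{\iota}_{n,k}(v)) \qquad \forall\, u\in\X_n,\ v\in\X_\infty,\ k\ge \bar{k};
\]
(c) finally glue $\{Z_n\}_{n\in\N}$ into one complete separable space $(Z,\dis_Z)$ by identifying the copies of $\X_\infty$ via the $\tilde{\iota}_n$. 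Admissible couplings and singular-set pairs transfer from $Y_{n,k}$ into $Z_n$ (and further into $Z$) without increasing cost, by the monotonicity of the bounded concave cost $\mathsf{c}$ and of the Hausdorff distance, so the $k$-th quantity in \eqref{eq:realiz} with respect to $Z$ is at most $\dis_{\mathsf{iKRW}}^{fm}(\mathbb{X}_n^k, \mathbb{X}_\infty^k) + 2^{-k}/n$, which vanishes as $n\to\infty$.

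The main technical hurdle is step (b): one must verify that, after identifying the copies of $\X_n$ (resp. $\X_\infty$) across the $Y_{n,k}$'s, the composite map $\iota_n$ (resp. $\tilde{\iota}_n$) remains an isometric embedding, and does not collapse into a merely non-expansive map. The concern is that a path from $\iota_n(u_1)$ to $\iota_n(u_2)$ passing through some $\tilde{\iota}_n(v)$, with $v\in\X_\infty$, could combine legs in two distinct ambient spaces $Y_{n,k_1}$ and $Y_{n,k_2}$ and thereby shortcut $\dis_n(u_1,u_2)$. A clean way to prevent this is to fix $Y_{n,\bar{k}}$ as a backbone and attach only the extra points $Y_{n,k}\setminus\bigl(\iota_{n,k}(\X_n)\cup \tilde{\iota}_{n,k}(\X_\infty)\bigr)$ for $k>\bar{k}$, so that all distances between points of $\iota_n(\X_n)$ (resp. $\tilde{\iota}_n(\X_\infty)$) are realized inside $Y_{n,\bar{k}}$ and hence coincide with $\dis_n$ (resp. $\dis_\infty$) by the isometric property of $\iota_{n,\bar{k}}$ (resp. $\tilde{\iota}_{n,\bar{k}}$). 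Once the isometric embedding property is secured, the remaining bounds in (b) and (c) are standard consequences of the triangle inequality on amalgamated length spaces and of the definition of pushforward of a coupling.
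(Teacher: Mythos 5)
Your treatment of (ii)$\Rightarrow$(i) is correct and agrees with the paper. The issue is with (i)$\Rightarrow$(ii), where you reverse the order of the two gluing steps relative to the paper, and this reversal introduces a genuine gap that your ``backbone'' fix does not close.

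The paper first glues \emph{over $n$} for each fixed $k$: the spaces $Z_n^k$ are glued along the single common subspace $\X_\infty^k$. Because only one subspace is identified, no path can shortcut between two points of $\X_n$: any chain leaving $Z_n^k$ must both enter and exit through $\X_\infty^k$, and the triangle inequality inside $Z_n^k$ then gives a lower bound by $\dis_n$. Only after this clean gluing does the paper establish a nontrivial compatibility lemma (via the Lipschitz cut-off functions $g_n^{j,k}$) showing that the $Z^k$-embeddings also realize the convergence of all $j$-cuts with $\bar k\le j<k-1$; this compatibility is precisely what licenses the second gluing over $k$.

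You instead glue \emph{over $k$} first, identifying the copies of both $\X_n$ and $\X_\infty$ inside the near-optimal realizations $Y_{n,k}$. This is a two-subspace amalgamation, and the obstruction you yourself flag is real: a chain $u_1\to z_1\to w\to z_2\to u_2$ with $u_1,u_2\in\X_n$, $w\in\X_\infty$, $z_1$ an extra point of $Y_{n,k_1}$ and $z_2$ of $Y_{n,k_2}$, has glued length at least $\dis_{Y_{n,k_1}}(u_1,w)+\dis_{Y_{n,k_2}}(w,u_2)$, and there is no reason this exceeds $\dis_n(u_1,u_2)$: the two cross-distances are taken in two \emph{different} ambient spaces, and the triangle inequality never mixes them. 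Your backbone fix does not resolve this. If you declare the $\X_n$--$\X_\infty$ cross-distances to be those of $Y_{n,\bar k}$, but assign the extra points $z$ of $Y_{n,k}$ their distances to $\X_n\cup\X_\infty$ via the $Y_{n,k}$ metric, the triangle inequality fails whenever $\dis_{Y_{n,\bar k}}(u,v)>\dis_{Y_{n,k}}(u,v)$ for some $u\in\X_n$, $v\in\X_\infty$ (take $z$ near $v$ in $Y_{n,k}$); enforcing the triangle inequality by a chain-infimum then re-opens exactly the shortcut you are trying to prevent. Note also that near-optimality of $\dis_{\mathsf{iKRW}}^{fm}(\mathbb X_n^k,\mathbb X_\infty^k)$ controls only the $k$-cut measure, the base points, and the Hausdorff distance of singular sets; it imposes no uniform constraint on the cross-distances between the full spaces $\X_n$ and $\X_\infty$ away from the support of the cut, so the embeddings in different $Y_{n,k}$'s can genuinely be incompatible where it matters.

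In short, besides the different order of gluings, you are also missing the analogue of the paper's compatibility lemma (that a realization of the $k$-th cuts already realizes the $j$-th cuts for $j<k-1$), which is precisely the tool the paper uses to make the gluing across $k$ legitimate. Without some such coherence control on the embeddings across different $k$, the amalgamation over $k$ cannot be guaranteed to produce isometric embeddings of $\X_n$ and $\X_\infty$.
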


\begin{proof}
$i) \Rightarrow ii)$ We start assuming that $\dint(\mathbb{X}_n,\mathbb{X}_\infty) \to 0$. In this case, the metric space $(Z,\dis_Z)$ as well as the isometric embeddings $\{i_n\}_{n \in \N}$ are constructed relying on a  twofold gluing argument. Roughly speaking, the strategy is the following: for any fixed $k \ge \bar k$ we use a ``gluing'' procedure to construct a common space $Z^k$ equipped with the metric that makes all the $k$-th cuts $\{\mathbb{X}_n^k\}_{n\in\N\cup\{\infty\}}$ be isometrically embedded. Next, we show that a certain compatibility condition holds between the spaces $\{Z^k\}_{k\in \mathbb{N}}$: this allows us to ``glue'' one more time, and obtain the desired common complete and separable metric space $(Z,\dis_Z)$ in which we will embed  the sequence $\{\mathbb{X}_n\}_{n\in\N\cup\{\infty\}}$. In the following we present the detailed argument, which is a suitable adaptation of \cite[Theorem 3.15]{GigMonSav}.\\
For every fixed $k \ge \bar k$, \eqref{eq:convfm} ensures the existence of a complete and separable metric spaces $\{(Z_n^k,\dis_{Z_n^k})\}_{n \in \N}$, and of two sequences of isometric embeddings $i_n^k \colon \X_n^k \to Z_n^k$ and $i_{\infty,n}^k\colon\X_\infty^k \to Z_n^k$, $n \in \N$, with the property that 
\begin{equation}\label{eq:extrinsic}
\begin{split}
\left | \log \bigg(\frac{\m_n^k(\X_n)}{\m_\infty^k(\X_\infty)}\bigg) \right | &+ \dis_{Z_n^k}\left (i_n^k(p_n),i_{\infty,n}^k(p_\infty) \right ) + \left(\dis_{Z_n^k}\right)_H\left ({i}_n^k(\mathcal S_{\mm_n}),{i}_{\infty, n}^k(\mathcal S_{\mm_\infty}) \right ) \\&+ W_c\left ((i_n^k)_\sharp \bar{\m}_n^k, (i_{\infty,n}^k)_\sharp  \bar{\m}_{\infty}^k \right )   \overset{n \to \infty}{\to} 0.
\end{split}
\end{equation}
We then define the set $Z^k=\sqcup_{n\in \N} Z_n^k$ and the function $\dis_{Z^k} \colon Z^k\times Z^k \to [0,\infty)$ by setting
\begin{eqnarray*}
\dis_{Z^k}(x,y) {:=}
\begin{cases}
\, \dis_{Z_n^k}(x,y) & \text{ if } (x,y) \in Z_n^k \times Z_n^k, \; \exists n \in \N,    \\
 \inf\limits_{w\in X_\infty^k} \dis_{Z_n^k}\big(x,i_{\infty,n}^k(w)\big) +\dis_{Z_m^k}\big(i_{\infty,m}^k(w),y\big)  &\text{ if } (x,y) \in Z_n^k \times Z_m^k, \; \exists n \neq m.
\end{cases}
\end{eqnarray*}
Thus, we can define an equivalence relation $\sim$ on $ Z^k$ saying that  $v \sim w$ if and only if $\dis_{Z^k}(v,w)=0$, for  $v,w\in Z^k$: we quotient $Z^k$ by this relation and we complete it. We denote by $\tilde{Z}^k$ the resulting space. Note that $\dis_{Z^k}$ canonically  induces a distance function on $\tilde{Z}^k\times \tilde{Z}^k$, which we still denote by $\dis_{Z^k}$, and that the operations made so far preserve the separability of the space. Thus, the pair $(\tilde{Z}^k,\dis_{Z^k})$ is a complete and separable metric space. By construction, for $n\in \N$, the composition 
\begin{equation*}
\mathsf{i}_{n}^k :{=}p^k \circ j_n^k\circ i_{n}^k \, \colon \, \X_n^k \to \tilde{Z}^k
\end{equation*}
is an isometric embedding, where $j_n^k \colon Z_n^k \to Z^k$ is the canonical inclusion and $p^k \colon Z^k \to  \tilde{Z}^k$ the projection map. Moreover, the fact that the set $j_n^k(i_{\infty,n}^k(\X_\infty))$ is identified under the equivalence relation with $j_m^k (i_{\infty,m}^k(\X_\infty))$, for every $m,n\in\N$, implies that the maps 
\begin{eqnarray*}
p^k \circ j_n^k\circ i_{\infty,n}^k\colon\X_\infty^k\to \tilde{Z}^k \qquad \text{and} \qquad
p^k \circ j_m^k\circ i_{\infty,m}^k\colon\X_\infty^k\to \tilde{Z}^k
\end{eqnarray*}
coincide, for every $n,m\in\N$. In this manner, we see that also $ p^k \circ j_m^k\circ i_{\infty,m}^k: X_\infty^k \to \tilde{Z}^k$ is an isometric embedding, which is independent of $m$. Let us denote it by $\mathsf{i}_\infty^k$. Convergence \eqref{eq:extrinsic} yields the convergence
\begin{equation} \label{eq:conv_Zk}
 \left | \log\left (\frac{\m_n^k(\X_n)}{\m_\infty^k(\X_\infty)}\right ) \right | + \dis_{Z_n^k}\left (\mathsf{i}_n^k(p_n),\mathsf{i}_{\infty}^k(p_\infty) \right ) + \left(\dis_{Z_n^k}\right)_H\left (\mathsf{i}_n^k(\mathcal S_{\mm_n}),\mathsf{i}_{\infty}^k(\mathcal S_{\mm_\infty}) \right ) \overset{n\to\infty}{\to} 0.
 \end{equation}
To finish the first step of the argument we note that the pushforward of a coupling under the map $ (p^k\circ j_n^k)\times (p^k\circ j_n^k):Z_n^k \times Z_n^k \to \tilde{Z}^k\times \tilde{Z}^k$,  is again a coupling between the pushforward of the original marginal measures, namely 
\[
\text{if } \, \pi\in \mathsf{Adm}\big( (i_n^k)_\sharp \bar{\m}_n^k ,(i_{\infty, n}^k)_\sharp \bar{\m}_{\infty}^k\big),  \text{then } \, \tilde{\pi}:{=} ( (p^k\circ j_n^k)^2)_\sharp \pi \in \mathsf{Adm}\big( (\mathsf{i}_n^k)_\sharp \bar{\m}_n^k ,(\mathsf{i}_\infty^k)_\sharp \bar{\m}_\infty^k\big).
\]
Therefore, if we choose $\pi \in \mathsf{Opt}\big( (i_n^k)_\sharp \bar{\m}_n^k ,(i_\infty^k)_\sharp \bar{\m}_\infty^k\big)$, we get
\begin{equation*}
W_c^{\tilde{Z}^k}\Big((\mathsf{i}_n^k)_\sharp \bar{\m}_n^k , (\mathsf{i}_\infty^k)_\sharp \bar{\m}_\infty^k\Big) \leq W_c^{Z_n^k}\Big( (i_n^k)_\sharp \bar{\m}_n^k ,(i_\infty^k)_\sharp \bar{\m}_\infty^k\Big),
\end{equation*}
since $p^k\circ j_n^k : Z_n^k \to \tilde{Z}^k$ is an isometry. Jointly with the last term in \eqref{eq:extrinsic}, this inequality implies the convergence $(\mathsf{i}_n^k)_\sharp \bar{\m}_n^k \to (\mathsf{i}_\infty^k)_\sharp \bar{\m}_\infty^k$ in $\big(\PX(\tilde{Z}^k),W_c^{\tilde{Z}^k}\big)$. We have hereby shown the existence of a complete separable metric space, $(Z^k,\dis_{Z^k})$, and of a sequence of isometric embeddings,  $i_n^k \colon \X_n\to Z^k$ for every $n\in\N\cup \{\infty\}$, which provide  a realization of the convergence $\mathbb{X}_n^k\to \mathbb{X}_\infty^k$ for any $k \ge \bar k$. \\

For the second part of the argument, first of all we prove that for any $\bar k \le j < k-1$, the embeddings $i_n^k \colon \X_n\to Z^k$ serve as an effective realization for the convergence $\mathbb{X}_n^{j}\to \mathbb{X}_\infty^{j}$. To that purpose, let us consider the function
\begin{eqnarray*}
\begin{split}
g_n^{j,k} \colon Z^k &\to \, [0,1] \\
y\;\,&\mapsto \,
\begin{cases} 
 f_{\mathsf{cut}}\left( \, \dis_{Z^k}\left(y,i_n^k(p_n)\right) \; 2^{-(j+1)}\right)\left(1-f_{\mathsf{cut}}\left(\dis_{Z^k}\left(y,i_n^k \left(\mathcal{S}_{\m_n}\right)\right)2^{j+1} \right)\right) \, &\text{ if } \Sin_{\m_n}\not= \emptyset, \label{eq:cut}\\
f_{\mathsf{cut}}\left( \, \dis_{Z^k}\left(y,i_n^k(p_n)\right) \; 2^{-(j+1)}\right)  &\text{ if }  \Sin_{\m_n}= \emptyset. \label{eq:fk}
\end{cases}
\end{split}
\end{eqnarray*}

The Lipschitz continuity of  the cut-off function $f_{\mathsf{cut}}$, together with the convergence of  $\{i_n^k(p_n)\}_{n \in \N}$  to $i_\infty^k(p_\infty)$ by \eqref{eq:extrinsic}, ensures that the sequence $f_{\mathsf{cut}}\left( \, \dis_{Z^k}\left(y,i_n^k(p_n)\right) \; 2^{-(j+1)}\right)$ is uniformly converging to $ f_{\mathsf{cut}}\,\big(\dis_{Z^k}(y,i_\infty^k(p_\infty))\;  2^{-(j+1)}\big)$ as $n \to \infty$. In the same way, the triangular inequality ensures that 
\[\displaystyle\abs{\dis_{Z^k}\left(y,i_n^k \left(\mathcal{S}_{\m_n}\right)\right) - \dis_{Z^k}\left(y,i_\infty^k \left(\mathcal{S}_{\m_\infty}\right)\right)}  \le \left(\dis_{Z^k}\right)_H \left(  i_n^k \left(\mathcal{S}_{\m_n}\right), i_\infty^k \left(\mathcal{S}_{\m_\infty}\right) \right).
\]
and the convergence \eqref{eq:extrinsic} guarantees that the sequence $f_{\mathsf{cut}}\left(\dis_{Z^k}\left(y,i_n^k \left(\mathcal{S}_{\m_n}\right)\right)2^{j+1} \right)$ uniformly converges to $f_{\mathsf{cut}}\left(\dis_{Z^k}\left(y,i_\infty^k \left(\mathcal{S}_{\m_\infty}\right)\right)2^{j+1} \right)$ as $n \to \infty$. Hence, for every $y \in Z^k$ the sequence $\{g_n^{j,k}(y)\}_{n \in \N}$ uniformly converges as $n \to \infty$ to
\[
 g^{j,k}(y) := \begin{cases} 
  f_{\mathsf{cut}}\,\left(\dis_{Z^k}(y,i_\infty^k(p_\infty))\;  2^{-(j+1)}\right)\left(1-f_{\mathsf{cut}}\left(\dis_{Z^k}\left(y,i_\infty^k \left(\mathcal{S}_{\m_\infty}\right)\right)2^{j+1} \right)\right) \, &\text{ if } \Sin_{\m_\infty}\not= \emptyset, \label{eq:cut}\\
 f_{\mathsf{cut}}\,\left(\dis_{Z^k}(y,i_\infty^k(p_\infty))\;  2^{-(j+1)}\right)  &\text{ if }  \Sin_{\m_\infty}= \emptyset.
\end{cases}
\]
This in particular implies the weak convergence of the sequence of measures
\begin{equation*} 
\big(i_n^k\big)_\sharp\bar{\m}_n^j = \big(g_n^{j,k} \circ i_n^k\big)_{\sharp}\bar{\m}^k_n \,\,\,\overset{n\to\infty}{\rightharpoonup} \,\,\, \big(i_\infty^k\big)_\sharp\bar{\m}^j = \big(g^{j,k} \circ i_\infty^k\big)_\sharp\bar{\m}^k
\end{equation*}
(note that we ask for $\bar k \le j<k-1$ so that the equalities in the above equation are accurate).
The former convergence, together with  \eqref{eq:conv_Zk} and the fact that $\left | \log\Big(\frac{\m_n^j(X_n)}{\m_\infty^j(X_\infty)}\Big) \right |\to 0$, shows that the embeddings $i_n^{k}\colon \X_n\to Z^k$ realize the convergence of the sequence of $j$-cuts, for every $\bar k \le j < k-1$.

The given argument indicates that for different values of $k$ the necessary compatibility between embeddings $\{i_n^k\}_{n\in \N \cup \{\infty\}}$ and spaces $Z^k$ exists to perform a second ``gluing'' argument: we can then construct a common complete and separable metric space $Z := \sqcup_k Z^k$ and a sequence of embeddings $i_n\colon \X_n\to Z$, $n\in\N\cup\{\infty\}$. The pair $\left (Z, \{i_n\}_{n\in\N\cup\{\infty\}}\right )$ is the desired effective realization of the $\mathsf{iKRW}$-convergence $\mathbb{X}_n\to\mathbb{X}_\infty$. We omit the details of the construction, since we can follow exactly the same lines of the one presented above.

%
%
$ii) \Rightarrow i)$ Note that the existence of an effective realization of the convergence $\mathbb{X}_n \overset{n \to \infty}{\to} \mathbb{X}_\infty$ implies that $\dint^{fm}(\mathbb{X}_n^k , \mathbb{X}_\infty^k )\overset{n \to \infty}{\to} 0$, for all $k \ge \bar k$. Then, we can conclude by using \eqref{eq:convfm}.
\end{proof}

In some situations, it would be practical to have to our disposal a metric which is not explicitly dependent  on the behavior of the $\m$-singular sets. For instance, we could gain flexibility by not asking for a control on the Hausdorff distance between $\m$-singular sets in the definition of the $\dis_{\mathsf{iKRW}}$-distance. However, as we just saw, this term is necessary to provide an extrinsic realization of the distance given an intrinsic one. Therefore, the following definition turns out to be useful.
\begin{dfn}[Extrinsic convergence]\label{de:ext}
We say that the sequence of pointed generalized metric measure spaces $\{\mathbb{X}_n\}_{n\in \N } \subset \mathcal{M}^{qR}_{\bar k}$ \emph{converges extrinsically} to $\mathbb{X}_\infty \in \mathcal{M}^{qR}_{\bar k}$, $\bar k \in \N$, if  there exists a complete and separable metric space $(Z,\dis_Z)$, and isometric embeddings $i_n\colon \X_n\to Z$, $n \in \N$, for which 
\begin{equation}\label{eq:realiz}
\begin{split}
\left | \log\bigg(\frac{\m_n^k(\X_n)}{\m_\infty^k(\X_\infty)}\bigg) \right | + \dis_Z&\big (i_n(p_n), i_\infty(p_\infty) \big ) +  W_c\big((i_n)_\sharp \bar{\m}_n^k, (i_\infty)_\sharp  \bar{\m}_\infty^k \big )   \overset{n \to \infty}{\to} 0,
\end{split}
\end{equation}
for any $k \ge \bar k$.
\end{dfn}
Note that we dropped the assumption on the Hausdorff distance between singular sets at the cost of presenting ourselves a space providing the extrinsic realization. Furthermore, by Proposition \ref{pr:krw} we know that a $\mathsf{iKRW}$-converging sequence converges also in the extrinsically manner. 

We finish the Section with some remarks. 
\begin{rmk}
\emph{Connection to Gigli-Mondino-Savare's $p\mathbb{G}_W$ distance.} In \cite{GigMonSav} the authors define a distance between $\mathbb{X}_1$ and $\mathbb{X}_2$ metric measure spaces endowed with Radon measures giving finite mass to bounded sets. Indeed, this is what inspired us to propose the definition of $\dis_{\mathsf{iKRW}}$. As a matter of fact, in this case the $\m$-singular set of metric measure spaces in such family is the empty set, and thus our definition coincides with theirs. 
\end{rmk}

\begin{rmk}
We recall that in \cite[Theorem 3.17]{GigMonSav} the authors prove that the class of all metric measure spaces equipped with Radon measures is complete with respect to the $p\mathbb{G}_W$ distance. It is worth to underline that in this context we cannot hope for a similar completeness result. The main reason is that the set all of closed sets with empty interior is not closed for the Hausdorff distance. Hence, intuitively, we cannot prevent a sequence of quasi-Radon measures from converging to a measure which is not quasi-Radon.
\end{rmk}

%
%

\section{$\CD$-condition for negative generalized dimension}
\subsection{Basic definitions and properties}\label{sec:CD1}
Let $(\X, \d)$ be a complete and separable metric space. We denote by $\mathscr{P}(\X)$ the set of all Borel probability measures on $\X$ and by $\mathscr{P}_2(\X) \subset \mathscr{P}(\X)$ the set of all probability measures with finite second moment. On the space $\mathscr{P}_2(\X)$ we introduce the 2-Wasserstein distance
\begin{equation}\label{eq:W2}
    W_2^2(\mu,\nu):=\inf_{\gamma \in \mathsf{Adm}(\mu,\nu)} \int_{\X\times \X}\dis(x,y)^2 \, \d\gamma(x,y).
\end{equation}
The infimum in \eqref{eq:W2} is always realized. The plans $\gamma\in\mathsf{Adm}(\mu,\nu)$ such that $\int \dis(x, y)^2\,\de \gamma(x, y) = W_2^2(\mu,\nu)$ are called optimal couplings, or optimal transport plans, and the set that contains them all is denoted by $\mathsf{Opt}(\mu,\nu)$. It is well known that $W_2$ is a complete and separable distance on $\PX_2(\X)$. Moreover the convergence with respect to the Wasserstein distance is characterized in the following way (cfr. \cite[Section 7.1]{AmbrosioGigliSavare08}):
\begin{equation*}
    \mu_n \overset{W_2}{\longrightarrow}\mu \quad \iff \quad \mu_n \rightharpoonup \mu\, \text{ and } \int \di(x_0,x)^2 \de \mu_n \to \int \di(x_0,x)^2 \de \mu \quad \forall x_0\in \X.
\end{equation*}
This description shows in particular that, for a sequence of probability measures with uniformly bounded support, the $W_2$-convergence is equivalent to the weak one and, consequently, to the $W_c$-convergence. Hence, in this case,  \eqref{eq:realiz} of Proposition \ref{pr:krw} remains valid when we replace the $W_c$-distance with the $W_2$-one.\\

We introduce the \emph{R\'enyi entropy} $S_{N,\m}$ for $N < 0$ with respect to the reference measure $\mm$ as the functional defined on $\mathscr{P}(\X)$ by 
\begin{equation*}
S_{N,\m}(\mu):= \begin{cases}
\displaystyle \int_{\X} \rho(x)^{\frac{N-1}{N}} d\m(x) &\qquad \text{if } \, \mu \ll  \m, \, \mu = \rho \m,\smallskip\\
+\infty &\qquad \text{otherwise},
\end{cases}
\end{equation*}
where $\rho = {\d\mu}/{\d\m}$ is the Radon-Nikodym derivative of $\mu$ with respect to $\m$, whose existence is guaranteed by Theorem \ref{thm:RadonNikodym}. 
\begin{rmk}
We point out that for $N \ge 1$ the ``standard'' R\'enyi entropy is defined as $S_{N,\m}(\rho \mm) := - \int_{\X} \rho(x)^{\frac{N-1}{N}} d\m(x)$. For $N < 0$, the minus sign is omitted to impose the convexity of the function $h(s) = s^{(N-1)/N}$. Note that, for $N \ge 1$, it suffices to define the ``standard'' R\'enyi entropy on Polish spaces equipped with Radon reference measures. In this case, under a volume growth condition on the reference measure $\m$, the functional $S_{N,\m}(\cdot)$ is lower semicontinuous with respect to the weak topology and, in particular, it is lower semicontinuous with respect to 2-Wasserstein convergence in $\PX_2(\X)$. Unfortunately, the same property is not necessarily true, for negative values of $N <0 $ and quasi-Radon reference measures $\m$. However, we prove in Proposition \ref{prop:almostlscofSN} that $S_{N,\m}(\cdot)$ is lower semicontinuous with respect to weak convergence, on the subspace
\begin{equation*}
    \PX^{\mathcal S_\m}(\X):= \{\mu\in \PX_2(\X) \,: \,S_\m \text{ is the $\m$-singular set and } \mu(\mathcal S_\m)=0\}.
\end{equation*}
\end{rmk}
In fact, we show a more general result stating that the R\'enyi entropy functional $S_{N, \n}(\nu)$ is a lower semicontinuous function of $(\n,\nu)\in \M_S(\X)\times\PX^{\mathcal S}(\X)$, where the convergence of the first coordinate is intended to be the  weak convergence of quasi-Radon measures.

In order to give the definition of curvature-dimension bounds, we need also to introduce the following distortion coefficients for $N < 0$:
\begin{equation}\label{E:sigma}
\sigma_{K,N}^{(t)}(\theta):= 
\begin{cases}
\infty, & \textrm{if}\ K\theta^{2} \leq N\pi^{2}, \crcr
\displaystyle  \frac{\sin(t\theta\sqrt{K/N})}{\sin(\theta\sqrt{K/N})} & \textrm{if}\  N\pi^{2} < K\theta^{2} <  0, \crcr
t & \textrm{if}\ 
K \theta^{2}=0,  \crcr
\displaystyle   \frac{\sinh(t\theta\sqrt{-K/N})}{\sinh(\theta\sqrt{-K/N})} & \textrm{if}\ K\theta^{2} > 0
\end{cases}
\end{equation}
and 
\begin{equation}\label{eq:deftau}
    \tau_{K, N}^{(t)}(\theta):=t^{1 / N} \sigma_{K /(N-1)}^{(t)}(\theta)^{(N-1) / N}.
\end{equation}

\noindent

\begin{dfn}
For any couple of measures $\mu_0,\mu_1 \in \mathscr{P}^{ac}(\X)$, $\mu_i = \rho_i \m$, we denote by $\pi\in \mathscr{P}(\X\times \X)$ a coupling between them, and by $T_{K,N}^t(\pi | \m)$  the functional defined by
\[
\begin{split}
T_{K,N}^{(t)}(\pi | \m)&:= \int_{\X\times \X} \Big[ \tau^{(1-t)}_{K,N} \big(\dis(x,y) \big) \rho_{0}(x)^{-\frac{1}{N}} +    \tau^{(t)}_{K,N} \big(\dis(x,y) \big) \rho_{1}(y)^{-\frac{1}{N}} \Big]   d \pi(x,y).
\end{split}
\]
\end{dfn}

We are ready to introduce the definition of metric measure space satisfying a curvature-dimension condition for negative values.
\begin{dfn}[$\CD$-condition]\label{def:curvcond}
For fixed  $K\in \R, N \in (-\infty,0)$, we say that a metric measure space $(\X, \dis, \m)$ satisfies the  \emph{$\CD(K,N)$-condition}  if, for each pair $\mu_{0}=\rho_{0} \m,\, \mu_{1}=\rho_{1} \m \in \PX_{2}^{ac}(\X)$, there exists an optimal coupling $\pi\in \mathsf{Adm}(\mu_0,\mu_1)$ and a $W_{2}$-geodesic  $\{\mu_t\}_{t \in [0, 1]} \subset \PX_{2}(\X)$ such that
\begin{align}\label{def:CD}
S_{N', \mm}(\mu_t) \leq T_{K,N'}^{(t)}(\pi | \m)
\end{align}
holds, for every $t\in [0,1]$, and every $N'\in [N, 0)$, provided that $S_{N', \m}(\mu_0)$, $S_{N', \mm}(\mu_1) < \infty$.  
\end{dfn}
\begin{rmk}\label{re:bounded}
Note that the $\CD$-inequality becomes trivial when $K < 0$ and
\[
\pi \Big(\big\{ (x, y) \in \X \times \X : \d(x, y) \ge \pi \sqrt{(N'-1)/K}  \big\}  \Big) > 0.
\]
Furthermore, observe that, if $K\geq 0$, or if $\diam(\X)<\sqrt{\pi (N-1)/K}$ when $K<0$, the coefficients $\tau_{K,N}^{(t)}(\cdot)$ are bounded.  Now notice that Jensen's inequality shows that $S_{N,\m}(\mu_0), \,S_{N,\m}(\mu_1)$ are finite, if the entropies $S_{N',\m}(\mu_0), \,S_{N',\m}(\mu_1)$ are finite for some $N' \in [N, 0)$. Therefore, observe that in this case the $\CD(K,N)$-condition guarantees that the Wasserstein geodesics along which inequality \ref{def:CD} holds are absolutely continuous with respect to $\m$.
\end{rmk}
\begin{rmk} 
Take notice that Definition \ref{def:curvcond} 
restricts the domain in which it is required to verify inequality   \eqref{def:CD} to $\mathcal{D}(S_{N',\m}) := \{\mu \,: \,S_{N',\m}(\mu)<\infty\}$. This is a common assumption made when dealing with convexity-like properties of functions which take values in the extended real numbers. Nevertheless, we wish to comment in more detail with regard to this.  

It turns out that it is not necessary to make this restriction in  the classical theory of curvature-dimension bounds for $N \ge 1$ since the R\'enyi Entropy is bounded for absolutely continuous measures on $\PX_2(\X)$, as a consequence of the fact that $\CD(K,N)$ space posses reference measures with a controlled volume growth. A proof of the finiteness of some entropy functionals, in particular of the  R\'enyi entropy, under volume growth assumptions can be found for example in \cite[Proposition E.17]{Lott-Villani09}. 
However, this is not necessarily the case when dealing with negative parameters $N$ and, therefore, we require the finiteness of the entropies at the marginal measures. 

We remark that  for  terminal marginals  with bounded supports, Definition \ref{def:curvcond} coincides with the one introduced by Ohta in \cite{Ohta16}. Indeed, if the supports of $\mu_0$ and $ \mu_1$ are bounded in $(\X, \di)$ the coefficients $\tau^{(1-t)}_{K,N}(\cdot)$ are bounded below away from $0$ on the support of any coupling $\pi \in \mathsf{Opt}(\mu_0, \mu_1)$, for fixed $0<t<1$. Thus, if for some $N'\in [N,0)$ one of the terminal measures has unbounded entropy $S_{N',\m}$, then $T_{K,N'}^{(t)}(\pi | \m) = \infty$, for any $t \in(0, 1)$, and inequality \eqref{def:CD} is always satisfied. 

Lastly, we emphasize as well that  the assumption on the entropy in Definition \ref{def:curvcond} is consistent with the standard definitions of curvature-dimension conditions for positive values of $N$, in which the requirement is not explicitly made.
\end{rmk}

We underline that, as in the case $N \ge 1$, the definition of curvature-dimension condition is invariant under standard transformations of metric measure structures. Precisely, the $\CD$-condition is stable under isomorphisms, scalings, and restrictions to convex subsets of metric measure spaces (this can be proved using the same techniques as in \cite[Proposition 1.4]{Sturm06I} and in \cite[Proposition 4.12, 4.13 and 4.15]{Sturm06II}). We also point out, that the ``hierarchy property'' of $\CD(K, N)$ spaces, with $N < 0$, remains valid. Specifically, 
\begin{prop}
If $(\X, \dis, \mm)$ satisfies the curvature-dimension condition $\CD(K, N)$ for some $K \in \R, N < 0$, then it also satisfies the curvature-dimension condition $\CD(K', N')$ for any $K' \le K$ and $N' \in [N, 0)$.
\end{prop}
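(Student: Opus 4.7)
The plan is to split the proposition into two separate monotonicity claims, one in $N$ and one in $K$, and to reduce the second to a pointwise monotonicity of the distortion coefficient $\tau_{K,N}^{(t)}(\theta)$ in $K$. The monotonicity in $N$ is essentially built into Definition \ref{def:curvcond}: given a $\CD(K,N)$-space, for each admissible pair $\mu_0,\mu_1$ there exist an optimal coupling $\pi$ and a $W_2$-geodesic $\{\mu_t\}$ such that \eqref{def:CD} holds for every $N'' \in [N,0)$. Since $[N',0) \subset [N,0)$ whenever $N'\in[N,0)$, the same $\pi$ and $\{\mu_t\}$ witness the $\CD(K,N')$-condition, so it suffices from now on to pass from $\CD(K,N)$ to $\CD(K',N)$ when $K'\leq K$.

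For the curvature monotonicity, fix $K'\leq K$ and $N<0$. I would establish the pointwise inequality
\begin{equation*}
\tau_{K,N}^{(t)}(\theta) \leq \tau_{K',N}^{(t)}(\theta), \qquad t \in [0,1],\; \theta \geq 0,
\end{equation*}
and deduce the integral inequality $T_{K,N}^{(t)}(\pi\mid \mm) \leq T_{K',N}^{(t)}(\pi\mid \mm)$ by observing that $-1/N>0$, so the densities $\rho_0(x)^{-1/N}$ and $\rho_1(y)^{-1/N}$ appearing in the definition of $T_{K,N}^{(t)}(\pi\mid\mm)$ are non-negative. Chaining the latter with the $\CD(K,N)$-inequality yields $S_{N,\mm}(\mu_t) \leq T_{K',N}^{(t)}(\pi\mid \mm)$ with the same transport data, giving $\CD(K',N)$; combined with the $N$-monotonicity already established, this produces $\CD(K',N')$ for every $N' \in [N,0)$.

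It therefore remains to verify that $K \mapsto \tau_{K,N}^{(t)}(\theta)$ is non-increasing when $N<0$. By \eqref{eq:deftau} and the fact that the exponent $(N-1)/N$ is strictly positive for $N<0$, this reduces to the monotonicity of $K \mapsto \sigma_{K/(N-1)}^{(t)}(\theta)$; and since $N-1<0$, the sign of $K/(N-1)$ is opposite to that of $K$, so one simply needs the elementary calculus facts that $a \mapsto \sinh(ta)/\sinh(a)$ is strictly decreasing on $(0,\infty)$ and $a \mapsto \sin(ta)/\sin(a)$ is strictly increasing on $(0,\pi)$, both with common limit $t$ as $a \to 0^+$, while in the forbidden regime $K\theta^2 \leq N\pi^2$ from \eqref{E:sigma} one already has $\sigma = +\infty$, so the desired inequality holds trivially. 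The main obstacle is purely bookkeeping: keeping track of the sign changes across the three branches of \eqref{E:sigma}, and verifying continuity at $K=0$, where both surrounding expressions meet at $\sigma = t$. No deeper analytic input is required.
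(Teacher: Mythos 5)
Your proof is correct and follows essentially the same route as the paper: the $N$-monotonicity is read off directly from the universal quantifier over $N'\in[N,0)$ in Definition \ref{def:curvcond}, and the $K$-monotonicity is reduced, via the positivity of the exponent $(N-1)/N$ and the sign flip from $N-1<0$, to the monotonicity of the one-parameter coefficient $\sigma_\kappa^{(t)}(\theta)$ in $\kappa$. The only difference is cosmetic: the paper cites Bacher--Sturm for that last monotonicity, whereas you re-derive the underlying calculus facts about $\sin(ta)/\sin(a)$ and $\sinh(ta)/\sinh(a)$ directly.
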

\begin{proof}
The monotonicity in $N$ follows directly from Definition \ref{def:curvcond}, while the monotonicity in $K$ follows from the fact that the coefficient $\sigma_\kappa^{(t)}(\theta)$ is non-decreasing in $\kappa$ once $t$ and $\theta$ are fixed (see \cite[Remark 2.2]{BacherSturm10}).
\end{proof}
Let us conclude by recalling that the $\CD(K, N)$-condition is weaker than the $\CD(K,\infty)$-condition \cite{Ohta16} and, therefore, it follows that $\CD(K, \infty)$ spaces are $\CD(K, N)$ space for every $N < 0$.

\subsection{Examples}

In this section we present some examples of negative dimensional CD spaces, referring to \cite{Milman-NegSph, Milman-Neg, KolMilm} for other model spaces satisfying the $\CD(K, N)$-condition with $N < 0$. Moreover, we show that singular points of the reference measure in negative dimensional CD spaces can appear as inner points of geodesics. This fact motivates us to present the definitions of \emph{approximate $\CD$-condition} and \emph{$\omega$-uniform convexity}, object of Section \ref{sec:assumption}, which will enable us to deal with this kind of behavior in the proof of our Stability Theorem.

A fundamental notion in the presentation of the examples is the one of $(K,N)$-convexity of a function on a metric space, for a negative value of $N$. This definition is the natural counterpart of the one with positive $N$, and it was introduced by Ohta in \cite{Ohta16}.

\begin{dfn}[$(K,N)$-convexity]
In a metric space $(\X,\di)$, for every fixed $K \in \R$ and $N \in (-\infty,0)$, a function $f:\X\to  \bar{\R}$ is said to be $(K,N)$-convex if for every $x_0,x_1 \in \{f< +\infty\}$, with $d:=\di(x_0,x_1)<\pi\sqrt{N/K}$ when $K<0$, there exists a constant speed geodesic $\gamma$ connecting $x_0$ and $x_1$, such that
\begin{equation}\label{eq:convinequality}
    f_N(\gamma_t) \leq \sigma_{K/N}^{(1-t)}(d) f_N(x_0) + \sigma_{K/N}^{(t)}(d) f_N(x_1) \qquad \forall t\in [0,1],
\end{equation}
where $f_N(x)=e^{-f(x)/N}$.
\end{dfn}

The following result (\cite[Corollary 4.12]{Ohta16}) is used to produce  examples of $\CD(K, N)$ spaces with negative values of the generalized dimension.
\begin{prop}\label{prop:ohtaexamples}
Let $M$ be a $n$-dimensional complete Riemannian manifold with Riemannian distance $\d_g$ and Riemannian volume $\text{vol}_g$. Let us then consider a weighted volume $\mm = e^{-\psi} \text{vol}_g$, and let numbers $K_1, K_2 \in \R$, $N_2 \ge n$ and $N_1 < - N_2$ be given.

Then if $(M, \d_g, \text{vol}_g)$ satisfies the $\CD(K_2, N_2)$-condition, the weighted space $(M, \d_g, e^{-\psi} \mm)$ satisfies the $\CD(K_1+K_2, N_1 +N_2)$-condition provided that $\psi \in C^2(M)$ is $(K_1, N_1)$-convex.
\end{prop}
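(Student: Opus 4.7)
The plan is to combine the displacement convexity of the R\'enyi entropy supplied by the $\CD(K_2,N_2)$-condition of $(M,\d_g,\text{vol}_g)$ with the pointwise $(K_1,N_1)$-convexity of the weight $\psi$ along geodesics, packaging them into a Brunn--Minkowski-type inequality for the density with respect to $\mm$.

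First, given terminal marginals $\mu_0=\rho_0\mm,\,\mu_1=\rho_1\mm \in \PX_2^{ac}(M)$ with $\mm = e^{-\psi}\text{vol}_g$, one observes that they are also absolutely continuous with respect to $\text{vol}_g$, with densities $\tilde\rho_i = \rho_i e^{-\psi}$. Applying the $\CD(K_2,N_2)$-hypothesis yields an optimal plan $\pi$ and a $W_2$-geodesic $\{\mu_t\}_{t\in[0,1]}$ along which the pointwise Brunn--Minkowski-type inequality
\begin{equation*}
\tilde\rho_t(\gamma_t)^{-1/N_2'} \;\ge\; \tau^{(1-t)}_{K_2,N_2'}(d)\,\tilde\rho_0(x)^{-1/N_2'} + \tau^{(t)}_{K_2,N_2'}(d)\,\tilde\rho_1(y)^{-1/N_2'}
\end{equation*}
holds for $\pi$-a.e.\ $(x,y)$ and every $N_2'\ge N_2$, with $d:=\d_g(x,y)$ and $\gamma_t$ the minimizing geodesic from $x$ to $y$. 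In parallel, the $(K_1,N_1)$-convexity of $\psi$ provides, along the same $\gamma_t$, the reverse-direction estimate
\begin{equation*}
e^{-\psi(\gamma_t)/N_1'} \;\le\; \sigma^{(1-t)}_{K_1/N_1'}(d)\,e^{-\psi(x)/N_1'} + \sigma^{(t)}_{K_1/N_1'}(d)\,e^{-\psi(y)/N_1'}
\end{equation*}
for every $N_1' \in [N_1,0)$, using the monotonicity of $(K,N)$-convexity in $N$ recalled in Section~\ref{sec:CD1}.

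The heart of the argument is to fuse these two pointwise statements into a single inequality for the density $\rho_t = \tilde\rho_t\, e^{\psi}$ of $\mu_t$ with respect to $\mm$, with distortion coefficient $\tau^{(t)}_{K_1+K_2,\,N_1'+N_2'}$. The technical core is an algebraic identity/inequality among the coefficients $\sigma$ and $\tau$ of the schematic form
\begin{equation*}
\big[\tau^{(t)}_{K_1+K_2,\,N_1'+N_2'}(d)\big]^{N_1'+N_2'} \;=\; \big[\tau^{(t)}_{K_2,N_2'}(d)\big]^{N_2'}\,\big[\sigma^{(t)}_{K_1/N_1'}(d)\big]^{N_1'},
\end{equation*}
together with a reverse-H\"older/Young inequality tailored to the negative exponent $1/(N_1'+N_2')$ (since $N_1'+N_2'<0$). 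Once the two-factor combination is established, one arrives at the pointwise estimate
\begin{equation*}
\rho_t(\gamma_t)^{-1/N'} \;\ge\; \tau^{(1-t)}_{K_1+K_2,\,N'}(d)\,\rho_0(x)^{-1/N'} + \tau^{(t)}_{K_1+K_2,\,N'}(d)\,\rho_1(y)^{-1/N'}
\end{equation*}
for every $N' := N_1' + N_2' \in [N_1+N_2,0)$, where the condition $N_1 < -N_2$ is used precisely to guarantee $N_1+N_2 < 0$ so that the negative-dimensional framework of Definition~\ref{def:curvcond} applies. I expect checking this two-parameter coefficient identity case-by-case on the signs of $K_1,K_2$ (and sorting out the admissible pairs $(N_1',N_2')$) to be the main technical obstacle.

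Finally, integrating the pointwise inequality against $\pi$ and invoking Jensen's inequality for the dynamical plan realising $\{\mu_t\}$ reproduces
\begin{equation*}
S_{N',\mm}(\mu_t) \;\le\; T^{(t)}_{K_1+K_2,\,N'}(\pi\,|\,\mm)
\end{equation*}
for every $N' \in [N_1+N_2,0)$. Since the identities verified pointwise do not depend on the specific choice of $\mu_0,\mu_1$ (aside from absolute continuity, which is inherited from $\mm \ll \text{vol}_g$), this is precisely the $\CD(K_1+K_2,\,N_1+N_2)$-condition of Definition~\ref{def:curvcond}.
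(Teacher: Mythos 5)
The paper does \emph{not} prove this proposition: it is stated as a known result and attributed directly to Ohta, precisely as \cite[Corollary~4.12]{Ohta16}, so there is no internal proof to compare against. That said, the strategy you outline is the standard one, and it is in fact the same strategy the paper \emph{does} carry out for the one-dimensional variant, Proposition~\ref{prop:varexamples}: rewrite the density of $\mu_t$ with respect to $\mm$ as the product of the $\text{vol}_g$-density (controlled by the $\CD(K_2,N_2)$ hypothesis via the Jacobi/Monge--Amp\`ere equation) and the weight factor $e^{\psi}$ (controlled by the $(K_1,N_1)$-convexity of $\psi$), then combine the two pointwise estimates and integrate.

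The one place where your sketch is actually misleading is the display you label an ``identity.'' Expanding both sides with $\tau_{K,N}^{(t)}(\theta)^{N}=t\,\sigma_{K/(N-1)}^{(t)}(\theta)^{\,N-1}$ shows that
\[
\big[\tau^{(t)}_{K_1+K_2,\,N_1'+N_2'}(d)\big]^{N_1'+N_2'}
=\big[\tau^{(t)}_{K_2,N_2'}(d)\big]^{N_2'}\,\big[\sigma^{(t)}_{K_1/N_1'}(d)\big]^{N_1'}
\]
is equivalent to
\[
\sigma^{(t)}_{\frac{K_1+K_2}{N_1'+N_2'-1}}(d)^{\,N_1'+N_2'-1}
=\sigma^{(t)}_{\frac{K_2}{N_2'-1}}(d)^{\,N_2'-1}\,\sigma^{(t)}_{K_1/N_1'}(d)^{\,N_1'},
\]
which fails already for $K_1=0$, $K_2\neq 0$ (the left side is $\sigma_{K_2/(N_1'+N_2'-1)}^{(t)}(d)^{N_1'+N_2'-1}$ while the right side is $t^{N_1'}\,\sigma_{K_2/(N_2'-1)}^{(t)}(d)^{N_2'-1}$, and these differ as soon as $K_2 d^2\neq 0$). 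What is available is an \emph{inequality} between these quantities, derived from the log-concavity of $\kappa\mapsto\sigma^{(t)}_{\kappa}(\theta)$ by a weighted Jensen/H\"older argument with weights $N_1'/(N_1'+N_2'-1)$ and $(N_2'-1)/(N_1'+N_2'-1)$. Since here $N_1'<0$, $N_2'-1>0$ and $N_1'+N_2'-1<0$, these weights have opposite signs, so the ordinary Jensen inequality does not apply directly and the direction of the resulting estimate is exactly where the negative-dimension hypotheses enter. This is the heart of Ohta's proof and the one step you have not actually established; compare how the paper handles the analogous step in the proof of Proposition~\ref{prop:varexamples}, where Jensen's inequality is applied only after reorganizing the terms so that the exponent $N'/(N'+1)>1$ is positive. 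A minor further point: passing from the entropy-form $\CD(K_2,N_2)$ hypothesis to the pointwise Brunn--Minkowski estimate for $\tilde\rho_t$ along geodesics requires the uniqueness and map structure of optimal transport on the Riemannian manifold; this is true but is used without comment.
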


\begin{example}[1-dimensional models]\label{ex:nice} In the following we will denote by $ |\cdot|$ the Euclidean distance and by $\Leb^1$ the 1-dimensional Lebesgue measure.\\
{\bf (i)} For any pair of real numbers $K > 0, N < -1$ the weighted space $(\R, |\cdot|, V \Leb^1)$ with
\[
V(x) = \cosh\bigg(  x \sqrt{- \dfrac{K}{N}} \bigg)^N
\]
satisfies the curvature-dimension condition $\CD(K, N+1)$ with no singular set, i.e. $\mathcal{S}_{V \Leb^1} =\emptyset$.\\
{\bf (ii)} For any pair of real numbers $K > 0, N < -1$ also the weighted space $([0, \infty), |\cdot|, V \Leb^1)$ with
\[
V(x) = \sinh\bigg(  x \sqrt{- \dfrac{K}{N}} \bigg)^N
\]
satisfies the curvature-dimension condition $\CD(K, N+1)$ with singular set $\mathcal{S}_{V \Leb^1} = \{ 0 \}$.\\
{\bf (iii)} For any $N < -1$ the space $([0, \infty), |\cdot|, x^{N} \Leb^1)$ is a $\CD(0, N+1)$ space with singular set $\mathcal{S}_{x^{N} \Leb^1} = \{ 0 \}$.\\
{\bf (iv)} For any pair of real numbers $K < 0, N < -1$ the weighted space
\[
\Bigg(\Bigg[ - \dfrac{\pi}{2} \sqrt{\dfrac{K}{N}},  \dfrac{\pi}{2} \sqrt{\dfrac{K}{N}} \Bigg], |\cdot |,  \cos \bigg(  x \sqrt{\dfrac{K}{N}}\bigg) ^N \, \Leb^1 \Bigg)
\]
satisfies the curvature-dimension condition $\CD(K, N+1)$ with singular set  given by
\[\mathcal{S}_{\cos \big(  x \sqrt{K/N} \big)^N \, \Leb^1} = \bigg\{   - \dfrac{\pi}{2} \sqrt{\dfrac{K}{N}},  \dfrac{\pi}{2} \sqrt{\dfrac{K}{N}} \bigg\}.\]
\end{example}

Example \ref{ex:nice} provides negative dimensional $\CD$ spaces, whose set of singular points is a subset of their topological boundary. Unfortunately, this is not a general behavior and we proceed now to show this. With this goal in mind, we will rely on a modification of Proposition \ref{prop:ohtaexamples}, whose proof needs a preliminary result.

\begin{lemma}
Let $f:I\to \bar \R$ be a function on the interval $I:=[a,b]\subset \R$. Assume that there exists $c\in(a,b)\cap \{f<+\infty\}$ such that ${f}|_{[a,c]}$ and ${f}|_{[c,b]}$ are $(K,N)$-convex and for every $x_0\in[a,c]$, $x_1\in[c,b]$ it holds that
\begin{equation}\label{eq:midpointconv}
    f_N(c) \leq \sigma_{K/N}^{\big(\frac{x_1-c}{x_1-x_0}\big)}(x_1-x_0) f_N(x_0) + \sigma_{K/N}^{\big(\frac{c-x_0}{x_1-x_0}\big)}(x_1-x_0) f_N(x_1).
\end{equation}
Then $f$ is $(K,N)$-convex.
\end{lemma}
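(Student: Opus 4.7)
The plan is to show $(K,N)$-convexity directly from the definition, by taking arbitrary $x_0,x_1\in\{f<+\infty\}$, noting that in $\R$ the constant speed geodesic joining them is the unique segment $\gamma_t=(1-t)x_0+tx_1$, and verifying inequality \eqref{eq:convinequality} for all $t\in[0,1]$. If $x_0,x_1$ both lie in $[a,c]$ or both in $[c,b]$, the conclusion is immediate from the $(K,N)$-convexity of the corresponding restriction. Hence one may assume $x_0\in[a,c)$ and $x_1\in(c,b]$, and it suffices to treat the case $t\in[0,t^*]$ where $t^*:=(c-x_0)/(x_1-x_0)$; the case $t\in[t^*,1]$ is symmetric.

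In the first subcase $\gamma_t\in[x_0,c]$, so writing $s:=t/t^*$ one may parametrize $\gamma_t=(1-s)x_0+sc$ and apply the $(K,N)$-convexity of $f|_{[a,c]}$ to obtain
\begin{equation*}
f_N(\gamma_t)\leq \sigma_{K/N}^{(1-s)}(c-x_0)f_N(x_0)+\sigma_{K/N}^{(s)}(c-x_0)f_N(c).
\end{equation*}
Then I would substitute the midpoint bound \eqref{eq:midpointconv} for $f_N(c)$, collect the coefficients of $f_N(x_0)$ and $f_N(x_1)$, and check that they equal $\sigma_{K/N}^{(1-t)}(x_1-x_0)$ and $\sigma_{K/N}^{(t)}(x_1-x_0)$ respectively. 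The coefficient in front of $f_N(x_1)$ telescopes immediately since $\sigma_{K/N}^{(s)}(c-x_0)\,\sigma_{K/N}^{((c-x_0)/(x_1-x_0))}(x_1-x_0)$ simplifies to a single $\sigma$ coefficient by direct manipulation of $\sin$ (or $\sinh$) quotients.

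The real obstacle is verifying the identity for the coefficient of $f_N(x_0)$; introducing $\omega:=\sqrt{|K/N|}$ and $D_1:=c-x_0$, $D_2:=x_1-c$, $D:=D_1+D_2$, this reduces, in the case $K<0$, to proving
\begin{equation*}
\sin((D_1-tD)\omega)\sin(D\omega)+\sin(tD\omega)\sin(D_2\omega)=\sin(D_1\omega)\sin((1-t)D\omega).
\end{equation*}
I would expand both sides via the product-to-sum formula $2\sin P\sin Q=\cos(P-Q)-\cos(P+Q)$; a short calculation shows that the two $\cos(A+D\omega)$-type terms cancel once one uses $D_1=A+B$ with $A=(D_1-tD)\omega$, $B=tD\omega$, while the identity $A-D\omega=-(B+D_2\omega)$ collapses the remaining cosines. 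The case $K>0$ is handled by the same computation with $\sin\to\sinh$ via the replacement $\omega\mapsto i\omega$, and the case $K=0$ reduces to the elementary identity $(1-tD/D_1)+(tD/D_1)(D_2/D)=1-t$. Finally, the constraint $\di(x_0,x_1)<\pi\sqrt{N/K}$ in case $K<0$ is inherited from the same constraint for $f|_{[a,c]}$, $f|_{[c,b]}$ and \eqref{eq:midpointconv}, so that all denominators in the $\sigma$-coefficients stay positive throughout the argument.
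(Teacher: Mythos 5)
Your proof is correct and follows essentially the same route as the paper's: restrict to the case $x_0\in[a,c)$, $x_1\in(c,b]$ with the interpolated point on the $[a,c]$ side, apply the $(K,N)$-convexity of $f|_{[a,c]}$, substitute the midpoint bound \eqref{eq:midpointconv} for $f_N(c)$, and verify that the two resulting coefficients reduce to the correct $\sigma$-coefficients by trigonometric manipulation (the paper invokes sum-to-product formulas where you use product-to-sum, but this is the same computation). The paper leaves the coefficient identity for $f_N(x_0)$ unverified; your explicit cancellation argument supplies exactly the detail the paper elides.
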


\begin{proof}
We have to prove the convexity inequality \eqref{eq:convinequality} for every $x_0,x_1\in I$ in the domain $\{f<\infty\}$. However, this holds by hypothesis, if $x_0,x_1\in [a,c]$ or $x_0,x_1\in [c,b]$, thus it is sufficient to consider the case where $x_0\in[a,c)$ and $x_1\in(c,b]$. Without loss of generality, we can assume that $x_t\in[a,c)$, then the $(K,N)$-convexity of $f|_{[a,c]}$ yields that
\begin{equation*}
    f_N(x_t)\leq \sigma_{K/N}^{\big(\frac{c-x_t}{c-x_0}\big)}(c-x_0) f_N(x_0) + \sigma_{K/N}^{\big(\frac{x_t-x_0}{c-x_0}\big)}(c-x_0) f_N(c).
\end{equation*}
Combining this last inequality with \eqref{eq:midpointconv} we obtain
\begin{equation}\label{eq:intermidiate}
\begin{split}
    f_N(x_t)\leq  \bigg[\sigma_{K/N}^{\big(\frac{c-x_t}{c-x_0}\big)}(c-x_0)+ \sigma_{K/N}^{\big(\frac{x_t-x_0}{c-x_0}\big)}(c-x_0)\,\sigma_{K/N}^{\big(\frac{x_1-c}{x_1-x_0}\big)}(x_1-x_0) \bigg] f_N(x_0) \\
    + \sigma_{K/N}^{\big(\frac{x_t-x_0}{c-x_0}\big)}(c-x_0)\,\sigma_{K/N}^{\big(\frac{c-x_0}{x_1-x_0}\big)}(x_1-x_0) f_N(x_1).
\end{split}
\end{equation}
On the other hand it is easy to realize that 
\begin{equation*}
    \sigma_{K/N}^{\big(\frac{x_t-x_0}{c-x_0}\big)}(c-x_0)\,\sigma_{K/N}^{\big(\frac{c-x_0}{x_1-x_0}\big)}(x_1-x_0) = \sigma_{K/N}^{\big(\frac{x_t-x_0}{x_1-x_0}\big)}(x_1-x_0).
\end{equation*}
As an example, we consider $K<0$. Then, it holds that
\begin{eqnarray*} 
   &\sigma_{K/N}^{\big(\frac{x_t-x_0}{c-x_0}\big)}(c-x_0)\,\sigma_{K/N}^{\big(\frac{c-x_0}{x_1-x_0}\big)}(x_1-x_0) =& \\ &\frac{\sin(\sqrt{K/N}(x_t-x_0))}{\sin(\sqrt{K/N}(c-x_0))} \cdot  \frac{\sin(\sqrt{K/N}(c-x_0))}{\sin(\sqrt{K/N}(x_1-x_0))}=\frac{\sin(\sqrt{K/N}(x_t-x_0))}{\sin(\sqrt{K/N}(x_1-x_0))}=&\\& \sigma_{K/N}^{\big(\frac{x_t-x_0}{x_1-x_0}\big)}(x_1-x_0).&
\end{eqnarray*}
Moreover, with an explicit computation, using the sum-to-product trigonometric formulas, it is also possible to prove that 
\begin{equation*}
    \sigma_{K/N}^{\big(\frac{c-x_t}{c-x_0}\big)}(c-x_0)+ \sigma_{K/N}^{\big(\frac{x_t-x_0}{c-x_0}\big)}(c-x_0)\,\sigma_{K/N}^{\big(\frac{x_1-c}{x_1-x_0}\big)}(x_1-x_0) = \sigma_{K/N}^{\big(\frac{x_1-x_t}{x_1-x_0}\big)}(x_1-x_0).
\end{equation*}
Combining the previous  trigonometric identities with inequality \eqref{eq:intermidiate} we obtain the $(K,N)$-convexity inequality.
\end{proof}

\noindent An immediate corollary follows from the fact that $f_N(-\infty)=0$.

\begin{cor}
Let $f:I\to \bar \R$ be a function on the interval $I:=[a,b]$. Assume that there exists $c\in(a,b)$ such that ${f}|_{[a,c]}$ and ${f}|_{[c,b]}$ are $(K,N)$-convex and that $f(c)=-\infty$, then $f$ is $(K,N)$-convex.
\end{cor}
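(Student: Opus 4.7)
The plan is to deduce the corollary directly from the preceding lemma by showing that the extra midpoint hypothesis \eqref{eq:midpointconv} becomes trivial when $f(c)=-\infty$.

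First I would compute $f_N(c)$ in this degenerate case: by definition $f_N(x)=e^{-f(x)/N}$, and since $N<0$ while $f(c)=-\infty$, the exponent $-f(c)/N = -\infty$, so
\[
f_N(c)=e^{-f(c)/N}=0.
\]
Next, for any $x_0\in[a,c)$ and $x_1\in(c,b]$ in the effective domain, the right-hand side of \eqref{eq:midpointconv} is
\[
\sigma_{K/N}^{\big(\tfrac{x_1-c}{x_1-x_0}\big)}(x_1-x_0)\,f_N(x_0) + \sigma_{K/N}^{\big(\tfrac{c-x_0}{x_1-x_0}\big)}(x_1-x_0)\,f_N(x_1),
\]
which is a sum of non-negative terms (the $\sigma$-coefficients are non-negative, and $f_N\geq 0$ because it is an exponential). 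Hence \eqref{eq:midpointconv} reduces to $0\leq(\text{something non-negative})$, which is automatically satisfied.

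Finally, since by assumption $f|_{[a,c]}$ and $f|_{[c,b]}$ are $(K,N)$-convex, all the hypotheses of the preceding lemma are met, and we conclude that $f$ is $(K,N)$-convex on the whole of $I$. There is essentially no obstacle here: the corollary is really just the observation that the additional midpoint comparison in the lemma trivializes as soon as $f_N$ vanishes at the separating point, which is precisely what $f(c)=-\infty$ guarantees.
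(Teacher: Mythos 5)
Your proof is correct and follows exactly the paper's own reasoning: the paper simply remarks that the corollary is immediate from $f_N(-\infty)=0$, and you spell out that this makes the extra midpoint hypothesis \eqref{eq:midpointconv} of the preceding lemma trivially satisfied.
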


Now, we present an alternative version of Proposition \ref{prop:ohtaexamples}, in which we do not need to assume regularity of the weight function $\psi$ at the price of restricting to the case $M=\R$.

\begin{prop}\label{prop:varexamples}
Let $\psi:\R\to [-\infty,\infty)$ be a $(K,N)$-convex with $N<-1$, such that $\mathcal{L}^1(\{\psi = -\infty\}) = 0$. Then the metric measure space $(\R, |\cdot|, e^{-\psi}\mathcal{L}^1)$ is a $\CD(K,N+1)$ space.
\end{prop}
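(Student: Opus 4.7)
The plan is to reduce the problem to the smooth setting covered by Proposition \ref{prop:ohtaexamples}, via a mollification of $\psi$, and then pass to the limit using the extrinsic variant of the stability result (Theorem \ref{th:ext}).

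\textbf{Mollification.} Setting $\psi_N := e^{-\psi/N}$, the $(K,N)$-convexity of $\psi$ is equivalent to the pointwise inequality
\[
\psi_N\bigl((1-t)x_0 + t x_1\bigr) \leq \sigma_{K/N}^{(1-t)}(|x_1-x_0|)\, \psi_N(x_0) + \sigma_{K/N}^{(t)}(|x_1-x_0|)\, \psi_N(x_1),
\]
which is linear in $\psi_N$ and therefore preserved under convolution with any nonnegative kernel, by the translation invariance of the Euclidean line. Choosing a standard family of smooth nonnegative mollifiers $\{\rho_\eps\}_{\eps>0}$, the convolutions $\psi_N^\eps := \psi_N \ast \rho_\eps$ are smooth, strictly positive (since $\psi_N > 0$ $\Leb^1$-almost everywhere by the hypothesis on $\{\psi=-\infty\}$), and satisfy the same inequality. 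Hence $\psi^\eps := -N \log \psi_N^\eps$ is a smooth, real-valued, $(K,N)$-convex function with $\psi^\eps \to \psi$ $\Leb^1$-a.e. as $\eps \to 0$.

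\textbf{Smooth case via Proposition \ref{prop:ohtaexamples}.} Since $(\R, |\cdot|, \Leb^1)$ is a $\CD(0,1)$ space, I apply Proposition \ref{prop:ohtaexamples} with $n = N_2 = 1$, $K_2 = 0$, $K_1 = K$ and $N_1 = N < -1 = -N_2$; for every $\eps > 0$ this yields that $(\R, |\cdot|, e^{-\psi^\eps}\Leb^1)$ satisfies the $\CD(K, N+1)$-condition.

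\textbf{Passage to the limit.} From $\psi_N^\eps \to \psi_N$ a.e.\ one deduces that $e^{-\psi^\eps} = (\psi_N^\eps)^N \to e^{-\psi}$ a.e., and the measures $\mm^\eps := e^{-\psi^\eps}\Leb^1$ approximate $\mm := e^{-\psi}\Leb^1$. Since each $\mm^\eps$ is Radon while $\mm$ may have singular points at which its density blows up, the Hausdorff-distance contribution appearing in $\dis_{\mathsf{iKRW}}$ does not vanish in the limit; one therefore works with the extrinsic convergence of Definition \ref{de:ext} and concludes via Theorem \ref{th:ext} that the limit space $(\R,|\cdot|,\mm)$ inherits the $\CD(K,N+1)$-condition.

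\textbf{Main obstacle.} The delicate step will be verifying the extrinsic convergence $\mm^\eps \to \mm$ together with the technical hypotheses of Theorem \ref{th:ext}, most notably a uniform non-accumulation of mass of the $\mm^\eps$ around the singular set $\mathcal S_\mm$. Mollification inherently regularizes the singular behavior of $e^{-\psi}$, so the $\mm^\eps$ assign finite mass to every bounded set while $\mm$ may not; quantitative estimates on how fast $\psi_N^\eps$ approaches $\psi_N$ near $\mathcal S_\mm$ will be needed to control this discrepancy uniformly in $\eps$. A useful structural remark is that, by the $(K,N)$-convexity inequality combined with the non-negativity of $\psi_N$, the zero set $\{\psi_N=0\}$ is geodesically convex in $\R$; being $\Leb^1$-negligible, it reduces to at most a single point, which considerably simplifies the uniform control required.
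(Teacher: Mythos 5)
Your approach --- mollify $\psi_N$, invoke Proposition~\ref{prop:ohtaexamples} for the smooth approximants, and pass to the limit via Theorem~\ref{th:ext} --- is genuinely different from the paper's, which instead runs a direct 1-D optimal-transport computation (Brenier map, Jacobi equation, pointwise $(K,N)$-convexity). The mollification observation is correct and nicely exploits the linearity of the $(K,N)$-convexity inequality in $\psi_N$, and the application of Proposition~\ref{prop:ohtaexamples} to the smooth $\psi^\eps$ is fine. However, the limiting step does not go through, and the obstruction you flag as ``the main obstacle'' is not merely quantitative but structural.

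\textbf{The $k$-cut masses diverge.} Suppose $\mathcal S_{\m}\neq\emptyset$ (the case the proposition is really needed for, e.g.\ $\psi_N(x)=|x|$, $\m=|x|^N\Leb^1$). Each mollified measure $\m^\eps := e^{-\psi^\eps}\Leb^1$ is a genuine Radon measure, so its canonical singular set is empty and its $k$-cut is $\m_\eps^k = f_{\mathsf{cut}}(\dis(\cdot,p)2^{-k})\,\m^\eps$, a cut-off over a full ball. In contrast, $\m_\infty^k$ excises a $2^{-(k+1)}$-neighbourhood of $\mathcal S_{\m}$. Since $\psi_N^\eps = \psi_N$ outside a $2\eps$-neighbourhood of $\mathcal S_{\m}$, one has the lower bound $\m_\eps^k(\R)\gtrsim \int_{2\eps}^1 x^N\,dx \sim \eps^{N+1}\to\infty$ (as $N+1<0$), while $\m_\infty^k(\R)$ is finite. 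Thus the first term $\big|\log(\m_\eps^k(\R)/\m_\infty^k(\R))\big|$ in \eqref{eq:realiz} blows up, and the extrinsic convergence of Definition~\ref{de:ext} fails for every $k$. The failure persists even after normalization: the mass of $\bar\m_\eps^k$ concentrates near $\mathcal S_{\m}$, so $W_c(\bar\m_\eps^k,\bar\m_\infty^k)$ is bounded below. No estimate on the \emph{rate} at which $\psi_N^\eps\to\psi_N$ can repair this, because the divergence is forced by the non-integrability of $e^{-\psi}$ near $\mathcal S_{\m}$, which any mollification necessarily caps and thereby lets the mass escape to infinity as $\eps\to 0$. (If $\mathcal S_\m=\emptyset$, your argument does work; but then the statement is easier and not the interesting case.)

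\textbf{Two secondary issues.} First, for $K<0$ hypothesis~(iii) of Theorems~\ref{th:stab} and~\ref{th:ext} imposes $\diam(\X_i)<\pi|K|^{-1/2}$ on all spaces in the sequence; since $\supp(\m^\eps)=\R$ this fails outright, so the stability theorem is not applicable for $K<0$ even setting aside the mass problem --- and $K<0$ is needed, e.g.\ for Example~\ref{ex:ugly}~(iv'). Second, the closing structural remark that $\{\psi_N=0\}$ is a single point is false in general: when $K<0$ the $(K,N)$-convexity inequality only constrains pairs with $\d(x_0,x_1)<\pi\sqrt{N/K}$, so $\{\psi_N=0\}$ need not be connected, and indeed Example~\ref{ex:ugly}~(iv') exhibits $J+1$ singular points.
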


\begin{proof}
In this proof we denote with $\m$ the reference measure $e^{-\psi}\mathcal{L}^1$. 
In order to prove the $\CD$-condition we fix two absolutely continuous measures $\mu_0=\rho_0\m,\mu_1=\rho_1 \m\in \PX_2(\R)$. Notice that the assumption $\Leb^1(\{\psi =-\infty\})=0$ avoids ambiguities and ensures that $\mu_0,\mu_1 \ll \Leb^1$, we are going to call $\tilde\rho_0$ and $\tilde\rho_1$ (respectively) their densities, that is $\mu_0=\tilde \rho_1 \Leb^1$ and $\mu_0=\tilde \rho_1 \Leb^1$. Now, Brenier theorem
ensures that there exists a unique optimal transport plan between $\mu_0$ and $\mu_1$, and it is induced by a map $T$, which is differentiable $\mu_0$-almost everywhere. It is also well known 
that the map $T$ is increasing, thus $T'(x)$ will be non-negative when defined. Moreover, the unique Wasserstein geodesic connecting $\mu_0$ and $\mu_1$ is given by $\mu_t=(T_t)_\#\mu_0$, where $T_t= (1-t) \text{id} + t T$. Then, calling $\tilde \rho_t$ the density of $\mu_t$ with respect to the Lebesgue measure $\Leb^1$, the Jacobi equation holds and gives that
\begin{equation*}
     \tilde \rho_0(x)= \tilde\rho_t(T_t(x)) J_{T_t} (x) = \tilde\rho_t(T_t(x)) (1+ t(T'(x)-1)),
\end{equation*}
for $\mu_0$-almost every $x$. On the other hand it is obvious that $\tilde \rho_t= e^{-\psi}\rho_t$ for every $t\in[0,1]$, therefore
\begin{equation}\label{eq:jacobi}
    e^{-\psi(x)}  \rho_0(x) = e^{-\psi(T_t(x))}\rho_t(T_t(x)) (1+ t(T'(x)-1))
\end{equation}
for every $t\in[0,1]$ and $\mu_0$-almost every $x$. 
On the other hand notice that for every $N'<-1$
\begin{equation}\label{eq:pointwiseconv}
\begin{split}
    S_{N'+1}(\mu_t)= \int \rho_t^{-\frac{1}{N'+1}} \de \mu_t = \int \rho_t^{-\frac{1}{N'+1}} \de [(T_t)_\#\mu_0]= \int \rho_t(T_t(x))^{-\frac{1}{N'+1}}  \de \mu_0(x) \\
    = \int \big( e^{-\psi(x)}  \rho_0(x)\big)^{-\frac{1}{N'+1}} \big( e^{-\psi(T_t(x))} (1+ t(T'(x)-1)) \big)^\frac{1}{N'+1} \de \mu_0(x).
\end{split}    
\end{equation}
We can then prove the convexity pointwise, using the $(K,N)$-convexity of $\psi$. In particular, $\psi$ is $(K,N')$-convex for every $N'\in[N,0)$ (cfr. \cite[Lemma 2.9]{Ohta16}). Therefore, calling $A(x)=T'(x)-1$ in order to ease the notation, for every $N'\in[N,-1)$, it holds that
\[
\begin{split}
    \big(e^{-\psi(T_t(x))}& (1+ tA(x)) \big)^\frac{1}{N'+1}= \big[e^{-\psi(T_t(x))/N'}\big]^\frac{N'}{N'+1} (1+ tA(x)) ^\frac{1}{N'+1} \\
    &\leq (1+ tA(x)) ^\frac{1}{N'+1} \bigg[ \sigma_{K/N'}^{(1-t)}(|T(x)-x|) e^{-\psi(x)/N'} + \sigma_{K/N'}^{(t)}(|T(x)-x|) e^{-\psi(T(x))/N'} \bigg]^\frac{N'}{N'+1}.
    \end{split}
    \]
Thus, by rewriting the last term, we obtain
    \[
    \begin{split}
    &\bigg[ (1+ tA(x)) ^\frac{1}{N'}\sigma_{K/N'}^{(1-t)}(|T(x)-x|) e^{-\psi(x)/N'} + (1+ tA(x)) ^\frac{1}{N'} \sigma_{K/N'}^{(t)}(|T(x)-x|) e^{-\psi(T(x))/N'} \bigg]^\frac{N'}{N'+1} \bigskip \\
   & \leq \tau_{K/(N'+1)}^{(1-t)}(|T(x)-x|) e^{-\psi(x)/(N'+1)} +  \tau_{K/(N'+1)}^{(t)}(|T(x)-x|) (1+ A(x)) ^\frac{1}{N'+1} e^{-\psi(T(x))/(N'+1)},
\end{split}
\]
 where the last inequality is a consequence of Jensen's inequality and of the definition of $\tau_{K,N'}^{(t)}$ (see \eqref{eq:deftau}).
Replacing the above inequality in \eqref{eq:pointwiseconv} and using \eqref{eq:jacobi} with $t=1$, we obtain that
\begin{equation*}
    \begin{split}
    S&_{N'+1}(\mu_t)\leq\int \tau_{K/(N'+1)}^{(1-t)}(|T(x)-x|) \rho_0(x)^{-\frac{1}{N'+1}} \de \mu_0(x) \\
    &\qquad+ \int \tau_{K/(N'+1)}^{(t)}(|T(x)-x|) (1+ A(x)) ^\frac{1}{N'+1} \big( e^{-\psi(x)}  \rho_0(x)\big)^{-\frac{1}{N'+1}} e^{-\psi(T(x))/(N'+1)} \de \mu_0(x)\\
    &= \int \tau_{K/(N'+1)}^{(1-t)}(|T(x)-x|) \rho_0(x)^{-\frac{1}{N'+1}} \de \mu_0(x) + \int \tau_{K/(N'+1)}^{(t)}(|T(x)-x|) \rho_1(T(x)) ^{-\frac{1}{N'+1}} \de \mu_0(x)\\
    &= \int \tau_{K/(N'+1)}^{(1-t)}(|T(x)-x|) \rho_0(x)^{-\frac{1}{N'+1}} \de \mu_0(x) + \int \tau_{K/(N'+1)}^{(t)}(|T(x)-x|) \rho_1(x) ^{-\frac{1}{N'+1}} \de \mu_1(x),
\end{split}  
\end{equation*}
for every $N'\in[N,-1)$, which is the desired inequality.
\end{proof}

\noindent
A direct application of the previous result leads to the following refinement of Example \ref{ex:nice}:
\begin{example}\label{ex:ugly}
{\bf (ii')} For any pair of real numbers $K > 0, N < -1$, the weighted space
\[
\Big(\R, |\cdot|, V \Leb^1\Big) \quad \text{ with } V(x) = \sinh\bigg(  x \sqrt{- \dfrac{K}{N}} \bigg)^N,
\]
obtained gluing two copies of the half-line space in Example \ref{ex:nice}-(ii),
satisfies the curvature-dimension condition $\CD(K, N+1)$ with singular set $\mathcal{S}_{V \Leb^1} = \{ 0 \}$.\\
{\bf (iii')} Similarly, for any $N < -1$ the space $(\R, |\cdot|, |x|^{N} \Leb^1)$ is a $\CD(0, N+1)$ space with singular set $\mathcal{S}_{|x|^{N} \Leb^1} = \{ 0 \}$.\\
{\bf (iv')} For any pair of real numbers $K < 0, N < -1$ the space which is obtained gluing $J$-copies of the interval in Example \ref{ex:nice}-(iv), for example by considering $\Big( \bigcup_{j=1}^J I_j,  |\cdot |, V \Leb^1 \Big)$ with
\[
{ I_j := \Bigg[ \dfrac{(2j-1)\pi}{2} \sqrt{\dfrac{K}{N}},  \dfrac{(2j+1) \pi}{2} \sqrt{\dfrac{K}{N}} \Bigg]} \, \text{ and } \, V := \sum_{j=1}^J \mathbbm{1}_{I_j} \cdot\cos \bigg(  (x - x_j ) \sqrt{\dfrac{K}{N}} \bigg)^N, \,\, x_j := {j \pi} \sqrt{\dfrac{K}{N}}, 
\]
satisfies the curvature-dimension condition $\CD(K, N+1)$ with singular set given by 
\[\mathcal{S}_{V \Leb^1} = \Bigg\{ \dfrac{(2j-1)\pi}{2} \sqrt{\dfrac{K}{N}} : j = 1, \dots, J+1  \Bigg\}.
\]
\end{example}

\noindent We end this section by pointing out that there exist unbounded $\CD$ spaces with negative dimension, for every value of the curvature. In particular, unlike to what happens for positive dimensional CD spaces, it is never possible to obtain a bound on the diameter of the space. Actually, this not only happens for singular spaces, as in Example \ref{ex:ugly} (iv'), since for example, also the hyperbolic plane satisfies the $\CD(-1,N)$-condition for any $N < 0$ (recall that every $\CD(K, N)$ space with $N \ge1$ is automatically a $\CD(K, N)$ space for any $N <0$). Therefore, there exists no counterpart of the generalized Bonnet-Myers theorem \cite[Corollary 2.6]{Sturm06II} for negative dimensional $\CD$ spaces. 

\subsection{Approximate $\CD$-condition and regularity assumptions}\label{sec:assumption}

Examples \ref{ex:nice}  and \ref{ex:ugly} exhibit that $\mm$-singular sets associated to $\CD(K, N)$ spaces are not necessarily empty sets. Nevertheless, as already noticed, the geometric behaviour in these examples can be extremely different: in opposition to Examples \ref{ex:nice}, where points in $\mathcal{S}_\mm$   appear only as terminal  points of geodesics in the space,  
singular points in Examples \ref{ex:ugly} occur as inner points of geodesics. This observation shows that, in particular, the $k$-th regular set $\mathcal{R}^k$ of a space $X$, which was introduced in \eqref{def:Rk}, is not necessarily geodesically convex. This turns out to produce major difficulties in the proof of our main result. Indeed, we wish to approximate metric measure spaces by considering their $k$-th regular sets but the $\CD$-condition is precisely a condition made on geodesics. We point out that this type of  issues do not show up for positive values of the dimensional parameter and, in fact, to surpass the problems arising from the non-geodesically convexity will  be the main challenge to overcome in the proof of the Stability Theorem \ref{th:stab}. In order to do so we present the next two definitions.

\begin{dfn}[Approximate $\CD$-condition]
We say that a metric measure space $(\X, \dis, \m)$ satisfies the  \emph{approximate curvature-dimension condition}, $\CD^a(K, N)$ in short,  if the $\CD$-condition \ref{def:curvcond} is satisfied by further requiring that the supports of the  measures  $\mu_{0},\, \mu_{1} \in \PX_{2}^{ac}(\X)$ satisfy $\supp(\mu_0),\supp(\mu_1) \subset \mathcal R^{k}$, for some $k\in\N$.
\end{dfn}
Note that in the definition above $k$ is not fixed.


As discussed above, we need to carefully approach the topic of the non-geodesic convexity of $\m$-singular sets. The following concept directs us in this direction by quantifying in term of masses - and thus, controlling - to which extent convexity of the $k$-th cuts is unsatisfied. 

\begin{dfn}[$\omega$-uniformly convexity]
A metric measure space $(\X, \di, \mm)$ is \emph{$\omega$-uniformly convex} if there exists a function $\omega \colon \N \times \N \times \R^+ \to [0, 1]$ with the following properties: 
\begin{itemize}
\item for any $\mu_{0},\, \mu_{1} \in \PX_{2}(\X)$, with $S_{N,\m}(\mu_0), S_{N,\m}(\mu_1)\le M$ and $\supp(\mu_0), \supp(\mu_1) \subseteq \mathcal R^{k}$, every $t$-middle point of any geodesic $\{\mu_t\}_{t \in [0, 1]}\subset \PX_2(\X)$ between $\mu_0$ and $\mu_1$ is such that
\[ \mu_t(\mathcal R^h) \ge 1 - \omega(k, h, M); \]
\item for any $k \in \N$, $M \in \R^+$	
\begin{equation}\label{eq:omega0}
\lim_{h \to \infty} \omega (k, h, M) = 0.
\end{equation}
\end{itemize}
\end{dfn}
Figure \ref{fig:unifconvexity2} shows an schematic representation of the $\omega$-convexity.

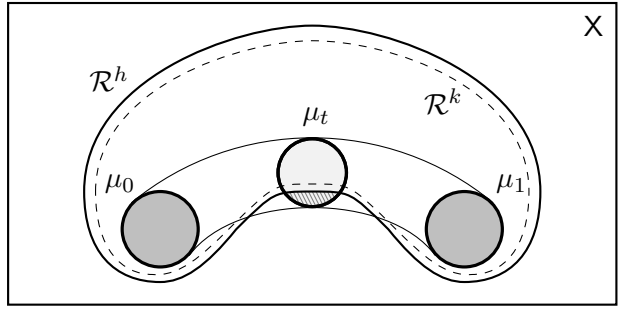
\begin{figure}
\begin{center}

\begin{tikzpicture} 
\filldraw[ fill=black!5!white, very thick](0,0.75) circle (0.45);
\draw[black!50!white] (0,0.5)--(0.1,0.3);
\draw[black!50!white] (0.05,0.5)--(0.15,0.3);
\draw[black!50!white] (0.1,0.5)--(0.18,0.34);
\draw[black!50!white] (0.15,0.5)--(0.23,0.34);
\draw[black!50!white] (0.2,0.5)--(0.26,0.38);
\draw[black!50!white] (0.25,0.5)--(0.3,0.4);
\draw[black!50!white] (-0.05,0.5)--(0.05,0.3);
\draw[black!50!white] (-0.1,0.5)--(0,0.3);
\draw[black!50!white] (-0.15,0.5)--(-0.05,0.3);
\draw[black!50!white] (-0.2,0.5)--(-0.1,0.3);
\draw[black!50!white] (-0.25,0.5)--(-0.17,0.34);
\draw[black!50!white] (-0.3,0.5)--(-0.24,0.38);
\draw[thick](-4,3)--(4,3)--(4,-1)--(-4,-1)--cycle;
\filldraw[ fill=black!25!white, very thick](2,0) circle (0.5);
\draw (-1.62,-0.35) .. controls (-1,0.5) and (1,0.5)..(1.62,-0.35);
\draw(-2.38,0.35).. controls (-1,1.5) and (1,1.5)..(2.38,0.35);
\draw[ very thick](0,0.75) circle (0.45);
\draw[thick] (-3,0.5)..controls (-3,2) and (-1,2.7).. (0,2.7)..controls (1,2.7) and (3,2).. (3,0.5)..controls (3,-0.3) and (2.6,-0.7)..(2,-0.7).. controls (1.7,-0.7) and (1.4,-0.5)..(1,0).. controls (0.5,0.5)..(0,0.5) .. controls (-0.5,0.5).. (-1,0).. controls (-1.4,-0.5) and (-1.7,-0.7)..(-2,-0.7).. controls (-2.6,-0.7) and (-3,-0.3)..(-3,0.5);
\draw[dashed] (-2.85,0.5)..controls (-2.85,1.9) and (-0.9,2.5).. (0,2.5)..controls (1,2.5) and (2.85,1.9).. (2.85,0.5)..controls (2.85,-0.27) and (2.5,-0.6)..(2.1,-0.6).. controls (1.75,-0.63) and (1.45,-0.35)..(1.1,0).. controls (0.5,0.6)..(0,0.6) .. controls (-0.5,0.6).. (-1.1,0).. controls (-1.45,-0.35) and (-1.75,-0.63) ..(-2.1,-0.6).. controls (-2.5,-0.6) and (-2.85,-0.27)..(-2.85,0.5);
\filldraw[ fill=black!25!white, very thick](-2,0) circle (0.5);
\node at (-3,0.6)[label=east:$\mu_{0}$] {};
\node at (2.15,0.6)[label=east:$\mu_{1}$] {};
\node at (-0.4,1.5)[label=east:$\mu_{t}$] {};
\node at (3.3,2.7)[label=east:$\X$] {};
\node at (-3.2,2)[label=east:$\mathcal R^h$] {};
\node at (1.2,1.7)[label=east:$\mathcal R^k$] {};

\end{tikzpicture}

\caption{A visual representation of the property of $\omega$-uniform convexity. In particular the shaded set has $\mu_t$-mass bounded above by $\omega(k,h,M)$, if $S_{N,\m}(\mu_0),S_{N,\m}(\mu_1) \le M$.}
\label{fig:unifconvexity2}
\end{center}
\end{figure}

\begin{rmk}
We illustrate with some examples the concept of $\omega$-uniform convexity.

\begin{itemize}
\item[\textasteriskcentered] If the $k$-th regular set $\mathcal R^k$ is geodesically convex, then we can choose $\omega(\ell,h,M)=0$, for all entropy bounds $M>0$ and $\ell \le k \le h$. This is the case, for example, of metric measure spaces with empty $\m$-singular set and the spaces presented in Example \ref{ex:nice}. 
\item[\textasteriskcentered] Conversely,  if $\mu_{0},\, \mu_{1} \in \mathcal{P}_{2}(\X)$  are supported in $\mathcal R^{k}$ with bounded entropies and the support of $\mu_t$, a geodesic joining $\mu_0$ and $\mu_1$ evaluated at time $t$, is contained in the complement of $\mathcal R^h$, for some $t \in (0, 1)$ and $h\in \N$, then $\omega (k, h, M) = 1$.\\ In particular, the metric measure space $([-1,1],|\cdot|,\m)$, with $\d\m= \delta_{-1} + \delta_{1} + 1/x^2 \d \mathcal{L}^1$ serves as example of a metric measure space which is not  $\omega$-uniformly convex. Indeed, at time $t=1/2$, the support of the unique 2-Wasserstein geodesic $(\delta_t)_{t\in[0,1]}$ is contained inside $X\setminus \mathcal R^h$, for any $h\in N$, while its terminal points have entropy equal to $1$. 
\item[\textasteriskcentered] Lastly, a more interesting behaviour occurs in a convex subset of Example \ref{ex:ugly}, given by 
\[
\left ( \left [0,\pi \right ], \,\big |\cdot \big |\, , \mathbb{I}_{\left [0,\frac{\pi}{2}  \right ]}\mathrm{cos}\left (x \right )^{-2} \mathcal{L}^1 + \mathbb{I}_{\left[\frac{\pi}{2}  , \pi \right ]} \mathrm{cos}\left (x-\pi \right )^{-2}\mathcal{L}^1 \right ),
\]
whose singular set is $S=\{\frac{\pi}{2}  \}$. This is a $\CD(-2,-1)$ space as well as an $\omega$-uniformly convex space for a non-trivial function $\omega(k,h,M)$. Note that since there exist Wasserstein geodesics which are, at some time $t$, entirely contained  in the complement of $\mathcal R^k$, there are actually some values  $k,h\in\N,\,M\in\R^+$ for which $\omega(k,h,M)=1$. Also, for fixed values of $k \in \N$ and $M \in \R^+$, $\omega(k,h,M)\to 0$ as $h\to\infty$, since $W_2$-geodesics are absolutely continuous 
and $X\setminus\mathcal R^{h} \to 0$ as $h\to\infty$. Indeed, the key observation here is that we can not force arbitrarily large amounts of mass to transit through $X\setminus\mathcal R^{h}$, at a given time, without losing the upper bound on the entropy of the terminal points. 
Intuitively, to produce such  geodesics, 
we would have to consider measures with arbitrarily small supports or which accumulate arbitrarily large masses around a point. However, these type of measures have  large entropy. \\
\end{itemize} 
\end{rmk}

A concrete and useful property which $\omega$-uniformly convex metric measure spaces enjoy is that we are able to quantify interpolated mass outside the set $\mathcal R^h$, even if the marginals are not necessarily supported on $\mathcal R^k$, granted they supply sufficient mass to the $k$-th regular sets.

\begin{prop}\label{prop:Omega}
Let $(\X, \d, \mm)$ be a $\omega$-uniformly convex space. Then there exists a function $\Omega \colon  \N \times \N \times \R^+ \times [0,1] \to \R$ such that:
\begin{itemize}
\item[i)] for any $\mu_{0},\, \mu_{1} \in \PX_{2}(\X)$ with $S_{N,\m}(\mu_0),S_{N,\m}(\mu_1)\le M$ and $\mu_0(\mathcal{R}^k), \mu_1(\mathcal{R}^k) \ge 1-\delta$, then any $t$-middle point of the geodesic $\{\mu_t\}_{t \in [0, 1]}$ is such that $\mu_t(\mathcal R^h) \ge 1 - \Omega(k, h, M, \delta)$,
\item[ii)] for every $0 \leq \delta< \frac 14$ it holds that 
\begin{equation}\label{eq:Omega}
    \limsup_{h \to \infty} \Omega(k, h, M, \delta) \le 2 \delta,
\end{equation}
for every fixed $k \in \N$ and $M \in \R^+$.
\end{itemize}
\end{prop}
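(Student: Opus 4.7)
The plan is to lift the Wasserstein geodesic $\{\mu_t\}$ to a dynamical optimal plan, isolate the sub-plan whose endpoints both lie in $\mathcal R^k$, and apply the $\omega$-uniform convexity hypothesis to this sub-plan after renormalization. The small mass sitting outside $\mathcal R^k$ at the endpoints accounts for the additive $2\delta$ in (ii).

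Concretely, I would first use Lisini's lifting theorem to produce a dynamical optimal plan $\Pi \in \mathscr{P}(\geo(\X))$ with $(e_t)_\sharp \Pi = \mu_t$ for all $t\in [0,1]$. Setting $A := \{\gamma \in \geo(\X) \,:\, \gamma_0,\gamma_1 \in \mathcal R^k\}$, the hypothesis $\mu_i(\mathcal R^k) \ge 1-\delta$ together with a union bound yields $\Pi(A) \ge 1-2\delta$, which is strictly positive as soon as $\delta < 1/4$. The renormalized restriction $\Pi_A := \Pi|_A/\Pi(A)$ is again a dynamical optimal coupling, since restrictions of optimal plans to measurable subsets are optimal between their marginals. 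Thus $\nu_t := (e_t)_\sharp \Pi_A$ is a $W_2$-geodesic joining $\nu_0$ and $\nu_1$, and by construction $\supp(\nu_0), \supp(\nu_1) \subseteq \mathcal R^k$.

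Next I control the entropies of the new endpoints: writing $\mu_i = \rho_i \m$, the measure $(e_i)_\sharp (\Pi|_A)$ is dominated by $\mu_i$, so the $\m$-density of $\nu_i$ is bounded pointwise by $\rho_i/(1-2\delta)$. Since $N<0$ gives $(N-1)/N > 1$, the map $s\mapsto s^{(N-1)/N}$ is monotone on $[0,\infty)$ and hence
\[
S_{N,\m}(\nu_i) \;\le\; (1-2\delta)^{-(N-1)/N}\, S_{N,\m}(\mu_i) \;\le\; M'' \;:=\; (1-2\delta)^{-(N-1)/N} M.
\]
Applying $\omega$-uniform convexity to the geodesic $\{\nu_t\}$ yields $\nu_t(\mathcal R^h) \ge 1 - \omega(k,h,M'')$ for every $t$. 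Using $\mu_t \ge \Pi(A)\, \nu_t$ as measures (which follows from $\mu_t = (e_t)_\sharp (\Pi|_A) + (e_t)_\sharp (\Pi|_{A^c})$), one deduces
\[
\mu_t(\mathcal R^h) \;\ge\; (1-2\delta)\bigl(1 - \omega(k,h,M'')\bigr),
\]
so defining $\Omega(k,h,M,\delta) := 1 - (1-2\delta)\bigl(1-\omega(k,h,M'')\bigr)$ proves (i), and (ii) follows at once from \eqref{eq:omega0}, which gives $\limsup_{h\to\infty}\Omega(k,h,M,\delta) = 2\delta$.

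The main subtleties I expect are (a) the existence of a lifting of an arbitrary $W_2$-geodesic to a dynamical optimal plan on $\geo(\X)$ and (b) the stability of optimality under restriction, both of which are by now standard in the Polish setting. A minor computational point is that the amplification factor $(1-2\delta)^{-(N-1)/N}$ stays finite precisely under the assumption $\delta < 1/4$, which is the reason for that restriction in the statement.
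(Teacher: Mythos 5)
Your proposal is correct and follows essentially the same route as the paper's proof: both lift the geodesic to a plan $\eta\in\PX(\mathrm{Geo}(\X))$, restrict to the set of geodesics with both endpoints in $\mathcal R^k$, renormalize (using $\eta(G)\ge 1-2\delta>0$), bound the entropies of the new endpoints by $(1-2\delta)^{-(1-1/N)}M$, apply $\omega$-uniform convexity, and recombine. The only cosmetic difference is that the paper, exploiting $\delta<\tfrac14$, replaces your $\delta$-dependent constant $M''=(1-2\delta)^{-(1-1/N)}M$ by the uniform bound $2^{1-1/N}M$ and writes $\Omega=\omega(k,h,2^{1-1/N}M)+2\delta$, but both choices yield a valid $\Omega$ with the same $\limsup_{h\to\infty}$.
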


\begin{figure}
\begin{center}

\begin{tikzpicture} 

\filldraw[ fill=black!5!white, very thick](0,0.487) circle (0.55);
\filldraw[ fill=black!15!white, very thick](-2.15,-0.4) circle (0.5);
\filldraw[ fill=black!15!white, very thick](2.15,-0.4) circle (0.5);
\draw[thick] (-3,0.5)..controls (-3,2) and (-1,2.7).. (0,2.7)..controls (1,2.7) and (3,2).. (3,0.5)..controls (3,-0.5) and (2.6,-0.75)..(2,-0.7).. controls (1.7,-0.7) and (1.4,-0.5)..(1,0).. controls (0.7,0.3) and (0.5,0.4)..(0,0.4) .. controls (-0.5,0.4) and (-0.7,0.3).. (-1,0).. controls (-1.4,-0.5) and (-1.7,-0.7)..(-2,-0.7).. controls (-2.6,-0.75) and (-3,-0.5)..(-3,0.5);

\draw[black!50!white] (-2.25,-0.7)--(-2.25,-0.9);
\draw[black!50!white] (-2.2,-0.7)--(-2.2,-0.9);
\draw[black!50!white] (-2.15,-0.7)--(-2.15,-0.9);
\draw[black!50!white] (-2.1,-0.7)--(-2.1,-0.9);
\draw[black!50!white] (-2.05,-0.7)--(-2.05,-0.9);
\draw[black!50!white] (-2,-0.7)--(-2,-0.88);
\draw[black!50!white] (-1.95,-0.7)--(-1.95,-0.86);
\draw[black!50!white] (-1.9,-0.7)--(-1.9,-0.84);
\draw[black!50!white] (-1.85,-0.69)--(-1.85,-0.8);
\draw[black!50!white] (-1.8,-0.67)--(-1.8,-0.77);
\draw[black!50!white] (-2.3,-0.7)--(-2.3,-0.89);
\draw[black!50!white] (-2.35,-0.69)--(-2.35,-0.87);
\draw[black!50!white] (-2.4,-0.67)--(-2.4,-0.85);
\draw[black!50!white] (-2.45,-0.67)--(-2.45,-0.81);
\draw[black!50!white] (-2.5,-0.65)--(-2.5,-0.77);
\draw[black!50!white] (-2.55,-0.62)--(-2.55,-0.72);

\draw[black!50!white] (2.25,-0.7)--(2.25,-0.9);
\draw[black!50!white] (2.2,-0.7)--(2.2,-0.9);
\draw[black!50!white] (2.15,-0.7)--(2.15,-0.9);
\draw[black!50!white] (2.1,-0.7)--(2.1,-0.9);
\draw[black!50!white] (2.05,-0.7)--(2.05,-0.9);
\draw[black!50!white] (2,-0.7)--(2,-0.88);
\draw[black!50!white] (1.95,-0.7)--(1.95,-0.86);
\draw[black!50!white] (1.9,-0.7)--(1.9,-0.84);
\draw[black!50!white] (1.85,-0.69)--(1.85,-0.8);
\draw[black!50!white] (1.8,-0.67)--(1.8,-0.77);
\draw[black!50!white] (2.3,-0.7)--(2.3,-0.89);
\draw[black!50!white] (2.35,-0.69)--(2.35,-0.87);
\draw[black!50!white] (2.4,-0.67)--(2.4,-0.85);
\draw[black!50!white] (2.45,-0.67)--(2.45,-0.81);
\draw[black!50!white] (2.5,-0.65)--(2.5,-0.77);
\draw[black!50!white] (2.55,-0.62)--(2.55,-0.72);

\draw[black!50!white] (0,-0.05)--(0,0.3);
\draw[black!50!white] (0.05,-0.05)--(0.05,0.3);
\draw[black!50!white] (0.1,-0.05)--(0.1,0.3);
\draw[black!50!white] (0.15,-0.05)--(0.15,0.3);
\draw[black!50!white] (0.2,-0.02)--(0.2,0.3);
\draw[black!50!white] (0.25,-0.02)--(0.25,0.3);
\draw[black!50!white] (0.3,0)--(0.3,0.28);
\draw[black!50!white] (0.35,0.05)--(0.35,0.27);
\draw[black!50!white] (0.4,0.1)--(0.4,0.27);
\draw[black!50!white] (0.45,0.15)--(0.45,0.25);
\draw[black!50!white] (-0.05,-0.05)--(-0.05,0.3);
\draw[black!50!white] (-0.1,-0.05)--(-0.1,0.3);
\draw[black!50!white] (-0.15,-0.05)--(-0.15,0.3);
\draw[black!50!white] (-0.2,-0.02)--(-0.2,0.3);
\draw[black!50!white] (-0.25,-0.02)--(-0.25,0.3);
\draw[black!50!white] (-0.3,0)--(-0.3,0.28);
\draw[black!50!white] (-0.35,0.05)--(-0.35,0.27);
\draw[black!50!white] (-0.4,0.1)--(-0.4,0.27);
\draw[black!50!white] (-0.45,0.15)--(-0.45,0.25);

\draw[  very thick](0,0.487) circle (0.55);
\draw[ very thick](-2.15,-0.4) circle (0.5);
\draw[ very thick](2.15,-0.4) circle (0.5);
\draw[thick](-4,3.5)--(4,3.5)--(4,-1.5)--(-4,-1.5)--cycle;
\draw[thick] (-3,0.5)..controls (-3,2) and (-1,2.7).. (0,2.7)..controls (1,2.7) and (3,2).. (3,0.5)..controls (3,-0.5) and (2.6,-0.75)..(2,-0.7).. controls (1.7,-0.7) and (1.4,-0.5)..(1,0).. controls (0.7,0.3) and (0.5,0.4)..(0,0.4) .. controls (-0.5,0.4) and (-0.7,0.3).. (-1,0).. controls (-1.4,-0.5) and (-1.7,-0.7)..(-2,-0.7).. controls (-2.6,-0.75) and (-3,-0.5)..(-3,0.5);
\draw[thick] (-3.3,0.5)..controls (-3.3,2.2) and (-1.1,3).. (0,3)..controls (1.1,3) and (3.3,2.2).. (3.3,0.5)..controls (3.3,-0.5) and (2.8,-1.1)..(2,-1).. controls (1.65,-0.95) and (1.4,-0.6)..(1,-0.15).. controls (0.7,0.2) and (0.5,0.3)..(0,0.3) .. controls (-0.5,0.3) and (-0.7,0.2).. (-1,-0.15).. controls (-1.4,-0.6) and (-1.65,-0.95)..(-2,-1).. controls (-2.8,-1.1) and (-3.3,-0.5)..(-3.3,0.5);
\draw(-2.53,-0.05).. controls (-1,1.4) and (1,1.4)..(2.53,-0.05);
\draw (-1.865,-0.83) .. controls (-1,0.2) and (1,0.2)..(1.865,-0.83);

\node at (-3,0.4)[label=east:$\mu_{0}$] {};
\node at (2.15,0.4)[label=east:$\mu_{1}$] {};
\node at (-0.4,1.3)[label=east:$\mu_{t}$] {};
\node at (3.3,3.2)[label=east:$\X$] {};
\node at (-3.4,2.2)[label=east:$\mathcal R^h$] {};
\node at (1.3,1.7)[label=east:$\mathcal R^k$] {};

\end{tikzpicture}

\caption{A visual representation of the property provided by the function $\Omega$, that is i) in Proposition \ref{prop:Omega}. In particular the shaded set in the center has $\mu_t$-mass bounded above by $\Omega(k,h,M,\delta)$, if the shaded set on the left and the one on the right have $\mu_0$-mass and $\mu_1$-mass (respectively) less than $\delta$ and $S_{N,\m}(\mu_0),S_{N,\m}(\mu_1) \le M$.  }
\label{fig:unifconvexity}
\end{center}
\end{figure}
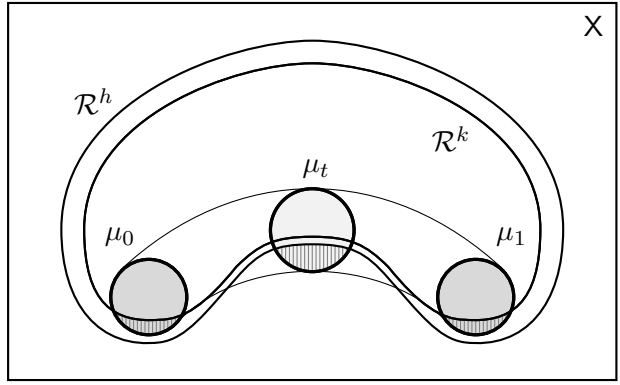

\begin{proof}
Notice that we can limit ourselves to the case when $0\leq\delta<\frac 14$, because we can simply put $\Omega(k,h,M,\delta)= 1$ if $\delta \geq \frac 14 $. Fixed $\mu_{0},\, \mu_{1} \in \PX_{2}(\X)$ such that $S_{N,\m}(\mu_0),S_{N,\m}(\mu_1)\le M$ and $\mu_0(\mathcal{R}^k), \mu_1(\mathcal{R}^k) \ge 1-\delta$, consider a $t$-middle point of a geodesic $\{\mu_t\}_{t \in [0, 1]}$, connecting $\mu_0$ and $\mu_1$. Call $\eta\in \PX(\text{Geo}(\X))$ the representation of $\{\mu_t\}_{t \in [0, 1]}$ and define 
\begin{equation*}
    \tilde \eta := \frac{1}{\eta (G)} \cdot \eta|_G \in \PX(\text{Geo}(\X)),
\end{equation*}
where 
\begin{equation*}
    G:= \{\gamma \in \text{Geo}(\X)\,:\, \gamma(0),\gamma(1)\in \mathcal R^k\}.
\end{equation*}
Notice that $\tilde \eta $ is actually well-defined, since our condition on $\mu_0$ and $\mu_1$ ensures that $\eta(G)\geq1-2\delta>0$. Moreover,
\begin{equation}\label{eq:eta}
    \eta=\eta(G)\cdot \tilde\eta + \bar \eta \quad \text{for some $\bar \eta\in\M(\text{Geo}(\X))$ with $\bar \eta(\text{Geo}(\X))\leq 2\delta$.}
\end{equation}
Observe that $\{\tilde\mu_t=(e_t)_\# \tilde\eta\}_{t\in[0,1]}$ is a Wasserstein geodesic connecting two measures $\tilde \mu_0$ and $\tilde\mu_1$, which are supported on $\mathcal R^k$ and satisfy
\begin{equation*}
\begin{split}
   \max \{ S_{N,\m}(\tilde\mu_0),S_{N,\m}(\tilde\mu_1)\}  &\leq  \bigg[ \frac{1}{\eta (G)}\bigg]^{1-\frac 1N} \max\{S_{N,\m}(\mu_0),S_{N,\m}(\mu_1)\}  \\ & \leq \bigg[ \frac{1}{1-2\delta}\bigg]^{1-\frac 1N} M \leq 2^{1-\frac 1N} M.
   \end{split}
\end{equation*}
Then, the $\omega$-uniform convexity of $(\X,\di,\m)$ ensures that, for every $h$,
\begin{equation*}
    \tilde\mu_t(\mathcal{R}^h)\geq 1- \omega(k,h,2^{1-\frac 1N} M).
\end{equation*}
Moreover, taking into account \eqref{eq:eta}, we can conclude that 
\begin{equation*}
    \mu_t(\mathcal R^h) \geq (1-2\delta)\cdot\tilde\mu_t(\mathcal{R}^h)\geq 1- \omega(k,h,2^{1-\frac 1N} M)- 2 \delta.
\end{equation*}
Therefore, to satisfy i), we can set
\begin{equation*}
    \Omega(k,h,M,\delta):=\omega(k,h,2^{1-\frac 1N} M)+ 2 \delta.
\end{equation*}
With this definition, ii) is a straightforward convergence of condition \eqref{eq:omega0} on $\omega(k,h,M)$.
\end{proof}

\section{Stability of $\CD$-condition}
The aim of the last section is to prove the main result,
\begin{thm}[Stability]\label{th:stab} 
Let $K\in\R$, $N\in (\infty,0)$, and $\big\{ (\X_n,\dis_n,\m_n, \mathcal S_{\mm_n}, p_n)\big \}_{n\in \N}\subset \mathcal{M}^{qR}_{\bar k}$ be a sequence of metric measure spaces converging to $(\X_\infty,\dis_\infty,\m_\infty, \mathcal S_{\mm_\infty}, p_\infty)\in \mathcal{M}^{qR}_{\bar k}$ in the  $\mathsf{iKRW}$-distance, for some $\bar k \in \N$. Assume further that:
\begin{itemize}
    \item[(i)] $(\X_n,\dis_n,\m_n)$ is a $\CD(K,N)$ space for every $n \in \N$;
    \item[(ii)] there exists $\omega:\N \times \N \times \R\to [0,1]$, 
    for which $(\X_n,\dis_n,\m_n)$ is $\omega$-uniformly convex, for every $n \in \N$; and
    \item[(iii)] $\sup_{i\in \N \cup \{\infty\}} \diam(\X_i,\di_i) < \pi\sqrt{\frac{1}{|K|}} $, if $K<0$.
\end{itemize}
Then $(\X_\infty, \dis_\infty, \m_\infty)$ is a $\CD(K,N)$ space.
\end{thm}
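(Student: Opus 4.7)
The plan is to adapt the classical Lott--Villani--Sturm stability scheme to the present setting, with the two distinctive obstacles being that the R\'enyi entropy $S_{N',\m}$ is only lower semicontinuous on the restricted class $\PX^{\mathcal S_\m}(\X)$ (by Proposition \ref{prop:almostlscofSN}) and that the $k$-th regular sets need not be geodesically convex. I would first apply Proposition \ref{pr:krw} to fix an effective realization $(Z,\dis_Z)$ with isometric embeddings $i_n\colon\X_n\hookrightarrow Z$, $n\in\N\cup\{\infty\}$, identifying each $\X_n$ with its image; the convergence then yields $p_n\to p_\infty$, $\bar\m_n^k\weakto\bar\m_\infty^k$ for every $k\geq\bar k$, and $(\dis_Z)_H(\mathcal S_{\m_n},\mathcal S_{\m_\infty})\to 0$. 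Given admissible marginals $\mu_i=\rho_i\,\m_\infty\in\PX_2^{ac}(\X_\infty)$ with $S_{N',\m_\infty}(\mu_i)<\infty$ for $i=0,1$ and $N'\in[N,0)$, a truncation/normalization argument (consistent with \eqref{def:Rk}) reduces the problem to marginals supported in $\mathcal R^k_\infty$ with densities bounded above and below: once \eqref{def:CD} is established in this case, a diagonal extraction in $k$ and the lower semicontinuity on $\PX^{\mathcal S_\m}$ recover the full statement.

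Next, using the weak convergence $\bar\m_n^k\weakto\bar\m_\infty^k$ and the separation of $\supp(\mu_i)$ from $\mathcal S_{\m_\infty}$, I would construct absolutely continuous approximants $\mu_{i,n}=\rho_{i,n}\,\m_n\in\PX_2^{ac}(\X_n)$ by partitioning $\mathcal R^k_\infty$ into fine Borel cells and reallocating mass according to the ratios $\m_n^k(A_j)/\m_\infty^k(A_j)$ on corresponding approximating cells in $\X_n$. A standard refinement argument yields $\mu_{i,n}\to\mu_i$ in $W_2$ inside $Z$, $S_{N',\m_n}(\mu_{i,n})\to S_{N',\m_\infty}(\mu_i)$, and $\mu_{i,n}(\mathcal R^{k-1}_n)\ge 1-\delta_n$ with $\delta_n\to 0$. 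Applying hypothesis (i) to $\X_n$ produces an optimal coupling $\pi_n\in\mathsf{Opt}(\mu_{0,n},\mu_{1,n})$ and a $W_2$-geodesic $\{\mu_{t,n}\}$ satisfying $S_{N',\m_n}(\mu_{t,n})\le T_{K,N'}^{(t)}(\pi_n\mid\m_n)$ for all $t\in[0,1]$ and $N'\in[N,0)$. Hypothesis (iii) keeps the distortion coefficients $\tau_{K,N'}^{(t)}$ uniformly bounded; Prokhorov's theorem (applied in $Z$, where all supports sit inside a common bounded set) then produces subsequential limits $\pi_n\weakto\pi\in\mathsf{Opt}(\mu_0,\mu_1)$ and $\mu_{t,n}\weakto\mu_t$, with $\{\mu_t\}$ a $W_2$-geodesic between $\mu_0$ and $\mu_1$ in $Z$.

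The hardest step, and the one for which hypotheses (ii)--(iii) are tailored, is passing to the limit in the $\CD$-inequality. Since $\PX^{\mathcal S_{\m_\infty}}$-lower semicontinuity is the only tool available, I must first show $\mu_t(\mathcal S_{\m_\infty})=0$. With $M:=\sup_{n,i}S_{N',\m_n}(\mu_{i,n})<\infty$, Proposition \ref{prop:Omega} gives
\[
\mu_{t,n}(\mathcal R^h_n)\ge 1-\Omega(k-1,h,M,\delta_n),\qquad \forall\,h\ge k-1,
\]
uniformly in $n$. Combining the Hausdorff convergence $\mathcal S_{\m_n}\to\mathcal S_{\m_\infty}$ and $p_n\to p_\infty$ with the weak convergence $\mu_{t,n}\weakto\mu_t$ via the Portmanteau theorem yields
\[
\mu_t(\mathcal R^h_\infty)\ge 1-\Omega(k-1,h,M,0)\qquad\forall\,h\ge k-1,
\]
and letting $h\to\infty$, by \eqref{eq:Omega}, gives $\mu_t(\mathcal S_{\m_\infty})=0$, so that $\mu_t\in\PX^{\mathcal S_{\m_\infty}}(\X_\infty)$. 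Proposition \ref{prop:almostlscofSN} applied to the joint convergence $(\m_n,\mu_{t,n})\to(\m_\infty,\mu_t)$ then yields $S_{N',\m_\infty}(\mu_t)\le\liminf_n S_{N',\m_n}(\mu_{t,n})$, while the uniform boundedness of the $\tau$-coefficients, together with the weak convergence $\pi_n\weakto\pi$ and the controlled convergence of the densities $\rho_{i,n}^{-1/N'}$ built in Step~2, gives $\limsup_n T_{K,N'}^{(t)}(\pi_n\mid\m_n)\le T_{K,N'}^{(t)}(\pi\mid\m_\infty)$. Chaining these two passages produces \eqref{def:CD} for the reduced marginals; the full statement then follows by letting the truncation parameter $k\to\infty$. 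The principal obstacle throughout is controlling the limit mass near $\mathcal S_{\m_\infty}$, and this is precisely the role of $\omega$-uniform convexity: it prevents the failure of weak lsc of $S_{N',\m}$ at the singular set from destroying the stability argument.
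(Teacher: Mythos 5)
Your outline tracks the paper's strategy at a high level (effective realization via Proposition \ref{pr:krw}, horizontal truncation to bounded supports, vertical approximation, $\omega$-uniform convexity to show $\mu_t(\mathcal S_{\m_\infty})=0$, then $\liminf$ on $S_{N',\m}$ and $\limsup$ on $T^{(t)}_{K,N'}$). However, two of the steps as you describe them contain genuine gaps that your sketch would not resolve.

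First, the vertical approximation. You propose constructing $\mu_{i,n}$ by ``partitioning $\mathcal R^k_\infty$ into fine Borel cells and reallocating mass according to ratios $\m_n^k(A_j)/\m_\infty^k(A_j)$.'' The paper instead uses the weighted marginalization map $P'_{n,k}$ induced by an optimal coupling $p_n^k\in\mathsf{Opt}(\bar\m^k_\infty,\bar\m^k_n)$, and this choice is not cosmetic. The entropy contraction $S_{N',\m^k_n}(P'_{n,k}\mu)\le [\m^k_n(\X)/\m^k_\infty(\X)]^{-1/N'}\,S_{N',\m^k_\infty}(\mu)$ (Lemma \ref{lemma:I4.19}) is what gives the uniform entropy bound $M+1$ used in the $\omega$-uniform-convexity step, and more importantly the explicit disintegration kernels $P_n(x),P'_n(y)$ are the ingredients needed to build the auxiliary coupling $\bar q^{(r)}_n\in\mathsf{Adm}(\mu_0^{(r)},\mu_1^{(r)})$ and carry out the Jensen-type estimate
\[
T_{K,N'}^{(t)}(\pi_n\mid\bar\m^k_n)\;\le\;T_{K,N'}^{(t)}(\bar q^{(r)}_n\mid\bar\m^k_\infty)\;+\;4Cr^{1-1/N'}\,W_2(\bar\m^k_\infty,\bar\m^k_n).
\]
Your claim that ``the uniform boundedness of the $\tau$-coefficients, together with the weak convergence $\pi_n\rightharpoonup\pi$ and the controlled convergence of the densities $\rho_{i,n}^{-1/N'}$, gives $\limsup_n T_{K,N'}^{(t)}(\pi_n\mid\m_n)\le T_{K,N'}^{(t)}(\pi\mid\m_\infty)$'' is where the argument breaks: the functional $T_{K,N'}^{(t)}(\pi_n\mid\m_n)$ involves densities with respect to the $n$-dependent reference measure $\m_n$, and weak convergence of plans on $Z$ does not by itself control the products $\tau_{K,N'}^{(t)}(\di)\,\rho_{i,n}^{-1/N'}$. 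One needs to first transfer $\pi_n$ to a coupling of the fixed measures $\mu_0^{(r)},\mu_1^{(r)}$ on $\X_\infty\times\X_\infty$; this is precisely what $\bar q^{(r)}_n$ does, and it is the content of Steps 3--6 of the paper, for which the cell construction offers no substitute.

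Second, tightness. You invoke Prokhorov ``in $Z$, where all supports sit inside a common bounded set.'' A bounded subset of a general Polish space need not be precompact, and the paper does not claim this; instead it obtains tightness of $(\mu_{t,n})_n$ from the uniform entropy bound together with the tightness of the normalized $k$-cuts $\bar\m^k_n$ (Lemma \ref{lem:tightness} / Corollary \ref{lem:tight}), after first approximating by measures $\mu^l_{t,n}$ supported in $\mathcal R^l_n$ and then letting $l\to\infty$ via the Cauchy estimate $W_2^2(\mu^i_t,\mu^j_t)\lesssim\omega(k,i-1,M+1)+\omega(k,j-1,M+1)$. This is an additional nontrivial layer that your use of Prokhorov bypasses.

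Finally, the sentence ``the full statement then follows by letting the truncation parameter $k\to\infty$'' compresses the entire Section \ref{section:CD}, which contains its own tightness argument (now with unbounded supports, requiring Lemma \ref{lem:geodesic} and Lemma \ref{lem:limitinl}) and a separate upper-semicontinuity proof for $T$. Your Portmanteau argument for $\mu_t(\mathcal S_{\m_\infty})=0$ is in the right spirit but, as written, confuses the directions: for the open sets $\mathcal R^h$ Portmanteau gives $\liminf_n\mu_{t,n}(\mathcal R^h_n)\le$ nothing useful directly because the open sets themselves move with $n$; the paper circumvents this by passing to the cut measures $\tilde\mu^l_{t,n}$, which are genuinely supported inside a fixed $\mathcal R^{l+1}_\infty$ in the limit.

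In short, you have correctly identified the role of the three hypotheses and the shape of the argument, but the proposal replaces the paper's weighted-marginalization machinery with a construction that does not supply the disintegration kernels the estimate on $T$ requires, and it asserts tightness and the diagonal limit without the lemmas needed to establish them.
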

As a matter of fact, Theorem \ref{th:stab} is concluded from the slightly more general statement below, since Proposition \ref{pr:krw} provides an effective realization for an $\mathsf{iKRW}$-converging sequence of metric measure spaces. Recall that the extrinsic convergence of metric measure spaces is presented in Definition \ref{de:ext}.  
\begin{thm}[Extrinsic Stability]\label{th:ext}
Let $K\in\R$, $N\in (\infty,0)$. Then the $\CD(K,N)$-condition is stable under the extrinsic convergence of metric measure spaces, granted conditions \emph{(i)-(iii)} from Theorem \ref{th:stab}. are satisfied by the converging sequence. 
\end{thm}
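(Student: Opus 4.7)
Fix $\mu_0 = \rho_0 \m_\infty, \mu_1 = \rho_1 \m_\infty \in \PX_2^{ac}(\X_\infty)$ with $S_{N',\m_\infty}(\mu_i) < \infty$ for some $N' \in [N,0)$, and let $(Z, \dis_Z)$ with isometric embeddings $\{i_n\}_{n \in \N \cup \{\infty\}}$ be the effective realization of the extrinsic convergence. The plan is to (a) transfer the marginals to each $\X_n$, (b) apply $\CD(K,N)$ there to obtain an optimal coupling $\pi_n$ and a midpoint $W_2$-geodesic $\{\mu_t^n\}$, (c) extract subsequential limits in $Z$ and pull them back to $\X_\infty$, and (d) pass to the limit in the CD inequality. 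The delicate point is (d): for $N' < 0$ the R\'enyi entropy $S_{N',\m}$ fails to be weakly lower semicontinuous on all of $\PX_2$; it is only so on the subspace $\PX^{\mathcal S_\m}$, so the whole proof hinges on showing that the limit midpoint $\mu_t^\infty$ does not charge $\mathcal S_{\mm_\infty}$.

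\textbf{Transfer of marginals.} Since $\mu_i(\mathcal S_{\mm_\infty}) = 0$ by absolute continuity, I first truncate the densities and restrict to a fixed regular set $\mathcal R_\infty^{k_0}$: dominated convergence on $s \mapsto s^{(N'-1)/N'}$ ensures the entropies of the truncated marginals approximate $S_{N',\m_\infty}(\mu_i)$. Using the weak convergence $(i_n)_\sharp \bar\m_n^{k_0} \weakto (i_\infty)_\sharp \bar\m_\infty^{k_0}$ in $Z$ together with convergence of total masses, I construct $\mu_0^n, \mu_1^n \in \PX_2^{ac}(\X_n)$ supported in $\mathcal R_n^{k_0}$, with bounded densities, such that $(i_n)_\sharp \mu_i^n \to (i_\infty)_\sharp \mu_i$ in $W_2(Z)$ and $S_{N',\m_n}(\mu_i^n) \to S_{N',\m_\infty}(\mu_i)$; concretely, by discretizing $\mu_i$ on a fine partition of $\mathcal R_\infty^{k_0}$ and reallocating mass according to the approximately conserved masses of $(i_n)_\sharp \m_n^{k_0}$ on the same partition.

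\textbf{Limits of plans and midpoints.} Assumption (iii) gives a uniform diameter bound when $K < 0$, so the $\tau_{K,N'}^{(t)}$-coefficients are bounded and continuous on the relevant range, and the supports of $(i_n)_\sharp \mu_i^n$ are uniformly bounded in $Z$; this ensures tightness of $\{(i_n)_\sharp \pi_n\}$ and $\{(i_n)_\sharp \mu_t^n\}$ in $W_2(Z)$. Along a subsequence the limits pull back via $i_\infty$ to $\pi_\infty \in \mathsf{Opt}(\mu_0, \mu_1)$ and a $W_2$-geodesic $\{\mu_t^\infty\}$ in $\X_\infty$. The right-hand side $T_{K,N'}^{(t)}(\pi_n \mid \m_n)$ converges to $T_{K,N'}^{(t)}(\pi_\infty \mid \m_\infty)$ by continuity of $\tau_{K,N'}^{(t)}$ and convergence of the marginals' entropies.

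\textbf{The main obstacle.} It remains to show $\mu_t^\infty(\mathcal S_{\mm_\infty}) = 0$, after which the joint weak lower semicontinuity of $S_{N', \cdot}(\cdot)$ on $\M_S(\X) \times \PX^{\mathcal S}(\X)$ proved earlier yields $S_{N',\m_\infty}(\mu_t^\infty) \le \liminf_n S_{N',\m_n}(\mu_t^n)$ and closes the argument. Let $M$ be a uniform upper bound on $S_{N',\m_n}(\mu_i^n)$, and hence, through the CD inequality in $\X_n$, on $S_{N',\m_n}(\mu_t^n)$. The uniform $\omega$-convexity (assumption (ii)) and Proposition \ref{prop:Omega} with $\delta = 0$ give
\begin{equation*}
\mu_t^n(\X_n \setminus \mathcal R_n^h) \le \Omega(k_0, h, M, 0), \qquad \text{with } \Omega(k_0, h, M, 0) \to 0 \text{ as } h \to \infty, \text{ uniformly in } n.
\end{equation*}
The crucial observation is that on $\mathcal R_n^h$ the cut measure $\m_n^{h+1}$ coincides with $\m_n$, so for any closed neighborhood $F_\varepsilon$ of $i_\infty(\mathcal S_{\mm_\infty})$ in $Z$ with $\varepsilon < 2^{-(h+2)}$ one has $(i_\infty)_\sharp \m_\infty^{h+1}(F_\varepsilon) = 0$, and Portmanteau on the closed set $F_\varepsilon$ yields $\limsup_n (i_n)_\sharp \m_n^{h+1}(F_\varepsilon) = 0$. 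Combining this with the H\"older-type bound $\mu_t^n(A) \le S_{N',\m_n}(\mu_t^n)^{N'/(N'-1)} \m_n(A)^{1/(1-N')}$ (valid because $(N'-1)/N' > 1$ for $N' < 0$) applied to $A = i_n^{-1}(F_\varepsilon) \cap \mathcal R_n^h$ gives $(i_n)_\sharp \mu_t^n(F_\varepsilon \cap i_n(\mathcal R_n^h)) \to 0$, whence $(i_n)_\sharp \mu_t^n(F_\varepsilon) \le \Omega(k_0, h, M, 0) + o(1)$. Passing to the weak limit on the open interior of $F_\varepsilon$ gives $\mu_t^\infty(\mathcal S_{\mm_\infty}) \le \Omega(k_0, h, M, 0)$ for every $h$, so $\mu_t^\infty(\mathcal S_{\mm_\infty}) = 0$. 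This step, which really exploits the uniform quantitative $\omega$-convexity together with the quasi-Radon structure of $\m_\infty$, is precisely what allows the extrinsic setting to dispense with the Hausdorff control on singular sets of the intrinsic setting and is the main technical hurdle of the proof.
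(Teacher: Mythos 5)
Your overall architecture matches the paper's: transfer marginals vertically to each $\X_n$, invoke the $\CD(K,N)$ inequality there, extract subsequential limits in $Z$, and pass to the limit using semicontinuity of $S_{N',\cdot}(\cdot)$ and $T^{(t)}_{K,N'}(\cdot\mid\cdot)$, with the $\omega$-uniform convexity supplying the crucial control on the mass that inner points of geodesics may place near the singular set. Your identification of the bottleneck -- proving $\mu_t^\infty(\mathcal{S}_{\mm_\infty})=0$ so that Proposition~\ref{prop:almostlscofSN} applies -- is also the paper's. Moreover, your route to that last fact (Portmanteau applied to the cuts $\m_n^{h+1}$ and a H\"older interpolation $\mu_t^n(A)\le S_{N',\m_n}(\mu_t^n)^{N'/(N'-1)}\m_n(A)^{1/(1-N')}$, combined with the uniform $\omega$-bound outside $\mathcal R_n^h$) is a genuinely different and arguably cleaner argument than the paper's Claim~2 in Step~7, which instead decomposes $\mu_{t,n}=\tilde\mu_{t,n}^l+\bar\mu_{t,n}^l$ and passes the masses of the cut measures to the limit. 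The H\"older/Portmanteau trade-off you exploit is a nice observation that the paper does not make explicit.

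That said, your write-up has two genuine gaps that the paper resolves with substantial work. First, the convergence $T^{(t)}_{K,N'}(\pi_n\mid\m_n)\to T^{(t)}_{K,N'}(\pi_\infty\mid\m_\infty)$ is asserted ``by continuity of $\tau$ and convergence of the marginals' entropies,'' but Proposition~\ref{prop:continuityofT} only gives continuity of $T$ in the plan with \emph{fixed} marginals and \emph{fixed} reference measure. Here all three -- plan, marginals, and reference -- vary with $n$, and the densities $\rho_{i,n}^{-1/N'}$ entering $T$ have no a priori convergence. The paper spends Steps~3--6 constructing an auxiliary coupling $\bar q^{(r)}_n$ on $\X_\infty$, bounding $T^{(t)}_{K,N'}(\pi_n\mid\bar\m_n^k)$ by $T^{(t)}_{K,N'}(\bar q^{(r)}_n\mid\bar\m_\infty^k)+O(\varepsilon)$ via Jensen's inequality, showing $\bar q^{(r)}_n$ asymptotically realizes $W_2^2(\mu_0^{(r)},\mu_1^{(r)})$, and only then invoking Proposition~\ref{prop:continuityofT}; you cannot bypass this. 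Second, you fix a regular-set index $k_0$ and construct $\mu_i^n$ supported in $\mathcal R_n^{k_0}$ with $(i_n)_\sharp\mu_i^n\to(i_\infty)_\sharp\mu_i$. If $\mu_i$ is not supported in $\mathcal R_\infty^{k_0}$, these two requirements are inconsistent; the weak limit of measures supported near $i_\infty(\mathcal R_\infty^{k_0})$ cannot charge the complement. What you actually prove, once the $T$-gap is filled, is Theorem~\ref{thm:approxCD} (the approximate CD-condition for marginals with bounded support in a fixed regular set). The passage to general $\mu_0,\mu_1\in\PX_2^{ac}(\X_\infty)$ requires a separate horizontal-limit argument in $k_0\to\infty$, with a second round of tightness and semicontinuity on the doubly-approximated geodesics; the paper handles this in Section~\ref{section:CD} via Lemma~\ref{lem:geodesic} and, critically for the extrinsic setting where Hausdorff distance between singular sets is not controlled, Lemma~\ref{lem:limitinl}, which quantifies how the transferred marginals concentrate on $\mathcal R_n^{k+1}$. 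A minor further point: uniform boundedness of supports does not by itself give tightness of $\{(i_n)_\sharp\mu_t^n\}$ in a non-locally-compact Polish space; you need the uniform entropy bound via the analogue of Corollary~\ref{lem:tight}.
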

The following is an immediate result of Theorem \ref{th:ext}.
\begin{cor}
Let $K\in\R$, $N\in (\infty,0)$, and $\big\{ (\X_n,\dis_n,\m_n, \mathcal S_{\mm_n}, p_n)\big \}_{n\in \N}\subset \mathcal{M}^{qR}_{\bar k}$ be a sequence converging to $(\X_\infty,\dis_\infty,\m_\infty, \mathcal S_{\mm_\infty}, p_\infty)\in \mathcal{M}^{qR}_{\bar k}$, in the extrinsic or intrinsic manner. Assume that the regular sets $\mathcal{R}_n^k$ are geodesically convex, for all $k,n\in\N$. If for every $n \in \N$ the space $(\X_n, \dis_n, \m_n)$ satisfies the $\CD(K,N)$ condition (with $\sup_{i\in \N \cup \{\infty\}} \diam(\X_i,\di_i) < \pi|K|^{-1/2} $, if $K<0$), then also $(\X_\infty, \dis_\infty, \m_\infty)$ is a $\CD(K,N)$ space.

\end{cor}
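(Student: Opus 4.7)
The plan is to deduce the corollary directly from Theorems~\ref{th:stab} and~\ref{th:ext}: all the assumptions of those stability results are either explicitly stated in the corollary or are trivially satisfied once a common modulus $\omega$ witnessing the $\omega$-uniform convexity of the whole sequence is produced. The only nontrivial point is therefore to extract such a modulus from the geodesic convexity of the regular sets.

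Concretely, I would define $\omega\colon\N\times\N\times\R^+\to[0,1]$ by $\omega(k,h,M):=0$ if $h\ge k$ and $\omega(k,h,M):=1$ otherwise, and then check that every $(\X_n,\dis_n,\m_n)$ is $\omega$-uniformly convex. Fix $n$, $k\in\N$, and $\mu_0,\mu_1\in\PX_2(\X_n)$ with $\supp(\mu_i)\subset\mathcal R_n^k$. Any $W_2$-geodesic $\{\mu_t\}_{t\in[0,1]}$ joining them admits, via Lisini's representation theorem, a lift $\eta\in\PX(\geo(\X_n))$ such that $\mu_t=(e_t)_\sharp\eta$ and $\eta$ is concentrated on geodesics $\gamma$ with endpoints in $\supp(\mu_0)\cup\supp(\mu_1)\subset\mathcal R_n^k$. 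Geodesic convexity of $\mathcal R_n^k\subset\X_n$ forces each such $\gamma$ to lie entirely in $\mathcal R_n^k$, hence $\mu_t(\mathcal R_n^k)=1$ for every $t\in[0,1]$; since $\mathcal R_n^k\subset\mathcal R_n^h$ for $h\ge k$, this gives $\mu_t(\mathcal R_n^h)\ge 1-\omega(k,h,M)$, and the required condition $\lim_{h\to\infty}\omega(k,h,M)=0$ is immediate from the definition of $\omega$. The same $\omega$ works for every $n$, so the sequence is uniformly $\omega$-convex.

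With $\omega$-uniform convexity in hand, assumptions (i)--(iii) of Theorem~\ref{th:stab} (and of Theorem~\ref{th:ext}) are all verified, so the appropriate stability theorem applies---Theorem~\ref{th:stab} in the intrinsic case and Theorem~\ref{th:ext} in the extrinsic one---yielding that $(\X_\infty,\dis_\infty,\m_\infty)$ is a $\CD(K,N)$ space. The only conceptual subtlety is the passage from pointwise geodesic convexity of $\mathcal R_n^k\subset\X_n$ to the corresponding Wasserstein-level statement, which is the standard Lisini argument; beyond this, no further analytic work is needed, because the heavy content---lower semicontinuity of $S_{N,\m}$ on $\PX^{\mathcal S_\m}(\X)$ and the uniform control of mass around singular sets encoded by the modulus---is already packaged inside Theorems~\ref{th:stab} and~\ref{th:ext}.
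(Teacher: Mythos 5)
Your argument is correct and matches the approach the paper intends: the remark preceding the definition of $\omega$-uniform convexity already notes that geodesic convexity of $\mathcal R^k$ lets one take $\omega(\ell,h,M)=0$ for $\ell\le k\le h$, and the corollary is stated as an immediate consequence of Theorem~\ref{th:ext}. A minor naming point: the lift of a $W_2$-geodesic to a plan on $\geo(\X)$ is the standard superposition result for geodesics in Wasserstein space rather than Lisini's theorem (which concerns general absolutely continuous curves), but this does not affect the argument.
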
 

When compared with \emph{Stability Theorem} \ref{th:stab}, the advantage of \emph{Extrinsic Stability Theorem} \ref{th:ext}, is that no assumptions have to be made, regarding the limiting behaviour of singular sets along the sequence. This contrasts with  the \emph{Stability Theorem}, which is stated in terms of the intrinsic $\dint$-convergence, since the $\dint$-distance controls the Hausdorff distance between singular sets. Therefore, with the latter Theorem one gains some flexibility to study the aforementioned sets; nevertheless, there is a price to pay in exchange. Namely, it is  necessary to be in possession of an effective realization for the convergence. In this sense, we find that both results complement very well each other. \\

We fix some notation prior to outlining the argument in the proof of Theorem \ref{th:ext}. \\ In the remainder, we use the adjective \emph{horizontal} to refer to the approximations we construct inside a fixed space $\mathbb{X}_n$, for $n\in\N$. Respectively, we denote as \emph{vertical} approximations  those approximations made over the sequence $\mathbb{X}_n\to \mathbb{X}_\infty$, when we let $n\to \infty$. 
%
Our objective is, naturally,  to demonstrate, for every pair of measures $\mu_0,\mu_1\in \mathscr{P}_2^{ac}(\X_\infty)$, the existence of a 2-Wasserstein geodesic $\{\mu_{t}\}_{t \in [0, 1]} \subset \PX_2(\X_\infty)$ and an optimal plan $q\in \mathsf{Opt}(\mu_0,\mu_1)$, for which the curvature-dimension inequality \eqref{def:CD} is satisfied. 
We accomplish this by following the next steps. \\

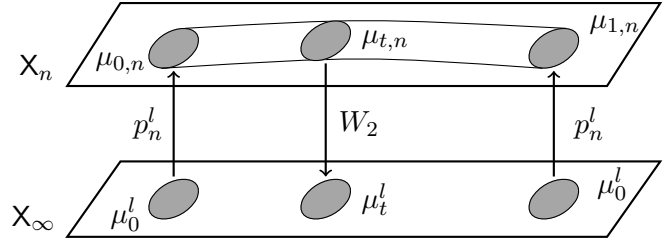
\begin{figure}
\begin{center}

\begin{tikzpicture} 

\draw[thick](-3.9,3)--(3.2,3)--(3.9,4.1)--(-3.2,4.1)--cycle;
\draw[thick](-3.9,1)--(3.2,1)--(3.9,2)--(-3.2,2)--cycle;
\filldraw[black!35!white] (-2.8,3.5).. controls (-2.6,3.85) and (-2.05,3.85)..(-2.2,3.5)..controls (-2.4,3.15) and (-2.95,3.15).. (-2.8,3.5);
\draw(-2.8,3.5).. controls (-2.6,3.85) and (-2.05,3.85)..(-2.2,3.5)..controls (-2.4,3.15) and (-2.95,3.15).. (-2.8,3.5);
\filldraw[black!35!white] (2.2,3.5).. controls (2.4,3.85) and (2.95,3.85)..(2.8,3.5)..controls (2.6,3.15) and (2.05,3.15).. (2.2,3.5);
\draw (2.2,3.5).. controls (2.4,3.85) and (2.95,3.85)..(2.8,3.5)..controls (2.6,3.15) and (2.05,3.15).. (2.2,3.5);
\draw (-2.66,3.235)..controls (0,3.4)..  (2.35,3.235);
\draw (-2.35,3.765) ..controls (0,3.9).. (2.66, 3.765);
\filldraw[black!35!white] (-2.8,1.5).. controls (-2.6,1.85) and (-2.05,1.85)..(-2.2,1.5)..controls (-2.4,1.15) and (-2.95,1.15).. (-2.8,1.5);
\draw(-2.8,1.5).. controls (-2.6,1.85) and (-2.05,1.85)..(-2.2,1.5)..controls (-2.4,1.15) and (-2.95,1.15).. (-2.8,1.5);
\filldraw[black!35!white] (2.2,1.5).. controls (2.4,1.85) and (2.95,1.85)..(2.8,1.5)..controls (2.6,1.15) and (2.05,1.15).. (2.2,1.5);
\draw (2.2,1.5).. controls (2.4,1.85) and (2.95,1.85)..(2.8,1.5)..controls (2.6,1.15) and (2.05,1.15).. (2.2,1.5);
\filldraw[black!35!white] (-0.8,3.6).. controls (-0.6,3.95) and (-0.05,3.95)..(-0.2,3.6)..controls (-0.4,3.25) and (-0.95,3.25).. (-0.8,3.6);
\draw (-0.8,3.6).. controls (-0.6,3.95) and (-0.05,3.95)..(-0.2,3.6)..controls (-0.4,3.25) and (-0.95,3.25).. (-0.8,3.6);
\filldraw[black!35!white] (-0.8,1.5).. controls (-0.6,1.85) and (-0.05,1.85)..(-0.2,1.5)..controls (-0.4,1.15) and (-0.95,1.15).. (-0.8,1.5);
\draw (-0.8,1.5).. controls (-0.6,1.85) and (-0.05,1.85)..(-0.2,1.5)..controls (-0.4,1.15) and (-0.95,1.15).. (-0.8,1.5);
\draw[thick,->](-2.5,1.8)--(-2.5,3.2);
\draw[thick,->](2.5,1.8)--(2.5,3.2);
\draw[thick,<-](-0.5,1.8)--(-0.5,3.3);
\node at (-4.9,1.25)[label=east:$\X_\infty$] {};
\node at (-4.8,3.25)[label=east:$\X_n$] {};
\node at (-3.8,3.3)[label=east:$\mu_{0,n}$] {};
\node at (-3.6,1.3)[label=east:$\mu_{0}^l$] {};
\node at (-3.3,2.5)[label=east:$p_n^l$] {};
\node at (2.5,2.5)[label=east:$p_n^l$] {};
\node at (2.7,3.8)[label=east:$\mu_{1,n}$] {};
\node at (2.8,1.7)[label=east:$\mu_{0}^l$] {};
\node at (-0.3,3.6)[label=east:$\mu_{t,n}$] {};
\node at (-0.3,1.5)[label=east:$\mu_{t}^l$] {};
\node at (-0.6,2.5)[label=east:$W_2$] {};

\end{tikzpicture}

\caption{Approximation procedure for the midpoints}
\label{fig:example}
\end{center}
\end{figure}

\begin{enumerate}

\item We assume that $\supp(\mu_i)\subseteq\mathcal{R}^k_\infty$, for $i\in\{0,1\}$ and fixed $k\in\N$, and construct a geodesic $(\mu_t)_{t \in [0,1]}\subset \PX_2^{ac}(\X_\infty)$ between $\mu_0$ and $\mu_1$ and an optimal plan $q\in\mathsf{Opt}(\mu_0,\mu_1)$, for which the $\CD$-inequality \eqref{def:CD} is fulfilled, relying on the following vertical approximation argument. (Above, and in the following, we write $\mathcal{R}^k_n\subset \X_n$ to denote  the $k$-th $\m_n$-regular set of $\X_n$, $k$-regular set in short, for $n\in\N \cup \{\infty\}$.)

The assumption on the supports allows us to approximate vertically the marginal measures $\mu_0$ and $\mu_1$, by employing a canonical map between Wasserstein spaces $P_n^{k}:\PX_{2}^{ac}(\X_\infty)\to \PX_2^{ac}(\X_n)$, induced via an optimal coupling of the normalized reference measures $p_n^k\in\mathsf{Opt}(\bar{\m}_\infty^k,\bar{\m}_n^k)$. Let us denote these approximations by $(\mu_{n,i})_{n\in\N}$, for $i\in\{0,1\}$. 

At this point we construct the pair $(\mu_t,\,q)$ as the vertical limits of a sequence of geodesics  $(\mu_{n,t})_{t\in[0,1]} \subset \PX_2^{ac}(\X_n)$, between  $\mu_{n,0}$ and $\mu_{n,1}$, and a sequence of optimal plans $q_n \in$ $\mathsf{Opt}(\mu_{n,0},\mu_{n,1})$, both indexed by $n\in\N$. Furthermore, we provide these sequences using the $\CD$-hypothesis on $(\X_n,\di_n,\m_n)$, so in particular we can guarantee that, for $n\in\N$, each pair $\left ( \mu_{n,t}, \;q_n \right)$ satisfies the $\CD$-inequality, for every $t\in[0,1]$.

After demonstrating the lower semicontinuity $S_{\m_\infty}(\mu_t)\leq \liminf S_{\m_n}(\mu_{n,t})$ and upper semicontinuity  $\limsup T_{K,N}^{(t)} (q_n|\m_n) \leq T_{K,N}^{(t)} (q|\m_\infty)$, along our sequences as $n\to\infty$, we conclude the validity of the $\CD$-inequality \eqref{def:CD} for $(\mu_t,\,q)$, for every $t\in[0,1]$.

(Look at Figure \ref{fig:example} for a schematic representation)
\item Additionally, we produce, for $i\in\{0,1\}$, favorable horizontal approximations $(\mu_i^k)_{k\in \N}\subset \PX_2^{ac}(\X_\infty)$, $W_2$-converging to $\mu_i$, whose supports satisfy $\supp(\mu_i^k)\subseteq\mathcal{R}^k_\infty$, for every $k\in\N$. 
Subsequently, by approximating with the pairs constructed in Step 1, we show the existence of the sought geodesic $\{\mu_{t}\}_{t \in [0, 1]} \subset \PX_2^{ac}(\X_\infty)$ and optimal plan $q\in \mathsf{Opt}(\mu_0,\mu_1)$. After showing that the appropriate semicontinuity of the functionals $S_{N',\m_\infty}(\cdot)$ and\\ $T_{K,N'}^{(t)} (\cdot|\m_\infty)$ hold, we are able to verify the $\CD$-condition and conclude.
\end{enumerate}

Let us now indicate where the complications arise. \\By now  the rough idea behind a proof of geometric stability in Wasserstein spaces is well known. More precisely, for well-behaved measures, as a first step one shows  that $\PX(\X_\infty)$ inherits the $\CD$-convexity from the stability of the geometry of $\PX(\X_n)$ under vertical approximations. Afterwards, one can conclude the same property for more general measures, by approximating them horizontally with these well-behaved measures. On the way, the two main challenges that one encounters are, of course, semicontinuity  and precompactness. 

Inspired by techniques used in \cite{Lott-Villani09}, we are able to provide a Legendre-type representation formula for the entropy, which handles one of the functionals in question. At this point, inspired by arguments of Sturm in \cite{Sturm06II}, we conclude  the upper semicontinuity of $T_{K,N'}^{(t)}(\cdot|\m_\infty)$. 

The very challenging obstacles appear then when approaching the problem of the existence of limits and of the convergence of inner points of geodesics. The  general class of metric measure structures which we consider is not even locally  compact, while the ``wildness'' of quasi-Radon measures prohibit us to control the reference measures in any uniform way, preventing us to recover any tightness results from them. Thus,  to overcome these problems original arguments have to be provided here.\\ The crucial ingredient to get back into track will be the control of the mass given by Wasserstein geodesics to $\m$-singular sets when taking the limits and this can be extracted from the $\omega$-uniform convexity.\\

We advance to the presentation of some auxiliary results in the next Section. The vertical approximation argument is presented afterwards in Section \ref{section:approx}, while Step 2. above is discussed in the final Section  \ref{section:CD}.

\subsection{Auxiliary Results}\label{sec:pre}

Collected in this Section are preliminary results, which are needed to prove Theorem \ref{th:stab}. 
We present in a first part results which prove useful in the approximation of $t$-midpoints of geodesics, and semicontinuity results are included in a consecutive Subsection. 

\subsubsection{Approximation and compactness results}
We start by exhibiting the existence of well-behaved horizontal approximations to measures. Recall that for a reference measure $\m \in \M^{qR}(\X)$, $\m^k$ is its $k$-cut defined by \eqref{eq:k-cut}.
\begin{lemma}\label{prop:mk}
Let $(\X,\dis,\m)$ be an metric measure space, $\m \in \M^{qR}(\X)$, and $\mu \in \PX_2^{ac}(\X)$. Then:
\begin{enumerate}
\item[$1)$] The sequence of measures  $\{\m^k\}_{k\in\N}$ approximates $\m$, in the sense of quasi-Radon measures: 
\begin{equation*}
\m^k\weakto \m.
\end{equation*}
\item[$2)$] There exists a sequence of measures $\{ \mu^k \}_{k \in \N} \subset \PX_2^{ac}(\X)$, $\mu^k \ll \m^k$ for any $k \in \N$, converging to $\mu$ in the $W_2$-distance. In particular, for every $k\in\N$, we have that $\supp(\mu^k)\subseteq \mathcal{R}^k:= B_{2^{k+1}}(p)\setminus \mathcal{N}_{2^{-(k+1)}}(\mathcal{S^\m})$, thus these measures have bounded support.
\end{enumerate}
\end{lemma}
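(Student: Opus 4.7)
For Part 1, the plan is to take any test function $f \in C_{bs}(\X) \cap C_{\mathcal S_\m}(\X)$. By definition $f$ has bounded support, say $\supp(f) \subseteq \overline{B_R(p)}$, and vanishes on some open neighborhood $\mathcal N_\epsilon(\mathcal S_\m)$ of the singular set. Inspecting the definition of $f^k$ in \eqref{eq:k-cut}, both cut-off factors attain the value $1$ at a point $x$ provided $\di(x,p) \le 2^k$ and $\di(x,\mathcal S_\m) \ge 2^{1-k}$. Choosing $k$ large enough so that $2^k > R$ and $2^{1-k} < \epsilon$, we obtain $f^k \equiv 1$ on $\supp(f)$, hence
\begin{equation*}
\int f\,\de\m^k = \int f\, f^k\,\de\m = \int f\,\de\m
\end{equation*}
for all such $k$, which yields the weak convergence $\m^k \weakto \m$.

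For Part 2, my first step would be to check that $\mu(\mathcal R^k) \to 1$ as $k \to \infty$. Indeed, $\mu \ll \m$ combined with $\m(\mathcal S_\m) = 0$ (Proposition \ref{pr:radon}) forces $\mu(\mathcal S_\m) = 0$; the finite second moment hypothesis gives $\mu(\X \setminus B_{2^{k+1}}(p)) \to 0$; and the monotone intersection $\bigcap_k \overline{\mathcal N_{2^{-(k+1)}}(\mathcal S_\m)} = \mathcal S_\m$, so continuity of $\mu$ along decreasing sets delivers the claim. I would then set, for $k$ large enough so that $\mu(\mathcal R^k)>0$,
\begin{equation*}
\mu^k := \frac{1}{\mu(\mathcal R^k)}\, \mathbbm{1}_{\mathcal R^k}\, \mu.
\end{equation*}
Writing $\mu = \rho\,\m$ with $\rho$ provided by Theorem \ref{thm:RadonNikodym}, this rewrites as $\mu^k = (\rho/\mu(\mathcal R^k))\, \mathbbm{1}_{\mathcal R^k}\, \m$, which is supported in $\mathcal R^k$ and has bounded support. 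Since $\supp(\m^k) = \mathcal R^k$ (so $f^k > 0$ $\m$-a.e.\ on $\mathcal R^k$), the measures $\m^k$ and $\m|_{\mathcal R^k}$ are mutually absolutely continuous, and hence $\mu^k \ll \m^k$.

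It remains to verify $W_2(\mu^k, \mu) \to 0$, for which I would show weak convergence together with convergence of the second moments. Weak convergence is immediate from the total-variation estimate $\|\mu^k - \mu\|_{\mathrm{TV}} \le (1-\mu(\mathcal R^k))/\mu(\mathcal R^k) + \mu(\X \setminus \mathcal R^k) \to 0$. Convergence of the second moments $\int \di^2(\cdot,p)\,\de\mu^k \to \int \di^2(\cdot,p)\,\de\mu$ then follows from dominated convergence, since $\di^2(\cdot,p) \in L^1(\mu)$ by assumption, the integrand $(1/\mu(\mathcal R^k))\mathbbm{1}_{\mathcal R^k}\di^2(\cdot,p)$ converges pointwise $\mu$-a.e.\ to $\di^2(\cdot,p)$, and for $k$ large with $\mu(\mathcal R^k) \ge 1/2$ it is dominated by $2\di^2(\cdot,p)$.

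The main subtle point I anticipate lies in reconciling the containment $\supp(\mu^k) \subseteq \mathcal R^k$ with the fact that supports are closed while $\mathcal R^k$ is generally neither open nor closed; this should be handled in line with the paper's convention $\supp(\m^k) = \mathcal R^k$, or else by replacing $\mathcal R^k$ by a slightly smaller exhausting family of sets on which $f^k$ is bounded away from zero.
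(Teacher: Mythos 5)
Part 1 is correct and essentially matches the paper's (one-line) argument.

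Part 2, however, contains a genuine gap in the absolute-continuity claim $\mu^k \ll \m^k$. You define $\mu^k := \mu(\mathcal R^k)^{-1}\,\mathbbm{1}_{\mathcal R^k}\,\mu$ and justify $\mu^k \ll \m^k$ by asserting that $f^k > 0$ $\m$-a.e.\ on $\mathcal R^k$, hence that $\m^k$ and $\m|_{\mathcal R^k}$ are mutually absolutely continuous. But that is false. From the definition \eqref{eq:k-cut}, $f^k(x) = 0$ whenever $\dis(x,\mathcal S_\m) \le 2^{-k}$ (since then $f_{\mathsf{cut}}(\dis(x,\mathcal S_\m)\,2^k) = 1$), whereas $\mathcal R^k$ only excises the smaller collar $\mathcal N_{2^{-(k+1)}}(\mathcal S_\m)$. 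So the annular region $A_k := B_{2^{k+1}}(p)\cap\{\,2^{-(k+1)} \le \dis(\cdot,\mathcal S_\m) \le 2^{-k}\,\}$ lies inside $\mathcal R^k$ but satisfies $f^k \equiv 0$ on it, hence $\m^k(A_k) = 0$. If $\mu$ (equivalently $\rho$) assigns positive mass to $A_k$ --- which nothing in the hypotheses rules out --- then $\mu^k(A_k) > 0 = \m^k(A_k)$, so $\mu^k \not\ll \m^k$. Note that the paper's remark $\supp(\m^k) = \mathcal R^k$ is a statement about closures and does not give $f^k > 0$ $\m$-a.e.\ on $\mathcal R^k$, so it cannot rescue the argument.

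The paper avoids this by cutting $\mu$ with the \emph{same} continuous function $f^k$: it takes $\mu^k := c^k f^k \mu$ with $c^k$ a normalization constant. Then, writing $\mu = \rho\,\m$, one has $\mu^k = (c^k \rho)\,f^k\m = (c^k\rho)\,\m^k$, and absolute continuity is immediate. The rest of your Step 2 --- the observation that $\mu(\mathcal R^k)\to 1$, the bounded support, and the dominated-convergence argument for $W_2$-convergence --- is sound and carries over essentially verbatim once $\mathbbm{1}_{\mathcal R^k}/\mu(\mathcal R^k)$ is replaced by $c^k f^k$ (using $f^k \to 1$ $\mu$-a.e.\ and $c^k \to 1$ as the dominating machinery, which is exactly the paper's route).
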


\begin{proof}
We start noticing that $1)$ follows directly from the definition of weak convergence because $\supp(f)\subseteq \mathcal{R}^k$ holds eventually, for any function $f\in C_{bs}(X)\cap C_S(X)$.

As for $2)$, let us consider $\mu^k := c^k f^k \mu$, where $f^k$ is the cut-off function defined in \eqref{eq:fk}, $c^k$ is a normalization constant providing $\mu^k(X)=1$, and $k$ is a sufficiently large number, the estimate of which will be determined along the proof. Clearly, $\mu^k \ll \m^k$. At this point we recall that $\supp (f^k) \subset \mathcal R^k$ with $0 \le f^k \le 1$ for any $k \in \N$, and that $f^k \to 1$ pointwise $\m$-almost everywhere as $k \to \infty$. As a consequence,  $f^k \to 1$ pointwise $\mu$-almost everywhere, and $\supp(\mu^k)$ is bounded $\supp(\mu^k)\subset \supp(\m^k)\subset \mathcal{R}^k$.\\ 
By choosing $k_0$ sufficiently large, we can assume that $\supp(\mu)\cap \mathcal{R}^k \not= \emptyset$, for all $k \ge k_0$. (Although the particular choice of $k_0$ does depend on $\mu$, there is no loss of generality, since such a bound exists for every measure $\mu$ and we are interested exclusively in the limit behavior of $\mu^k$.) Let $c^k:= (\int_X f^k \, \d\mu)^{-1}$: in view of the previous remarks, $c^k$ is well-defined and monotone decreasing in $k \in \N$ and $\lim_{k\to \infty}c^k=1$. \\
We can then conclude using the dominated convergence theorem, recalling that a sequence of measures is $W_2$-convergent if and only if it is weakly convergent and the sequence of its second moments is also convergent.
\end{proof}

Recall that, given a coupling $p\in\mathsf{Adm}(\m_X,\m_Y)$, between probability measures $\m_X$ on $(\X,\di_\X)$ and $\m_Y$ on $(\mathrm{Y},\di_\mathrm{Y})$, we can consider a canonical map between its Wasserstein spaces $P:\PX_{2}^{\ll \m_\X}(\X)\to \PX_2^{\ll \m_\mathrm{Y}}(\mathrm{Y})$, which is induced by pushing forward weighted versions of the coupling $p$. We refer to these maps as \emph{Weighted Marginalization}s and we will use them  to produce vertical approximations. 

In detail, for each  $n, k\in \N$ we consider (and fix) an optimal coupling $p_n^k \in  \mathsf{Opt}({\bar\m}_\infty^k,{\bar\m}_n^k)$. Here $\bar\n = \n(\mathsf{Y} )^{-1}\n$ denotes the normalization of a finite measure $\n\in\M(\mathsf{Y})$.
We then write $\{P_{n,k}(x)\}_{x\in Z} \in \PX(x\times \X_n)  \approx\PX(\X_n)$ and $\{P'_{n,k}(y)\}_{y\in Z}\in \PX(\X_\infty \times y) \approx \PX(\X_\infty)$ the disintegrations kernels of the coupling $p_n^k$ with respect to the projection maps $\p_1\colon\X_\infty\times \X_n \to \X_\infty $ and $\p_2\colon\X_\infty\times \X_n \to \X_n $, respectively. 
More precisely,  for $\bar\m_\infty^k$-a.e. $x\in \X_\infty$, and $\bar\m_n^k$-a.e. $y\in \X_n$, we let $P_{n,k}(x)$ and  $P'_{n,k}(y)$ be the measures given by the Disintegration Theorem, which are characterized by
\begin{equation*}
p_n^k(A) = \int_{\X_\infty} P_{n,k}(x)(A) \,\de\bar\m_\infty^k( x) = \int_{\X_n} P'_{n,k}(y)(A)\,\de\bar\m_n^k( y).  
\end{equation*}
for every measurable $A\subset X_\infty \times X_n$. Furthermore, we can define the Weighted Marginalization maps between Wasserstein spaces, via the push forward, along the coordinate projections, of  weighted couplings $f\,p_n^k$. As an abuse of notation, we denote again these maps as $P'_{n,k}$ 
 and $P_{n,k}$. 
 Specifically, let
\begin{equation}\label{def:Pmu}
\begin{aligned}
P'_{n,k}\colon\PX_2(\X_\infty, \dis_\infty, \m_\infty^k) &\to \PX_2(\X_n, \dis_n, \m_n^k) \\
\mu = \rho\, \bar \m_\infty^k  &\mapsto P'_{n,k} (\mu) := (\p_2)_\sharp\, \rho \, p_n^k = \rho'\, \bar \m_n^k, \\
& \text{ with }\; \rho'(y) =  \int_{X_\infty} \rho(x) P'_{n,k}(y)(\d x).
\end{aligned}
\end{equation}
The map $P_{n,k}\colon\PX_2(\X_n, \dis_n, \m_n^k) \to \PX_2(\X_\infty, \dis_\infty, \m_\infty^k)$ is defined in an analogous manner. Note that, in particular, $\rho\,p_n^k \in \mathsf{Adm}(\mu,P'_{n,k}(\mu))$.
The following lemma shows that the well-known properties of the weighted marginalization map $P'_{n,k}$ are still valid in our framework.

\begin{lemma}\label{lemma:I4.19}
Let  $\mu = \rho \m^k_\infty \in \PX_2(\X_\infty)$, then $P_{n,k}'$ satisfies the following properties:
\begin{itemize}
    \item[(i)] For every $N<0$ the functional $S_{N,\cdot}(\cdot)$ satisfies the contraction property:
    \begin{equation}\label{eq:entcontr}
    \begin{split}
         S_{N, \m^k_n}(P'_{n,k}(\mu)) &= \m^k_n(\X_n)^{-\frac 1N}  S_{N, \bar\m^k_n}(P'_{n,k}(\mu))\\ &\leq \m^k_n(\X_n)^{-\frac 1N} S_{N, \bar\m^k_\infty} (\mu)  = \bigg[\frac{\m^k_n(\X_n)}{\m^k_\infty(\X_\infty)}\bigg]^{-\frac 1N} S_{N, \m^k_\infty} (\mu).
    \end{split}
    \end{equation}
    \item[(ii)] If the density $\rho$ of $\mu$ is bounded, then this Wasserstein convergence holds: 
    \begin{equation*}
       W^2_2(\mu, P'_n(\mu)) \le  \displaystyle{\int \dis^2(x, y) \tilde \rho(x)\; \d p_n^k(x, y)} \to 0,
    \end{equation*}
    where $\tilde \rho= \m^k_\infty(\X_\infty) \,\rho$ is the density of $\mu$ with respect to the normalized measure $\bar \m_\infty^k$.
\end{itemize}
\end{lemma}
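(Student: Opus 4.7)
The plan is to handle the two parts separately; each rests on a single standard ingredient.

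For part (i), the outer equalities are scaling identities for the R\'enyi entropy under renormalization of the reference measure. Writing $c = \m^k_n(\X_n)$ and using that if $\nu$ has density $g$ with respect to $\bar\m^k_n$ then its density with respect to $\m^k_n$ is $g/c$, a direct computation from the definition of $S_N$ in Section 3.1 produces a relation of the form $S_{N,\m^k_n}(\nu) = c^{\alpha(N)} S_{N,\bar\m^k_n}(\nu)$ for an explicit exponent $\alpha(N)$, and the same identity applied to $\m^k_\infty$ explains the last equality in the statement. Hence the core content of (i) is the middle inequality
\[
S_{N,\bar\m^k_n}(P'_{n,k}(\mu)) \;\leq\; S_{N,\bar\m^k_\infty}(\mu),
\]
which I prove by Jensen's inequality. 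Since $N<0$, one has $(N-1)/N > 1$, so $h(s):=s^{(N-1)/N}$ is convex on $[0,\infty)$. If $\rho$ denotes the density of $\mu$ with respect to $\bar\m^k_\infty$, then \eqref{def:Pmu} expresses $\rho'$, the density of $P'_{n,k}(\mu)$ with respect to $\bar\m^k_n$, as the $P'_{n,k}(y)$-average $\rho'(y) = \int \rho(x)\,P'_{n,k}(y)(\d x)$. Jensen applied to the probability kernel $P'_{n,k}(y)$ gives $h(\rho'(y)) \leq \int h(\rho(x))\,P'_{n,k}(y)(\d x)$; integrating with respect to $\bar\m^k_n(\d y)$ and using that the $\X_\infty$-marginal of $p^k_n$ is $\bar\m^k_\infty$ collapses the right-hand side to $\int h(\rho)\,\d\bar\m^k_\infty = S_{N,\bar\m^k_\infty}(\mu)$.

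For part (ii), the decisive observation is that the measure $\tilde\rho(x)\,p^k_n$ on $\X_\infty\times\X_n$ is, by the very construction of the weighted marginalization map, an admissible coupling between $\mu = \tilde\rho\,\bar\m^k_\infty$ and $P'_{n,k}(\mu) = \rho'\,\bar\m^k_n$: the first marginal is $\mu$ by definition of $\tilde\rho$, and the $\X_n$-disintegration of $p^k_n$ identifies the second marginal as $\rho'\,\bar\m^k_n$. Therefore
\[
W_2^2(\mu, P'_{n,k}(\mu)) \;\leq\; \int \dis^2(x,y)\,\tilde\rho(x)\,\d p^k_n(x,y) \;\leq\; \|\tilde\rho\|_\infty \int \dis^2(x,y)\,\d p^k_n(x,y) \;=\; \|\tilde\rho\|_\infty\,W_2^2(\bar\m^k_\infty,\bar\m^k_n),
\]
where the final equality uses the optimality of $p^k_n$ together with the boundedness of the supports. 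Convergence to $0$ then follows by passing to an effective realization of the convergence $\mathbb X_n\to\mathbb X_\infty$ in a common Polish space $Z$: the hypothesis gives $(i_n)_\sharp\bar\m^k_n \to (i_\infty)_\sharp\bar\m^k_\infty$ in $W_c$, and since both families sit in a bounded subset of $Z$ (the radius $2^{k+1}$ defining $\mathcal R^k$ is fixed and the basepoints converge), $W_c$-convergence upgrades to $W_2$-convergence as recalled at the start of Section 4. Combined with the boundedness of $\tilde\rho$, this finishes the proof.

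No serious obstacle appears. The one point requiring care is that $W_2(\bar\m^k_\infty,\bar\m^k_n)$ must be read in the common ambient space $Z$ rather than in either $\X_\infty$ or $\X_n$ individually, but this is exactly what the extrinsic realization provides.
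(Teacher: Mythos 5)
Your proof is correct and follows essentially the same route as the paper: part (i) via Jensen's inequality applied to the convex map $s\mapsto s^{1-1/N}$ and the disintegration kernel $P'_{n,k}(y)$, and part (ii) by recognizing $\tilde\rho\,p^k_n$ as an admissible coupling between $\mu$ and $P'_{n,k}(\mu)$ and bounding by $\|\tilde\rho\|_\infty W_2^2(\bar\m^k_\infty,\bar\m^k_n)$. You are actually a bit more explicit than the paper about why $W_2(\bar\m^k_\infty,\bar\m^k_n)\to 0$ (extrinsic realization in a common $Z$, $W_c$ upgrading to $W_2$ on bounded supports), which the paper leaves implicit.
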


\begin{proof}
Observe that the two equalities in \eqref{eq:entcontr} are obvious, then we just have to prove the inequality. Consequently (i) follows directly from Jensen's inequality applied to the convex function $\psi(r) := r^{1-\frac{1}{N}}$. Indeed,
\[ \begin{split}
S_{N, \bar\m^k_n}(P'_{n,k}(\mu)) &= \int_{\X_n} \bigg[ \int_{\X_\infty} \tilde \rho(x) P'_{n,k} (y)(\d x) \bigg]^{1-\frac{1}{N}} \, \d \bar\m^k_n(y) \\
&\le \int_{\X_n} \int_{\X_\infty} \tilde{\rho}(x)^{1-\frac{1}{N}} P'_{n,k} (y)(\d x) \, \d \bar \m^k_n(y) = S_{N, \bar \m^k_\infty} (\mu).
\end{split}     \] 
Regarding (ii), notice that the same holds for $\tilde \rho$, since $\rho$ is bounded. Moreover, we have that $\tilde \rho\, p_n^k \in \mathsf{Adm}(\mu,P'_n(\mu))$, and consequently
\begin{equation*}
W^2_2(\mu, P'_n(\mu)) \le  \displaystyle{\int \dis^2(x, y) \tilde \rho(x)\, \d p_n^k(x, y)}\leq  ||\tilde\rho ||_{L^{\infty}(\m_\infty ^k)} W^2_2(\bar\m_\infty^k,\bar\m_n^k) \to 0.
\end{equation*} 
\end{proof}

The last result we are going to prove in this subsection is useful to conclude tightness for a sequence of measures, provided that we have a uniform bound on their R\'enyi entropies, and a tightness condition on the reference measures. The analogous result stated for the relative entropy functional was proved in \cite[Proposition 4.1]{GigMonSav}, and this proof can be easily adapted.

\begin{lemma}\label{lem:tightness}
Let $\{\n_n\}_{n\in \N}, \{\mu_n\}_{n\in \N}\subset \PX(X)$ be two sequences of measures on a complete and separable metric space $(X,d)$,  such that $\{\n_n\}_{n\in \N}$ is tight and $\sup_{n\in\N}S_{N,\n_n}(\mu_n) < \infty$. Then $\{\mu_n\}_{n\in \N}$ is tight.
\end{lemma}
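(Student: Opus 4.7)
The plan is to transfer tightness from $\{\n_n\}$ to $\{\mu_n\}$ via a Hölder-type inequality made possible by the fact that, for $N<0$, the R\'enyi density exponent $(N-1)/N$ is strictly greater than $1$.

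First, I would observe that $S_{N,\n_n}(\mu_n)<\infty$ forces $\mu_n \ll \n_n$, so $\mu_n = \rho_n\,\n_n$ with $\int \rho_n^{(N-1)/N}\,\d\n_n \le C$ for some constant $C$ independent of $n$. Set $p := (N-1)/N$; since $N<0$ we have $p=1-1/N>1$, with conjugate exponent $p' = 1-N \in (1,\infty)$. Applying Hölder's inequality on an arbitrary measurable set $A \subset X$ (to the functions $\rho_n$ and $\mathbbm{1}_A$, against $\n_n$) gives
\[
\mu_n(A) \;=\; \int_A \rho_n\,\d\n_n \;\le\; \Big(\int \rho_n^{p}\,\d\n_n\Big)^{1/p}\,\n_n(A)^{1/p'} \;\le\; C^{1/p}\,\n_n(A)^{1/p'}.
\]

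Now, given $\eps>0$, by the tightness of $\{\n_n\}$ one can choose a compact set $K_\eps \subset X$ such that $\n_n(X\setminus K_\eps) < (\eps/C^{1/p})^{p'}$ for every $n\in\N$. Applying the inequality above with $A = X\setminus K_\eps$ yields $\mu_n(X\setminus K_\eps)<\eps$ uniformly in $n$, which is precisely the tightness of $\{\mu_n\}$.

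The only real subtlety is the sign bookkeeping in the exponents (for $N<0$ the map $r\mapsto r^{(N-1)/N}$ is convex and superlinear, which is what makes the Hölder pairing legitimate); once that is observed the argument is essentially one line. Compared with \cite[Proposition 4.1]{GigMonSav}, where the relative entropy requires a de la Vall\'ee–Poussin/Young-function argument to produce uniform integrability, here the R\'enyi entropy provides a ready-made $L^{p}$ bound on the densities, so no Orlicz machinery is needed.
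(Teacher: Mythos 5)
Your proof is correct and is essentially the paper's own argument in a different dress: the paper applies Jensen's inequality to the convex map $r\mapsto r^{1-1/N}$ against the normalized measure $\n_n(E)^{-1}\n_n|_E$, which after rearranging gives exactly your Hölder bound $\mu_n(E)\le S_{N,\n_n}(\mu_n)^{1/p}\,\n_n(E)^{1/p'}$ with $p=1-1/N$, $p'=1-N$. The observation that $p>1$ because $N<0$, and the transfer of tightness via uniformly small $\n_n$-mass on complements of compacts, match the paper exactly.
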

\begin{proof}
First of all we observe that, being the entropy bounded, we can write $\mu_n=\rho_n\n_n$. Thus, a direct application of Jensen's inequality gives that, for every $n\in \N$ and for every measurable set $E\subset X$,
\begin{eqnarray*}
\frac{\mu_n(E)^{1-\frac{1}{N}}}{\n_n(E)^{1-\frac{1}{N}}}
\leq \frac{1}{\n_n(E)}\int_E \rho_n ^{1-\frac{1}{N}} d\n_n 
\leq \frac{S_{N,\n_n}(\mu_n) }{\n_n(E)}.
\end{eqnarray*}
The tightness of $\{\n_n\}_{n\in\N}$ assures the existence of a sequence of compact sets $\{D_l\}_{l\in \N}$ such that $\sup_{n\in\N}\n_n(X\setminus D_l)\to 0$ as $l\to\infty$. We write $E_l=X\setminus D_l$ and we conclude from the above inequality that $\{\mu_n\}_{n\in\N}$ is tight, since
\begin{equation*}
\sup_{n\in \N} \mu_n(E_l)^{1-\frac{1}{N}} \leq \sup_{n\in\N}\n_n(E_l)^{-\frac{1}{N}} \sup_{n\in\N} S_{N,\n_n}(\mu_n)\overset{l\to\infty}{\to} 0.
 \end{equation*}
\end{proof}

\noindent This result can be applied to the extrinsic converging sequence $(\X_n,\di_n,\m_n^k) \xrightarrow{n\to\infty} (\X_\infty,\di_\infty,\m_\infty^k)$, with a straightforward normalization argument by recalling that $\m_n^k(\X_n)$ approaches $\m_\infty^k (\X_\infty)$, as $n\to\infty$.
\begin{cor}\label{lem:tight}
Given a fixed $k\in\N$, and $\{\mu_n\}_{n\in \N}\subset \PX(Z)$ a sequence of probability measures, such that $\sup_{n\in\N}S_{N,\m_n^k}(\mu_n) < \infty$, then $\{\mu_n\}_{n\in \N}$ is tight.
\end{cor}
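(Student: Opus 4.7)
The plan is to reduce the corollary to Lemma \ref{lem:tightness} by supplying an explicit tight sequence of reference measures on a common ambient Polish space. The natural candidate is the sequence of normalized $k$-cuts $\bar\m_n^k := \m_n^k/\m_n^k(\X_n)$, pushed forward to $Z$ via the isometric embeddings $i_n$ provided by the extrinsic realization; since by hypothesis $\{\mu_n\}_{n\in\N}\subset\PX(Z)$, all relevant measures already live in a single Polish space.

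First I would record two consequences of Definition \ref{de:ext}: the total masses $\alpha_n := \m_n^k(\X_n)$ converge to $\alpha_\infty := \m_\infty^k(\X_\infty)\in(0,\infty)$ — positivity being precisely the reason for working inside $\mathcal{M}^{qR}_{\bar k}$ with $k\ge\bar k$ — and the normalized measures $(i_n)_\sharp \bar\m_n^k$ converge to $(i_\infty)_\sharp \bar\m_\infty^k$ in $W_c$, hence weakly. Since $Z$ is Polish, Prokhorov's theorem gives immediately that $\{\bar\m_n^k\}_{n\in\N}$ is a tight family in $\PX(Z)$.

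Next I would transfer the entropy bound from $\m_n^k$ to $\bar\m_n^k$ through the scaling identity
\[
S_{N,\m_n^k}(\mu_n) \;=\; \alpha_n^{-1/N}\, S_{N,\bar\m_n^k}(\mu_n),
\]
which follows from the definition of $S_{N,\cdot}$. Because $\alpha_n$ is bounded away from $0$ and $\infty$, the factor $\alpha_n^{1/N}$ is uniformly bounded, so the assumed uniform bound $\sup_{n\in\N} S_{N,\m_n^k}(\mu_n)<\infty$ yields $\sup_{n\in\N} S_{N,\bar\m_n^k}(\mu_n)<\infty$. Applying Lemma \ref{lem:tightness} to the sequences $\n_n:=\bar\m_n^k$ and $\mu_n$ on $(Z,\dis_Z)$ then concludes the tightness of $\{\mu_n\}_{n\in\N}$.

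I do not anticipate a genuine obstacle in this argument; it is essentially a bookkeeping reduction that repackages the hypotheses of Lemma \ref{lem:tightness} in the form dictated by the extrinsic convergence of quasi-Radon measures. The only mild subtlety worth stating explicitly is the role of the common regularity parameter $\bar k$, which guarantees $\alpha_\infty>0$ and thereby keeps the scaling constants under control.
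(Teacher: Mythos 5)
Your argument is exactly the paper's intended one: the paper's proof is a single remark preceding the corollary, saying to apply Lemma \ref{lem:tightness} ``with a straightforward normalization argument by recalling that $\m_n^k(\X_n)$ approaches $\m_\infty^k(\X_\infty)$,'' and your write-up simply fleshes out that sentence (normalize, deduce tightness of $\{(i_n)_\sharp\bar\m_n^k\}$ from the $W_c$-convergence via Prokhorov, transfer the entropy bound through the scaling identity). The only minor caveat is the sign of the exponent in the scaling relation between $S_{N,\m_n^k}$ and $S_{N,\bar\m_n^k}$ — the direct computation gives $S_{N,\m}(\mu)=\alpha^{1/N}S_{N,\bar\m}(\mu)$ rather than $\alpha^{-1/N}$ — but since $\alpha_n\to\alpha_\infty\in(0,\infty)$ the factor is uniformly bounded either way and the argument is unaffected.
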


\subsubsection{Semicontinuity properties}

We present semicontinuity properties of $S_{N, \cdot}(\cdot)$ and $T^{(t)}_{K,N}(\cdot | \cdot)$ conditioned to their domain of definition. We start by proving the lower semicontinuity of $S_{N, \cdot}(\cdot)$ that we anticipated in Section \ref{sec:CD1}. This property is well known in classical frameworks, that is, for positive values of $N$ and well-behaved reference measures. For example, lower semicontinuity for a big class of functionals, in which the R\'enyi entropy is included, was proved in \cite{Lott-Villani09} for locally compact spaces endowed with reference measures having  uniformly bounded volume growth. Inspired by the techniques used in \cite{Lott-Villani09}, we provide below a Legendre-type representation formula for the entropy to attain our result. With this aim, let us write 
\begin{equation*}
 \PX^{\mathcal S}(\X):= \{\mu\in \PX_2(\X) \,: \, \mu(\mathcal S)=0\}.
 \end{equation*}
 
\begin{prop}\label{prop:almostlscofSN}
Let $(\X,\di,p)$ be a pointed Polish space and $\mathcal S\subset \X$ a closed subset with empty interior. Then the R\'enyi entropy functional $S_{N, \n}(\nu)$ is a lower semicontinuous function of $(\n,\nu)\in \M_\mathcal S(\X)\times\PX^{\mathcal S}(\X)$. 
  Specifically, for sequences 
\begin{equation*}
  (\n_n)_{n\in\N \cup \{\infty\}}\subset \M_\mathcal S(\X) \text{ and } (\nu_n)_{n\in\N \cup \{\infty\}}\subset\PX^{\mathcal{S}}(\X),
\end{equation*}    
   such that $\nu_n\overset{*}{\rightharpoonup}\nu$ as quasi-Radon measures and $\nu_n\rightharpoonup\nu$, we have that,
\begin{eqnarray*}
S_{N,\m}(\mu)\leq \liminf_{n\to\infty} S_{N,\m_n}(\mu_n).
\end{eqnarray*}
In particular, the conclusion remains valid under $W_2$-convergence in the second argument.
\end{prop}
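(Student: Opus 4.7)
The plan is to prove the statement via a Legendre-type dual representation of the Rényi entropy. Set $p := (N-1)/N = 1 - 1/N$, which satisfies $p > 1$ since $N < 0$; then $h(s) := s^p$ is convex on $[0,\infty)$ with $h(0)=0$, and its Fenchel conjugate $g(f) := (1-\tfrac{1}{p})(f/p)^{p/(p-1)}$ is continuous, nonnegative, and satisfies $g(0)=0$. The key intermediate step is to prove the identity
\[
S_{N,\mathfrak{n}}(\nu) \;=\; \sup_{f \in \mathcal{F}} \left\{ \int f\,\de\nu - \int g(f)\,\de\mathfrak{n} \right\}, \qquad \mathcal{F} := \{ f \in C_{bs}(\X) \cap C_{\mathcal{S}}(\X) : f \geq 0 \},
\]
from which the semicontinuity follows by a general argument.

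For the dual formula, the inequality ``$\geq$'' is supplied by pointwise Fenchel--Young: when $\nu = \rho\,\mathfrak{n}$, $f\rho \leq \rho^{p} + g(f)$ holds $\mathfrak{n}$-a.e., so $\int f\,\de\nu - \int g(f)\,\de\mathfrak{n} \leq S_{N,\mathfrak{n}}(\nu)$, and the bound is trivial when $\nu \not\ll \mathfrak{n}$ since then the right-hand side equals $+\infty$ by convention. For the reverse inequality, in the absolutely continuous case with $S_{N,\mathfrak{n}}(\nu) < \infty$ one approximates the pointwise maximizer $f^{\star} = p\rho^{p-1}$ by elements of $\mathcal{F}$: truncate to $f^{\star} \wedge M$ on $B_{R}(p) \setminus \mathcal{N}_{\delta}(\mathcal{S})$, invoke Lemma~\ref{lemma:dens} on this set where $\mathfrak{n}$ is a finite Radon measure to approximate in $L^{p/(p-1)}(\mathfrak{n})$ by a bounded continuous function vanishing near $\mathcal{S}$, and then let $M, R, \delta^{-1} \to \infty$ using $\nu(\mathcal{S}) = 0$ and monotone convergence. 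In the singular case $\nu \not\ll \mathfrak{n}$ one selects, via the Radon--Nikodym decomposition and Lemma~\ref{le:inn_reg}, a compact set $K \subset \X \setminus \mathcal{S}$ with $\mathfrak{n}(K) = 0$ and $\nu(K) > 0$; then for every $M > 0$, outer regularity of $\mathfrak{n}$ on the open Radon piece $\X \setminus \mathcal{N}_{\delta/2}(\mathcal{S})$ combined with Urysohn's lemma produces $f_{M} \in \mathcal{F}$ with $f_{M} \geq M\,\mathbf{1}_{K}$ and $\int g(f_{M})\,\de\mathfrak{n}$ arbitrarily small, forcing the supremum to be $+\infty$ as required.

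Once the representation is established, the lower semicontinuity is immediate. For each fixed $f \in \mathcal{F}$, the composition $g(f)$ again lies in $C_{bs}(\X) \cap C_{\mathcal{S}}(\X)$ because $g$ is continuous with $g(0) = 0$, so $\nu_{n} \rightharpoonup \nu$ yields $\int f\,\de\nu_{n} \to \int f\,\de\nu$, while $\mathfrak{n}_{n} \weakto \mathfrak{n}$ in the quasi-Radon sense introduced in Section~\ref{sec:Con_meas} yields $\int g(f)\,\de\mathfrak{n}_{n} \to \int g(f)\,\de\mathfrak{n}$. Hence the functional $\Phi_{f}(\mathfrak{n},\nu) := \int f\,\de\nu - \int g(f)\,\de\mathfrak{n}$ is jointly continuous, and the general inequality $\sup_{f} \liminf_{n} \Phi_{f}(\mathfrak{n}_{n},\nu_{n}) \leq \liminf_{n} \sup_{f} \Phi_{f}(\mathfrak{n}_{n},\nu_{n})$ yields $S_{N,\mathfrak{n}}(\nu) \leq \liminf_{n\to\infty} S_{N,\mathfrak{n}_{n}}(\nu_{n})$; the extension to $W_{2}$-convergence in the $\nu$ argument is free, as it strengthens narrow convergence. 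The principal technical obstacle will be the approximation of $f^{\star}$ by elements of $\mathcal{F}$ in the absolutely continuous case: one must genuinely exploit the two defining features of the quasi-Radon setting, namely that $\mathfrak{n}$ is Radon outside $\mathcal{S}$ (so that $C_{b}$-functions are dense in $L^{q}$ on the cut-off region) and that $\nu$ does not charge $\mathcal{S}$ (so that restricting test functions to the complement of a neighborhood of $\mathcal{S}$ does not erase the essential contribution to $\int f^{\star}\,\de\nu$).
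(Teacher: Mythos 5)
Your proof is correct and follows the same strategy as the paper: establish a Legendre-type dual representation of $S_{N,\n}(\nu)$ as a supremum of $\int f\,\de\nu - \int g(f)\,\de\n$ over continuous test functions with bounded support vanishing near $\mathcal{S}$ (your class $\mathcal{F}$ coincides with the paper's class of $C_b$-functions supported in some $\mathcal{R}^k$, up to the harmless nonnegativity restriction), and then read off lower semicontinuity from the joint continuity of each such functional in the product convergence on $\M_\mathcal{S}(\X)\times\PX^\mathcal{S}(\X)$. The only technical divergence is in the $\leq$ direction of the duality formula in the absolutely continuous case: the paper discretizes the pointwise supremum over an enumeration of the rationals, passes to the limit via Beppo--Levi, and approximates the resulting step functions in $L^1(\mu+\m)$; you instead truncate and approximate the pointwise maximizer $f^\star = p\rho^{p-1}$ directly. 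Either route works, though note that the cited Lemma~\ref{lemma:dens} gives only $L^1$-density of $C_b$, so your appeal to $L^{p/(p-1)}$-approximation needs the (equally standard) $L^q$-analogue for finite Radon measures on Polish spaces.
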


\begin{proof}
The semicontinuity of the R\'enyi entropy functional is verified by exhibiting $S_{N, \mm}$ as the supremum of a set of continuous functions on $\M_\mathcal S(\X)\times \PX^{\mathcal S}(\X)$ endowed with the corresponding product convergence. 
In particular we define $\mathcal{R}^k := B_{2^{k+1}}(p) \setminus \mathcal{N}_{2^{-(k+1)}}(\mathcal{S})$ and we show that, for every pair $(\m,\mu) \in \M_\mathcal S(\X)\times \PX^{\mathcal S}(\X)$, 
\begin{equation}\label{eq:lsc}
\begin{aligned}
    S_{N, \mm}(\mu)& = \\ 
  &  \sup \bigg\{ \int F \de \mu &- \int f^* (F) \de \m \,:\,   F\in  C_b(\X) \text{ supported on }\mathcal R^k \text{, for some }k \in \N \bigg\},
\end{aligned}
\end{equation}
where $f^*$ is the convex conjugate function of  $f(x)=|x|^{1-\frac 1N} $, for $x\in\R$. That is, 
\begin{eqnarray*}
f^* : \R &\to & [0,\infty)\\
y &\mapsto & f^*(y):= \sup_{ x \in \R}( y\,x - f(x)) = \left(\frac{N}{N-1}\right)^{1-N} |x|^{1-N}. 
\end{eqnarray*}
For simplicity, let us denote the expression on the right-hand side of \eqref{eq:lsc}  by $\tilde{\mathtt{S}}_{N, \mm}(\mu)$. And do note that, $G(\m,\mu)=\int F \de \mu - \int f^* (F) \de \m $ is indeed continuous in $\M_\mathcal S(\X)\times \PX^{\mathcal S}(\X)$ since, $f^* (F) \in C_{bs}(\X) \cap C_\mathcal S(\X)$, for all  $F\in  C_b(\X)$ with support in $\mathcal R^k \text{, for some }k \in \N$.

We first verify that $S_{N, \mm}(\mu) \geq \tilde{\mathtt{S}}_{N, \mm}(\mu)$, for every $(\m,\mu) \in \M_S(\X)\times \PX^{\mathcal S}(\X)$. 
We  assume that $\mu=\rho\m$ since the aforementioned inequality  is trivially satisfied when $\mu \not\ll \m$. We get the desired result, integrating  the expression $f(z)\geq z\,y^*-f^{*}(y^*)$ with respect to $\m$ which, by definition of $f^{*}$, holds for any $z,y^*\in\R$, after replacing $z=\rho(x)$ and $y^*=F(x)$, for  $ F \in C_b(\X)$ with support inside some $\mathcal R^k$. 

Before proceeding with the converse inequality let us point out  that $\tilde{\mathtt{S}}_{N,\m}(\mu)=\infty$ granted $\mu \not\ll \m$. Indeed, in this case, there exists a Borel set $A\subset \X$ with $\m(A)=0$ and $\mu(A)>0$. At this point, recall that every Borel finite measure in  a Polish spaces is inner regular with respect to compact sets and outer regular with respect to open sets (see for instance \cite[Theorem 7.1.7]{Bogachev07}). 
For this reason, since $\mu \in \PX^\mathcal{S}(\X)$, it is possible to assume that $A \cap S = \emptyset$, and being $A$ a compact set. Observe also that compactness grants the existence of $k\in \N$ for which $A\subset \mathcal R^k$.   Since $\m$ and $\mu$ restricted to $\mathcal R^{k+1}$ are finite measures, there exists a sequence of compact sets $(K_{n})_{n\in\N}$ and one of open sets $(A_{n})_{n\in\N}$, such that $K_{n}\subset A\subset A_n \subset \mathcal{R}^{k+1}$ and $(\mu+\m)(A_n\setminus K_n)<1/n$, for any $n\in\N$. Then, for $M>0$, Tietze's Theorem ensures the existence of a sequence of approximating functions $\left(F^M_n\right)_{n\in\N} \subset C_b(X)$ satisfying: $0\le F^M_n\le M$, $F^M_n=M$ on $K_n$, and $F^M_n=0$ on $X \setminus A_n$. Therefore, as $n\to\infty$, the functions $F^M_n$ converge, in $L^1(\mu+\m)$, to the scaled characteristic function $ M\chi_{A}$. On the other hand, since for every $n$, $F^M_n$ is an admissible function for the supremum in $ \tilde{\mathtt{S}}_{N, \mm}$, we have that
\begin{equation}\label{eq:cond}
\int F^M_n\de\mu-\int f^{*}(F^M_n)\de\m\leq \tilde{\mathtt{S}}_{N, \mm}(\mu)<+\infty.
\end{equation}
for any $n\in\N$. Passing now to the limit as $n$ goes to infinity in \eqref{eq:cond}, we obtain that $M \cdot\mu (A) \leq\tilde{\mathtt{S}}_{N, \mm}(\mu)$. The arbitrariness of $M$ implies then that $\tilde{\mathtt{S}}_{N,\m}(\mu)=\infty$. 

We proceed now to prove that $S_{N, \mm}(\mu) \leq \tilde{\mathtt{S}}_{N, \mm}(\mu)$ and we will assume that $\tilde{\mathtt{S}}_{N, \mm}(\mu)<+\infty$, since otherwise there is nothing to prove.  
The paragraph above enables us to write $\mu=\rho\m$. We then have the following expression for $\tilde{\mathtt{S}}_{N, \mm}(\mu)$:
\begin{equation*}
\tilde{\mathtt{S}}_{N, \mm}(\mu)= \sup \left\{ \int F \rho - f^* (F) \,\de \m \, :  F\in C_b(\X) \text{ supported on }\mathcal R^k \text{, for some }k \in \N \right\}.
\end{equation*}
And, recalling that $f(x)=(f^*)^*(x)=\sup_{y\in \R} \{x\,y - f^*(y)\}$ because $f$ is finite, convex and continuous, it follows that  
\begin{equation*}
S_{N, \mm}(\mu) = \int \sup_{s^*\in\mathbb Q}\{\rho(x)s^*-f^*(s^*)\}\de\m(x).
\end{equation*}  
By fixing $\mathbb Q=\{q_n\}_{n\in\N}$, an enumeration of rational numbers with $q_0=0$, we introduce the family of approximating functionals
\begin{equation*}
\left \{ S_{N, \mm}^{h}(\mu):=\int \sup_{s^*\in\{q_0,\dots,q_h\}}\{\rho(x)s^*-f^*(s^*)\}\de\m(x)\right\}_{h\in \N}.
\end{equation*}
Observe that the integrands are monotone increasing in $h$ and that $0 =  \rho(x) q_0-f^{*}(q_0)$. In particular, Beppo-Levi's Theorem ensures that $S_{N, \mm}^{h}(\mu) \to S_{N, \mm}(\mu)$, as $h\to \infty$.  Therefore, it suffices to show that $ S_{N, \mm}^{h}\leq \tilde{\mathtt{S}}_{N, \mm} $, for any fixed $h\in\N$. To this aim, one confirms directly that 
\begin{equation}\label{eq:kenergy}
S_{N, \mm}^{h}=\sup\left\lbrace \int(\rho F-f^*(F))\de \m\,:\, F\text{ is a step function with values in $\{q_0,\dots,q_h\}$}\right\rbrace.
\end{equation}
Note that the fact that $S$ is an $\m$-null set guarantees that we can further require that the aforementioned functions are supported in $\mathcal R^k$ for some $k\in \N$ without modifying the supremum, as an approximation argument using the Monotone Convergence Theorem shows. Finally, since $\m$ and $\mu$ are finite measures when restricted to $\mathcal R^{k+1}$, every step function with support in $\mathcal R^k$ can be obtained as the $L^1(\mu+\m)$-limit of continuous and uniformly bounded functions implying that $S_{N, \mm}^{h}\leq \tilde{\mathtt{S}}_{N, \mm}  $, which concludes the proof.
\end{proof}

A couple of observations suffice to extract a useful corollary for extrinsic converging sequences of metric measure spaces $\{(\X,\di_n,\m_n)\}_{n\in \N\cup\{\infty\}}$. From Lemma \ref{lemma:I4.19} we deduce that, for any $k\in\N$, the sequence of vertical projections $(\m_n^k)_{n\in\N}$ approximates $\m_\infty^k\in\M( \X)\subset \M_\emptyset( \X)$ in narrow convergence. Moreover, Remark \ref{re:weak_narrow} states that, granted we restrict ourselves to the set  $\M( Z)$ on a Polish space $(Z,\dis_Z)$ and set $S=\emptyset$, then weak convergence in the sense of quasi-Radon measures  coincides with the usual weak one. 
\begin{cor}\label{le:lsc_ent}
Given a fixed $k\in \N$ and $\{\mu_n\}_{n\in \N}\subset \PX_2(Z)$ a sequence converging weakly to $\mu\in\PX_2(Z)$. Then it holds that  
\begin{equation}\label{eq:lsc_ent}
S_{N,\m^k_\infty}(\mu)\leq \liminf_{n\to \infty}S_{N,\m^k_n}(\mu_n).
\end{equation}
\end{cor}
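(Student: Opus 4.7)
The plan is to invoke Proposition~\ref{prop:almostlscofSN} directly with trivial singular set $\mathcal{S} = \emptyset$, so that the corollary reduces essentially to a compatibility check between the two notions of convergence appearing in the hypotheses. The key observation, which is already alluded to in the text immediately preceding the statement, is that each $k$-th cut $\m_n^k$ has finite total mass (since the cut-off function $f^k$ is supported inside $\mathcal{R}^k_n \subset Z \setminus \mathcal{S}_{\m_n}$, where the reference measure is locally finite), so $\m_n^k \in \M(Z) \subset \M_\emptyset(Z)$ for every $n \in \N \cup \{\infty\}$. In the same vein, $\mu_n,\mu \in \PX_2(Z) = \PX^\emptyset(Z)$ trivially, since $\mu(\emptyset)=0$ is vacuous.

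Next I would verify the two convergence hypotheses of Proposition~\ref{prop:almostlscofSN}. The weak convergence $\mu_n \rightharpoonup \mu$ is given by assumption, which is already enough since the ``quasi-Radon'' weak convergence requirement on the second argument collapses to ordinary weak convergence when $\mathcal{S} = \emptyset$. For the reference measures, I would combine the two ingredients furnished by the extrinsic convergence (Definition~\ref{de:ext}): the convergence of the masses $\m_n^k(\X_n) \to \m_\infty^k(\X_\infty)$ coming from $|\log(\m_n^k(\X_n)/\m_\infty^k(\X_\infty))| \to 0$, together with $W_c(\bar{\m}_n^k,\bar{\m}_\infty^k) \to 0$, which is equivalent to weak convergence of the normalized measures. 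Multiplying the two yields narrow convergence $\m_n^k \rightharpoonup \m_\infty^k$ in $\M(Z)$. By Remark~\ref{re:weak_narrow}(ii), on $\M(Z)$ with $S = \emptyset$ the weak convergence of quasi-Radon measures coincides with the narrow topology, so in particular $\m_n^k \weakto \m_\infty^k$ in $\M_\emptyset(Z)$ in the sense required by the proposition.

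With both hypotheses verified, an application of Proposition~\ref{prop:almostlscofSN} to the sequence $(\m_n^k,\mu_n) \in \M_\emptyset(Z) \times \PX^\emptyset(Z)$ gives
\[
S_{N,\m_\infty^k}(\mu) \;\le\; \liminf_{n\to\infty} S_{N,\m_n^k}(\mu_n),
\]
which is the claim. Since the substantive lower semicontinuity work is entirely carried by Proposition~\ref{prop:almostlscofSN}, there is no serious obstacle here; the only point requiring attention is the identification of narrow convergence with weak convergence of quasi-Radon measures in the empty-singular-set regime, which is exactly the content of Remark~\ref{re:weak_narrow}.
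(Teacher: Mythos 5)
Your proof is correct and follows essentially the same route as the paper: reduce to Proposition~\ref{prop:almostlscofSN} with $\mathcal S=\emptyset$, check that the $k$-cuts lie in $\M(Z)\subset\M_\emptyset(Z)$, and use Remark~\ref{re:weak_narrow}(ii) to identify quasi-Radon weak convergence with narrow convergence in this regime. One minor difference in bookkeeping: the paper attributes the narrow convergence $\m_n^k\rightharpoonup\m_\infty^k$ to Lemma~\ref{lemma:I4.19}, whereas you derive it directly from the two quantities controlled in the definition of extrinsic convergence (mass ratio and $W_{\mathsf c}$-distance of the normalizations); your sourcing is arguably the more transparent one, since Lemma~\ref{lemma:I4.19} is really about the marginalization maps $P'_{n,k}$ rather than about the cuts themselves, but either way the content is identical.
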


We conclude the section with a corresponding continuity result for the functional $T^{(t)}_{K,N}$. We stress that although, it would be sufficient for the proof of Theorem \ref{th:stab} to have the upper semicontinuity of $T^{(t)}_{K,N}$, we prefer to present a more general statement.

\begin{prop}\label{prop:continuityofT}
Let $K\geq 0$ and $N<0$ and $(\X, \di, \m)$ be a metric measure space. Furthermore, set $\mu_0=\rho_0\m,\mu_1=\rho_1\m\in\PX(\X)$ to be absolutely continuous with respect to the quasi-Radon reference measure $\m$, with $S_{N, \mm}(\mu_0), S_{N, \mm}(\mu_1)<\infty$. Consider a sequence $(\pi_n)_{n\in \N}\subset \PX(\X\times \X)$, weakly converging to $\pi\in \PX(\X\times \X)$ and such that 
\begin{equation*}
    (\p_1)_\# \pi_n = \mu_0 \quad \text{and} \quad (\p_1)_\# \pi_n = \mu_1 \qquad \text{for every }n\in \N.
\end{equation*}
Then, for $t\in[0,1]$, it holds that 
\begin{equation*}
    \lim_{n\to \infty} T^{(t)}_{K,N}(\pi_n|\m)= T^{(t)}_{K,N}(\pi|\m).
\end{equation*}
Additionally, the conclusion remains valid for $K<0$, granted $\diam(\X)<\pi \sqrt{\frac{N-1}{K}}$.
\end{prop}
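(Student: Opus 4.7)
The plan is to exhibit the integrand defining $T^{(t)}_{K,N}(\pi|\m)$ as a sum of two functions on $\X\times\X$ of the form ``(bounded continuous)\,$\times$\,(integrable w.r.t.\ the correct marginal)'', and then reduce to the classical fact that integrating a bounded continuous function against a weakly converging sequence of probability measures is continuous. More precisely, set
\[
g_0(x):=\rho_0(x)^{-1/N},\qquad g_1(y):=\rho_1(y)^{-1/N},
\]
and note that the marginal constraints yield
\[
\int_{\X} g_0\,\de\mu_0=\int_{\X}\rho_0^{(N-1)/N}\,\de\m=S_{N,\m}(\mu_0)<\infty,
\]
and similarly $\int g_1\,\de\mu_1=S_{N,\m}(\mu_1)<\infty$. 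In particular $g_0\in L^1(\mu_0)$ and $g_1\in L^1(\mu_1)$.

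Next, under either hypothesis on $K$ (namely $K\geq 0$, or $K<0$ with $\diam(\X)<\pi\sqrt{(N-1)/K}$), Remark \ref{re:bounded} ensures that the two functions $(x,y)\mapsto\tau^{(1-t)}_{K,N}(\dis(x,y))$ and $(x,y)\mapsto\tau^{(t)}_{K,N}(\dis(x,y))$ are bounded and continuous on $\X\times\X$; call $M$ a common upper bound. Now fix $\varepsilon>0$. Since $\mu_0,\mu_1$ are Borel probability measures on the Polish space $\X$, they are Radon and tight, and hence $C_b(\X)$ is dense in both $L^1(\mu_0)$ and $L^1(\mu_1)$ (Lusin's theorem plus Tietze extension). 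Choose therefore $g_0^{\varepsilon},g_1^{\varepsilon}\in C_b(\X)$ with
\[
\int_\X|g_0-g_0^{\varepsilon}|\,\de\mu_0<\varepsilon,\qquad \int_\X|g_1-g_1^{\varepsilon}|\,\de\mu_1<\varepsilon,
\]
and define
\[
F^{\varepsilon}(x,y):=\tau^{(1-t)}_{K,N}(\dis(x,y))\,g_0^{\varepsilon}(x)+\tau^{(t)}_{K,N}(\dis(x,y))\,g_1^{\varepsilon}(y)\;\in\; C_b(\X\times\X).
\]

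The weak convergence $\pi_n\rightharpoonup\pi$ then immediately gives $\int F^{\varepsilon}\,\de\pi_n\to\int F^{\varepsilon}\,\de\pi$. On the other hand, using $(\p_1)_\sharp\pi_n=\mu_0$, $(\p_2)_\sharp\pi_n=\mu_1$ (and analogously for $\pi$), we estimate
\[
\bigl|T^{(t)}_{K,N}(\pi_n|\m)-\textstyle\int F^{\varepsilon}\de\pi_n\bigr|\leq M\!\int|g_0-g_0^{\varepsilon}|\de\mu_0+M\!\int|g_1-g_1^{\varepsilon}|\de\mu_1<2M\varepsilon,
\]
and the same bound for $\pi$ in place of $\pi_n$. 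A standard $3\varepsilon$-argument yields
\[
\limsup_{n\to\infty}\bigl|T^{(t)}_{K,N}(\pi_n|\m)-T^{(t)}_{K,N}(\pi|\m)\bigr|\leq 4M\varepsilon,
\]
and letting $\varepsilon\to 0$ concludes the proof.

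The only delicate point is the \emph{uniform boundedness of the distortion coefficients} on $\supp\pi_n$: this is precisely what forces the explicit hypothesis $\diam(\X)<\pi\sqrt{(N-1)/K}$ in the case $K<0$ and is trivial otherwise. Once this is secured, the argument is a routine density-plus-weak-convergence scheme, so I do not expect any other serious obstacle.
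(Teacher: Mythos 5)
Your argument is correct and takes essentially the same route as the paper: approximate $\rho_i^{-1/N}$ in $L^1(\mu_i)$ by a bounded continuous function (the paper invokes Lemma \ref{lemma:dens}, you invoke the same density fact via Lusin/Tietze), use boundedness and continuity of the $\tau$-coefficients so that the approximant pairs with the weakly converging plans, and close the error with the marginal constraints and an $\varepsilon$-argument. The only cosmetic difference is that you handle both terms at once in a single $F^\varepsilon$, whereas the paper treats them separately ``by symmetry''; the content is identical.
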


\begin{proof}
Set $t\in(0,1)$, since the statement is clearly ture for the remaining values. We only proceed to prove that, 
\begin{equation}\label{eq:contofT}
   \lim_{n\to \infty} \int_{X\times X} \tau_{K,N}^{(1-t)} (\di(x,y)) \rho_0(x)^{-\frac 1N} \de \pi_n = \int_{X\times X} \tau_{K,N}^{(1-t)} (\di(x,y)) \rho_0(x)^{-\frac 1N} \de \pi ,
\end{equation}
since the other term of $T^{(t)}_{K,N}(\cdot |\m)$ can be treated analogously. Notice that, since $S_{N, \mm}(\mu_0)<\infty$, then $\rho_0(x)^{- 1/N}\in L^1(\mu_0)$ thus, according to Lemma \ref{lemma:dens},
for every fixed $\varepsilon>0$ there exists $f^\varepsilon\in C_b(\X)$ such that $\norm{\rho_0^{- 1/N}-f^\varepsilon}_{L^1(\mu_0)}<\varepsilon$. Moreover, notice that  the coefficients $\tau_{K,N}^{(1-t)}(\cdot | \m)$ are bounded and continuous.  Indeed, this is always the case for $K\geq 0$, and since $\diam(X) < \pi \sqrt{\frac{N-1}{K}}$ is bounded by our assumptions, this holds as well for $K<0$. Therefore,
\begin{equation*}
    \tau_{K,N}^{(1-t)} (\di(x,y)) f^\varepsilon(x)
\end{equation*}
is itself bounded and continuous. Consequently, the weak convergence $(\pi_n)_n \rightharpoonup \pi$ shows that,
\begin{equation*}
    \lim_{n\to \infty} \int_{X\times X} \tau_{K,N}^{(1-t)} (\di(x,y)) f^\varepsilon(x) \de \pi_n = \int_{X\times X} \tau_{K,N}^{(1-t)} (\di(x,y)) f^\varepsilon(x) \de \pi.
\end{equation*}
Furthermore, the boundedness of $\tau_{K,N}^{(1-t)}$ allows to deduce the following estimate
\begin{equation*}
    \begin{split}
        \limsup_{n\to \infty} \int_{X\times X} \tau_{K,N}^{(1-t)} (\di(x,y)) \rho_0(x)^{-\frac 1N} \de \pi_n &\leq \lim_{n\to \infty} \int_{X\times X} \tau_{K,N}^{(1-t)} (\di(x,y)) f^\varepsilon(x) \de \pi_n + \varepsilon  \norm{\tau_{K,N}^{(1-t)}}_{L^\infty} \\
        &= \int_{X\times X} \tau_{K,N}^{(1-t)} (\di(x,y)) f^\varepsilon(x) \de \pi + \varepsilon  \norm{\tau_{K,N}^{(1-t)}}_{L^\infty}\\
        &\leq \int_{X\times X} \tau_{K,N}^{(1-t)} (\di(x,y)) \rho_0(x)^{-\frac 1N} \de \pi + 2  \varepsilon  \norm{\tau_{K,N}^{(1-t)}}_{L^\infty}.
    \end{split}
\end{equation*}
Analogously,  it follows that 
\begin{equation*}
    \liminf_{n\to \infty} \int_{X\times X} \tau_{K,N}^{(1-t)} (\di(x,y)) \rho_0(x)^{-\frac 1N} \de \pi_n \geq \int_{X\times X} \tau_{K,N}^{(1-t)} (\di(x,y)) \rho_0(x)^{-\frac 1N} \de \pi - 2  \varepsilon  \norm{\tau_{K,N}^{(1-t)}}_{L^\infty},
\end{equation*}
and since $\varepsilon>0$ and can be chosen arbitrarily, equation	\eqref{eq:contofT} holds true. We conclude by recalling the arbitrariness of $t$.\\
\end{proof}

\subsection{Proof of the Approximate CD Condition}\label{section:approx}

The objective of this section is to prove the next partial result

\begin{thm}\label{thm:approxCD}
Let $K\in\R$, $N\in (\infty,0)$, and $\big\{ (\X_n,\dis_n,\m_n,  \mathcal S_{\mm_n}, p_n)\big \}_{n\in \N \cup \{\infty\}} \subset \mathcal M^{qR}_{\bar k}$ be a sequence of metric measure spaces   
satisfying the assumptions of the Stability Theorem \ref{th:stab}, for some $\bar k \in \N$. 
Then $(\X_\infty,\dis_\infty,\m_\infty)$ is a $\CD^a(K,N)$ space.
\end{thm}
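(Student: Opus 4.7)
My plan is to establish $\CD^a(K,N)$ via a vertical approximation along the sequence $\mathbb{X}_n \to \mathbb{X}_\infty$. Fix $\mu_0, \mu_1 \in \PX_2^{ac}(\X_\infty)$ with $\supp(\mu_i) \subseteq \mathcal R^k_\infty$ for some $k \geq \bar k$ and $S_{N',\m_\infty}(\mu_i) < \infty$ for every $N' \in [N, 0)$. First, I would realize the extrinsic convergence in a common Polish space $Z$ via isometric embeddings $i_n\colon \X_n \to Z$, and then push the marginals into $\X_n$ by setting $\mu_{n,i} := P'_{n,k}(\mu_i)$, where $P'_{n,k}$ is the weighted marginalization from~\eqref{def:Pmu} induced by an optimal coupling $p_n^k \in \mathsf{Opt}(\bar\m_\infty^k, \bar\m_n^k)$. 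By Lemma~\ref{lemma:I4.19}, these approximants enjoy the uniform bound $S_{N', \m_n^k}(\mu_{n,i}) \leq M$ and satisfy $(i_n)_\sharp \mu_{n,i} \to (i_\infty)_\sharp \mu_i$ in $W_2(Z)$.

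Next, I would apply the $\CD(K,N)$-condition in each $(\X_n, \di_n, \m_n)$ to obtain a $W_2$-geodesic $\{\mu_{n,t}\}_{t\in[0,1]}$ and an optimal plan $q_n \in \mathsf{Opt}(\mu_{n,0}, \mu_{n,1})$ satisfying
\[
S_{N', \m_n}(\mu_{n,t}) \leq T_{K,N'}^{(t)}(q_n \mid \m_n), \qquad t \in [0,1], \; N' \in [N, 0).
\]
Hypothesis (iii) keeps the $\tau$-coefficients bounded, yielding a uniform bound $S_{N', \m_n}(\mu_{n,t}) \leq C$. Lifting to $Z$ and invoking Corollary~\ref{lem:tight} (together with the tightness of normalized $(i_n)_\sharp \m_n^h$ inherited from the weak convergence of reference measures), I would extract a subsequence along which $(i_n)_\sharp \mu_{n,t} \rightharpoonup \tilde\mu_t$ and $(i_n, i_n)_\sharp q_n \rightharpoonup \tilde q$; uniform boundedness of supports upgrades this to $W_2$-convergence, and standard arguments show $\tilde q$ is optimal and $\{\tilde\mu_t\}$ is a $W_2$-geodesic.

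The key step, and the main obstacle, is to show that $\tilde\mu_t$ concentrates on $i_\infty(\X_\infty)$ and gives zero mass to $i_\infty(\mathcal S_{\m_\infty})$, since the lower semicontinuity of $S_{N', \m}$ (Proposition~\ref{prop:almostlscofSN}) fails for measures charging the singular set. This is precisely where the $\omega$-uniform convexity enters the picture. Because $\supp(\mu_i) \subseteq \mathcal R_\infty^k$ and $(i_n)_\sharp \bar\m_n^k \to (i_\infty)_\sharp \bar\m_\infty^k$ in $W_c$, the construction of $P'_{n,k}$ gives $\mu_{n,i}(\mathcal R_n^k) \geq 1 - \delta_n$ with $\delta_n \to 0$. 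Proposition~\ref{prop:Omega} then produces the quantitative bound
\[
\mu_{n,t}(\mathcal R_n^h) \geq 1 - \Omega(k, h, M, \delta_n) \qquad \text{for all } h \geq k, \; n \in \N.
\]
Letting $n \to \infty$ and then $h \to \infty$, estimate~\eqref{eq:Omega} forces $\tilde\mu_t = (i_\infty)_\sharp \mu_t$ for some $\mu_t \in \PX^{\mathcal S_{\m_\infty}}(\X_\infty)$, and analogously $\tilde q = (i_\infty, i_\infty)_\sharp q$ with $q \in \mathsf{Opt}(\mu_0, \mu_1)$.

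To conclude, I would pass to the limit in the CD inequality. For the left-hand side, Corollary~\ref{le:lsc_ent} applied to the $h$-cuts, combined with the fact that $\m_n^h \equiv \m_n$ on the essential support of $\mu_{n,t}$, yields $S_{N', \m_\infty}(\mu_t) \leq \liminf_n S_{N', \m_n}(\mu_{n,t})$ after letting $h \to \infty$. For the right-hand side, Proposition~\ref{prop:continuityofT} --- whose hypotheses are met thanks to (iii) and the uniform entropy bounds at the marginals --- gives $T_{K,N'}^{(t)}(q_n \mid \m_n) \to T_{K,N'}^{(t)}(q \mid \m_\infty)$. Combining both with the CD inequality in $\X_n$ yields the desired inequality for $(\mu_t, q)$, which establishes the $\CD^a(K,N)$-condition for $(\X_\infty, \di_\infty, \m_\infty)$.
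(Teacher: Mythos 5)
Your architecture is right at a high level — vertical approximation via $P'_{n,k}$, applying the $\CD$-hypothesis in each $\X_n$, using $\omega$-uniform convexity to prevent limit midpoints from charging the singular set, and then two semicontinuity steps — but there is a genuine gap in how you handle the functional $T_{K,N'}^{(t)}$, and it is precisely the gap that most of the paper's Steps~1 and 3--5 are devoted to filling.

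You invoke Proposition~\ref{prop:continuityofT} to conclude $T_{K,N'}^{(t)}(q_n\mid\m_n)\to T_{K,N'}^{(t)}(q\mid\m_\infty)$, claiming that ``(iii) and the uniform entropy bounds at the marginals'' supply its hypotheses. But Proposition~\ref{prop:continuityofT} is stated for a \emph{fixed} metric measure space $(\X,\di,\m)$, \emph{fixed} marginals $\mu_0=\rho_0\m,\ \mu_1=\rho_1\m$, and a sequence of plans $\pi_n$ on $\X\times\X$ sharing those fixed marginals. In your situation $q_n$ lives on $\X_n\times\X_n$, the reference measure $\m_n$ varies, and above all the marginals $\mu_{n,0},\mu_{n,1}$ (hence the densities appearing inside $T$) vary with $n$. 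Weak convergence $q_n\rightharpoonup q$ does not let you pass to the limit an integral whose integrand $\tau^{(1-t)}(\dis(x,y))\,\rho_{n,0}(x)^{-1/N'}$ changes with $n$, and the $L^1$-approximation trick in the proof of Proposition~\ref{prop:continuityofT} is pinned to a single fixed density $\rho_0$. This is exactly why the paper does not apply Proposition~\ref{prop:continuityofT} to $(q_n)$ directly: instead it performs (Step~1) a horizontal truncation to bounded-density marginals $\mu_i^{(r)}$, then (Step~3) a Jensen-type estimate exploiting that $\rho_{n,i}$ is a conditional average of the \emph{fixed} bounded density $\tilde\rho_i^{(r)}$, which bounds $T_{K,N'}^{(t)}(\pi_n\mid\bar\m_n^k)$ by $T_{K,N'}^{(t)}(\bar q_n^{(r)}\mid\bar\m_\infty^k)$ plus controllable errors — where $\bar q_n^{(r)}$ is an explicitly constructed coupling on $\X_\infty\times\X_\infty$ with fixed marginals — and (Steps~4--5) checks optimality of the limit and reintroduces the untruncated marginals. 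Only after this transfer to a fixed space with fixed marginals does Proposition~\ref{prop:continuityofT} legitimately apply (to the sequence $q^{\varepsilon_m}$).

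A related secondary issue: you write that Lemma~\ref{lemma:I4.19} gives $W_2$-convergence $(i_n)_\sharp\mu_{n,i}\to(i_\infty)_\sharp\mu_i$, but part~(ii) of that lemma requires the density of $\mu_i$ to be bounded. This is another reason the paper's Step~1 truncation $\mu_i\leadsto\mu_i^{(r)}$ is not optional. Your treatment of the singular-set control via $\omega$/$\Omega$ and of the entropy lower semicontinuity via Corollary~\ref{le:lsc_ent} and cut-off approximations is essentially the paper's Step~7 and is sound in outline, but without the $T$-semicontinuity the $\CD$-inequality does not close.
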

We follow the plan discussed at the beginning of the section and argue, in steps, using vertical approximations. Specifically, Steps 1 and 2 serve the purpose of constructing useful approximations of the marginals $\mu_0$ and $\mu_1$. 
Next, we follow the argumentation of Sturm in \cite{Sturm06II} in Steps 3 to 6, to exhibit the upper semicontinuity of $T^{(t)}_{K,N}$ along a sequence of optimal couplings, provided by the curvature-dimension assumption.  Additionally, we demonstrate the existence of a favorable limiting optimal coupling. Step 7 focuses in proving the convergence of inner points of a vertical sequence of Wasserstein geodesics, as well as, the lower semicontinuity of the Renyi entropy along this sequence.\\

%

Let us fix first the notation. Set $k \ge \bar k$ and assume that $\mu_0, \mu_1 \in \PX_2^{ac}(\X_\infty)$ have supports satisfying $\supp(\mu_0),\,\supp(\mu_1) \subset \mathcal R_\infty^{k-1}$. We denote by $\rho_i$ the density of $\mu_i$ with respect to $\m$, for $i=\{0,1\}$. Define the set, 
\begin{equation*}
    I:= \{ N'\in [N,0)\, :\, S_{N',\m_\infty}(\mu_0),S_{N',\m_\infty}(\mu_1)<\infty\},
\end{equation*}
and observe that $I$ is an interval, as a consequence of Jensen's inequality. Then, surely, we are able to assume that (and set)
\begin{equation*}
  (\, M:=\,)\;\max \left\{S_{N,\m_\infty}(\mu_0),\,S_{N,\m_\infty}(\mu_1)\right\}<\infty,
\end{equation*}
and that, for every $q\in \mathsf{Opt}(\mu_0,\mu_1)$, 
\begin{equation}\label{eq:finitenessofT}
    T^{(t)}_{K, N'}(q | \m_\infty) < \infty,
\end{equation}
since the $\CD$-condition is trivial in  failure of any of these inequalities.  Therefore, the arbitrariness of $k$ and initial measures shows that, in order to demonstrate Theorem \ref{thm:approxCD}, we are required to validate the $\CD$-inequality \eqref{def:CD}, for every $N'\in I$, and every $t\in[0,1]$.

Additionally, we fix as before an optimal coupling $p_n\in\mathsf{Opt}\left(\bar\m_\infty^k,\bar\m_n^k\right)$ between the normalized $k$-cuts of the reference measures, for every $n\in \N$. And, as defined in Section \ref{sec:pre}, we consider $\{P_{n}(x)\}_{x\in X^\infty} \in  \PX(\X_n)$ and $\{P'_{n}(y)\}_{y\in X_n}\in  \PX(\X_\infty)$  the disintegration kernels of $p_n$ with respect to the projections $\p_1$ and $\p_2$ respectively, and consider the map $P'_n\colon\PX_2(\X_\infty, \dis_\infty, \m_\infty^k) \to \PX_2(\X_n, \dis_n, \m_n^k)$. Note that, in contrast to Section \ref{sec:pre}, here we have omitted the dependence on the number $k$, since it is fixed for now. \\
%

\textbf{STEP 1: Horizontal approximation with bounded densities}\\
During the argument it proves useful to work with bounded-density measures. Therefore we construct here a horizontal approximation of $\mu_0$ and $\mu_1$, for which its elements enjoy this property. \\ For the construction, we fix an arbitrary optimal coupling $\tilde{q} \in\mathsf{Opt}(\mu_0,\mu_1)$ and define, for every $r>0$, 
\begin{equation}
    E_r := \{ (x_0, x_1) \in X_\infty\times X_\infty \,:\, \rho_0(x_0) < r, \rho_1(x_1) < r  \}
\end{equation}
and consequently, for sufficiently large $r$,
\begin{equation*}
    \tilde{q}^{(r)} := \alpha_r^{-1} \tilde{q}(\, . \cap E_r),
\end{equation*}
where $\alpha_r := \tilde{q}(E_r)$.
The measure $\tilde{q}^{(r)} \in \PX(\X_\infty \times \X_\infty)$ has marginals given by
\begin{equation}\label{margqtilde} 
\mu_0^{(r)} :=  (\p_{ 1})_{ \sharp} \, \tilde{q}^{(r)} \quad \text{ and } \quad \mu_1^{(r)} :=  (\p_{ 2})_{ \sharp} \, \tilde{q}^{(r)}.
\end{equation}
 Notice that both $\mu_0^{(r)}$ and $\mu_1^{(r)}$ have bounded densities and that $\mu_i^{(r)}$ converges to $\mu_i$ in $(\PX^2(\X_\infty),W_2)$, for $i=0, 1$. Moreover, notice that $S_{N,\m_\infty}(\mu_i^{(r)}) \to S_{N,\m_\infty}(\mu_i)$ as $r\to \infty$, for $i=0,1$. Then we fix $\varepsilon>0$ and find $r =r (\varepsilon)$ such that $\alpha_r\geq 1-\varepsilon$ and that the following estimates holds:
\begin{equation}\label{mutildeclose}
\max_{i\in\{1,2\}}W_2 (\mu_i, \mu_i^{(r)}) \le \varepsilon \quad \text{ and }\quad  \max_{i\in\{1,2\}}S_{N,\m_\infty^k} (\mu_i^{(r)})=\max_{i\in\{1,2\}}S_{N,\m_\infty} (\mu_i^{(r)})\leq M +\frac 12.
\end{equation}
We point out that the parameter $r$ depends on $\varepsilon$, but we won't be explicit on this dependence for the sake of the presentation. \\

\textbf{STEP 2: Vertical approximation}\\
Once we have identified the horizontal approximations $\mu_0^{(r)}$ and $\mu_1^{(r)}$, we may proceed to their vertical approximation. First of all, observe that $\mu_0^{(r)}$ and $\mu_1^{(r)}$ are absolutely continuous with respect to the normalized reference measure $\bar\m_\infty^k$, so we denote by $\tilde{\rho}_0^{(r)}$ and $\tilde{\rho}_1^{(r)}$ their bounded densities. Then, for every $n \in \N$, we define $\mu_{0,n}, \,\mu_{1,n} \in \PX_2(\X_n, \dis_n, \m_n^k)$ as
\begin{equation}\label{muin} 
\mu_{i, n} := P_n'(\mu_i^{(r)}) = \rho_{i, n} \bar\m_n^k,
\end{equation}
where $\rho_{i, n}(y) = \int \tilde{\rho}_i^{(r)} (x)P_n'(y)(\d x)$.
Notice that $\mu_{0,n}$ and $\mu_{1,n}$ depend on $r$ (and ultimately on $\varepsilon$), but, once again, we prefer not to make this dependence explicit, in order to maintain an easy notation in the following. Anyway, we invite the reader to keep in mind that every object we are going to define depends only on $\varepsilon$.
Now, since $\m^k_n(\X_n)\to\m^k_\infty(\X_\infty)$, observe that Lemma \ref{lemma:I4.19} guarantees the existence of an $\bar{n} \in \N$, such that if $n \ge \bar{n}$ it holds that
\begin{equation}\label{muclose}
\max_{i\in\{1,2\}} W_2(\mu_i^{(r)}, \mu_{i, n})  \leq \varepsilon.
\end{equation}
and that  
\begin{equation}\label{eq:entropyboundM}
    \max_{i\in\{1,2\}} S_{N,\m_n}(\mu_{i,n})\leq \max_{i\in\{1,2\}} S_{N,\m_n^k}(\mu_{i,n})\leq M+1.
\end{equation}
Moreover, according to Lemma \ref{lemma:I4.19}, for every $N'\in I$, it holds that
\begin{equation*}
      S_{N', \m_n}(\mu_{i,n}) \leq S_{N', \m^k_n}(\mu_{i,n})\leq \bigg[\frac{\m^k_n(\X_n)}{\m^k_\infty(\X_\infty)}\bigg]^{-\frac 1{N'}} S_{N', \m^k_\infty} (\tilde\mu^{(r)}_i) <\infty.
\end{equation*}
 Therefore, for every $n\in \N$ large enough, since $(\X_n, \di_n, \m_n)$ is a $\CD(K,N)$ space,  there exist an optimal plan $\pi_n\in \mathsf{Opt}(\mu_{0,n},\mu_{1,n})$ and a 2-Wasserstein geodesic $(\mu_{t,n})_{t \in [0,1]} \subset  \mathscr{P}_{2}^{ac}(\X_n) $ connecting $\mu_{0,n}$ and $\mu_{1,n}$, for which,
\begin{equation}\label{eq.CDatn}
    S_{N',\m_n}(\mu_{t,n}) \leq T_{K,N'}^{(t)}(\pi_n | \m_n)
\end{equation}
holds, for every $t\in [0,1]$ and every $N'\in I$. Note that Remark \ref{re:bounded} together with the assumption that $\sup_{i\in \N \cup \{\infty\}} \diam(\X_i,\di_i) < \pi|K|^{-1/2} $, if $K<0$, assures that the geodesic $\mu_{t,n}$ is absolutely continuous with respect to $\m_n$.\\

\textbf{STEP 3: Estimate for $T_{K,N'}^{(t)}$ }\\
In this step we start  the proof of the upper semicontinuity of the functional $T_{K,N'}^{(t)}$.
In particular, we fix $N'\in [N, 0)$ and a time $t \in [0, 1]$ and we call $Q_n$ and $Q'_n$ be the disintegrations of $\pi_n$ with respect to $\mu_{0, n}$ and $\mu_{1, n}$ respectively. Then we define the following two functions
\[  v_0(y_0) =  \int_{\X_n} \tau^{(1-t)}_{K, N'} (\dis(y_0, y_1)) Q_n(y_0, d y_1)  \]
and 
\[ v_1(y_1) = \int_{\X_n} \tau^{(t)}_{K, N'} (\dis(y_0, y_1)) Q'_n(y_1, d y_0).  \] 
A direct application of Jensen's theorem leads to 
\[
\begin{split}
 T_{K, N'}^{(t)}(\pi_n |\bar \m_n^k) &= \displaystyle \sum_{i = 0}^1 \, \int_{\X_n} \rho_{i, n}(y_i)^{1-1/N'} \cdot v_i (y_i) \, \d \bar\m^k_n(y_i)\\
&= \sum_{i = 0}^1 \, \int_{\X_n} \bigg[ \int_{\X_\infty} \tilde{\rho}^{(r)}_{i}(x_i) P'_n(y_i, \d x_i)\bigg]^{1-1/N'} \cdot v_i (y_i) \, \d \bar\m^k_n(y_i)\\
&\le \sum_{i=0}^1 \, \int_{\X_n} \int_{\X_\infty} \tilde{\rho}^{(r)}_{i}(x_i)^{1-1/N'} P'_n(y_i, \d x_i)  \cdot v_i (y_i) \, \d \bar\m^k_n(y_i)\\
&= \sum_{i=0}^1 \, \int_{\X_\infty} \tilde{\rho}^{(r)}_{i}(x_i)^{1-1/N'} \bigg[ \int_{\X_n} v_i(y_i) P_n(x_i, \d y_i)   \bigg] \, \d\bar\m^k_\infty(x_i).
\end{split}
\]
\noindent At this point we see that 
\begin{align*}
\displaystyle \int_{\X_n}  v_0(y_0) & P_n(x_0, \d y_0) = \int_{\X_n \times \X_n} \tau_{K, N'}^{(1-t)}(\dis(y_0, y_1)) Q_n(y_0, \d y_1) P_n(x_0, \d y_0) &\\
&=\int_{\X_n \times \X_n\times \X_\infty} \tau_{K, N'}^{(1-t)}(\dis(y_0, y_1))\dfrac{\tilde{\rho}_1^{(r)}(x_1)}{\rho_{1, n}(y_1)}P_n'(y_1, \d x_1) Q_n(y_0, \d y_1) P_n(x_0, \d y_0) \\
& \le \int_{\X_n \times \X_n \times \X_\infty} \big[ \tau_{K, N'}^{(1-t)}(\dis(x_0, x_1)) + C \cdot |\dis(y_0, y_1) - \dis(x_0, x_1)| \big]& \\ 
& \hspace{4.5cm} \dfrac{\tilde{\rho}_1^{(r)}(x_1)}{\rho_{1, n}(y_1)}P_n'(y_1, \d x_1)Q_n(y_0, \d y_1) P_n(x_0, \d y_0) &\\
&\le \int_{\X_n \times \X_n \times \X_\infty} \big[ \tau_{K, N'}^{(1-t)}(\dis(x_0, x_1)) + C \cdot (\dis(x_0, y_0) + \dis(x_1, y_1)) \big]& &\\ 
& \hspace{4.5cm} \dfrac{\tilde{\rho}_1^{(r)}(x_1)}{\rho_{1, n}(y_1)}P_n'(y_1, \d x_1) Q_n(y_0, \d y_1) P_n(x_0, \d y_0) ,
\end{align*} 
and analogously that
\begin{align*}
\displaystyle \int_{\X_n}  v_1(y_1)  P_n(x_1, \d y_1) & \le \int_{\X_n \times \X_n \times \X_\infty} \big[ \tau_{K, N'}^{(1-t)}(\dis(x_0, x_1)) + C \cdot (\dis(x_0, y_0) + \dis(x_1, y_1)) \big]& &\\ 
& \hspace{4.5cm} \dfrac{\tilde{\rho}_0^{(r)}(x_0)}{\rho_{0, n}(y_0)}P_n'(y_0, \d x_0) Q_n(y_1, \d y_0) P_n(x_1, \d y_1), \vspace{-0.3cm}
\end{align*} 
where $C := \max_{\theta \in [0, M], s\in [0, 1]} \frac{\partial}{\partial \theta} \tau_{K, N'}^{(s)}(\theta)$ and $ M$ is the maximum between $\dis (x_0, x_1)$ and $\dis(x_1, y_1)$. 
Observe that the constant $C$ is indeed finite because we know that  $\sup_{i\in \N \cup \{\infty\}} \text{diam}(\X_i,\di_i) < \pi\sqrt{\frac{1}{|K|}}$, if $K<0$, from assumption (iii) in Theorem \ref{thm:approxCD}.\\
Moreover we notice that 
\begin{equation}\label{eq:estimate1}
\begin{split}
    \int_{\X_\infty} \tilde{\rho}^{(r)}_{i}(x_0)^{1-1/N'}   &\int_{\X_n \times \X_n \times \X_\infty} \dis(x_0, y_0) \dfrac{\tilde{\rho}_1^{(r)}(x_1)}{\rho_{1, n}(y_1)}P_n'(y_1, \d x_1) Q_n(y_0, \d y_1) P_n(x_0, \d y_0)\, \d\bar\m^k_\infty(x_0)\\
    &=\int_{\X_\infty} \tilde{\rho}^{(r)}_{i}(x_0)^{1-1/N'}   \int_{\X_n } \dis(x_0, y_0)  P_n(x_0, \d y_0)\, \d\bar\m^k_\infty(x_0) \\
    &\leq r^{1-1/N'} \int_{\X_n\times \X_\infty} \dis(x_0, y_0) \de p_n^k(x_0,y_0) \leq r^{1-1/N'} W_2(\bar\m_n^k,\bar\m_\infty^k)
\end{split}
\end{equation}
and
\begin{equation}\label{eq:estimate2}
\begin{split}
    \int_{\X_\infty} \tilde{\rho}^{(r)}_{0}(x_0)^{1-1/N'}   &\int_{\X_n \times \X_n \times \X_\infty} \dis(x_1, y_1) \dfrac{\tilde{\rho}_1^{(r)}(x_1)}{\rho_{1, n}(y_1)}P_n'(y_1, \d x_1) Q_n(y_0, \d y_1) P_n(x_0, \d y_0)\, \d\bar\m^k_\infty(x_0)\\
    &\leq r^{-1/N'}  \int_{\X_n \times \X_n \times \X_\infty\times \X_\infty}   \dis(x_1, y_1) \dfrac{\tilde{\rho}_1^{(r)}(x_1)}{\rho_{1, n}(y_1)}P_n'(y_1, \d x_1) \\
    & \qquad\qquad\qquad\qquad\qquad\qquad\qquad Q_n(y_0, \d y_1) P_n(x_0, \d y_0)\tilde{\rho}^{(r)}_{0}(x_0)\, \d\bar\m^k_\infty(x_0) \\
    &=r^{-1/N'}  \int_{\X_n \times \X_n \times \X_\infty}   \dis(x_1, y_1) \dfrac{\tilde{\rho}_1^{(r)}(x_1)}{\rho_{1, n}(y_1)}P_n'(y_1, \d x_1) Q_n(y_0, \d y_1) \mu_{0,n}(\de y_0)\\
     &=r^{-1/N'}  \int_{\X_n \times \X_\infty}   \dis(x_1, y_1) \tilde{\rho}_1^{(r)}(x_1)P_n'(y_1, \d x_1)\de \bar\m_n^k(y_1)\\
     &\leq r^{1-1/N'}  \int_{\X_n \times \X_\infty} \di(x_1,y_1)  \de p_n^k(x_1, y_1) \leq r^{1-1/N'} W_2(\bar\m_n^k,\bar\m_\infty^k),
\end{split}
\end{equation}
where the last inequality in both chains follows by the Jensen's inequality.
Consequently, for every $n\in\N$, we define a -- not necessarily optimal -- coupling $\bar{q}_n^{(r)}\in \mathsf{Adm}(\mu_0^{(r)},\mu_1^{(r)})$  by imposing that
\[
\begin{split}
\d \bar{q}_n^{(r)}(x_0, x_1) &= \displaystyle \int_{\X_n \times \X_n} \dfrac{\tilde{\rho}_0^{(r)}(x_0) \tilde{\rho}_1^{(r)}(x_1)}{\rho_{0, n}(y_0) \rho_{1, n}(y_1)} P_n'(y_1, \d x_1) P_n'(y_0, \d x_0) \, \d\pi_n(y_0, y_1)\\
&= \int_{\X_n \times \X_n} \dfrac{\tilde{\rho}_0^{(r)}(x_0) \tilde{\rho}_1^{(r)}(x_1)}{ \rho_{1, n}(y_1)} P_n'(y_1, \d x_1)  Q_n(y_0, \d y_1) P_n(x_0, \d y_0) \, \d \bar\m_\infty^k(x_0)\\
&= \int_{\X_n \times \X_n} \dfrac{\tilde{\rho}_0^{(r)}(x_0) \tilde{\rho}_1^{(r)}(x_1)}{ \rho_{0, n}(y_0)} P_n'(y_1, \d x_1)  Q'_n(y_1, \d y_0) P_n(x_0, \d y_0) \, \d \bar\m_\infty^k(x_1).
\end{split}
\]\\
With this definition of $\bar{q}^{(r)}$ and keeping in mind \eqref{eq:estimate1} and \eqref{eq:estimate2}, we end up with
\[
T_{K, N'}^{(t)}(\pi_n | \bar\m_n^k)\le T^{(t)}_{K, N'}(\bar{q}^{(r)} | \bar\m_\infty^k) + 4 C r^{1-1/N'}W_2(\bar\m_\infty^k, \bar\m_n^k).
\]
Now, up to taking a greater $\bar n$, we can require that for every $n\geq \bar n$ it holds that 
\[
W_2(\bar\m_\infty^k, \bar\m_n^k) \le  \dfrac{\varepsilon}{4 C r^{\frac{N'-1}{N'}}} ,
\]
for every $N'\in [N, \varepsilon)$.
As a consequence we obtain that
\begin{equation}\label{eq:Tfixmarg}
T_{K, N'}^{(t)}(\pi_n |\bar \m^k_n)  \le T_{K, N}^{(t)}(\bar{q}_n^{(r)} | \bar\m^k_\infty) + \varepsilon,
\end{equation}
for every $n\geq \bar n$ and every $N'\in [N, \varepsilon)$.\\

\textbf{STEP 4: $q^{(r)}_n$ converges to an optimal plan}\\
The objective now is to prove that 
\begin{equation}\label{eq:aimofstep}
    \int \di^2(x_0,x_1) \de \bar q^{(r)}_n(x_0,x_1)\to W^2_2(\mu_0^{(r)},\mu_1^{(r)}) \qquad \text{as }n \to \infty.
\end{equation}

First of all notice that, since every $\bar q^{(r)}_n$ is an admissible plan between $\mu_0^{(r)}$ and $\mu_1^{(r)}$, then for every $n\in \N$ it holds that
\begin{equation}\label{eq:boundbelow}
    \int \di^2(x_0,x_1) \de \bar q^{(r)}_n(x_0,x_1)\geq W^2_2(\mu_0^{(r)},\mu_1^{(r)}).
\end{equation}
On the other hand the triangular inequality ensures that
\begin{equation*}
    \di(x_0,x_1) \leq \di (x_0,y_0)+\di (y_0,y_1) + \di (x_1,y_1)
\end{equation*}
and consequently, since $\di(y_0,y_1)<\text{diam}(\mathcal R ^k_n)\leq 2^{k+2}$ for $\pi_n$-almost every pair $(y_0,y_1)$, we have that
\begin{equation*}
    \di^2(x_0,x_1)-\di^2(y_0,y_1) \leq 2\di^2 (x_0,y_0)+ 2\di^2 (x_1,y_1) + 2^{k+3} \di (x_0,y_0)+ 2^{k+3} \di (x_1,y_1)
\end{equation*}
for $\pi_n$-almost every pair $(y_0,y_1)$. It is then possible to perform the following estimate
\begin{align*}
    \int_{\X_\infty \times \X_\infty}\di^2(x_0,x_1) &\de \bar q^{(r)}_n (x_0,x_1) \\
    &= \int_{\X_\infty \times \X_\infty}\di^2(x_0,x_1) \int_{\X_n \times \X_n} \dfrac{\tilde{\rho}_0^{(r)}(x_0) \tilde{\rho}_1^{(r)}(x_1)}{\rho_{0, n}(y_0) \rho_{1, n}(y_1)} P'_n(y_1, \d x_1) P'_n(y_0, \d x_0) \, \d\pi_n(y_0, y_1)\\
    &\leq \int_{\X_n \times \X_n} \di^2(y_0,y_1) \de \pi_n (y_0,y_1) \\
    &\quad+ \int_{\X_\infty } \int_{\X_n \times \X_n} 2\di^2(x_0,y_0) \dfrac{\tilde{\rho}_0^{(r)}(x_0) }{\rho_{0, n}(y_0)}  P'_n(y_0, \d x_0) \, \d\pi_n(y_0, y_1)\\
    &\quad+\int_{\X_\infty} \int_{\X_n \times \X_n} 2^{k+3} \di(x_0,y_0) \dfrac{\tilde{\rho}_0^{(r)}(x_0) }{\rho_{0, n}(y_0)}  P'_n(y_0, \d x_0) \, \d\pi_n(y_0, y_1) \\
    &\quad+ \int_{\X_\infty } \int_{\X_n \times \X_n} 2\di^2(x_1,y_1) \dfrac{\tilde{\rho}_1^{(r)}(x_1) }{\rho_{1, n}(y_1)}  P'_n(y_1, \d x_1) \, \d\pi_n(y_0, y_1)\\
    &\quad+\int_{\X_\infty } \int_{\X_n \times \X_n} 2^{k+3} \di(x_1,y_1) \dfrac{\tilde{\rho}_1^{(r)}(x_1) }{\rho_{1, n}(y_1)}  P'_n(y_1, \d x_1) \, \d\pi_n(y_0, y_1) 
\end{align*}
We can now consider one term at the time and start by noticing that, according to Lemma \ref{lemma:I4.19},
 \begin{align*}
     \int_{\X_\infty } \int_{\X_n \times \X_n} 2\di^2(x_0,y_0) \dfrac{\tilde{\rho}_0^{(r)}(x_0) }{\rho_{0, n}(y_0)}  &P'_n(y_0, \d x_0) \, \d\pi_n(y_0, y_1) \\
     &= \int_{\X_\infty } \int_{\X_n } 2\di^2(x_0,y_0) \tilde{\rho}_0^{(r)}(x_0)  P'_n(y_0, \d x_0) \, \d\bar\m_n^k(y_0)\\
     &=\int_{\X_\infty } \int_{\X_n} 2\di^2(x_0,y_0) \tilde{\rho}_0^{(r)}(x_0) \, \de p^k_n(x_0,y_0)  \to 0,
 \end{align*}
 and similarly 
 \begin{equation*}
     \int_{\X_\infty } \int_{\X_n \times \X_n} 2\di^2(x_1,y_1) \dfrac{\tilde{\rho}_1^{(r)}(x_1) }{\rho_{1, n}(y_1)}  P'_n(y_1, \d x_1) \, \d\pi_n(y_0, y_1) \to 0.
 \end{equation*}
Moreover H\"older's inequality ensures that 
\begin{align*}
    \int_{\X_\infty} &\int_{\X_n \times \X_n} 2^{k+3} \di(x_0,y_0) \dfrac{\tilde{\rho}_0^{(r)}(x_0) }{\rho_{0, n}(y_0)}  P'_n(y_0, \d x_0) \, \d\pi_n(y_0, y_1) \\
    &\leq 2^{k+3} \left[ \int_{\X_\infty } \int_{\X_n \times \X_n} \di^2(x_0,y_0) \dfrac{\tilde{\rho}_0^{(r)}(x_0) }{\rho_{0, n}(y_0)}  P'_n(y_0, \d x_0) \, \d\pi_n(y_0, y_1)\right] ^\frac12 \to 0,
\end{align*}
and analogously that 
\begin{equation*}
    \int_{\X_\infty } \int_{\X_n \times \X_n} 2^{k+3} \di(x_1,y_1) \dfrac{\tilde{\rho}_1^{(r)}(x_1) }{\rho_{1, n}(y_1)}  P'_n(y_1, \d x_1) \, \d\pi_n(y_0, y_1) \to 0.
\end{equation*}
Therefore, putting together the estimates on every term, we can conclude that
\begin{align*}
    \limsup_{n\to \infty} \int \di^2(x_0,x_1) \de \bar q^{(r)}_n(x_0,x_1) &\leq \limsup_{n\to \infty} \int_{\X_n \times \X_n} \di^2(y_0,y_1) \de \pi_n (y_0,y_1)  \\
    &=\limsup_{n\to \infty} W^2_2(\mu_{0,n}, \mu_{1,n}) = W^2_2(\mu_0^{(r)},\mu_1^{(r)}),
\end{align*}
where we used that $\pi_n$ is an optimal plan and that $\mu_{0,n}\xrightarrow{W_2}\mu_0^{(r)}$, $\mu_{
1,n}\xrightarrow{W_2}\mu_1^{(r)}$. This last inequality, combined with \eqref{eq:boundbelow}, allows us to conclude \eqref{eq:aimofstep}.\\

\textbf{STEP 5: Definition of approximating plan with fixes marginals}\\
We have shown the existence of $n(\varepsilon)\geq \bar n$ such that 
\begin{equation}\label{eq:convtooptimal}
     \bigg |\int \di^2(x_0,x_1) \de \bar q^{(r)}_{n(\varepsilon)}(x_0,x_1)- W^2_2(\mu_0^{(r)},\mu_1^{(r)})\bigg | <\varepsilon.
\end{equation}
Recalling the properties of $\bar n$ proven in the previous steps, we also know that 
\begin{equation}\label{eq:estT1}
T_{K, N'}^{(t)}(\pi_{n(\varepsilon)} | \bar\m^k_{n(\varepsilon)})  \le T_{K, N'}^{(t)}(\bar{q}_{n(\varepsilon)}^{(r)} | \bar\m^k_\infty) + \varepsilon,
\end{equation}
for every $n\geq \bar n$ and every $N'\in [N, \varepsilon)$.
At this point, using $\bar{q}_{n(\varepsilon)}^r$, we define a coupling $q^\varepsilon$ between $\mu_0$ and $\mu_1$ by
\begin{equation}
q^\varepsilon ( . ) := \alpha_r \bar{q}_{n(\varepsilon)}^{(r)} + \tilde{q} ( . \cap (\X_\infty^2 \setminus E_r)).
\end{equation}
First of all, notice that 
\begin{equation*}
    \bigg | \int \di^2(x_0,x_1) \de \bar q^{(r)}_{n(\varepsilon)}(x_0,x_1) - \int \di^2(x_0,x_1) \de  q^\varepsilon(x_0,x_1) \bigg |\leq  (1-\alpha_r) \text{diam} (\mathcal R^k_\infty)^2 \leq \varepsilon 2^{2k+4}.
\end{equation*}
Consequently, putting together this last estimate with \eqref{mutildeclose} and \eqref{eq:convtooptimal}, we can conclude that 
\begin{equation}\label{eq:limitW2}
    \int \di^2(x_0,x_1) \de  q^\varepsilon(x_0,x_1) = W^2_2(\mu_0,\mu_1) + O(\varepsilon).
\end{equation}
On the other hand, it is immediate from the definition of $q^\varepsilon$ that 
\begin{equation}\label{eq:estT2}
(1-\varepsilon)^{1-1/N'}T^{(t)}_{K, N'}(\bar{q}^{r}_{n(\varepsilon)} | \bar\m^k_\infty) \leq \alpha_r^{1-1/N'} T^{(t)}_{K, N'}(\bar{q}^{r}_{n(\varepsilon)} | \bar\m^k_\infty) \leq  T^{(t)}_{K, N'}(q^{\varepsilon} | \bar\m^k_\infty)  . 
\end{equation}

\textbf{STEP 6: Convergence of plans}\\
We turn now to prove the weak convergence of the plans $q^\varepsilon$ introduced in the previous step, as $\varepsilon\to 0$, and consequently the upper semicontinuity of $T^{(t)}_{K,N'}$.\\ 
We first note that, since for every $\varepsilon>0$, it holds that $q^\varepsilon \in \mathsf{Adm}(\mu_0, \mu_1)$, then the family $(q^\varepsilon)_{\varepsilon>0}$ is tight and Prokhorov Theorem ensures the existence of a sequence $(\varepsilon_m)_{m\in \N}$ converging to $0$ such that $q^{\varepsilon_m}\rightharpoonup q \in \mathsf{Adm}(\mu_0,\mu_1)$. Equation \eqref{eq:limitW2} ensures the optimality of $q \in \mathsf{Opt} (\mu_0,\mu_1)$. Furthermore, putting together the estimates \eqref{eq:estT1} (that holds definitely for every $N'\in[N,0)$) and \eqref{eq:estT2}, we conclude that, for every $N'\in [N,0)$ and $t \in [0,1]$,

\begin{equation}
    \begin{split}
    \limsup_{m \to \infty} T_{K, N'}^{(t)}(\pi_{n(\varepsilon_m)} | \m_{n(\varepsilon_m)}) &\leq \limsup_{m \to \infty} T_{K, N'}^{(t)}(\pi_{n(\varepsilon_m)} | \m_{n(\varepsilon_m)}^k) \\
    &= \limsup_{m \to \infty} \frac{1}{\m_{n(\varepsilon_m)}^k(\X_n)^{-1/N'}}  \cdot T_{K, N'}^{(t)}(\pi_{n(\varepsilon_m)} | \bar\m_{n(\varepsilon_m)}^k)\\
    &= \frac{1}{\m_{\infty}^k(\X_\infty)^{-1/N'}}\limsup_{m \to \infty}  T_{K, N'}^{(t)}(\pi_{n(\varepsilon_m)} | \bar\m_{n(\varepsilon_m)}^k)\\
    &\le \frac{1}{\m_{\infty}^k(\X_\infty)^{-1/N'}}\limsup_{m \to \infty} T_{K, N'}^{(t)}(\bar{q}_{n(\varepsilon_m)}^{(r)} | \bar\m^k_\infty) + \varepsilon_m \\
    &=\frac{1}{\m_{\infty}^k(\X_\infty)^{-1/N'}} \limsup_{m \to \infty} T_{K, N'}^{(t)}(\bar{q}_{n(\varepsilon_m)}^{(r)} | \bar\m^k_\infty) \\
    &\leq \frac{1}{\m_{\infty}^k(\X_\infty)^{-1/N'}} \limsup_{m \to \infty} \frac{1}{(1-\varepsilon_m)^{1-1/N'}} T^{(t)}_{K, N'}(q^{\varepsilon_m} | \bar\m^k_\infty)\\
    &= \limsup_{m \to \infty} T^{(t)}_{K, N'}(q^{\varepsilon_m} | \m^k_\infty).
\end{split}
\end{equation}
Now, notice that every $q^{\varepsilon_m}$ has as marginals $\mu_0$ and $\mu_1$, which are supported on $\mathcal{R}^{k-1}_\infty$ and therefore  
\begin{equation*}
    T^{(t)}_{K, N'}(q^{\varepsilon_m} | \m^k_\infty) = T^{(t)}_{K, N'}(q^{\varepsilon_m} | \m_\infty).
\end{equation*}
Thus we can apply Proposition \ref{prop:continuityofT} to  $T^{(t)}_{K, N'}(q^{\varepsilon_m} | \m_\infty)$, for $N'\in I$, which together with the above estimate guarantees that 
\begin{equation}\label{eq:uscofT}
    \limsup_{m \to \infty} T_{K, N'}^{(t)}(\pi_{n(\varepsilon_m)} | \m_{n(\varepsilon_m)}) \leq \limsup_{m \to \infty} T^{(t)}_{K, N'}(q^{\varepsilon_m} | \m_\infty)
   = T^{(t)}_{K, N'}(q | \m_\infty)
\end{equation}
holds for every $N'\in I$ and $t \in [0,1]$.\\

\textbf{STEP 7: Convergence of midpoints}\\
The goal of this step is to show the existence of a limit geodesic $\{\mu_t\}_{t \in [0, 1]}$, such that for any $t \in [0,1]$, $\mu_{t,n(\varepsilon_m)}$ $W_2$-converges (up to subsequences) to $\mu_t$, as $m\to\infty$. Furthermore, we are going to prove a suitable lower semicontinuity of the Renyi entropies that will allow us to pass to the limit the $\CD$ inequality. In order to ease the notation we will denote the Renyi entropy $S_{N,\m_{n(\varepsilon_m)}} $ by $S_{N,n(\varepsilon_m)} $.

\begin{clm*}
For every fixed $t\in [0,1]$, the sequence $(\mu_{t,n(\varepsilon_m)})_{m\in \N}$ converges (up to subsequences) to a measure $\mu_t\in\PX(\X_\infty)$.
\end{clm*}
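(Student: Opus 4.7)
The plan is to prove that, when viewed via the isometric embeddings $i_{n(\varepsilon_m)}\colon\X_{n(\varepsilon_m)}\to Z$ into the common Polish realization $Z$, the family $\{\mu_{t,n(\varepsilon_m)}\}_{m\in\N}\subset\PX(Z)$ is tight, and then conclude by Prokhorov's theorem. The main obstacle is that the reference measures $\m_{n(\varepsilon_m)}$ are merely quasi-Radon, so they can themselves concentrate arbitrary mass near their singular sets, and a naive use of Corollary \ref{lem:tight} with only a bound on the R\'enyi entropy is not sufficient. The $\omega$-uniform convexity hypothesis is tailored precisely to bypass this difficulty.

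I would first observe that, since $S_{N,\m_\infty}(\mu_0),S_{N,\m_\infty}(\mu_1)<\infty$, the index $N'=N$ belongs to $I$; applying the $\CD(K,N)$-inequality \eqref{eq.CDatn} at $N'=N$ together with the upper semicontinuity \eqref{eq:uscofT} and the finiteness \eqref{eq:finitenessofT} of the limit, one obtains the uniform entropy bound
\[
\sup_{m\in\N} S_{N,\m_{n(\varepsilon_m)}}(\mu_{t,n(\varepsilon_m)}) \le A < \infty.
\]
Next, since both $\mu_{0,n(\varepsilon_m)}$ and $\mu_{1,n(\varepsilon_m)}$ are supported in $\mathcal R^k_{n(\varepsilon_m)}$ with entropies bounded by $M+1$ (see \eqref{eq:entropyboundM}), the $\omega$-uniform convexity of $(\X_{n(\varepsilon_m)},\di_{n(\varepsilon_m)},\m_{n(\varepsilon_m)})$ would yield, for every $h\ge k$,
\[
\mu_{t,n(\varepsilon_m)}\bigl(\X_{n(\varepsilon_m)}\setminus \mathcal R^h_{n(\varepsilon_m)}\bigr) \le \omega(k,h,M+1),
\]
uniformly in $m$, and by \eqref{eq:omega0} this quantity vanishes as $h\to\infty$.

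The third step is to combine both ingredients through Corollary \ref{lem:tight}. For each fixed $h$, the normalised restrictions $\tilde\mu_{t,m,h}$ of $\mu_{t,n(\varepsilon_m)}$ to $\mathcal R^h_{n(\varepsilon_m)}$ are probability measures absolutely continuous with respect to $\m_{n(\varepsilon_m)}^h$, since the cut-off weight $f^h$ is bounded below by a constant $c_h>0$ on $\mathcal R^{h-1}$; their entropies $S_{N,\m_{n(\varepsilon_m)}^h}(\tilde\mu_{t,m,h})$ therefore remain uniformly bounded in $m$, and Corollary \ref{lem:tight} provides tightness of $\{\tilde\mu_{t,m,h}\}_{m\in\N}$ in $Z$. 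A standard $\eta/2$ splitting then yields tightness of the original family: given $\eta>0$, choose $h$ with $\omega(k,h,M+1)\le\eta/2$, and then a compact $K_\eta\subset Z$ capturing all but $\eta/2$ of the mass of every $\tilde\mu_{t,m,h}$, so that $\mu_{t,n(\varepsilon_m)}(Z\setminus K_\eta)\le\eta$ uniformly in $m$. Prokhorov's theorem delivers a weakly convergent subsequence, and a final portmanteau-type argument, based on the convergence of $\supp(\m_{n(\varepsilon_m)}^h)$ to $\supp(\m_\infty^h)\subset i_\infty(\X_\infty)$, shows that the limit $\mu_t$ is supported in $i_\infty(\X_\infty)$ and hence canonically identifies with an element of $\PX(\X_\infty)$.
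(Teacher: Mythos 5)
Your overall strategy---prove tightness of the full family $\{\mu_{t,n(\varepsilon_m)}\}_m$ directly via an $\eta/2$-splitting, combining the $\omega$-uniform convexity (which controls tail mass near the singular sets) with Corollary~\ref{lem:tight} (which gives compactness in the ``bulk''), and then invoke Prokhorov---is a legitimate alternative to the paper's route. The paper instead fixes a cut-off level $l$, passes to a weak limit $\mu_t^l$ of the cut-offs $\mu_{t,n(\varepsilon_m)}^l$ as $m\to\infty$, shows $(\mu_t^l)_l$ is $W_2$-Cauchy, and recovers the convergence of $\mu_{t,n(\varepsilon_m)}$ by a three-epsilon argument using the uniform estimate $W_2^2(\mu_{t,n(\varepsilon_m)}^l,\mu_{t,n(\varepsilon_m)})\lesssim \omega(k,l-1,M+1)$. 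Both routes use the same two lemmas; the paper's version is slightly longer for this claim but deliberately produces the double-indexed objects $\mu_t^l$, which it reuses verbatim in Claims~2 and~3, whereas your argument would need a parallel construction there.

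There are, however, two concrete problems with your writeup. First, a technical one: restricting to $\mathcal R^h_{n(\varepsilon_m)}$ and measuring the R\'enyi entropy with respect to $\m^h_{n(\varepsilon_m)}$ does not give the uniform bound you claim, because the density of the restricted measure against $\m^h_{n(\varepsilon_m)}=f^h\m_{n(\varepsilon_m)}$ is $\rho/f^h$, and $f^h$ is \emph{not} bounded below on $\mathcal R^h$---it decays to $0$ near $\partial\mathcal R^h$. Since the exponent $1-1/N>1$, this can blow the entropy up. Your own justification (``$f^h$ is bounded below on $\mathcal R^{h-1}$'') is for the wrong set. The fix is cheap: either restrict to $\mathcal R^{h-1}$ (where $f^h\equiv 1$) or, as the paper does, keep the restriction to $\mathcal R^h$ but take $\m^{h+1}$ as the reference measure, on which $S_{N,\m^{h+1}}$ of the restriction agrees with $S_{N,\m}$.

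Second, and more seriously, the closing ``portmanteau-type argument based on the convergence of $\supp(\m_{n(\varepsilon_m)}^h)$ to $\supp(\m_\infty^h)$'' is not a valid step. Weak (or $W_c$) convergence of finite measures in $Z$ does \emph{not} imply Hausdorff convergence of supports, and the extrinsic convergence of Definition~\ref{de:ext}, under which the theorem is actually proved, does not carry any support control at all: it is entirely possible a priori for the supports $\supp(\m_n^h)\subset i_n(\X_n)\subset Z$ to wander in $Z$ even as the normalized measures converge. So you have not excluded that the Prokhorov limit $\mu_t\in\PX(Z)$ charges $Z\setminus i_\infty(\X_\infty)$. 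The correct mechanism, which the paper uses, is the lower semicontinuity of the entropy: by Corollary~\ref{le:lsc_ent}, any subsequential weak limit $\nu_h\in\PX(Z)$ of your normalized restrictions satisfies $S_{N,\m_\infty^{h+1}}(\nu_h)<\infty$, which forces $\nu_h\ll\m_\infty^{h+1}$ and hence $\supp(\nu_h)\subset\supp(\m_\infty^{h+1})\subset i_\infty(\X_\infty)$; the identification of $\mu_t$ with a measure on $\X_\infty$ then follows by letting $h\to\infty$. Replace your final sentence with this entropy argument and your proof goes through.
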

First of all, notice that estimate \eqref{eq:uscofT}, the $\CD$-condition
\eqref{eq.CDatn} and assumption \eqref{eq:finitenessofT} together ensure that the entropies $S_{N,n(\varepsilon_m)}(\mu_{t,n(\varepsilon_m)})$ are uniformly bounded above by a constant $M'$, for $m\in \N$. Moreover, for every $n(\varepsilon_m)$, the approximation Lemma \ref{prop:mk} provides the existence of a sequence $(\mu_{t,n(\varepsilon_m)}^l)_{l\in \N}$, that $W_2$-converges to $\mu_{t,n(\varepsilon_m)}$, as $l\to \infty$, and such that $\supp(\mu_{t,n(\varepsilon)}^l)\subseteq \mathcal R_{n(\varepsilon)} ^l$. From the proof of Lemma \ref{prop:mk} we recall that $\mu_{t,n(\varepsilon_m)}^l=c^l\,f^l \,  \mu_{t,n(\varepsilon_m)}$ so, we can easily notice that, 
\begin{equation*}
    (c^l)^{-1} =\int f^l \, \de \mu_{t,n(\varepsilon_m)} \geq \mu_{t,n(\varepsilon_m)} (\mathcal R^{l-1}_{n(\varepsilon_m)}) \geq 1- \omega(k,l-1,M+1).
\end{equation*}
Consequently, for sufficiently large $l$ it holds that, as measures,
\begin{equation}\label{eq:ineqofmeasures}
    \mu_{t,n(\varepsilon_m)}^l \leq \frac{1}{1- \omega(k,l-1,M+1)}\cdot  \mu_{t,n(\varepsilon_m)} .
\end{equation}
Notice that we took into account that $\supp (\mu_{0,n(\varepsilon_m)}),\supp (\mu_{1,n(\varepsilon_m)}) \subseteq \mathcal R_{n(\varepsilon_m)}^k$ and we have used the $\omega$-uniform convexity assumption, keeping in mind that $M+1$ bounds from above the terminal entropies \eqref{eq:entropyboundM}.
In turn, inequality \eqref{eq:ineqofmeasures}  implies that,
\begin{equation}\label{eq:entropyboundl}
    S_{N,n(\varepsilon_m)}(\mu_{t,n(\varepsilon_m)}^l)\leq \frac{1}{(1- \omega(k,l-1,M+1))^{1-1/N}} S_{N,n(\varepsilon_m)}(\mu_{t,n(\varepsilon_m)}).
\end{equation}
 Moreover, the fact that $\supp (\mu_{0,n(\varepsilon_m)}),\supp (\mu_{1,n(\varepsilon_m)}) \subseteq \mathcal R _{n(\varepsilon_m)}^k$  shows that the measure $\mu_{t,n(\varepsilon_m)}$ has bounded support. In particular, for every $l\in\N$ (and every $m\in\N$)
\begin{equation*}
    \supp (\mu_{t,n(\varepsilon_m)}^l) \subseteq \supp (\mu_{t,n(\varepsilon_m)}) \subseteq B(p_{n(\varepsilon_m)}, 2^{k+2}).
\end{equation*}
As a consequence of this bound, it's easy to deduce that, 
\begin{equation}\label{eq:W2estimatel}
    W_2^2(\mu_{t,n(\varepsilon_m)}^l,\mu_{t,n(\varepsilon_m)})\leq (2\cdot 2^{2k+2})^2 \,\omega(k,l-1,M+1),
\end{equation}
because $\mu_{t,n(\varepsilon_m)}\leq \mu_{t,n(\varepsilon_m)}^l$ when restricted to $\mathcal R_{n(\varepsilon_m)}^{l-1}$, and $\mu_{t,n(\varepsilon_m)} (X\setminus \mathcal R_{n(\varepsilon_m)}^{l-1})\geq \omega(k,l-1,M+1)$, by $\omega$-uniform convexity. Now, for every fixed $l$ sufficiently large, such that $\omega(k,l-1,M+1)<1$, observe that, according to \eqref{eq:entropyboundl}, the entropies $S_{N,n(\varepsilon_m)}(\mu_{t,n(\varepsilon_m)}^l)$ are uniformly bounded above by the constant
\begin{equation*}
    \frac{1}{(1- \omega(k,l-1,M+1))^{1-1/N}} M',
\end{equation*}
for all $m\in \N$. Notice also that, since $\mu_{t,n(\varepsilon_m)}^l$ is supported on $\mathcal R_{n(\varepsilon_m)}^l$, it holds that
\begin{equation*}
    S_{N,\m^{l+1}_{n(\varepsilon_m)}}(\mu_{t,n(\varepsilon_m)}^l)=S_{N,n(\varepsilon_m)}(\mu_{t,n(\varepsilon_m)}^l).
\end{equation*}
Therefore, Corollary \ref{lem:tight} shows that  $\mu_{t,n(\varepsilon_m)}^l$ weakly converges to some $\mu_{t}^l\in \PX(X_\infty)$ as $m\to \infty$, for some choice of a subsequence. Moreover, we extract the bound $S_{N,\m_\infty^{l+1}}(\mu_t^l)<\infty$  from Corollary \ref{le:lsc_ent}, which guarantees the lower semicontinuity of $S_{N, \cdot}(\cdot)$ along our sequence. Consequently, this implies that the support of $\mu_t^l$ is contained in $\mathcal R_\infty^{l+1}$. 
Finally, note that then the sequence of measures $(\mu_{t,n(\varepsilon_m)}^l)_{m\in\N}$ is supported in a uniformly bounded set, since $\supp (\mu_{t,n(\varepsilon_m)}^l) \subseteq B\left (p_{n(\varepsilon_m)},2^{k+2} \right )$ and $p_{n(\varepsilon_m)}\to p_\infty$, as $m\to\infty$. Thus, we are able to conclude, up to picking again subsequence,  that for every sufficiently large $l \in \N$ 
\[ \mu_{t,n(\varepsilon_m)}^l \xrightarrow{W_2} \mu_{t}^l  \quad \text{ as } \quad m\to \infty.\]
As a matter of fact, we can show using inequality \eqref{eq:W2estimatel} that, 
\begin{equation*}
    W_2^2(\mu_t^i,\mu_t^j) \leq 2^{2k+7} \big[\omega(k,i-1,M+1)) + \omega(k,j-1,M+1)) \big],
\end{equation*}
for every (large enough) $i,j\in \N$. Then, our assumption on $\omega$ ensures that  $(\mu_t^l)_{l\in\N}$ is a Cauchy sequence, which therefore $W_2$-converges to $\mu_t\in \PX(\X_\infty)$. We conclude by noting that the uniform estimate \eqref{eq:W2estimatel} guarantees that $\mu_{t,n(\varepsilon_m)}\to \mu_t$. 

\begin{clm*}
For every $t \in [0,1]$ the measure $\mu_t$ does not give mass to the set $S$ of singular points.
\end{clm*}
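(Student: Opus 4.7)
The plan is to bound $\mu_t(\mathcal S_{\m_\infty})$ by transferring, through weak convergence, the bound on $\mu_{t,n(\varepsilon_m)}$ near its own singular set provided by the $\omega$-uniform convexity, and then invoking the Hausdorff convergence of singular sets in the ambient space $Z$ guaranteed by the $\dint$-distance via Proposition \ref{pr:krw}.

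First, I would promote the weak convergence $\mu_{t,n(\varepsilon_m)}^l \to \mu_t^l$ (already available at each fixed $l$) to the weak convergence $\mu_{t,n(\varepsilon_m)} \to \mu_t$ in $Z$. A standard $3\epsilon$-argument in $W_2$---combining the Cauchy estimate $W_2^2(\mu_t^l,\mu_t)\leq 2^{2k+7}\omega(k,l-1,M+1)$ just established in the previous claim, the uniform bound $W_2^2(\mu_{t,n(\varepsilon_m)}^l,\mu_{t,n(\varepsilon_m)})\leq 2^{2k+4}\omega(k,l-1,M+1)$ from \eqref{eq:W2estimatel}, and the weak convergence $\mu_{t,n(\varepsilon_m)}^l \to \mu_t^l$---would yield $\mu_{t,n(\varepsilon_m)} \to \mu_t$ in $W_2$, and hence weakly in $Z$ (the uniform boundedness of supports, using (iii) of Theorem \ref{th:stab} when $K<0$, guarantees that $W_2$ and weak convergence agree here).

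Next, I would apply the $\omega$-uniform convexity of $(\X_{n(\varepsilon_m)},\dis_{n(\varepsilon_m)},\m_{n(\varepsilon_m)})$ to the Wasserstein geodesic between $\mu_{0,n(\varepsilon_m)}$ and $\mu_{1,n(\varepsilon_m)}$, whose supports lie in $\mathcal R_{n(\varepsilon_m)}^k$ and whose $N$-R\'enyi entropies are at most $M+1$ by \eqref{eq:entropyboundM}: this gives, for every $h \geq k$,
\[
\mu_{t,n(\varepsilon_m)}\bigl(\mathcal N_{2^{-(h+1)}}(\mathcal S_{\m_{n(\varepsilon_m)}})\bigr)\;\leq\;\mu_{t,n(\varepsilon_m)}\bigl(\X_{n(\varepsilon_m)} \setminus \mathcal R_{n(\varepsilon_m)}^h\bigr)\;\leq\;\omega(k,h,M+1),
\]
a bound that persists after isometric push-forward to $Z$. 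Proposition \ref{pr:krw} furthermore provides $\dis_H(i_{n(\varepsilon_m)}(\mathcal S_{\m_{n(\varepsilon_m)}}), i_\infty(\mathcal S_{\m_\infty})) \to 0$ in $Z$, so for any $\varepsilon \in (0,2^{-(h+1)})$ and all $m$ sufficiently large the inclusion
\[
\mathcal N_{2^{-(h+1)}-\varepsilon}\bigl(i_\infty(\mathcal S_{\m_\infty})\bigr) \;\subseteq\; \mathcal N_{2^{-(h+1)}}\bigl(i_{n(\varepsilon_m)}(\mathcal S_{\m_{n(\varepsilon_m)}})\bigr)
\]
holds in $Z$.

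Combining these two ingredients with the Portmanteau theorem applied to the open set on the left would give $\mu_t(\mathcal N_{2^{-(h+1)} - \varepsilon}(\mathcal S_{\m_\infty})) \leq \omega(k, h, M+1)$; letting $\varepsilon \downarrow 0$, by continuity of the finite measure $\mu_t$ from below, upgrades this to $\mu_t(\mathcal N_{2^{-(h+1)}}(\mathcal S_{\m_\infty})) \leq \omega(k, h, M+1)$, and finally letting $h \to \infty$ and using continuity from above together with \eqref{eq:omega0} yields $\mu_t(\mathcal S_{\m_\infty}) = 0$. The main challenge I foresee is the careful translation between the intrinsic objects $\mathcal R_{n(\varepsilon_m)}^h, \mathcal S_{\m_{n(\varepsilon_m)}}$ of $\X_{n(\varepsilon_m)}$ and their isometric images in $Z$, and the reliance on Hausdorff convergence of singular sets---a feature specific to the $\dint$-distance that is unavailable under the purely extrinsic convergence of Theorem \ref{th:ext}, which would therefore require a separate treatment.
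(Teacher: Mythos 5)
Your proposal is correct under the $\dint$-convergence assumption of Theorem \ref{th:stab}, but it takes a genuinely different route from the paper's, and the distinction is consequential. You control $\mu_t(\mathcal S_{\m_\infty})$ by pushing the $\omega$-uniform-convexity estimate on $\mu_{t,n(\varepsilon_m)}$ near $\mathcal S_{\m_{n(\varepsilon_m)}}$ through weak convergence in $Z$ via the Portmanteau theorem, and the essential glue is the Hausdorff convergence $\dis_H\bigl(i_{n(\varepsilon_m)}(\mathcal S_{\m_{n(\varepsilon_m)}}), i_\infty(\mathcal S_{\m_\infty})\bigr)\to 0$ guaranteed by Proposition \ref{pr:krw}. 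The paper instead uses a mass-decomposition argument that never references the singular sets of the approximating spaces: setting $\tilde\mu_{t,n(\varepsilon_m)}^l := \bigl(1-\omega(k,l-1,M+1)\bigr)\mu_{t,n(\varepsilon_m)}^l$, inequality \eqref{eq:ineqofmeasures} gives $\tilde\mu_{t,n(\varepsilon_m)}^l \le \mu_{t,n(\varepsilon_m)}$, hence a decomposition $\mu_{t,n(\varepsilon_m)} = \tilde\mu_{t,n(\varepsilon_m)}^l + \bar\mu_{t,n(\varepsilon_m)}^l$ with $\bar\mu_{t,n(\varepsilon_m)}^l \ge 0$; passing both summands to weak limits in $m$ yields $\mu_t = \tilde\mu_t^l + \bar\mu_t^l$ with $\bar\mu_t^l \ge 0$, and since $\mu_t^l$ is supported on $\mathcal R_\infty^{l+1}$ (a fact extracted already in Claim~1 from $S_{N,\m_\infty^{l+1}}(\mu_t^l)<\infty$ via Corollary \ref{le:lsc_ent}, entirely without Hausdorff input), one concludes $\mu_t(\mathcal R_\infty^{l+1}) \ge \tilde\mu_t^l(\X_\infty) = 1-\omega(k,l-1,M+1)$ and lets $l\to\infty$.

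What the paper's route buys is that it works verbatim in the extrinsic setting of Theorem \ref{th:ext}, where no control on $\dis_H$ between singular sets is available; this matters because Theorem \ref{th:ext} is precisely the statement from which Theorem \ref{th:stab} is derived, so a proof tied to the Hausdorff term of $\dint$ would have to be completely redone in the extrinsic case. You correctly flag this limitation at the end, and the paper's answer to the problem you foresee is precisely to replace the Hausdorff/Portmanteau argument by the support identification through entropy lower semicontinuity. Your argument is otherwise sound: the push-forward of $\varepsilon$-neighborhoods under the isometric embeddings, the inclusion of neighborhoods from the Hausdorff bound, and the two limiting passages $\varepsilon\downarrow0$ and $h\to\infty$ are all handled correctly.
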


For every $m \in \N $ and every $l\in \N$ sufficiently large, let us introduce the measures 
\begin{equation*}
    \tilde\mu_{t,n(\varepsilon_m)}^l =[1- \omega(k,l-1,M+1)] \mu_{t,n(\varepsilon_m)}^l,
\end{equation*}
and notice that, for every $l\in \N$ sufficiently large, 
\begin{equation*}
    \tilde\mu_{t,n(\varepsilon_m)}^l \rightharpoonup \tilde\mu_{t}^l:= [1-\omega(k,l-1,M+1)] \mu_{t}^l.
\end{equation*}
Observe also that all measures $\tilde \mu_{t,n(\varepsilon_m)}^l$ have total mass equal to $[1-\omega(k,l-1,M+1)]$, as $m$ varies. Thus, $\tilde \mu_{t}^l$ also has total mass equal to $[1-\omega(k,l-1,M+1)]$.
On the other hand it follows from the uniform convexity properties (and in particular from \eqref{eq:ineqofmeasures}) that for every $m \in \N $ and every $l\in \N$ sufficiently large, there exists a positive measure $\bar \mu_{t,n(\varepsilon_m)}^l$ such that
\begin{equation*}
    \mu_{t,n(\varepsilon_m)} = \tilde \mu_{t,n(\varepsilon_m)}^l+\bar \mu_{t,n(\varepsilon_m)}^l.
\end{equation*}
Notice that, since the sequences $ \mu_{t,n(\varepsilon_m)}$ and $(\tilde \mu_{t,n(\varepsilon_m)}^l)_{m\in \N}$ are weakly converging, the sequence $\bar \mu_{t,n(\varepsilon_m)}^l$ is also weakly converging to a (positive) measure $\bar \mu_{t}^l$, such that
\begin{equation*}
    \mu_t= \tilde \mu_t^l + \bar \mu_t^l.
\end{equation*}
As pointed out before $\mu_t^l$ is supported on $\mathcal R_\infty^{l+1}$, thus the same holds for $\tilde \mu_t^l$, and therefore $$\mu_t(\mathcal R_\infty^{l+1})\geq 1-\omega(k,l-1,M+1).$$ 
Finally observe that this is sufficient to prove the claim, because of the arbitrariness of $l$.

\begin{clm*}
The lower semicontinuity of the Renyi entropies holds, that is for every $N'\in[N,0)$ 
\begin{equation}\label{eq:step3}
    S_{N', \m_\infty}(\mu_{t}) \leq \liminf_{m\to \infty} S_{N', n(\varepsilon_m)}(\mu_{t, n(\varepsilon_m)}).
\end{equation}
\end{clm*}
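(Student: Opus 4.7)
The plan is to interpose the approximations $\mu_t^l$ constructed above between $\mu_t$ and the sequence $(\mu_{t,n(\varepsilon_m)})_{m\in\N}$, using the semicontinuity of Corollary \ref{le:lsc_ent} in the $m$-direction (with fixed $k$-cut) and Proposition \ref{prop:almostlscofSN} in the $l$-direction (with fixed reference measure $\m_\infty$). The bridge between the two levels is provided by the monotonicity of $S_{N',\m}(\cdot)$ under $\mu_1\le\mu_2$ --- which for $N'<0$ follows from the fact that the function $s\mapsto s^{(N'-1)/N'}$ is increasing --- together with the explicit scaling $\tilde\mu_{t,n(\varepsilon_m)}^l=(1-\omega(k,l-1,M+1))\,\mu_{t,n(\varepsilon_m)}^l$. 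Without loss of generality we may assume that the right-hand side of \eqref{eq:step3} is finite.

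First, fix $l\in\N$ large enough that $\omega(k,l-1,M+1)<1$, and fix an integer $j\ge 3$ large enough that the cut-off $f^{l+j}$ equals $1$ on both $\mathcal R_{n(\varepsilon_m)}^{l}\supseteq\supp(\mu_{t,n(\varepsilon_m)}^l)$ (uniformly in $m$) and $\mathcal R_\infty^{l+1}\supseteq\supp(\mu_t^l)$. Since in addition $\mu_{t,n(\varepsilon_m)}^l \rightharpoonup \mu_t^l$ weakly as $m\to\infty$, Corollary \ref{le:lsc_ent} applied with parameter $l+j$ yields
\begin{equation*}
S_{N',\m_\infty^{l+j}}(\mu_t^l)\le\liminf_{m\to\infty}S_{N',\m_{n(\varepsilon_m)}^{l+j}}(\mu_{t,n(\varepsilon_m)}^l),
\end{equation*}
and because the cut-off is $1$ on the relevant supports, the $l+j$-cuts may be replaced by the original measures, giving $S_{N',\m_\infty}(\mu_t^l)\le\liminf_{m\to\infty}S_{N',\m_{n(\varepsilon_m)}}(\mu_{t,n(\varepsilon_m)}^l)$.

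Next, the scaling formula $S_{N',\m}((1-\omega)\rho\,\m)=(1-\omega)^{(N'-1)/N'}S_{N',\m}(\rho\,\m)$ together with the monotonicity of $S_{N',\m}$ applied to $\tilde\mu_{t,n(\varepsilon_m)}^l\le\mu_{t,n(\varepsilon_m)}$ (both absolutely continuous with respect to $\m_{n(\varepsilon_m)}$) gives
\begin{equation*}
S_{N',\m_{n(\varepsilon_m)}}(\mu_{t,n(\varepsilon_m)}^l)\le\bigl(1-\omega(k,l-1,M+1)\bigr)^{-(N'-1)/N'}S_{N',\m_{n(\varepsilon_m)}}(\mu_{t,n(\varepsilon_m)}),
\end{equation*}
so that, passing to the $\liminf$ in $m$,
\begin{equation*}
S_{N',\m_\infty}(\mu_t^l)\le\bigl(1-\omega(k,l-1,M+1)\bigr)^{-(N'-1)/N'}\liminf_{m\to\infty}S_{N',\m_{n(\varepsilon_m)}}(\mu_{t,n(\varepsilon_m)}).
\end{equation*}

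Finally, let $l\to\infty$. On one hand, $\omega(k,l-1,M+1)\to 0$ by \eqref{eq:omega0}, so the prefactor converges to $1$. On the other hand, $\mu_t^l\xrightarrow{W_2}\mu_t$ (in particular $\mu_t^l\rightharpoonup\mu_t$), each $\mu_t^l$ is supported in $\mathcal R_\infty^{l+1}$ and hence satisfies $\mu_t^l(\mathcal S_{\m_\infty})=0$, and the previous claim ensures $\mu_t(\mathcal S_{\m_\infty})=0$; hence Proposition \ref{prop:almostlscofSN}, applied with the constant reference measure $\m_\infty$ and sequence $\mu_t^l$, yields $S_{N',\m_\infty}(\mu_t)\le\liminf_{l\to\infty}S_{N',\m_\infty}(\mu_t^l)$. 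Combining these, inequality \eqref{eq:step3} follows. The main subtlety is that the natural weak lower semicontinuity of $S_{N',\cdot}(\cdot)$ fails when the reference measure is genuinely quasi-Radon and the limiting measure charges the singular set; the two-scale scheme above circumvents this by using the $\omega$-uniform convexity to confine the horizontal masses $\mu_t^l$ away from $\mathcal S_{\m_\infty}$ while still approximating $\mu_t$ in $W_2$.
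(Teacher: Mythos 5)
Your argument is correct and follows essentially the same two-scale scheme as the paper: first apply Corollary~\ref{le:lsc_ent} in the $m$-direction (with a fixed $k$-cut large enough that the cut-off is identically $1$ on the relevant supports), then use the domination $\mu_{t,n(\varepsilon_m)}^l \le (1-\omega(k,l-1,M+1))^{-1}\mu_{t,n(\varepsilon_m)}$ coming from $\omega$-uniform convexity together with the explicit scaling of $S_{N',\cdot}$ to relate this to the original sequence, and finally apply Proposition~\ref{prop:almostlscofSN} in the $l$-direction using the fact (Claim~2) that $\mu_t(\mathcal S_{\m_\infty})=0$. Your choice of cut index $l+j$ with $j\ge 3$ is in fact a bit more careful than the paper's $l+2$: with $l+2$ the cut-off $f^{l+2}$ is not necessarily identically $1$ on all of $\mathcal R_\infty^{l+1}\supseteq\supp(\mu_t^l)$ (only the one-sided inequality $S_{N',\m_\infty}(\mu_t^l)\le S_{N',\m_\infty^{l+2}}(\mu_t^l)$ is guaranteed, which is the direction one needs), whereas your $j\ge 3$ makes the replacement of cuts by the original reference measures an honest equality on both sides.
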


First of all notice that, the result of Claim 2 combined with Proposition \ref{prop:almostlscofSN} yields that 
\begin{equation}\label{eq:lscatinfty}
   S_{N', \m_\infty}(\mu_{t}) \leq \liminf_{l\to \infty} S_{N', \m_\infty}(\mu_{t}^l).
\end{equation}
On the other hand Lemma \ref{le:lsc_ent} ensures that for every $l\in\N$ large enough
\begin{equation}\label{eq:lscatl}
    S_{N', \m_\infty}(\mu_{t}^l)= S_{N', \m_\infty^{l+2}}(\mu_{t}^l)\leq  \liminf_{m\to \infty} S_{N', \m^{l+2}_{n(\varepsilon_m)}}(\mu_{t, n(\varepsilon_m)}^l)=\liminf_{m\to \infty} S_{N', n(\varepsilon_m)}(\mu_{t, n(\varepsilon_m)}^l).
\end{equation}
Moreover, we  deduce as in Claim 1 the following estimate for every $N'\in[N,0)$
\begin{equation*}
    S_{N',n(\varepsilon_m)}(\mu_{t,n(\varepsilon_m)}^l)\leq \frac{1}{(1- \omega(k,l-1,M+1))^{1-1/N'}} S_{N',n(\varepsilon_m)}(\mu_{t,n(\varepsilon_m)})
\end{equation*}
and consequently for every $l\in \N$
\begin{equation*}
\begin{split}
    \liminf_{m\to \infty} S_{N',n(\varepsilon_m)}(\mu_{t,n(\varepsilon_m)}) &\geq \liminf_{m\to \infty} (1-\omega(k,l-1,M+1))^{1-1/N'} S_{N',n(\varepsilon_m)}(\mu_{t,n(\varepsilon_m)}^l)\\
    &\geq (1-\omega(k,l-1,M+1))^{1-1/N'}S_{N', \m_\infty}(\mu_{t}^l),
\end{split}
\end{equation*}
where the last passage follows from \eqref{eq:lscatl}. Then, since this last inequality holds for every $l\in\N$, we can conclude that
\begin{equation*}
\begin{split}
     \liminf_{m\to \infty} S_{N',n(\varepsilon_m)}(\mu_{t,n(\varepsilon_m)})&\geq \liminf_{l\to \infty}(1-\omega(k,l-1,M+1))^{1-1/N'}S_{N', \m_\infty}(\mu_{t}^l)\\
    &\geq  S_{N', \m_\infty}(\mu_{t}),
\end{split}
\end{equation*}
where we used \eqref{eq:lscatinfty}. This is exactly what we wanted to prove.\\

\textbf{CONCLUSION}\\
So far we were able to prove that for every fixed $t\in[0,1]$, the sequence $(\mu_{t,n(\varepsilon_m)})_{m\in \N}$ converges (up to subsequences) to a measure $\mu_t\in\PX_2(\X_\infty)$ and 
\begin{equation}\label{eq:lsconQ}
      S_{N', \m_\infty}(\mu_{t}) \leq \liminf_{m\to \infty} S_{N', n(\varepsilon_m)}(\mu_{t, n(\varepsilon_m)}),
\end{equation}
for every $N'\in[N,0)$.
Now, a diagonal argument ensures that, by selecting a suitable subsequence (that we do not rename for sake of simplicity), 
\begin{equation*}
    (\mu_{t,n(\varepsilon_m)}) \overset{W_2}{\longrightarrow} \mu_t,
\end{equation*}
and that estimate \eqref{eq:lsconQ} holds for every $t\in[0,1]\cap \mathbb Q$.
Our approximation ensures also that $\mu_{0,n(\varepsilon_m)}\overset{W_2}{\longrightarrow}\mu_0 $ and $\mu_{1,n(\varepsilon_m)}\overset{W_2}{\longrightarrow}\mu_1 $ therefore, since $\mu_{t,n(\varepsilon_m)}$ is a $t$-midpoint of $\mu_{0,n(\varepsilon_m)}$ and $\mu_{1,n(\varepsilon_m)}$, for every $t\in[0,1]\cap \mathbb Q$ the limit point $\mu_t$ is a $t$-midpoint of $\mu_0$ and $\mu_1$. Now it is easy to realize that we can extend by continuity $\mu_t$ to a Wasserstein geodesic (connecting $\mu_0$ and $\mu_1$) on the whole interval $[0,1]$, obtaining also that 
\begin{equation*}
    (\mu_{t,n(\varepsilon_m)}) \overset{W_2}{\longrightarrow} \mu_t, \qquad \text{for every }t\in [0,1].
\end{equation*}
Moreover, we know from the proof of Claim 2, that for every $l\in\N$ 
\begin{equation*}
    \mu_t(\mathcal R_\infty^{l+1})\geq 1-\omega(k,l-1,M+1),
\end{equation*}
for every $t\in[0,1]\cap \mathbb Q$. Then by continuity we can conclude the same inequality for every $t\in[0,1]$, and consequently we know that $\mu_t$ gives no mass to the set of singular points.\\
Finally, inequality \eqref{eq:lsconQ}, combined with \eqref{eq:uscofT}, allows to pass to the limit as $m\to \infty$ inequality \eqref{eq.CDatn} at every rational time and obtaining that
\begin{equation*}
    S_{N', \m_\infty}(\mu_{t}) \leq T^{(t)}_{K, N'}(q | \m_\infty)
\end{equation*}
holds for every $t\in[0,1]\cap \mathbb Q$ and every $N'\in I$. Finally, the lower semicontinuity of the entropy (ensured by the fact that $\mu_t$ gives no mass to the set of singular points) and the continuity of $T_{K,N'}^{(t)}(q | \m_\infty)$ in $t$ (which is a straightforward consequence of the dominated convergence theorem), allow to extend this last inequality to every $t\in[0,1]$, concluding the proof of the approximate $\CD$-condition.

\subsection{Proof of the CD Condition}\label{section:CD}
This final section is dedicated to the proof of our main result, that is Theorem \ref{th:ext}.. As already mentioned, the proof of the approximate $\CD$-condition and the approximation argument that made it possible are the foundation to prove the $\CD$-condition. As the reader will notice, we are going to use basically the same techniques, but refining them a little bit to achieve the more general result. We specify that we could prove the $\CD$-condition directly, but we preferred to divide the proof in order to be clearer.

Before going on, we prove a preliminary lemma, that will help us in the following. Notice that a result of this type is now needed because the marginals may not have bounded support.
\begin{lemma}\label{lem:geodesic}
Given a metric space $(X,d)$, for every $n\in \N$ let $(\nu_t^n)_{t\in[0,1]}\subset \PX_2(X)$ be a Wasserstein geodesic. Assume that for every $t\in[0,1]$ the family $(\nu_t^n)_{n\in \N}$ is tight and that there exist $\nu_0,\nu_1\in\PX_2(X)$ such that 
\begin{equation*}
    \nu_0^n \overset{W_2}{\longrightarrow} \nu_0 \quad \text{ and } \quad  \nu_1^n \overset{W_2}{\longrightarrow} \nu_1 \qquad \text{ as }n \to \infty.
\end{equation*}
Then there exists a Wasserstein geodesic $(\nu_t)_{t\in[0,1]}\subset \PX_2(X)$ connecting $\nu_0$ and $\nu_1$ such that, up to subsequences, 
\begin{equation*}
    \nu_t^n \rightharpoonup \nu_t \qquad \text{as } n \to \infty\text{, for every $t\in [0,1]\cap \mathbb Q$}.
\end{equation*}
\end{lemma}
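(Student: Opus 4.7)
The plan is to construct the limit geodesic by diagonal extraction at rational times and then upgrade weak subsequential limits to a genuine $W_2$-geodesic by combining lower semicontinuity with the triangle inequality; finally, we extend from rationals to all of $[0,1]$ by completeness.

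First, I would enumerate $[0,1]\cap \mathbb{Q}$ as $(t_k)_{k\in \mathbb{N}}$ with $t_0=0$, $t_1=1$. By the tightness hypothesis and Prokhorov's theorem, for each fixed $k$ the sequence $(\nu_{t_k}^n)_{n\in \mathbb N}$ admits a weakly converging subsequence. A standard diagonal argument then yields a single subsequence (which I do not rename) and Borel probability measures $\nu_{t_k}\in \mathscr P(X)$ such that $\nu_{t_k}^n \rightharpoonup \nu_{t_k}$ as $n\to \infty$ for every $k\in \mathbb N$. Since $W_2$-convergence implies weak convergence, uniqueness of weak limits forces $\nu_{t_0}=\nu_0$ and $\nu_{t_1}=\nu_1$.

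Next, I would verify that $\nu_{t_k}\in \PX_2(X)$ and that the family $\{\nu_{t_k}\}_{k\in\mathbb{N}}$ is a $W_2$-geodesic on rationals. For every $n$ and every $s,t\in [0,1]\cap \mathbb{Q}$ we have the geodesic identity
\begin{equation*}
W_2(\nu_s^n,\nu_t^n)=|t-s|\,W_2(\nu_0^n,\nu_1^n),
\end{equation*}
and the right-hand side converges to $|t-s|\,W_2(\nu_0,\nu_1)<\infty$ by the $W_2$-convergence at the endpoints. This uniformly bounds the second moments of $\nu_t^n$ (triangle inequality against a fixed reference point), so Fatou's lemma together with weak convergence yields $\nu_{t_k}\in \PX_2(X)$. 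The lower semicontinuity of $W_2$ with respect to weak convergence of both marginals (valid on Polish spaces, which is the ambient setting of the paper) gives
\begin{equation*}
W_2(\nu_s,\nu_t)\le \liminf_{n\to\infty} W_2(\nu_s^n,\nu_t^n)=|t-s|\,W_2(\nu_0,\nu_1).
\end{equation*}
Applying this to the three pairs $(0,s)$, $(s,t)$, $(t,1)$ (for $0\le s\le t\le 1$ rational) and invoking the triangle inequality,
\begin{equation*}
W_2(\nu_0,\nu_1)\le W_2(\nu_0,\nu_s)+W_2(\nu_s,\nu_t)+W_2(\nu_t,\nu_1)\le W_2(\nu_0,\nu_1),
\end{equation*}
so all inequalities are equalities and in particular $W_2(\nu_s,\nu_t)=|t-s|\,W_2(\nu_0,\nu_1)$. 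Hence $(\nu_t)_{t\in [0,1]\cap \mathbb Q}$ is a geodesic parametrization on rationals.

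Finally, the map $\mathbb{Q}\cap[0,1]\ni t\mapsto \nu_t\in(\PX_2(X),W_2)$ is Lipschitz with constant $W_2(\nu_0,\nu_1)$; since $(\PX_2(X),W_2)$ is complete, it extends uniquely to a continuous curve $(\nu_t)_{t\in[0,1]}$, which, by density and continuity of $W_2$, still satisfies $W_2(\nu_s,\nu_t)=|t-s|\,W_2(\nu_0,\nu_1)$ for all $s,t\in[0,1]$. This provides the required Wasserstein geodesic connecting $\nu_0$ and $\nu_1$, and by construction $\nu_t^n\rightharpoonup \nu_t$ along the extracted subsequence for every $t\in[0,1]\cap \mathbb Q$. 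The only delicate point is the upgrade from the weak inequality given by semicontinuity to the geodesic equality; the triangle-inequality squeeze above handles this cleanly, and the absence of local compactness in our ambient spaces causes no issue here because tightness of $(\nu_t^n)_n$ at each $t$ is assumed.
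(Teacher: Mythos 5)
Your proof is correct and follows essentially the same route as the paper's: diagonal extraction via Prokhorov at rational times, lower semicontinuity of $W_2$ under weak convergence to obtain $W_2(\nu_s,\nu_t)\le |t-s|\,W_2(\nu_0,\nu_1)$, a triangle-inequality squeeze to upgrade this to equality, and finally extension from $[0,1]\cap\mathbb Q$ to all of $[0,1]$ by Lipschitz continuity and completeness of $(\PX_2(X),W_2)$. The only (minor) presentational difference is that you handle general pairs $(s,t)$ directly through the three-term triangle inequality, whereas the paper first establishes the $t$-midpoint property and then passes to general $s,t$; you also take the extra care to confirm $\nu_t\in\PX_2(X)$ via uniform second-moment bounds, which the paper leaves implicit.
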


\begin{proof}
First of all, notice that applying Prokhorov theorem we deduce that, for every fixed $t$, the sequence $(\nu_t^n)_{n\in \N}$ is weakly convergent, up to subsequences. Thus the diagonal argument ensures that, up to take a suitable subsequence which we do not recall for simplicity, for every $t\in [0,1]\cap \mathbb Q$ there exists $\nu_t\in \PX_2(X)$ such that
\begin{equation*}
    \nu_t^n\rightharpoonup \nu_t \qquad\text{as } n \to \infty\text{, for every $t\in [0,1]\cap \mathbb Q$}.
\end{equation*}
It is well-known that the Wasserstein distance is lower semicontinuous with respect to the weak convergence (see for example Proposition 7.1.3 in \cite{AmbrosioGigliSavare08}, then
\begin{equation*}
    W_2(\nu_0, \nu_t)\leq \liminf_{n\to \infty} W_2(\nu_0^n, \nu_t^n)= \liminf_{n\to \infty} t\cdot W_2(\nu_0^n, \nu_1^n)= t \cdot W_2(\nu_0, \nu_1)
\end{equation*}
and analogously
\begin{equation*}
    W_2(\nu_t, \nu_1)\leq (1-t) \cdot W_2(\nu_t, \nu_1). 
\end{equation*}
Combining this two inequalities with the triangular inequality we deduce that 
\begin{equation*}
     W_2(\nu_0, \nu_t)=  t \cdot W_2(\nu_0, \nu_1)\quad \text{and}\quad W_2(\nu_t, \nu_1)= (1-t) \cdot W_2(\nu_t, \nu_1),
\end{equation*}
which means that $\nu_t$ is a $t$-midpoint of $\nu_0$ and $\nu_1$. The lower semicontinuity of the Wasserstein distance also ensures that for every $s,t\in[0,1]\cap \mathbb Q$ it holds that
\begin{equation*}
    W_2(\nu_t,\nu_s) \leq \liminf_{n\to \infty} |t-s|\cdot W_2(\nu_t^n, \nu_s^n)= |t-s|\cdot\liminf_{n\to \infty}  W_2(\nu_0^n, \nu_1^n)=|t-s|\cdot W_2(\nu_0,\nu_1).
\end{equation*}
Finally, since for every $r \in [0,1]\cap \mathbb Q$ $\nu_r$ is an $r$-midpoint of $\nu_0$ and $\nu_1$, the triangular inequality allow us conclude that
\begin{equation*}
    W_2(\nu_t,\nu_s) =|t-s|\cdot W_2(\nu_0,\nu_1), \qquad \text{for every $s,t\in[0,1]\cap \mathbb Q$},
\end{equation*}
then we can extend $\nu_t$ to the whole interval $[0,1]$, finding a Wasserstein geodesic $(\nu_t)_{t\in[0,1]}$ connecting $\nu_0$ and $\nu_1$.
\end{proof}

Now that we have this last result at our disposal we can proceed to the proof of Theorem \ref{th:stab}.
To this aim, we fix $\mu_0, \mu_1 \in \PX_2^{ac}(\X_\infty)$. In analogy with the previous section, we can assume that $S_{N,\m_\infty}(\mu_0),S_{N,\m_\infty}(\mu_1)<\infty$ and introduce the constant  
\begin{equation*}
  M:=\max \{S_{N,\m_\infty}(\mu_0),S_{N,\m_\infty}(\mu_1)\}.
\end{equation*}
 We can also define the interval
\begin{equation*}
    I:= \{ N'\in [N,0)\, :\, S_{N',\m_\infty}(\mu_0),S_{N',\m_\infty}(\mu_1)<\infty\},
\end{equation*}
in particular we will need to prove \eqref{def:CD} for every $N'\in I$ and every $t\in[0,1]$. Now, according to Lemma \ref{prop:mk} there exist two sequences $(\mu_0^l)_{l\in\N}$ and $(\mu_1^l)_{l\in\N}$, $W_2$-converging to $\mu_0$ and $\mu_1$ respectively and such that
\begin{equation*}
    \supp(\mu_0^l),\supp(\mu_1^l)\subseteq \mathcal R_\infty^{l-1} \qquad \text{for every }l\in \N.
\end{equation*}
Moreover, keeping in mind the definition of $\mu_0^l$ and $\mu_1^l$, it is easy to realize that for $l$ sufficiently large
\begin{equation*}
    S_{N',\m_\infty}(\mu_0^l),S_{N',\m_\infty}(\mu_1^l)<\infty \quad \text{ for every $N'\in I$}
\end{equation*}
and that the dominated convergence theorem ensures that
\begin{equation*}
    \lim_{l\to\infty}S_{N,\m_\infty}(\mu_0^l) = S_{N,\m_\infty}(\mu_0) \qquad \text{ and }\qquad \lim_{l\to\infty}S_{N,\m_\infty}(\mu_1^l) = S_{N,\m_\infty}(\mu_1).
\end{equation*}
Thus, for every $l$ large enough 
\begin{equation*}
    S_{N,\m_\infty}(\mu_0^l),S_{N,\m_\infty}(\mu_1^l)\leq \max \big\{S_{N,\m_\infty}(\mu_0),S_{N,\m_\infty}(\mu_1) \big\} +1 :=M+1
\end{equation*}
 and then we can apply the argument presented in the last section and deduce the existence of an optimal plan $q^l\in \mathsf{Opt} (\mu^l_0,\mu^l_1)$ and of a Wasserstein geodesic $(\mu^l_t)_{t\in[0,1]}$ connecting $\mu_0^l$ and $\mu_1^l$, such that
\begin{equation}\label{eq:approxCD}
    S_{N',\m_\infty}(\mu_t^l) \leq T_{K,N'}^{(t)}(q^l | \m_\infty)
\end{equation}
holds for every $t\in[0,1]$ and every $N'\in I$.
Now, we divide the proof in two steps, the first dedicated to the convergence of the plans $(q^l)_{l\in\N}$ and to the upper semicontinuity of $T_{K,N'}^{(t)}$, the second dedicated to the convergence of the measures $(\mu_t^l)_{l\in\N}$ and  the lower semicontinuity of $S_{N',\m_\infty}$.\\

\textbf{Step 1: Upper semicontinuity for $T_{K,N'}^{(t)}$}

Notice that $(q^l)_{l\in\N}$ is a sequence of probability measures having as marginals two sequences of converging, and thus tight, probability measures. As a consequence the sequence $(q^l)_{l\in\N}$ is itself tight, then up to subsequences it weakly converges to a plan $q\in \PX(\X_\infty \times \X_\infty)$. It is well know and easy to prove that $q\in \mathsf{Opt}(\mu_0,\mu_1)$. We are now going to prove that 
\begin{equation}\label{eq:uscofT2}
    \limsup_{l\to \infty} T_{K,N'}^{(t)}(q^l|\m_\infty) \leq T_{K,N'}^{(t)}(q|\m_\infty)
\end{equation}
for every $t\in[0,1]$ and every $N'\in I$. The argument we are going to use is essentially the same as the one explained in the proof of Proposition \ref{prop:continuityofT}, nevertheless we briefly recall it for the sake of completeness, avoiding to repeat all the details. \\
In particular, for every $l\in \N$ let us call $\rho_0^l$ and $\rho_1^l$ the densities of $\mu_0^l$ and $\mu_1^l$ with respect to the reference measure $\m_\infty$, we just need to prove that
\begin{equation*}
   \lim_{l\to \infty} \int \tau_{K,N'}^{(1-t)} (\di(x,y)) \rho_0^l(x)^{-\frac 1{N'}} \de q^l = \int \tau_{K,N'}^{(1-t)} (\di(x,y)) \rho_0(x)^{-\frac 1{N'}} \de q.
\end{equation*}
 Notice that, the particular definition of $\mu_0^l$ (check Lemma \ref{prop:mk}), ensures that the density $\rho_0^l$ is a suitable renormalization of $f^l\rho_0$, then for a fixed $\varepsilon>0$ we can find $\bar l\in \N$ such that
 \begin{equation*}
    \norm{\big(\rho_0^l \big)^{ -1/N'}-\rho_0^{- 1/N'}}_{L^1(\mu_0^l)}<\varepsilon \, \, \text{ for every }l\geq \bar l.
 \end{equation*}
 Furthermore, for the same reason combined with Lemma \ref{lemma:dens} (up to possibly change $\bar l$) we can find $f^\varepsilon\in C_b(X)$ such that
\begin{equation*}
        \norm{\rho_0^{- 1/N'}-f^\varepsilon}_{L^1(\mu_0)}<\varepsilon \qquad\text{and} \qquad \norm{\rho_0^{- 1/N'}-f^\varepsilon}_{L^1(\mu_0^l)}<\varepsilon \,\,\,\text{ for every }l\geq \bar l.
\end{equation*}
Putting together this last two estimates we end up proving that 
\begin{equation*}
    \norm{\big(\rho_0^l \big)^{ -1/N'}-f^\varepsilon}_{L^1(\mu_0^l)}<2\varepsilon \, \, \text{ for every }l\geq \bar l.
 \end{equation*}
  On the other hand, since the function
\begin{equation*}
    \tau_{K,N}^{(1-t)} (\di(x,y)) f^\varepsilon(x)
\end{equation*}
is bounded and continuous, the weak convergence of $(q^l)_l$ to $q$ yields that
\begin{equation*}
    \lim_{l\to \infty} \int \tau_{K,N'}^{(1-t)} (\di(x,y)) f^\varepsilon(x) \de q^l = \int \tau_{K,N'}^{(1-t)} (\di(x,y)) f^\varepsilon(x) \de q.
\end{equation*}
Then, since definitely $l\geq \bar l$, we can deduce the following estimate
\begin{equation*}
    \begin{split}
        \limsup_{l\to \infty} \int \tau_{K,N'}^{(1-t)} (\di(x,y)) \rho_0^l(x)^{-\frac 1{N'}} \de q^l &\leq \lim_{l\to \infty} \int \tau_{K,N'}^{(1-t)} (\di(x,y)) f^\varepsilon(x) \de q^l + 2\varepsilon  \norm{\tau_{K,N'}^{(1-t)}}_{L^\infty} \\
        &= \int \tau_{K,N'}^{(1-t)} (\di(x,y)) f^\varepsilon(x) \de q + 2\varepsilon  \norm{\tau_{K,N'}^{(1-t)}}_{L^\infty}\\
        &\leq \int \tau_{K,N'}^{(1-t)} (\di(x,y)) \rho_0(x)^{-\frac 1{N'}} \de \pi + 3  \varepsilon  \norm{\tau_{K,N'}^{(1-t)}}_{L^\infty}.
    \end{split}
\end{equation*}
and since $\varepsilon>0$ can be chosen arbitrarily, \eqref{eq:uscofT2} holds true.\\

\textbf{Step 2: Lower semicontinuity for $S_{N',\m_\infty}$}\\
In this second step we prove an additional property on $\mu^l_t$, which is fundamental to prove the $\CD$-condition. Let us start with a preliminary lemma.

\begin{lemma}\label{lem:limitinl}
Fix $k\leq h \in \N$ and let $\nu\in \PX_{ac}(\X_\infty,\m_\infty)$ with bounded density be such that $\supp(\nu)\subseteq\mathcal R_\infty^{k-1}$. Then, for every $\epsilon>0$, there exists $\tilde n\in \N$ large enough such that, $P'_{n,h}(\nu)(\mathcal R_n^{k+1})\geq 1-\epsilon$ for every $n\geq \tilde n$.
\end{lemma}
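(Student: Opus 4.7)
The plan is to exploit the extrinsic convergence in a common Polish space $Z$: the optimal coupling $p_n^h\in\mathsf{Opt}(\bar\m_\infty^h,\bar\m_n^h)$ (understood in $(Z,\d_Z)$) will concentrate near the ``diagonal'' $\{(x,y):\d_Z(i_\infty(x),i_n(y))\simeq 0\}$, and pairs $(x,y)$ with $x\in \mathcal R_\infty^{k-1}$ and $i_\infty(x)$, $i_n(y)$ close in $Z$ must satisfy $y\in \mathcal R_n^{k+1}$, thanks to $\d_Z(i_n(p_n),i_\infty(p_\infty))\to 0$ and $(\d_Z)_H(i_n(\mathcal S_{\m_n}),i_\infty(\mathcal S_{\m_\infty}))\to 0$.

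First, let $\tilde\rho$ be the density of $\nu$ with respect to $\bar\m_\infty^h$ and $C:=\norm{\tilde\rho}_{L^\infty}$. Since $\supp(\tilde\rho)\subseteq\mathcal R_\infty^{k-1}$, from the definition of $P'_{n,h}$ one gets
\[
P'_{n,h}(\nu)(\X_n\setminus\mathcal R_n^{k+1}) \,=\int_{\X_\infty\times\X_n}\mathbbm 1_{\X_n\setminus\mathcal R_n^{k+1}}(y)\,\tilde\rho(x)\,\d p_n^h(x,y)\;\leq\; C\cdot p_n^h(A_n),
\]
where $A_n:=\mathcal R_\infty^{k-1}\times(\X_n\setminus\mathcal R_n^{k+1})$. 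Hence it suffices to show that $p_n^h(A_n)\to 0$ as $n\to\infty$.

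Second, since the supports of $\bar\m_\infty^h$ and $\bar\m_n^h$ are contained in the bounded sets $\mathcal R_\infty^h$ and $\mathcal R_n^h$ respectively, the $\dint$-convergence yields $W_c((i_n)_\sharp\bar\m_n^h,(i_\infty)_\sharp\bar\m_\infty^h)\to 0$, which upgrades to $W_2$-convergence in $Z$ (because on bounded sets $W_c$ and $W_2$ metrize the same topology). The $W_2$-optimality of $p_n^h$ in $Z$ then gives
\[
\int_{\X_\infty\times\X_n}\d_Z(i_\infty(x),i_n(y))^2\,\d p_n^h(x,y)\;=\;W_2^Z\big((i_\infty)_\sharp\bar\m_\infty^h,(i_n)_\sharp\bar\m_n^h\big)^2\;\longrightarrow\;0,
\]
and Markov's inequality yields $p_n^h\{(x,y):\d_Z(i_\infty(x),i_n(y))\geq\delta\}\to 0$ for every fixed $\delta>0$.

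Third, set $\delta:=2^{-k}/4$ and let $\alpha_n:=\d_Z(i_\infty(p_\infty),i_n(p_n))$, $\beta_n:=(\d_Z)_H(i_n(\mathcal S_{\m_n}),i_\infty(\mathcal S_{\m_\infty}))$, both tending to $0$. If $(x,y)$ satisfies $x\in\mathcal R_\infty^{k-1}$ and $\d_Z(i_\infty(x),i_n(y))<\delta$, the triangle inequality in $Z$ gives
\[
\d_n(y,p_n)\;\leq\;\delta+\d_\infty(x,p_\infty)+\alpha_n\;<\;\delta+2^k+\alpha_n,
\]
which is less than $2^{k+2}$ for $n$ large. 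When $\mathcal S_{\m_n}\neq\emptyset$, for any $s\in\mathcal S_{\m_n}$ there is $\tilde s\in\mathcal S_{\m_\infty}$ with $\d_Z(i_n(s),i_\infty(\tilde s))\leq\beta_n$, so that
\[
\d_Z(i_n(y),i_n(s))\;\geq\;\d_\infty(x,\tilde s)-\delta-\beta_n\;\geq\;2^{-k}-\delta-\beta_n,
\]
and, taking the infimum over $s$, $\d_n(y,\mathcal S_{\m_n})\geq 2^{-k}-\delta-\beta_n>2^{-(k+2)}$ for $n$ large. Hence $y\in\mathcal R_n^{k+1}$, so for $n$ large $\{(x,y):x\in\mathcal R_\infty^{k-1},\ \d_Z(i_\infty(x),i_n(y))<\delta\}\cap A_n=\emptyset$, and $p_n^h(A_n)\leq p_n^h\{\d_Z(i_\infty(x),i_n(y))\geq\delta\}\to 0$. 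Choosing $n$ so large that this last quantity is below $\epsilon/C$ concludes.

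The argument is essentially a bookkeeping exercise, and the only mildly delicate point is the passage from $W_c$- to $W_2$-convergence in $Z$ needed to extract the Markov estimate; this is why one must exploit that all relevant mass lives inside the bounded set $\mathcal R^h$ (a control which is built into the definition of the $k$-th cut).
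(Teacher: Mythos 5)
Your argument is clean and correct as far as it goes, but it relies on an ingredient that is not available in the generality in which Lemma~\ref{lem:limitinl} is needed. In the third step you invoke $\beta_n:=(\d_Z)_H\big(i_n(\mathcal S_{\m_n}),i_\infty(\mathcal S_{\m_\infty})\big)\to 0$ to rule out $\d_n(y,\mathcal S_{\m_n})<2^{-(k+2)}$. That Hausdorff control is part of the $\dint$-distance, but Lemma~\ref{lem:limitinl} is used in Section~\ref{section:CD} to prove Theorem~\ref{th:ext}, where convergence is only \emph{extrinsic} (Definition~\ref{de:ext}), and there the term $(\d_Z)_H(i_n(\mathcal{S}_{\m_n}),i_\infty(\mathcal{S}_{\m_\infty}))$ has been deliberately dropped. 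This is not a harmless simplification: the motivating example in the introduction is precisely $([2^{-n},\infty),|\cdot|,x^N\Leb^1)\to([0,\infty),|\cdot|,x^N\Leb^1)$, where $\mathcal S_{\m_n}=\emptyset$ while $\mathcal S_{\m_\infty}=\{0\}$, so $\beta_n=\infty$ for all $n$. Your estimate $\d_n(y,\mathcal{S}_{\m_n})\geq 2^{-k}-\delta-\beta_n$ therefore gives no information, and the claim that the bad set $A_n$ is eventually disjoint from $\{\d_Z(i_\infty(x),i_n(y))<\delta\}$ is unjustified.

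The paper circumvents this by never touching $\mathcal S_{\m_n}$ metrically. Instead it brings in a second weighted marginal $P'_{n,k}(\nu)$, built with the coupling $p_n^k$ (index $k$, not $h$): this measure is \emph{automatically} supported on $\mathcal R_n^k=\supp(\m_n^k)$, because it is absolutely continuous with respect to $\bar\m_n^k$. By Lemma~\ref{lemma:I4.19} both $P'_{n,k}(\nu)$ and $P'_{n,h}(\nu)$ $W_2$-converge to $\nu$, hence $W_2(P'_{n,k}(\nu),P'_{n,h}(\nu))\to 0$; but if $P'_{n,h}(\nu)$ put mass $\geq\epsilon$ outside $\mathcal R_n^{k+1}$, the gap $\d_n(\mathcal R_n^k,(\mathcal R_n^{k+1})^c)\geq 2^{-(k+2)}$ (a purely intrinsic fact about the $k$-cuts, not about $\mathcal S_{\m_n}$) would force $W_2^2(P'_{n,k}(\nu),P'_{n,h}(\nu))\geq\epsilon\,2^{-(2k+4)}$, a contradiction. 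Your first two steps — the reduction to bounding $p_n^h(A_n)$ via the density bound, and the upgrade from $W_c$- to $W_2$-convergence on bounded sets — are fine and could be kept; the fix is to replace the metric estimate on singular sets in Step~3 by this comparison with the auxiliary marginal $P'_{n,k}(\nu)$, which only sees the supports $\mathcal R_n^k$ and never the singular sets themselves.
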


\begin{proof}
Notice that, accordingly to Lemma \ref{lemma:I4.19}, both the sequences $(P'_{n,k}(\nu))_{n\in \N}$ and $(P'_{n,h}(\nu))_{n\in \N}$ $W_2$-converge to $\nu^k$. Assume that $P'_{m,h}(\nu)(\mathcal R_m^{k+1})< 1-\epsilon$ for some $m\in \N$. Observe that 
\begin{equation*}
    \inf \big\{ \di(x,y) \,:\, x\in\mathcal R_m^{k}, \, y \in \mathcal (R_m^{k+1})^c \big\}= 2^{-(k+2)},
\end{equation*}
as a consequence, since $P'_{m,k}(\nu^k)$ is supported on $\mathcal R^k_m$, we obtain that
\begin{equation}\label{eq:W2geq}
    W_2^2(P'_{m,k}(\nu),P'_{m,h}(\nu)) \geq \epsilon \cdot 2^{-(2k+4)}.
\end{equation}
On the other, since the sequences $(P'_{n,k}(\nu^k))_{n\in \N}$ and $(P'_{n,h}(\nu^k))_{n\in \N}$ have the same limit, it holds that
\begin{equation*}
     W_2^2(P'_{n,k}(\nu^k),P'_{n,h}(\nu^k))\to 0 \qquad \text{as }n\to \infty.
\end{equation*}
Then definitely \eqref{eq:W2geq} cannot hold, proving the desired result.
\end{proof}
Fix $\epsilon>0$ and take $k(\epsilon)\in \N$ such that $\mu_0(\mathcal R_\infty^{k(\epsilon)-1}),\mu_1(\mathcal R_\infty^{k(\epsilon)-1})>1-\frac\epsilon 2$. Then we take $l> k(\epsilon)$ and repeat the argument of the previous section on $\mu_0^l$ and $\mu_1^l$. We are also going to use the same notation, forgetting for the moment the dependence on $l$. 
It is easy to realize that there exist two measures $\nu^{k(\epsilon)}_0$ and $\nu_1^{k(\epsilon)}$ with $\supp(\nu^{k(\epsilon)}_0),\supp(\nu^{k(\epsilon)}_1)\subseteq\mathcal R_\infty^{k(\epsilon)-1}$ and $\nu^{k(\epsilon)}_0(\X_\infty),\nu^{k(\epsilon)}_1(\X_\infty)> 1-\epsilon$, such that for $r$ sufficiently large (and thus for $\varepsilon$ sufficiently small) $\tilde\mu_0^{(r)}\geq \nu^{k(\epsilon)}_0$ and $\tilde\mu_1^{(r)}\geq \nu^{k(\epsilon)}_1$ (in particular this tells us that $\nu^{k(\epsilon)}_0$ and $\nu_1^{k(\epsilon)}$ have bounded density). Then we can apply Lemma \ref{lem:limitinl} to the probability measures
\begin{equation*}
    \frac{1}{\nu^{k(\epsilon)}_0(\X_\infty)}\nu^{k(\epsilon)}_0 \quad \text{and} \quad \frac{1}{\nu^{k(\epsilon)}_1(\X_\infty)}\nu^{k(\epsilon)}_1,
\end{equation*}
obtaining that for $m$ sufficiently large (in particular such that $n(\varepsilon_m)\geq \tilde n$) it holds that
\begin{equation*}
    \mu_{0,n(\varepsilon_m)} (\mathcal R_{n(\varepsilon_m)}^{k(\epsilon)+1}), \mu_{1,n(\varepsilon_m)} (\mathcal R_{n(\varepsilon_m)}^{k(\epsilon)+1}) \geq (1-\epsilon)^2\geq 1-2 \epsilon.
\end{equation*}
Consequently our uniform convexity assumption ensures that 
\begin{equation*}
    \mu_{t,n(\varepsilon_m)} (\mathcal R_{n(\varepsilon_m)}^h) \geq 1- \Omega(k(\epsilon)+1, h, M+2,2\epsilon),
\end{equation*}
for every $t\in[0,1]$ and every $h\in \N$. Proceeding as in Step 7 and Conclusion of the previous section we can actually conclude that
\begin{equation}\label{eq:boundXh}
    \mu_t^l (\mathcal R_\infty^{h+1}) \geq 1- \Omega (k(\epsilon)+1, h-1,M+2,2\epsilon),
\end{equation}
for every $t\in[0,1]$ and every $h\in \N$ sufficiently large. \\

\begin{clm*}
For a fixed $t>0$, the family $(\mu_t^l)_{l\in \N}$ is tight.
\end{clm*}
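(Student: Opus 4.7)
The plan is to combine the localization estimate \eqref{eq:boundXh} with a uniform R\'enyi-entropy bound on the $\mu_t^l$, so as to reduce the problem to standard tightness for probability measures dominated by a single finite Borel measure. Fix $\eta>0$ and choose $\epsilon\in(0,1/4)$ with $4\epsilon<\eta/2$. By Proposition \ref{prop:Omega}(ii), $\limsup_{h\to\infty}\Omega(k(\epsilon)+1,h-1,M+2,2\epsilon)\leq 4\epsilon<\eta/2$, so for some $h\in\N$ the inequality \eqref{eq:boundXh} yields $\mu_t^l(\mathcal R_\infty^{h+1})\geq 1-\eta/2$ for all $l$ sufficiently large. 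Since $\mathcal R_\infty^{h+1}$ is a bounded subset of the open set $\X_\infty\setminus\mathcal N_{2^{-(h+2)}}(\mathcal S_{\m_\infty})$, on which $\m_\infty$ is Radon, the renormalized restriction $\bar\m:=\m_\infty(\mathcal R_\infty^{h+1})^{-1}\m_\infty|_{\mathcal R_\infty^{h+1}}$ is a tight probability measure on $\X_\infty$.

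The next task is to establish a uniform bound $\sup_l S_{N,\m_\infty}(\mu_t^l)<\infty$. The approximate $\CD$-inequality \eqref{eq:approxCD} at $N'=N$ gives $S_{N,\m_\infty}(\mu_t^l)\leq T_{K,N}^{(t)}(q^l\,|\,\m_\infty)$. For the fixed $t\in(0,1)$ the coefficients $\tau_{K,N}^{(t)}$ and $\tau_{K,N}^{(1-t)}$ are uniformly bounded on the range of admissible distances: this is immediate from \eqref{E:sigma} when $K\geq 0$ (using that $\sinh(tx)/\sinh(x)\leq 1$ for $t\in[0,1]$), and follows from hypothesis (iii) of Theorem \ref{th:stab} when $K<0$. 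Denoting a common bound by $C$,
\[
S_{N,\m_\infty}(\mu_t^l)\leq C\bigl(S_{N,\m_\infty}(\mu_0^l)+S_{N,\m_\infty}(\mu_1^l)\bigr)\leq 2C(M+1),
\]
uniformly in $l$ large enough. A direct computation of densities, combined with $\mu_t^l(\mathcal R_\infty^{h+1})\geq 1/2$, then transfers this bound to the renormalized restrictions $\tilde\mu_t^l:=\mu_t^l(\mathcal R_\infty^{h+1})^{-1}\mu_t^l|_{\mathcal R_\infty^{h+1}}$: explicitly,
\[
S_{N,\bar\m}(\tilde\mu_t^l)\leq \m_\infty(\mathcal R_\infty^{h+1})^{-1/N}\,2^{(N-1)/N}\,S_{N,\m_\infty}(\mu_t^l),
\]
which is uniformly bounded in $l$.

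Finally, Lemma \ref{lem:tightness} applied to the sequence $(\tilde\mu_t^l)_l$ with the constant (tight) reference measure $\bar\m$ produces a compact $K_\eta\subset\X_\infty$ with $\tilde\mu_t^l(K_\eta)\geq 1-\eta/2$ for all large $l$; therefore $\mu_t^l(K_\eta)\geq(1-\eta/2)^2\geq 1-\eta$, and, since the remaining finitely many measures in the sequence are individually tight, the whole family $(\mu_t^l)_{l\in\N}$ is tight. The only delicate ingredient is the transfer of the entropy bound from the quasi-Radon $\m_\infty$ to a finite reference measure; this step crucially exploits the localization to $\mathcal R_\infty^{h+1}$, which in turn is made available by the $\omega$-uniform convexity assumption through \eqref{eq:boundXh}, preventing any uncontrolled accumulation of the midpoint mass around $\mathcal S_{\m_\infty}$.
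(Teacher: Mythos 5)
Your proof is correct and follows essentially the same strategy as the paper: localize the mass to $\mathcal R_\infty^{h+1}$ via \eqref{eq:boundXh} (from $\omega$-uniform convexity), obtain a uniform bound on $S_{N,\m_\infty}(\mu_t^l)$, and apply Lemma \ref{lem:tightness} relative to the finite Radon measure $\m_\infty|_{\mathcal R_\infty^{h+1}}$. The only cosmetic differences are that you derive the uniform entropy bound directly from boundedness of the $\tau$-coefficients rather than citing the upper semicontinuity \eqref{eq:uscofT2} from Step~1, and that you make the normalization needed to invoke Lemma \ref{lem:tightness} explicit, where the paper argues ``as in the proof of Lemma~\ref{lem:tightness}''; both are sound.
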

Given a fixed $\delta>0$, we have to find a compact set $K_\delta$, such that $\mu_t^l(K_\delta)\geq 1-\delta$ for every $l\in\N$. To this aim we take suitable $\epsilon$ and $h$ such that \eqref{eq:boundXh} ensures that
\begin{equation}\label{eq:tightness}
    \mu_t^l(\mathcal R_\infty^{h+1})\geq 1-\frac\delta2.
\end{equation}
Moreover, combining the result of Step 1 (that is \eqref{eq:uscofT2}) with \eqref{eq:approxCD}, we conclude that $S_{N,\m_\infty}(\mu_t^l)$ is definitely bounded. Then, since $\m_\infty|_{\mathcal R_\infty^{h+1}}$ is a finite Radon measure, we can argue as in the proof of Lemma \ref{lem:tightness} and prove the tightness of the family of measure $(\mu_t^l|_{\mathcal R_\infty^{h+1}})_{l\in \N}$. As a consequence, keeping in mind \eqref{eq:tightness}, there exists a compact set $K_\delta$ such that 
\begin{equation*}
    \mu_t^l(K_\delta) \geq \mu_t^l|_{\mathcal R_\infty^{h+1}}(K_\delta)\geq 1-\delta,
\end{equation*}
proving the claim.
\\

Now, we can apply Lemma \ref{lem:geodesic} and find a Wasserstein geodesic $(\mu_t)_{t\in[0,1]}\subset\PX(\X_\infty)$ connecting $\mu_0$ and $\mu_1$ such that (up to subsequences) 
\begin{equation*}
    \mu_t^l \rightharpoonup \mu_t \in\PX(\X_\infty) \qquad \text{as $l\to\infty$ for every $t\in[0,1]\cap\mathbb Q$}
\end{equation*}
Then, since the bound \eqref{eq:boundXh} is uniform in $l$, we can conclude that
\begin{equation}\label{eq:massRh}
    \mu_t (\mathcal R_\infty^{h+1}) \geq 1- \Omega (k(\epsilon)+1, h-1,M+2,2\epsilon),
\end{equation}
for every $t\in[0,1]\cap \mathbb Q$ and every $h\in \N$ sufficiently large. Moreover, by continuity we can deduce \eqref{eq:massRh} for every time $t\in[0,1]$ (and every $h\in \N$ sufficiently large).
This is sufficient to conclude that $\mu_t$ gives no mass to the set $\mathcal S$ of singular points, for every $t\in[0,1]$. In fact, assume by contradiction that $\mu_t(\mathcal S)=\delta>0$, then condition \eqref{eq:Omega}
on $\Omega$ ensures that there exist $\epsilon$ and $h\in \N$ such that 
\begin{equation*}
    \Omega (k(\epsilon)+1, h-1,M+2,2\epsilon)) < \delta,
\end{equation*}
 and consequently 
 \begin{equation*}
    \mu_t (\mathcal R_\infty^{h+1}) \geq 1- \delta,
 \end{equation*}
which contradicts $\mu_t(\mathcal S)=\delta$. At his point we know from Proposition \ref{prop:almostlscofSN} that 
\begin{equation}\label{eq:lscofSN}
    \liminf_{l\to \infty} S_{N',\m_\infty} (\mu_t^l) \leq S_{N',\m_\infty}(\mu_t),
\end{equation}
for every $t\in[0,1]\cap \mathbb Q$ and $N'\in[N,0)$.\\

Finally, we can use \eqref{eq:lscofSN} and \eqref{eq:uscofT2} to pass at the limit as $l\to \infty$ the inequality \eqref{eq:approxCD} and deduce that
\begin{equation}\label{eq:last}
    S_{N',\m_\infty}(\mu_t) \leq T_{K,N'}^{(t)}(q | \m_\infty)
\end{equation}
holds for every $t\in[0,1]\cap\mathbb Q$ and every $N'\in I$. Then the lower semicontinuity of $ S_{N',\m_\infty}$ (granted by \eqref{eq:massRh}) and the continuity of $T_{K,N'}^{(t)}(q | \m_\infty)$ in $t$ (which is a straightforward consequence of the dominated convergence theorem), allow to conclude \eqref{eq:last} for every $t\in[0,1]$, finishing the proof.\\



\bibliographystyle{siam}
\bibliography{Stability_negN}
\end{document}